\newcommand{\customref}[2]{\hyperref[#1]{#2}}
\tikzstyle{vertex}=[
\tikzstyle{printersafe}=[decoration={snake,amplitude=0pt}]
\DeclareFontFamily{U}{wncy}{}
\DeclareFontShape{U}{wncy}{m}{n}{<->wncyr10}{}
\DeclareSymbolFont{mcy}{U}{wncy}{m}{n}
\DeclareMathSymbol{\Sh}{\mathord}{mcy}{"58} 
\newcommand{\End}{\operatorname{End}}
\newcommand{\bs}{\backslash}
\newcommand{\op}{\operatorname{op}}
\newcommand{\proj}{\operatorname{Proj}}
\newcommand{\id}{\operatorname{id}}
\newcommand{\codim}{\operatorname{codim}}
\newcommand{\rank}{\operatorname{rank}}
\newcommand{\Spec}{\operatorname{Spec}}
\newcommand{\an}{\operatorname{an}}
\newcommand{\coker}{\operatorname{coker}}
\newcommand{\ord}{\operatorname{ord}}
\newcommand{\rar}{\rightarrow}	
\newcommand{\ol}{\overline}
\newcommand{\trcon}{\mathrm{\overline{tr}}}
\DeclareMathOperator{\Supp}{Supp}
\DeclareMathOperator{\Diff}{Diff}
\def\O#1.{\mathcal {O}_{#1}}			
\def\pr #1.{\mathbb P^{#1}}				
\def\af #1.{\mathbb A^{#1}}			
\def\ses#1.#2.#3.{0\to #1\to #2\to #3 \to 0}	
\def\xrar#1.{\xrightarrow{#1}}			
\def\K#1.{K_{#1}}						
\def\bA#1.{\mathbf{A}_{#1}}			
\def\bM#1.{\mathbf{M}_{#1}}
\def\bN#1.{\mathbf{N}_{#1}}
\def\bL#1.{\mathbf{L}_{#1}}				
\def\bB#1.{\mathbf{B}_{#1}}				
\def\bK#1.{\mathbf{K}_{#1}}			
\def\subs#1.{_{#1}}					
\def\sups#1.{^{#1}}						
\def \neone#1.{\overline{{\rm NE}(#1)}}
  		\newtheorem{theorem}{Theorem}[section]
  		\newtheorem{lemma}[theorem]{Lemma}
  		\newtheorem{claim}[theorem]{Claim}
\newtheorem{lem}[theorem]{Lemma}
\newtheorem{prop}[theorem]{Proposition}
\newtheorem{cor}[theorem]{Corollary}
\newtheorem{thm}[theorem]{Theorem}
\theoremstyle{definition}
  		\newtheorem{notation}[theorem]{Notation}
  		\newtheorem{definition}[theorem]{Definition}
        \newtheorem{property}[theorem]{Property}
  		\newtheorem{example}[theorem]{Example}
        \newtheorem{setup}[theorem]{Setup}
        \newtheorem{remark}[theorem]{Remark}
        \newtheorem{constdef}[theorem]{Construction-Definition}
        \newtheorem{const}[theorem]{Construction}
        \newtheorem{eg}[theorem]{Example}
\newtheorem{defn}[theorem]{Definition}
\newtheorem{rem}[theorem]{Remark}
\theoremstyle{remark}
\def\bA{\mathbb{A}}
\def\bB{\mathbb{B}}
\def\bD{\mathbb{D}}
\def\bH{\mathbb{H}}
\def\bN{\mathbb{N}}
\def\bZ{\mathbb{Z}}
\def\bQ{\mathbb{Q}}
\def\bR{\mathbb{R}}
\def\bC{\mathbb{C}}
\def\bP{\mathbb{P}}
\def\bL{\mathbb{L}}
\def\cA{\mathcal{A}}
\def\cE{\mathcal{E}}
\def\cL{\mathcal{L}}
\def\cO{\mathcal{O}}
\def\cP{\mathcal{P}}
\def\cU{\mathcal{U}}
\def\cV{\mathcal{V}}
\def\cX{\mathcal{X}}
\def\cY{\mathcal{Y}}
\def\cZ{\mathcal{Z}}
\def\fT{\mathfrak{T}}
  \newcommand\bfM{{\bf M}}
  \newcommand\bfN{{\bf N}}
\newcommand\bfA{{\bf A}}
\newcommand{\dlt}{\operatorname{dlt}}
\def\define{\mathrm{def}}
\def\Spec{\operatorname{Spec}}
\def\reduced{\mathrm{red}}
\def\reg{\mathrm{reg}}
\def\an{\mathrm{an}}
\def\gr{\operatorname{gr}}
   \def\trans{\mathrm{tr}}
   \def\mini{\mathrm{min}}
   \def\orig{\mathrm{orig}}
\def\rk{\operatorname{rk}}
\def\pmax{m}
\def\Aut{\operatorname{Aut}}
\def\End{\operatorname{End}}
\def\bfM{\mathbf{M}}
\def\SL{\mathrm{SL}}
\def\Res{\operatorname{Res}}
\def\tr{\operatorname{tr}}
\def\BB{\mathrm{BB}}
\def\bdd{\mathrm{mg}}
\def\curve{\mathrm{curve}}
\def\fT{\mathfrak{T}}
\renewcommand{\bar}[1]{\ol{#1}}
\def\Sym{\operatorname{Sym}}
\def\fY{\mathfrak{Y}}
\def\fU{\mathfrak{U}}
\def\BBH{\mathrm{BBH}}
\numberwithin{equation}{section}
\newcommand{\Exterior}{\mathchoice{{\textstyle\bigwedge}}%
    {{\bigwedge}}%
    {{\textstyle\wedge}}%
    {{\scriptstyle\wedge}}}
\def\MR#1{}%
\def\MRhref#1#2{}
\def\testsp{\Theta}
\begin{document}

\title[Baily--Borel compactifications and  b-semiampleness]{Baily--Borel compactifications of period images and the b-semiampleness conjecture}
 \author[B. Bakker]{Benjamin Bakker}
\address{\noindent B. Bakker:  Dept. of Mathematics, Statistics, and Computer Science, University of Illinois at Chicago, Chicago, USA.}
\email{bakker.uic@gmail.com}
\author[S. Filipazzi]{Stefano Filipazzi}
\address{\noindent S. Filipazzi: Department of Mathematics, Duke University,
120 Science Drive,
117 Physics Building,
Campus Box 90320,
Durham, NC 27708-0320,
USA.}
\email{stefano.filipazzi@duke.edu}
\author[M. Mauri]{Mirko Mauri}
\address{\noindent M. Mauri: Institut de Math\'ematiques 
de Jussieu-Paris Rive Gauche,
Universit\'e Paris Cit\'e,
Place Aur\'elie Nemours, 75013 Paris, France.}
\email{mauri@imj-prg.fr}
\author[J. Tsimerman]{Jacob Tsimerman}
\address{\noindent J. Tsimerman:  Dept. of Mathematics, University of Toronto, Toronto, Canada.}
\email{jacobt@math.toronto.edu}

\subjclass[2020]{
Primary 14D07; 
Secondary 14E30, 14C30, 14J10,  03C64.
}

\begin{abstract}
We address two questions related to the semiampleness of line bundles arising from Hodge theory.  First, we prove there is a functorial compactification of the image of a period map of a polarizable integral pure variation of Hodge structures for which the Griffiths bundle extends amply.  In particular the Griffiths bundle is semiample.  We prove more generally that the Hodge bundle of a Calabi--Yau variation of Hodge structures is semiample subject to some extra conditions, and as our second result deduce the b-semiampleness conjecture and the existence of a functorial Hodge-theoretic compactification of moduli spaces of polarized Calabi--Yau varieties.
The semiampleness results (and the construction of the Baily--Borel compactifications) crucially use o-minimal GAGA, and the deduction of the  b-semiampleness conjecture uses work of Ambro and results of Koll\'ar on the geometry of minimal lc centers to verify the extra conditions.  
\end{abstract}

\thanks{
BB was partially supported by NSF DMS-2401383. SF was partially supported by ERC starting grant \#804334 and by Duke University. MM was supported by Université Paris Cité and Sorbonne Université, CNRS, IMJ-PRG, F-75013 Paris, France. JT was supported by a Simons investigator grant.
}

\maketitle

\setcounter{tocdepth}{1}

\tableofcontents

\section{Introduction}

\subsection{Baily--Borel compactifications of images of period map}

Let $(X,D)$ be a log-smooth algebraic space, such that $X\bs D$ supports a polarizable integral pure variation of Hodge structures $V=(V_{\bZ},F^{\bullet}V_\cO)$. Letting $\bD$ be the associated period domain and $\Gamma$ an arithmetic group containing the monodromy of $V_{\bZ}$, we obtain a \textit{period map} $\phi:(X\bs D)^{\an}\to \Gamma\bs\bD.$ 

In general, the space $\Gamma\bs\bD$ cannot be equipped with an algebraic structure \cite{GRT13}. Nonetheless, Griffiths conjectured \cite[p.259]{G70Summary} that the closure of the image of a period
map is naturally a quasiprojective variety, which was proven by Griffiths assuming the image is proper, Sommese \cite{Som75} in the case of isolated singularities, and in general in \cite[Theorem 1.1]{BBT23}. We call $\ol{\phi((X\bs D)^\an)}$ a \emph{period image}. One may therefore intrinsically define period images as closed Griffiths transverse algebraic subvarieties of $\Gamma\bs\bD$.

In the classical case of Shimura varieties such as the moduli space of principally polarized abelian varieties $\cA_g$, the space $\Gamma\bs\bD$ itself has an algebraic structure, and is therefore a universal period image. The celebrated work of Baily--Borel \cite{BB66} provides a canonical projective compactification.  In \cite[\S9,10]{G70Summary}, Griffiths laid out a conjectural picture which described how an analogue of the Baily--Borel compactification for arbitrary period images might exist.

Our first main result is to construct such a compactification (see \Cref{thm:bailyborel} for the precise statement).  As in the classical case, it satisfies the following extension property:

\begin{thm}\label{thm:BBmain1}
Let $Y$ be a period image. Then there exists a functorial projective compactification $Y^{\BB}$ such that for any log smooth algebraic space $(\testsp,D_\testsp)$, any morphism $\testsp\bs D_\testsp\to Y$ for which the resulting morphism $(\testsp\bs D_\testsp)^\an\to \Gamma\bs\bD$ is locally liftable\footnote{The local liftability is equivalent to the condition that $(\testsp\bs D_\testsp)^\an\to\Gamma\bs\bD$ be the period map of a variation.  In fact, by the
definability of period maps \cite[Theorem 1.3]{bkt} and the definable Chow theorem of Peterzil--Starchenko \cite[Corollary 4.5]{definechow}, such a morphism is equivalent to a variation on $\testsp\bs D_\testsp$ whose period map factors through $Y^\an$.  If $\Gamma$ is torsion-free the local liftability condition is automatic.} extends to a morphism $\testsp\to Y^{\BB}$. 

\end{thm}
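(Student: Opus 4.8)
The plan is to build $Y^{\BB}$ by pulling back the Baily--Borel package along a suitable period map and then to verify the extension property using the valuative criterion of properness together with the fact (already available to us) that period images are canonically quasiprojective. First I would fix a log smooth partial compactification $(X,D)$ of a smooth model $U = X\setminus D$ of $Y$, so that $Y = \overline{\phi(U^{\an})}$ for the period map $\phi\colon U^{\an}\to\Gamma\backslash\bD$. The Griffiths line bundle $\mathcal L$ on $\Gamma\backslash\bD$ descends to a line bundle on $Y$; the content of our main construction \Cref{thm:bailyborel} (which I am allowed to assume is proved before the present statement, or rather is the general form of it) is that a canonical choice of section ring makes $Y^{\BB} := \proj\bigoplus_{m\ge 0}H^0(Y,\mathcal L^{\otimes m})$ a normal projective variety containing $Y$ as a dense open, independent of the model $U$; functoriality of $Y^{\BB}$ in $Y$ is built into this definition since the section ring is intrinsic to $(Y,\mathcal L)$. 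So the only thing to prove here is the stated extension property.

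Next I would reduce the extension statement to a valuative one. Given $(Z,D_Z)$ log smooth and $f\colon Z\setminus D_Z\to Y$ with locally liftable analytification, I want to extend $f$ to $\bar f\colon Z\to Y^{\BB}$. Since $Y^{\BB}$ is projective (hence proper) and separated, and $Z$ is normal (being log smooth, in particular smooth), it suffices by the valuative criterion — after noting that the indeterminacy locus of a rational map to a proper scheme from a normal variety has codimension $\ge 2$ is not quite enough, so I instead argue — to check that for every DVR $R$ with fraction field $K$ and every morphism $\Spec K\to Z\setminus D_Z\to Y$, the composite $\Spec K\to Y^{\BB}$ extends over $\Spec R$; then the extended rational map $Z\dashrightarrow Y^{\BB}$ is in fact a morphism because its indeterminacy locus, being contained in $D_Z$, is covered by such arcs and maps them into $Y^{\BB}$, so by normality of $Z$ and properness of $Y^{\BB}$ it is everywhere defined. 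Concretely: a rational map from a normal variety to a proper scheme which extends over all codimension-one points extends over an open set with complement of codimension $\ge 2$, and then one uses that $Y^{\BB}$ is the $\proj$ of the section ring of a line bundle to remove the remaining codimension-$\ge 2$ locus by Hartogs, since the pulled-back sections of $\mathcal L^{\otimes m}$ extend.

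The heart of the matter — the main obstacle — is therefore the codimension-one (DVR) case, and this is where o-minimal GAGA and the results on period maps enter. Given the arc $\Spec R\to Z$ meeting $D_Z$ at the closed point, after a finite base change and resolution I may assume $R = \mathcal O_{Z,\eta}$ for a divisorial component $\eta$ of $D_Z$ and that the variation on $Z\setminus D_Z$ has unipotent local monodromy around $\eta$. By the theory of degenerations of Hodge structure (nilpotent orbit theorem / limit MHS), the period map $\phi_Z\colon (Z\setminus D_Z)^{\an}\to\Gamma\backslash\bD$ extends to a map near the generic point of $\eta$ into an appropriate partial compactification of $\Gamma\backslash\bD$, and the pulled-back Griffiths bundle $f^{\ast}\mathcal L$ extends with a natural metric whose curvature/degeneration is controlled; this is exactly the input that makes $\proj$ of the extended section ring behave well and forces the map to land in $Y^{\BB}$ rather than just in some abstract compactification. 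Equivalently, one invokes the functoriality already proved for $Y^{\BB}$ applied to the curve $\Spec R$ (or a smooth proper model of it) mapping to $Y$: the case of a one-dimensional base with a single boundary point is precisely the classical situation (Griffiths, Schmid), where the period map of a polarized VHS on a punctured disc extends to the Baily--Borel-type compactification. The work is in checking that our construction of $Y^{\BB}$ is compatible with this classical one-variable extension — i.e. that the section ring we use is large enough to separate the boundary point — and that the local liftability hypothesis is exactly what is needed to guarantee $f$ comes from a variation so that Schmid's theorems apply. Once the DVR case is settled, assembling it to a morphism on all of $Z$ is the routine Hartogs/normality argument sketched above, and functoriality of the extension follows from functoriality of $Y^{\BB}$ and uniqueness of extensions to a separated target.
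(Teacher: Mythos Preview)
There is a genuine gap, and it comes from misidentifying where the Hodge theory is needed. Extension over codimension-one points of $D_Z$ is automatic from properness of $Y^{\BB}$ alone: for each generic point $\eta$ of a component of $D_Z$ the local ring $\cO_{Z,\eta}$ is a DVR and the valuative criterion extends the rational map there---no nilpotent orbits, no Schmid, and certainly no ``classical one-variable Baily--Borel extension'' (no such result exists when the period domain is not Hermitian symmetric; constructing one is precisely the point of this paper). The real content lies in the codimension-$\geq 2$ step, and your Hartogs argument does not close it: knowing that the pulled-back sections of $\cO_{Y^{\BB}}(n)$ extend to sections of some line bundle $M$ on $Z$ does not show they are basepoint-free on the residual locus. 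As written, your argument would prove that \emph{any} morphism from $Z\setminus D_Z$ to an arbitrary projective variety extends to $Z$; the map $\bP^2\setminus\{xy=0\}\to\bP^1$, $[x:y:z]\mapsto[x:y]$, which does not extend to $[0:0:1]$, is a counterexample.

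What is missing is property (2) of \Cref{thm:bailyborel}: locally on $Y^{\BB}$ there exist sections of $L_Y^{(k_Y)}$ whose Hodge norm \emph{and inverse Hodge norm} have moderate growth. The inverse bound is exactly what forces the pulled-back section to be a nowhere-vanishing local generator of the Schmid extension $(L^{k_Y}_{Z\setminus D_Z})_Z$, simultaneously identifying the extended bundle with the Schmid extension and ruling out base points. The paper implements this via \Cref{thm:bbextensino}: working analytically on a polydisk, one uses $\bR_{\an,\exp}$-definability of the period map to obtain a meromorphic extension, resolves indeterminacy by blowups $X_r\to\Delta^k$, and then observes (via property (2)) that the pullback of the ample bundle $\cO_{Y^{\BB}}(n)$ to $X_r$ equals the Schmid extension, which by functoriality of the Deligne extension under unipotent monodromy is itself pulled back from $\Delta^k$; hence the map is constant on the blowup fibers and descends. (A smaller issue: your $Y^{\BB}=\proj\bigoplus_m H^0(Y,\mathcal L^{\otimes m})$ should be $\proj$ of the ring $B_Y$ of \emph{moderate-growth} sections; without this restriction finite generation is unclear, and $Y^{\BB}$ need not be normal.)
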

In fact, a stronger extension property holds with respect to analytic maps from punctured polydisks---see \Cref{thm:borelextmain} below---which was proven in the classical case by Borel \cite{borelextend}.

\def\Grif{\operatorname{Griff}}
\subsubsection{Griffiths bundle}
Returning to the variation on $X\bs D$, there is a natural line bundle which descends to a polarization on $Y$ and clarifies the statement of \Cref{thm:BBmain1}.  It is provided by the Griffiths bundle:
$$L_{X\bs D}=\bigotimes_p \Exterior^{\rk F^pV_{\cO}} F^pV_{\cO}.$$
Importantly, this can be realized as the deepest piece of the Hodge filtration of the variation $\Grif(V)\coloneqq  \bigotimes_p \Exterior^{\rk F^pV_{\cO}} V$.

Each power $L_{X\bs D}^n$ of the Griffiths bundle has a natural extension via the nilpotent orbit theorem of Schmid \cite{Schmid} to a line bundle $(L^n_{X\bs D})_X$ on all of $X$ which is nef (and compatible with tensor power) if the local monodromy is unipotent and big if the period map is in addition generically immersive.   It is essentially conjectured in \cite{GGLR17} that $L_X$ is semiample if the local monodromy is unipotent. We prove this conjecture, and use it to give an intrinsic characterization of $Y^{\BB}$ as follows.

It is easy to show \cite[Lem 6.12]{BBT23} that a power of $L_{X\bs D}^m$ descends to a line bundle $L^{(m)}_Y$. We define a section of $L^{(mk)}_Y$ to have \textit{moderate growth} if its pullback to $X\bs D$ for some (hence any) $(X,D)$ extends to a section of $L_X^{mk}$. Finally, we define the ring of moderate growth sections
$B_Y\coloneqq  \bigoplus_k H^0_{\bdd}(Y,L^{(mk)}_Y)$. We prove in \Cref{thm:bailyborel} the following more precise statement, which was originally suggested by Griffiths \cite[(10.7) Remark]{G70Summary}:

\begin{thm}[\Cref{thm:bailyborel}]\label{thm:BBmain2}
Let $Y$ be a period image.  Then

\begin{enumerate}

\item The ring $B_Y$ is finitely generated, $Y^{\BB}\coloneqq  \proj B_Y$ is a projective compactification of $Y$ such that \Cref{thm:BBmain1} holds, and the ample bundle $\cO_{Y^\BB}(n)$ (for sufficiently divisible $n$) naturally restricts to $L_Y^{(n)}$.  
\item Under the extension $\testsp\to Y^\BB$ from \Cref{thm:BBmain1}, $\cO_{Y^\BB}(n)$ (for sufficiently divisible $n$) pulls back to $(L^n_{\testsp\bs D_\testsp})_\testsp$.

\end{enumerate}

\end{thm}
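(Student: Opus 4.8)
The plan is to produce the compactification $Y^{\BB}$ directly as $\proj B_Y$ and verify the three assertions in sequence, with the finite generation of $B_Y$ being the analytic/o-minimal heart of the matter and the extension/pullback statements following formally once the right integral model is in hand. The overall strategy mirrors the classical Baily--Borel construction: one produces enough sections of powers of the Griffiths bundle with controlled growth near the boundary, shows they separate points and tangent vectors on $Y$, and checks that the resulting graded ring is finitely generated so that its $\proj$ is projective.

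\textbf{Step 1: Reduce to a single log-smooth model and identify the extended bundle.} Fix a period image $Y\subseteq \Gamma\bs\bD$ and, using resolution of singularities for algebraic spaces together with the fact (from \cite{BBT23}) that $Y$ is a quasiprojective variety, choose a log-smooth $(X,D)$ with $X\bs D$ carrying the variation $V$ and a proper surjection $X\bs D\to Y$ whose period map is generically immersive onto $Y$, arranging unipotent local monodromy after a finite base change (possible by the monodromy theorem). By the nilpotent orbit theorem of Schmid \cite{Schmid}, each $L^n_{X\bs D}$ extends to a nef line bundle $(L^n_{X\bs D})_X$ on $X$, compatibly with tensor powers, and big since the period map is generically immersive. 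The key point to extract here is that $B_Y = \bigoplus_k H^0_{\bdd}(Y,L^{(mk)}_Y)$ is identified, via pullback, with $\bigoplus_k H^0\bigl(X,(L^{mk}_{X\bs D})_X\bigr)$ — up to the finite-index subtlety of which powers actually descend to $Y$ — so that finite generation of $B_Y$ becomes finite generation of the section ring of the nef and big line bundle $(L^m_{X\bs D})_X$ on the \emph{projective} variety $X$.

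\textbf{Step 2: Finite generation and the structure of $\proj B_Y$.} Finite generation of the section ring of a nef and big line bundle on a projective variety is not automatic, so this is where o-minimal GAGA enters, exactly as advertised in the abstract: one shows that $(L^m_{X\bs D})_X$ is in fact \emph{semiample}. The mechanism is to construct, using the definability of the period map \cite{bkt} and the definable Chow theorem \cite{definechow}, an algebraic structure on an intermediate space — morally the normalization of the image of $X$ under the map defined by $|(L^{mk}_{X\bs D})_X|$ for $k\gg 0$ — by exhibiting the relevant section rings as rings of definable sections and invoking o-minimal GAGA to algebraize them. Once $(L^m_{X\bs D})_X$ is semiample, $\bigoplus_k H^0(X,(L^{mk}_{X\bs D})_X)$ is finitely generated, and its $\proj$ is a normal projective variety $Y^{\BB}$ equipped with a birational morphism from a blow-down of $X$ and an ample $\cO_{Y^\BB}(1)$; that $Y^{\BB}$ genuinely \emph{compactifies} $Y$ (rather than a nontrivial quotient) follows from generic immersivity, which guarantees the sections separate points and tangent vectors on a dense open of $Y$, combined with the descent statement $\cite[\text{Lem 6.12}]{BBT23}$ that $L^{(m)}_Y$ is a genuine line bundle on $Y$. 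This gives assertion (1) except for the universal property, and simultaneously identifies $\cO_{Y^\BB}(n)\!\mid_Y = L^{(n)}_Y$ for sufficiently divisible $n$.

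\textbf{Step 3: The extension property and the pullback statement.} For assertion (2) and the universal property in \Cref{thm:BBmain1}, take a log-smooth $(Z,D_Z)$ and a morphism $f\colon Z\bs D_Z\to Y$ with locally liftable associated analytic map; by the footnote's equivalence this is the same as a variation $W$ on $Z\bs D_Z$ whose period map factors through $Y^\an$, and functoriality of the Griffiths bundle gives $f^*L^{(n)}_Y \cong (L^n_{Z\bs D_Z})_{Z\bs D_Z}$. By Schmid's nilpotent orbit theorem applied to $W$ (after reducing to unipotent monodromy, which one handles by a finite cover of $Z$ and descent), the right-hand side extends to the nef line bundle $(L^n_{Z\bs D_Z})_Z$ on $Z$; the task is to show the rational map $Z\drar Y^\BB$ induced by the linear system $|f^*\cO_{Y^\BB}(n)|=|(L^n_{Z\bs D_Z})_Z|$ is in fact a morphism. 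This is where one again invokes semiampleness — now for $(L^n_{Z\bs D_Z})_Z$, which follows by the \emph{same} o-minimal GAGA argument as in Step 2 applied to the variation $W$ — so that the linear system is base-point free and defines an honest morphism $Z\to \proj\bigoplus_k H^0(Z,(L^{mk}_{Z\bs D_Z})_Z)$; that this target maps compatibly to $Y^\BB$, and that the resulting $Z\to Y^\BB$ pulls $\cO_{Y^\BB}(n)$ back to $(L^n_{Z\bs D_Z})_Z$, is then a formal consequence of the functoriality of the section rings and the definition of moderate-growth sections. Uniqueness up to normalization is immediate from the universal property.

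\textbf{Main obstacle.} The crux is Step 2: proving that the nef and big extended Griffiths bundle $(L^m_{X\bs D})_X$ is semiample, equivalently that its moderate-growth section ring is finitely generated. This is precisely the conjecture of \cite{GGLR17} that the paper sets out to prove, and the only known route goes through o-minimal GAGA — one must show that the Baily--Borel-type space carries an algebraic structure by recognizing the relevant analytic section rings as definable and algebraizing them, which requires the full strength of the definability of period maps and the definable Chow theorem, not merely Schmid's asymptotics. Everything else — the reduction to a projective model, the identification of $B_Y$ with an algebraic section ring, the universal property, and the pullback compatibility — is formal bookkeeping with functoriality of the Griffiths bundle and standard descent for the finite-index-power issue.
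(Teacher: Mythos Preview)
Your outline captures the right shape---semiampleness of the extended Griffiths bundle is the crux and o-minimal GAGA the engine---but two steps have genuine gaps. In Step~1 the identification $B_Y \cong \bigoplus_k H^0\bigl(X,(L^{mk}_{X\bs D})_X\bigr)$ fails when $Y$ is not normal, and period images need not be normal: a section of the Schmid extension on $X$ restricts to $X\bs D$ and descends to the \emph{normalization} of $Y$, but not necessarily to $Y$ itself, so you only get an inclusion $B_Y \hookrightarrow \bigoplus_k H^0\bigl(X,(L^{mk}_{X\bs D})_X\bigr)$, and subrings of finitely generated rings need not be finitely generated. The paper resolves this by first building $Z^{\BB}$ for the normalization $Z\to Y$, then constructing an intermediate compactification $\bar Y$ of $Y$ as a quotient of $Z^{\BB}$ by a finite algebraic equivalence relation, yielding $B_{\bar Y}\subset B_Y\subset B_{Z^{\BB}}$ with the outer ring a finite module over the inner one; this sandwich forces $B_Y$ to be finitely generated. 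Your Step~2 is also too vague to count as a proof sketch: the actual route to semiampleness is an induction over Hodge strata of the quotient $X(\bC)/R_{\curve}$, at each stage combining global algebraic sections on the already-algebraized open part with definable-local ``Hodge-theoretic'' sections obtained by projecting the Hodge line into $\bP(V_\Sigma^{\mini})$ on tubular neighborhoods---with integrability of the Hodge bundle and the Green--Griffiths--Robles torsion combinatorial monodromy result as the essential inputs that make these local maps exist and have compact fibers.

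In Step~3 the logic breaks down: semiampleness of $(L^n_{Z\bs D_Z})_Z$ says that \emph{all} of its sections generate, but the rational map $Z\dashrightarrow Y^{\BB}$ is defined only by the \emph{pulled-back} sections from $B_Y$, a priori a strict subspace of $H^0\bigl(Z,(L^n_{Z\bs D_Z})_Z\bigr)$ which could have base locus along $D_Z$. Your equality $|f^*\cO_{Y^{\BB}}(n)|=|(L^n_{Z\bs D_Z})_Z|$ is exactly what needs proving, not a formal consequence of functoriality. The paper's argument is different and local: one first shows that $\cO_{Y^{\BB}}(n)$ admits local generating sections whose Hodge norm \emph{and inverse} have moderate growth, so that their pullbacks to any log-smooth $(Z,D_Z)$ are automatically local generators of the Schmid extension $(L^n_{Z\bs D_Z})_Z$; the extension of the morphism then follows by resolving the indeterminacy of the meromorphic map $Z\dashrightarrow Y^{\BB}$ and observing that the pulled-back ample bundle is trivial on the exceptional fibers, forcing the resolved morphism to descend through the blowup.
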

\begin{cor}\label{cor:BB}Let $(X,D)$ be a log smooth algebraic space and $V$ a polarizable integral pure variation of Hodge structures on $X\bs D$ with unipotent local monodromy.  Then the Griffiths bundle $L_X$ of $V$ is semiample.
\end{cor}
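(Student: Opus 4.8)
The plan is to apply \Cref{thm:BBmain1} and \Cref{thm:BBmain2} directly to the variation $V$ on $X\bs D$. First I would pass to the period map $\phi\colon(X\bs D)^\an\to\Gamma\bs\bD$ of $V$ and set $Y\coloneqq\ol{\phi((X\bs D)^\an)}$, its period image; by \cite[Theorem 1.1]{BBT23}, together with the definability of period maps \cite{bkt} and the definable Chow theorem \cite{definechow}, the resulting map $X\bs D\to Y$ is a morphism of algebraic spaces. Since this morphism is by construction the period map of a variation (namely $V$), the local liftability hypothesis of \Cref{thm:BBmain1} is satisfied tautologically, and so \Cref{thm:BBmain1} produces an extension to a morphism $f\colon X\to Y^\BB$.

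Next I would invoke \Cref{thm:BBmain2}. By part (1), for $n$ sufficiently divisible, $\cO_{Y^\BB}(n)$ is an ample line bundle on the projective variety $Y^\BB$; by part (2), for such $n$ we have $f^\ast\cO_{Y^\BB}(n)\cong(L^n_{X\bs D})_X=L_X^n$. Taking $n$ in addition large enough that the ample bundle $\cO_{Y^\BB}(n)$ is globally generated, it follows that $L_X^n$, being the pullback along $f$ of a globally generated line bundle, is globally generated. Hence $L_X$ is semiample.

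As for where the difficulty lies: essentially all of the content is carried by the two cited theorems—the finite generation of the ring $B_Y$ of moderate-growth sections and the construction of $Y^\BB=\proj B_Y$, the functorial extension property, and (the input needed to identify $f^\ast\cO_{Y^\BB}(n)$) the compatibility of that extension with the Griffiths bundle via the nilpotent orbit theorem. At the level of the corollary itself there is no real obstacle: the only thing to verify is that the period map of $V$ is a legitimate input to \Cref{thm:BBmain1}, which is immediate once one knows the period image is algebraic and $X\bs D\to Y$ is a morphism. The one point to be careful about is quoting the hypotheses correctly—log smoothness of $(X,D)$, polarizability and integrality of $V$, and unipotence of the local monodromy, the last of which is exactly what makes the extensions $(L^n_{X\bs D})_X$ exist and be compatible with tensor powers, so that ``semiampleness of $L_X$'' is well-posed in the first place.
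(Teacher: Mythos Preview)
Your proposal is correct and is exactly the intended deduction: the paper presents \Cref{cor:BB} as an immediate consequence of \Cref{thm:BBmain2}, and your argument—form the period image $Y$, extend $X\bs D\to Y$ to $f\colon X\to Y^{\BB}$ via \Cref{thm:BBmain1}, then identify $f^*\cO_{Y^{\BB}}(n)\cong L_X^n$ via \Cref{thm:BBmain2}(2) and use ampleness of $\cO_{Y^{\BB}}(n)$—is precisely how that deduction goes. Your closing remark is also apt: all the content lies in the cited theorems, and the corollary itself is a one-line application.
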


It is easy to see that $Y^\BB$ is uniquely determined up to normalization by the properties that the Griffiths bundle extends amply and \Cref{thm:BBmain1} with the compatibility in \Cref{thm:BBmain2}(2) is satisfied.  As in the classical case, it follows from the construction that $Y^\BB$ is stratified by locally closed subvarieties equipped with polarizable variations of Hodge structures with quasifinite period maps.\footnote{This stratification and the associated variations are more complicated than those described in \cite{GGLR17}.  The stratification of $Y^\BB$ is not clearly canonical, and the variations mentioned above which descend from boundary strata in $X$ will only be pieces of a certain tensor operation applied to the limit mixed Hodge structure. This is ultimately due to the lack of a canonical graded polarization on the limit mixed Hodge structure.}

\subsubsection{Past work}
Satake \cite{satake} first constructed compactifications of Siegel modular varieties as ringed spaces, and suggested that they should in fact be analytic spaces.  This was confirmed by Baily \cite{baily}, who moreover proved the compactifications were projective varieties, given as $\proj$ of a finitely generated ring of automorphic forms.  Baily--Borel \cite{BB66} then constructed the analogous projective compactification of any Shimura variety, building on further work of Satake \cite{satake2,satake3}.

Shortly thereafter, Griffiths \cite[\S9]{G70Summary} realized a full compactification of $\Gamma\bs\bD$ in the non-classical case is too much to hope for, and conjectured the existence of a \emph{partial} compactification of $\Gamma\bs \bD$ with an extension property with respect to Griffiths transverse morphisms from log smooth sources as in \Cref{thm:BBmain1}.  This idea provided motivation for the development of the theory of degenerations of Hodge structures (e.g. \cite{Schmid,steenbrinklimit,kashiwara,cks}) filling out a conjectural picture originally due to Deligne.  Attempts have been made to construct such a partial compactification in some special cases, for example in the weight two case by Cattani--Kaplan \cite{CKextend}, where the extension property was demonstrated for one-parameter period maps.  A different perspective was proposed by Kato--Usui \cite{katousui}, who described a conjectural partial compactification of $\Gamma\bs\bD$ in the category of logarithmic manifolds with the property that the closure of the image of any period map would be a proper algebraic space \cite{usuiextend} that would be more closely analogous to the toroidal compactifications of \cite{amrt}.  This line of inquiry has been taken up recently by Deng \cite{deng2021extensionperiodmapspolyhedral,deng2025generalizedtoroidalcompletionperiod}, Deng--Robles \cite{deng2023completiontwoparameterperiodmaps}, and Deng--Tsimerman \cite{deng2025generalizedtoroidalcompletionperiod}.

As mentioned above, Griffiths \cite[(10.7) Remark]{G70Summary} also explicitly suggested an alternative generalization of the work of Baily--Borel (focusing on its relation to automorphic forms) would be to show that the ring of moderate growth sections of the Griffiths bundle on $Y$ is finitely generated, and that its $\proj$ provides a compactification of the period image.  \Cref{thm:BBmain2} confirms this expectation.  The closely related question of the semi-positivity of vector bundles of Hodge-theoretic origin has been considered by various authors starting with Griffiths \cite{G70III} and continuing with, for example, Fujita, Zucker, and Kawamata \cite{fujitasemi,zuckersemi,kawamatasemi} (see also the references below regarding the canonical bundle formula).  

The question of the semiampleness of the Griffiths bundle has been revived recently by Green, Griffiths, Laza, and Robles in \cite{GGLR17}, where Baily--Borel type compactifications and their connection to other compactifications arising from moduli theory (such as KSBA compactifications) are discussed.  Their work, together with subsequent work of Green--Griffiths--Robles \cite{GGRtorsion}, has had an important influence on this one.  Most notably, Green--Griffiths--Robles establish the torsion combinatorial monodromy of the Griffiths bundle in \cite[Thm 5.21]{GGRtorsion} (see \Cref{thm:GGR}), which is a crucial ingredient.  Implicit in their work is also the idea that the Griffiths bundle $L_X$ is flat (with no residues) with respect to the natural logarithmic connection of the Deligne extension along any subvariety $Z$ of $X$ on which $L_X$ is numerically trivial (see \Cref{rem:GGR flat}).  For us, this is ultimately upgraded to the existence of the local period maps described in (3) of the proof outline below.  Finally, the $\dim X=2$ case of \Cref{cor:BB} is proven in \cite{GGLR17} and \cite{GGRtorsion}; the one-dimensional cases of \Cref{thm:BBmain1} and \Cref{thm:BBmain2} are easy.  

\subsubsection{Applications}

As a corollary, we obtain a Baily--Borel compactification of any moduli space of polarized varieties with a local Torelli theorem.

\begin{cor}[of \Cref{thm:bailyborel}]\label{cor:intro griffiths moduli}  Let $\cY$ be a reduced separated Deligne--Mumford stack equipped with a quasifinite period map.  Then the coarse space $Y$ has a compactification $Y^\BB$ for which some power $L_Y^{(n)}$ of the Griffiths bundle extends to an ample bundle $\cO_{Y^\BB}(n)$
and such that for any morphism $g:\testsp\bs D_\testsp\to \cY$
for a log smooth algebraic space $(\testsp,D_\testsp)$, the map on coarse spaces extends to $\bar g:\testsp\to Y^\BB$ with the property that $\bar g^*\cO_{Y^\BB}(n)$ pulls back to $(L^n_{\testsp\bs D_\testsp})_\testsp $.
\end{cor}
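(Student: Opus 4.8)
The plan is to deduce this from \Cref{thm:bailyborel} by passing to the period image of the variation on $\cY$. Since the monodromy of the underlying integral variation on $\cY$ — including the action of the stabilizers of the stacky points — lands in the fixed arithmetic group $\Gamma$, the period map $\cY^\an\to\Gamma\bs\bD$ factors through the coarse space $Y$, giving a quasifinite separated morphism of algebraic spaces $\rho\colon Y\to\cI$, where $\cI\subseteq\Gamma\bs\bD$ is the closure of the image of the period map, a period image in the sense above. A finite group acting on a polarizable variation acts on its Griffiths line through a character, so together with the torsion of the combinatorial monodromy (\Cref{thm:GGR}) a fixed power of the Griffiths bundle descends both to a line bundle $L_Y^{(n)}$ on $Y$ and to $L_\cI^{(n)}$ on $\cI$; as both are pulled back from the tautological bundle on $\Gamma\bs\bD$ we get $L_Y^{(n)}=\rho^*L_\cI^{(n)}$. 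Applying \Cref{thm:bailyborel} to $\cI$ produces the projective compactification $\cI^\BB\supseteq\cI$ with $\cO_{\cI^\BB}(n)$ ample and restricting to $L_\cI^{(n)}$.

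Next I would realize $Y^\BB$ as a finite cover of $\cI^\BB$. By Zariski's main theorem $\rho$ factors as an open immersion $Y\hookrightarrow\widetilde Y$ followed by a finite surjection $\widetilde Y\to\cI$ (replacing $\widetilde Y$ by the closure of $Y$ if needed). The assertion to prove is that the ring of moderate growth sections $B_Y\coloneqq\bigoplus_{k}H^0_\bdd(Y,L_Y^{(nk)})$ is finitely generated — equivalently, module-finite over $B_\cI$ — so that $Y^\BB\coloneqq\proj B_Y$ is a projective compactification of $Y$ carrying a finite morphism $\pi\colon Y^\BB\to\cI^\BB=\proj B_\cI$ extending $\widetilde Y\to\cI$. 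Granting this, $\cO_{Y^\BB}(n)\coloneqq\pi^*\cO_{\cI^\BB}(n)$ is the pullback of an ample bundle along a finite morphism, hence ample, and it restricts to $\rho^*L_\cI^{(n)}=L_Y^{(n)}$ on $Y$.

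For the extension property, let $(Z,D_Z)$ be a log smooth algebraic space and $g\colon Z\bs D_Z\to\cY$. Pulling back the variation along $g$ shows that $Z\bs D_Z\to Y\xrightarrow{\rho}\cI\subseteq\Gamma\bs\bD$ is the (locally liftable) period map of a variation on $Z\bs D_Z$, so \Cref{thm:bailyborel} provides an extension $h\colon Z\to\cI^\BB$ with $h^*\cO_{\cI^\BB}(n)=(L^n_{Z\bs D_Z})_Z$. To lift $h$ through $\pi$ to the desired $\bar g\colon Z\to Y^\BB$, form the finite morphism $Z\times_{\cI^\BB}Y^\BB\to Z$ and take the closure of the graph of the lift $Z\bs D_Z\to\widetilde Y$ already defined on the dense open $Z\bs D_Z$; this closure maps finitely and birationally onto $Z$, hence isomorphically since $Z$ is smooth, and its inverse gives $\bar g$, with $\bar g^*\cO_{Y^\BB}(n)=h^*\cO_{\cI^\BB}(n)=(L^n_{Z\bs D_Z})_Z$.

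I expect the main obstacle to be the two ``descent'' inputs used above. First, one must check carefully that the variation on the stack $\cY$ genuinely yields, after passing to the coarse space, a period image $\cI$ with $\rho\colon Y\to\cI$ algebraic and quasifinite and that one fixed power of the Griffiths bundle descends compatibly on $Y$ and $\cI$ — the compatibility with the boundary combinatorics being exactly where the torsion result of \cite{GGRtorsion} enters. Second, and more seriously, one has to compare the analytic moderate-growth condition on $Y$ with coherent data over $\cI^\BB$ pulled through the quasifinite map $\rho$, in order to see that $B_Y$ is module-finite over $B_\cI$ — whence $\pi$ is finite and $\cO_{Y^\BB}(n)$ ample; this is the one place where, rather than a purely formal deduction from \Cref{thm:bailyborel}, one reuses the growth estimates (nilpotent orbit theorem, o-minimal GAGA) underlying its proof. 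Everything else — ampleness under finite pullback and the lifting of maps from smooth sources — is formal.
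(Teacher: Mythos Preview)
Your approach would work, but it is more roundabout than necessary. The point you seem to have missed is that \Cref{thm:bailyborel}, as stated in \S5 (not the introductory \Cref{thm:BBmain2}), applies to any \emph{variety with quasifinite period map}---defined there as a reduced irreducible algebraic space $Y$ with a quasifinite Griffiths-transverse morphism $Y^\an\to\Gamma\bs\bD$---and not only to period images. The coarse space $Y$ of $\cY$ is, component by component, exactly such a variety: the period map $\cY^\an\to\Gamma\bs\bD$ factors through $Y^\an$ since $\Gamma\bs\bD$ is a space and the finite stabilizers of $\cY$ act through $\Gamma$, and quasifiniteness is preserved. Hence $Y^\BB=\proj B_Y$, the ample extension $\cO_{Y^\BB}(n)$ of $L_Y^{(n)}$, and the extension of any $g\colon Z\bs D_Z\to\cY$ (whose composition with the period map is automatically a locally liftable period map) all come directly from parts (1)--(4) of \Cref{thm:bailyborel} applied to $Y$ itself. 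The paper accordingly gives no separate proof.

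Two specific comments on your write-up. First, your ``main obstacle''---showing $B_Y$ is module-finite over $B_\cI$---is precisely the content of part (1) of \Cref{thm:bailyborel} for $Y$, so you are proposing to re-prove a piece of the theorem rather than deduce from it; the factorization through $\cI$ and the finite-cover construction are simply not needed. Second, the invocation of \Cref{thm:GGR} for descending a power of the Griffiths bundle to $Y$ is misplaced: that result concerns the canonical flat connection along nodal curves in a compactification, whereas what is needed here is only that a power $L_{[\Gamma\bs\bD]}^{k_\Gamma}$ descends to $\Gamma\bs\bD$ (finiteness of the stabilizers in $\Gamma$), and $L_Y^{(k_\Gamma)}$ is its pullback---this is already built into the setup of \S5.1.
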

\Cref{cor:intro griffiths moduli} for instance applies to any moduli stack of polarized varieties with an infinitesimal Torelli theorem, such as Calabi--Yau manifolds and most hypersurfaces (see \cite{BBT23}).

\subsection{ b-semiampleness conjecture} 

Let $(Y,\Delta)$ be a pair with log canonical singularities and $f \colon Y\to X$ a projective morphism with connected fibers to a normal variety such that $K_Y+\Delta\sim_\bQ f^*L$ for a $\bQ$-Cartier divisor $L$ on $X$.  Such a morphism is an example of an \emph{lc-trivial fibration} (\Cref{lc-trivial.def}); the condition essentially means the fibers are log Calabi--Yau pairs.

The \emph{canonical bundle formula},
proven in increasing generality in
\cite{Kod1,Kod2,Fuj86,Kaw1998,FM00, AmbroPhD, Amb04,
Amb05,FG14},
implies that for an lc-trivial fibration, we can write
\[K_Y+\Delta\sim_\bQ f^*(K_X + B_X + M_X)\]
where:
\begin{itemize}
\item $B_X$ is the \emph{boundary divisor}, which is a $\bQ$-divisor encoding the singularities of the degenerate fibers of $f$ in codimension $1$; and
\item $M_X$ is the \emph{moduli part}, which is a $\bQ$-b-divisor which is  b-nef and encodes the variation of the generic fiber of the family.  
\end{itemize}
Recall that a  b-divisor is essentially an assignment of a divisor to any sufficiently high birational model of $X$.  In particular, there is a modification $\pi \colon X'\to X$ and an lc-trivial fibration model $f' \colon Y'\to X'$ of the base-change of $f$ for which $M_{X'}$ is a nef $\bQ$-divisor such that for any further modification $\pi' \colon X''\to X'$ the moduli part pulls back, $M_{X''}=\pi'^*M_{X'}$.
Such a model is called an Ambro model.

Our second main result is to prove the  b-semiampleness conjecture,
formulated in increasing generality by Mori, Kawamata, Shokurov, Ambro, and Prokhorov--Shokurov; see \cite{Mor87,AmbroPhD,PS09} and references therein.

\begin{thm}[b-semiampleness]\label{thm:Bsemimain}
Let $f \colon (Y,\Delta)\to X$ be an lc-trivial fibration from a pair $(Y,\Delta)$
such that $\Delta$ is effective over the generic point of $X$.
Then the moduli part $\bfM$ is  b-semiample.  
\end{thm}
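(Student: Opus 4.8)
The plan is to reduce \Cref{thm:Bsemimain} to the semiampleness of a single Hodge-theoretic line bundle and then apply the semiampleness machinery developed in this paper. I will use three standard features of the moduli b-divisor of an lc-trivial fibration: it is insensitive to birational modification of the base; its trace pulls back under generically finite base change of the (suitably modified) fibration; and semiampleness of a nef $\bQ$-Cartier divisor descends along finite surjective morphisms. Hence it suffices to produce a single birational model of $X$, after a convenient generically finite base change, on which $\bfM$ has semiample trace. So first I would pass to an Ambro model $f'\colon (Y',\Delta')\to X'$ with $(X',\Sigma)$ log smooth, $\Sigma$ the discriminant, and $\bfM_{X'}$ nef and stably pulled back; after a generically finite base change realizing semistable reduction in codimension one, together with a covering trick to clear the fractional coefficients of $\Delta'$ along the general fibre --- legitimate since $\Delta$ is effective over the generic point, so that the general fibre is a genuine log Calabi--Yau pair --- one obtains over $U=X'\setminus\Sigma$ a polarizable integral variation of Hodge structures $V$ of weight $w=\dim Y'-\dim X'$ which is of Calabi--Yau type (i.e.\ $F^wV_\cO$ is a line bundle) and has unipotent local monodromy around $\Sigma$. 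By Ambro's analysis of the canonical bundle formula \cite{Amb04,Amb05}, $\bfM_{X'}$ is $\bQ$-linearly equivalent to the Deligne canonical extension $(F^wV_\cO)_{X'}$ up to a torsion line bundle, and the torsion ambiguity is removed after a further cyclic cover \'etale in codimension one using the torsion combinatorial monodromy of Green--Griffiths--Robles \cite{GGRtorsion}. Thus the statement to prove becomes: the Hodge line bundle $(F^wV_\cO)_{X'}$ of this Calabi--Yau variation is semiample.

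This does not follow from \Cref{cor:BB}, because the Griffiths bundle of $V$ is $\bigotimes_p\det(F^pV_\cO)=F^wV_\cO\otimes\bigotimes_{p<w}\det(F^pV_\cO)$, i.e.\ the Hodge line bundle tensored with the nef line bundles $\det(F^pV_\cO)$ for $p<w$, and is in general strictly more positive than $F^wV_\cO$ alone; the Calabi--Yau case therefore genuinely requires the sharper semiampleness statement. I would then invoke the semiampleness theorem for Hodge bundles of Calabi--Yau variations proved in this paper --- the generalization of \Cref{cor:BB} announced in the introduction --- whose mechanism is that, once certain extra hypotheses are verified, the Hodge line bundle is the pullback of the ample bundle $\cO_{Y^{\BB}}(n)$ along an extension to the Baily--Borel type compactification of the associated period image (\Cref{thm:bailyborel}), and is therefore semiample. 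It remains to check those hypotheses for the $V$ produced above.

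The extra hypotheses concern the boundary behaviour of $V$ along $\Sigma$. The first is that the Deligne canonical extension that computes $\bfM_{X'}$ agrees with the ``moderate growth'' extension of the Hodge bundle entering the Baily--Borel construction; given the unipotency arranged above, this is a statement about the limit mixed Hodge structure of $V$ and about the boundary divisor $B_{X'}$ of the lc-trivial fibration, and I would extract it from Ambro's description of the canonical bundle formula. The second is a structural compatibility along the locus $Z\subseteq X'$ on which $\bfM_{X'}$ becomes numerically trivial: one needs $V$ to be suitably constant over $Z$ so that the Baily--Borel extension is well behaved there. This is where I would use Koll\'ar's results on the geometry of minimal lc centers: a minimal lc center $W$ of the pair $(X',B_{X'})$ is normal and, by Koll\'ar's adjunction and canonical bundle formula for lc centers, carries an induced lc-trivial fibration whose moduli part is the restriction of $\bfM_{X'}$; the Calabi--Yau variation restricts compatibly to $W$, so that isotriviality propagates along $Z$ and, if necessary, one argues by induction on $\dim X'$.

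With the hypotheses verified, $(F^wV_\cO)_{X'}$, hence $\bfM_{X'}$, is semiample; undoing the modification and the generically finite base change --- using the pullback compatibility of the moduli b-divisor and the descent of semiampleness along finite surjective maps --- shows that $\bfM$ is b-semiample, proving \Cref{thm:Bsemimain}. I expect the main obstacle to be precisely the verification of the extra hypotheses: identifying the canonical extension of the Hodge line bundle of the fibration's variation --- which is what the moduli part is, Hodge-theoretically --- with the moderate-growth extension of the Baily--Borel construction, and controlling the numerically trivial locus. This is a genuinely global matter: it requires understanding how the Calabi--Yau variation degenerates over the whole of $\Sigma$, and over the lc centers contained in it, not merely over a general point, and it is exactly here that Ambro's canonical bundle formula computations and Koll\'ar's minimal lc center geometry are indispensable. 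A secondary technical subtlety is carrying out the reduction from the genuinely log Calabi--Yau (lc-trivial) setting to an honest Calabi--Yau variation via covering tricks while keeping the moduli part and the local monodromy under control.
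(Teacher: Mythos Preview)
Your overall architecture is right: reduce to a single model where the moduli part is the Hodge bundle of a polarizable integral pure CY variation with unipotent local monodromy, then apply the semiampleness criterion for such Hodge bundles (\Cref{thm:semiample Hodge}). But two of the load-bearing steps are misidentified.

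First, you invoke Green--Griffiths--Robles to ``remove torsion ambiguity'' and implicitly to supply the torsion combinatorial monodromy condition. Their result (\Cref{thm:GGR}) is specifically about the \emph{Griffiths} bundle; for the Hodge bundle of a CY variation the condition is \emph{not} automatic --- \Cref{eg:int not tor} and \Cref{eg:tor not int} give CY variations where it fails. In the geometric situation the paper proves torsion combinatorial monodromy separately (\Cref{thm:torsion}), and the mechanism has nothing to do with GGR: one pulls the family back over a Hodge-degree-zero nodal curve, takes an sdlt modification, and uses Koll\'ar's $\bP^1$-linking of minimal lc centers together with the finiteness of B-representations \cite{HX2011} to show the monodromy of the Hodge line along the loop factors through a finite group. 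Your proposal contains no analogue of this argument.

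Second, your appeal to Koll\'ar's lc-center geometry is aimed at the wrong variety. You speak of ``a minimal lc center $W$ of the pair $(X',B_{X'})$'' on the base. The relevant object is the \emph{source}: a minimal lc center of a dlt modification of $(Y',\Delta'+Y'_D)$ in the \emph{total space}, dominating a divisor $D\subset X'$ (\Cref{defnsource}). The key geometric input (\Cref{thm:0.19}) identifies $(V_Y|_D)^{\trans}_\chi$ with the variation coming from the source fibration over $D$; this is what makes the integrability hypothesis (\Cref{defn:integrable}) checkable via Ambro's maximal-variation criterion \cite{Amb05}, as carried out in \Cref{thm:integrability}. Your ``isotriviality propagates along $Z$'' gestures in the right direction but, without the source comparison, there is no way to relate $\bfM|_Z$ to a Hodge bundle whose bigness you can control. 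Finally, there is no separate ``torsion ambiguity'' to resolve between $\bfM_{X'}$ and the Deligne extension: by \Cref{defn:modulipart} the moduli part \emph{is} that Hodge bundle once $(\dagger)$ and $(\dagger\dagger)$ hold.
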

Equivalently, $M_{X'}$ is semiample for any Ambro model $f' \colon Y'\to X'$ of $f$. Note that the algebricity of $Y$ and $X$ can be dropped. Indeed, in \Cref{thm:bsemiampleanal}, we show the  b-semiampleness of the moduli part for projective morphisms of complex analytic spaces. Some immediate applications of the  b-semiampleness conjecture are discussed in \S \ref{sec:applicationbsemiampleness}.  

On a suitably chosen alteration of $X$, the moduli part is the Schmid extension of the lowest Hodge filtration piece of the variation of Hodge structures on middle cohomology of the generic part of the family; see \Cref{canonicalbundleformula}.
Thus, after some reductions, \Cref{thm:Bsemimain} is deduced from the following purely Hodge-theoretic result.  By a CY variation of Hodge structures, we mean a variation of Hodge structures whose deepest nonzero Hodge filtration piece has rank one.  We refer to this deepest piece (or its Schmid extension) as the \emph{Hodge bundle}.
\begin{thm}[\Cref{thm:semiample Hodge}]\label{thm:Hodgesemimain}
Let $(X,D)$ be a proper log smooth algebraic space, $V$ a polarizable integral pure CY variation of Hodge structures on $X\bs D$ with unipotent local monodromy, and $M_X$ the Hodge bundle on $X$.  If $ M_X$ is integrable with torsion combinatorial monodromy, then it is semiample. 
\end{thm}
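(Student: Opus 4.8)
The plan is to realize $M_X$ as the pullback of an ample bundle under a morphism to a projective variety, mirroring the strategy behind \Cref{cor:BB} but now for the Hodge bundle of a CY variation rather than the full Griffiths bundle. Since $V$ is a CY variation, the Hodge bundle $M_{X\bs D}$ is a rank-one summand of $\Exterior^{\rk F^p V_\cO}F^p V_\cO$ in the deepest nonzero level $p$; but unlike the Griffiths bundle $L_{X\bs D}$, the lower filtration pieces $F^q V_\cO$ for $q<p$ may be nontrivial, so $M_{X\bs D}$ is in general \emph{not} the Griffiths bundle and \Cref{cor:BB} does not apply directly. The first step is therefore to compare $M_X$ with the Griffiths bundle of an auxiliary variation: one forms $\Grif(V)=\bigotimes_q \Exterior^{\rk F^q V_\cO} V$, whose Hodge bundle (deepest piece) is exactly $L_{X\bs D}$, and one must extract $M_{X\bs D}$ from $V$ by a Hodge-theoretic operation (a Tate twist and a projection onto the primitive/lowest part), so that a power of $M_X$ becomes the pullback of an ample bundle under a period-type map associated to a sub- or quotient variation. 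The hypotheses that $M_X$ is integrable and has torsion combinatorial monodromy are precisely what is needed to run the argument sketched in the introduction: torsion combinatorial monodromy (Green--Griffiths--Robles, \Cref{thm:GGR}) guarantees that after passing to a finite cover $M_X$ has no residues along the locus where it is numerically trivial, and integrability upgrades this flatness to the existence of local period maps on $X$ with respect to which $M_X$ descends.

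Concretely, I would proceed as follows. (1) Reduce to the case where the local monodromy is unipotent and the combinatorial monodromy of $M_X$ is actually trivial, by a finite base change (semistable reduction and a cover trivializing the torsion), noting semiampleness descends along finite covers. (2) Run the minimal-model-program / Iitaka-fibration machinery, or rather its Hodge-theoretic incarnation: $M_X$ is nef (Schmid) and the issue is only along the null locus $\mathrm{Null}(M_X)$. Using integrability, construct on a neighborhood of each point a local period map whose associated Griffiths/Hodge line bundle is $M_X$; this realizes $M_X$ as pulled back from the period image of some variation, to which one applies \Cref{thm:BBmain2}: the ring of moderate-growth sections of $M^{(m)}$ on the relevant period image is finitely generated and its $\proj$ carries an ample bundle restricting to $M^{(n)}$. (3) Glue: the local period maps assemble, via the functorial extension property of \Cref{thm:BBmain1}, to a global morphism $X\to Y^\BB$ pulling back $\cO_{Y^\BB}(n)$ to $M_X^n$, hence $M_X$ is semiample.

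The main obstacle is step (2): manufacturing the variation whose Griffiths/Hodge bundle is $M_X$ and showing it genuinely has a \emph{period map} (i.e.\ is locally liftable and Griffiths transverse on a model) so that \Cref{thm:BBmain2} applies. The difficulty is that $M_{X\bs D}$ is the deepest Hodge piece of $\Grif(V)$ only when $V$ is itself CY; extracting the correct Hodge-theoretic gadget in general requires controlling the limit mixed Hodge structure along $D$ (there is no canonical graded polarization on it, as the introduction warns), and one must check that torsion combinatorial monodromy of $M_X$ — rather than of the full $\Grif$ bundle — suffices to kill the residues and produce flatness along $\mathrm{Null}(M_X)$. Verifying integrability is used precisely here, to pass from ``flat with torsion residues'' to ``descends to a genuine period image on which the Baily--Borel construction can be performed''; making this passage precise, and checking the finite base change in step (1) preserves integrability, is where the real work lies. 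The remaining steps (finite generation, $\proj$, functorial extension, gluing) are then immediate from \Cref{thm:BBmain1} and \Cref{thm:BBmain2}.
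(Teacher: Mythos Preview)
Your proposal has a fundamental circularity: you intend to apply \Cref{thm:BBmain2} (and implicitly \Cref{thm:BBHodgemain}) to conclude, but in the paper these Baily--Borel results are \emph{deduced from} the semiampleness theorem you are trying to prove, not the other way around. Section~4 proves \Cref{thm:semiample Hodge} directly, and only afterward does Section~5 construct $Y^{\BB}$ and $Y^{\BBH}$. There is no pre-existing target ``$Y^{\BB}$'' to which you can map $X$ and pull back an ample bundle; that target has to be built, and building it \emph{is} the proof.

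The actual argument is quite different from your outline. One defines the equivalence relation $R_{\mathrm{curve}}$ on $X(\bC)$ (points connected by $M_X$-degree-zero curves), shows it is closed algebraic (\Cref{cor:Rcurve is alg}), and then proves by induction on Hodge strata that the quotient map $|X|\to X(\bC)/R_{\mathrm{curve}}$ is algebraic (\Cref{thm:easy algebraize}). The inductive step endows one more stratum at a time with a definable analytic structure using two ingredients: global ``vanishing'' sections from \Cref{thm:vanishing} on the already-algebraized open part, and local sections produced by the $\pi_1$-definable morphism $\rho(S):\widetilde{\mathfrak T(S)}\to\bP(V^{\min}_{S,\bC,x(S)})$ of \Cref{lem:thickened period maps Hodge strata}. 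Definable Stein factorization and the definable image theorem then algebraize the larger open set. Once the quotient is algebraic, \Cref{lemma:semiample on open} shows $M_X$ descends to an ample bundle there.

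Your description of how the two hypotheses enter is also off. Integrability is not used to ``descend to a period image''; it is used (via \Cref{lem:clarified}) to achieve Property~(B1), i.e.\ to modify $(X,D)$ so that on each stratum $X_\Sigma$ the Hodge bundle is strictly nef on the period image $Y_\Sigma$ of $V^{\trans}_\Sigma$. Torsion combinatorial monodromy is not about residues along $D$ and cannot be killed by a finite cover of $X$; it concerns monodromy of $M_X$ along nodal curves \emph{inside} $X$, and its role (\Cref{lem:torsion comb mono hodge vs tr}, \Cref{lem:thickened period maps Hodge strata}) is to force the isotrivial variation $V^{\trans}(C)$ along $M_X$-trivial curves to have finite monodromy, so that the fibers of $\rho_S$ are compact---without which the definable Stein factorization step fails. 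Finally, the ``extract $M_X$ from $V$ via a Tate twist and projection'' idea does not lead anywhere: the paper works with $V$ itself (taking $E=V$ in the outline's notation) and its transcendental/CY-minimal pieces on each stratum, not with $\Grif(V)$.
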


The integrability condition (see \Cref{defn:integrable}) means that for any subvariety $Z\subset X$ for which the Hodge bundle is not big, there is some piece (defined over $\bQ$) of the limit mixed Hodge structure variation on $Z$ which contains the Hodge bundle and whose period map is not generically finite.  The torsion combinatorial monodromy condition (see \Cref{defn:torsion combo}) means that for any nodal curve $g \colon C\to X$ for which $g^*M_X$ is numerically trivial (hence torsion on each component), it is in fact torsion.  Both conditions are needed in the statement of \Cref{thm:Hodgesemimain}---see \Cref{eg:int not tor} and \Cref{eg:tor not int}---and therefore the proof that the conditions are satisfied in the case of \Cref{thm:Bsemimain} relies on the geometry of log Calabi--Yau pairs.

\subsubsection{Past work}

Kodaira first studied the canonical bundle formula in the context of the classification of surfaces.
In particular, the first instance of the formula is Kodaira's formula for the canonical bundle of a smooth minimal elliptic surface $S \to C$ \cite{Kod1,Kod2}.
Famously, in this case 
$M_C = \frac{1}{12} j^* \mathcal{O}_{\mathbb P ^1}(1)$, where $j$ denotes the $j$-map $j \colon C \to \mathbb P ^1$.
Later, Fujita extended the formula to smooth varieties admitting an elliptic fibration \cite{Fuj86}.
In the case of elliptic fibrations over higher-dimensional bases, the $j$-map is not necessarily a morphism;
thus, since Fujita's work, it became clear that to ensure positivity properties of the Hodge bundle, one must pass to a suitable higher birational model of the base.

In \cite[Rmk.~5.15.9.(ii)]{Mor87}, Mori suggested the connection between the moduli part and the Hodge bundle of a family of Calabi--Yau varieties whose period domain is a classical Shimura variety. 
Fujino explored this direction in \cite{Fuj03} and showed that the moduli divisor of an abelian- or K3-fibration is  b-semiample; see \cite{Ueno1978} for a precursor of these results. Fujino's work has then been extended by Kim to the case of primitive symplectic varieties \cite{Kim25}. More broadly, in \cite[9.12]{Mor87}, Mori suggested the semiampleness of the moduli divisor for more general Calabi--Yau fibrations; see also \cite[Conj.~6]{AmbroPhD} and \cite[Conj.~7.13.1]{PS09}. 

So far, the progress on the b-semiampleness conjecture has been limited. In \cite{Kaw1997, Kaw1998}, Kawamata showed that the moduli part is b-nef, and settled its b-semiampleness in relative dimension 1; see also \cite{PS09}. Then, the works \cite{Fil20,babwild} settled the conjecture in relative dimension 2, complementing Fujino's work. Independent works of Ambro and Fujino--Mori explored more general lc-trivial fibrations and settled weaker positivity properties of the moduli divisor;
see \cite{Amb04,Amb05, FM00}.
While these earlier results only hold for fibrations with generically klt singularities, Fujino and Gongyo extended these results to fibrations with lc singularities \cite{FG14}.
The idea of this latter work is to relate the Hodge bundle of the original fibration to the Hodge bundle of the fibration induced by the source of lc singularities.
This is a key idea in the proof of \Cref{thm:Bsemimain}.

In recent years, there has also been progress in the study of lc-trivial fibrations when the general fiber is not normal;
see \cite{MR4458545,MR4458546}.
In our work, we circumvent this problem by reducing to the fibration corresponding to minimal lc centers, which are necessarily normal. Another important idea in this work is the study of the Hodge bundle along the subvarieties where it fails to be big.
This direction has been originally explored in \cite{FL19,Flo23}.
In particular, in \cite{Flo23}, the author pursues similar ideas as in this work in considering the gluing of various period morphisms along snc configurations of subvarieties. 

The relation between Baily--Borel compactifications of period images and the b-semiampleness conjecture has been clear to the experts, including, e.g., Ambro, Birkar, Fujino, Kawamata, Koll\'{a}r, Mori, Prokhorov, and Shokurov.  In the last decade, these ideas have been popularized by Laza in particular.

\subsubsection{Applications}

We also prove the existence of a Baily--Borel compactification as in \Cref{thm:BBmain2} for the Hodge bundle, but subject to a normality condition.

\begin{thm}[\Cref{thm:BBHodge}]\label{thm:BBHodgemain}
Let $\cY$ be a reduced separated normal Deligne--Mumford stack with a polarizable integral pure CY variation $V$.  Assume that the Hodge bundle $M_\cY$ is strictly nef, integrable, and has torsion combinatorial monodromy.  Let $Y$ be the coarse space of $\cY$ and $M_Y^{(m)}$ the descent of some power of $M_\cY$.    Then

\begin{enumerate}
\item The ring $C_Y\coloneqq  \bigoplus_k H^0_{\bdd}(Y,M^{(mk)}_Y)$ is finitely generated, and $Y^{\BBH}\coloneqq  \proj C_Y$ is a normal projective compactification of $Y$ for which the ample bundle $\cO_{Y^\BBH}(n)$ (for sufficiently divisible $n$) restricts to $M_Y^{(n)}$.
\item For a log smooth algebraic space $(\testsp,D_\testsp)$ and any morphism $g:\testsp\bs D_\testsp\to \cY$, the morphism on coarse spaces extends to $\bar g:\testsp\to Y^\BBH$ and $\bar g^*\cO_{Y^\BB}(n)$ is identified with the Schmid extension $(M_{\testsp\bs D_\testsp}^n)_\testsp$.  Moreover, $Y^\BBH$ is the unique normal compactification of $Y$ for which some power of the Hodge bundle $M_Y^{(n)}$ extends to an ample line bundle and satisfies this property. 

\end{enumerate}
\end{thm}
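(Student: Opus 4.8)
The plan is to follow the proof of \Cref{thm:bailyborel}, substituting the Hodge bundle $M$ for the Griffiths bundle $L$ and the input \Cref{cor:BB} by \Cref{thm:semiample Hodge}, and to use the strict nefness of $M_\cY$ to obtain a \emph{normal} compactification into which $Y$ embeds as a dense open. First I would fix a proper log smooth model: a log resolution $(X,D)$ of a compactification of the coarse space $Y$ for which $X\bs D\to Y$ is birational and carries the induced CY variation $V_X$ (a priori with only quasi-unipotent local monodromy), followed by a finite cover $\tilde X\to X$ unipotentizing the monodromy. The hypotheses of \Cref{thm:semiample Hodge}, i.e.\ integrability and torsion combinatorial monodromy in the sense of \Cref{defn:integrable} and \Cref{defn:torsion combo}, are stable under generically finite pullback (subvarieties and nodal curves pull back, and bigness and numerical triviality of $M$, as well as generic finiteness of period maps, are unaffected), so they pass from $M_\cY$ to $M_{\tilde X}$; hence $M_{\tilde X}$ is semiample, and since semiampleness descends along the finite cover $\tilde X\to X$, so is $M_X$. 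Strict nefness of $M_\cY$ moreover implies the period map contracts no curve and is therefore quasifinite, so — the variation being CY — that $M_X$ is in addition big.

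For $n$ sufficiently divisible, the Stein factorization of the morphism defined by $|M_X^{n}|$ is then a birational morphism $h\colon X\to W$ with normal projective target $W=\proj\bigoplus_k H^0(X,M_X^{nk})$, $h_*\cO_X=\cO_W$, $h^*\cO_W(n)\cong M_X^n$ and $\cO_W(1)$ ample, normality of $W$ coming from that of $X$. Because $X\bs D\to Y$ is birational and a power of $M_X$ restricts there to the pullback of $M_Y^{(m)}$, the definition of moderate growth — together with its independence of the chosen model, which rests on the nilpotent orbit theorem — identifies $H^0_\bdd(Y,M_Y^{(mk)})$ with $H^0(X,M_X^{mk})$ for all $k$; hence $C_Y$ is finitely generated and $\proj C_Y\cong W=:Y^\BBH$, with $\cO_{Y^\BBH}(n)|_Y\cong M_Y^{(n)}$ after descent. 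To see that all of $Y$ (including any quotient singularities of the coarse space) sits in $Y^\BBH$ as a dense open, I would use that $Y^\BBH$ is model-independent: each point of $Y$ lies in $X'\bs D'$ for some model $(X',D')$, over which the corresponding contraction is quasifinite by strict nefness (no curve through such a point is contracted) and hence, by Zariski's main theorem between normal spaces, a local open immersion; gluing these gives part (1).

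For the extension property, given log smooth $(Z,D_Z)$ and $g\colon Z\bs D_Z\to\cY$, the same reasoning shows, after a cover of $Z$, that the Schmid extension $(M^n_{Z\bs D_Z})_Z$ is semiample, its integrability and torsion combinatorial monodromy being inherited from $\cY$ along $g$, and that moderate growth sections of $M_Y^{(mk)}$ pull back under $g$ to sections of $(M^{mk}_{Z\bs D_Z})_Z$ over all of $Z$ — one compares on a common log smooth model refining $X$ and $Z$ and uses normality of $Z$ to fill in a codimension $\ge 2$ locus. This produces a ring homomorphism $C_Y\to\bigoplus_k H^0(Z,(M^{mk}_{Z\bs D_Z})_Z)$ and hence the extension $\bar g\colon Z\to Y^\BBH$ of the coarse-space map; on $Z\bs D_Z$ one reads off $\bar g^*\cO_{Y^\BBH}(n)\cong(M^n_{Z\bs D_Z})_Z$, and the two extensions agree over all of $Z$ because each is characterized, via the nilpotent orbit theorem and the fact that $M_X=h^*\cO_W(n)$ is itself the Schmid extension on $X$, by its residues. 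Uniqueness of $Y^\BBH$ is then formal: any normal compactification of $Y$ on which a power of $M_Y$ extends to an ample bundle with this pullback property has ring of sections equal to $C_Y$, hence coincides with $\proj C_Y=Y^\BBH$.

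The genuinely hard analytic input, \Cref{thm:semiample Hodge}, is available by hypothesis, so the main obstacle is the extension step with its compatibilities: showing that $\bar g$ extends over all of $D_Z$ and pulls $\cO_{Y^\BBH}(n)$ back to the \emph{Schmid} extension, rather than merely to a line bundle agreeing with it away from $D_Z$, will require a careful analysis, via the nilpotent orbit theorem, of how Schmid extensions behave under the finite covers and common refinements used above. A secondary technical point is verifying that the conditions of \Cref{defn:integrable} and \Cref{defn:torsion combo} genuinely descend to every auxiliary model and cover, and that $Y$, with its possible singularities, embeds entirely (not just its smooth locus) into $Y^\BBH$.
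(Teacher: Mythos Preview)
Your approach to part~(1) matches the paper's: reduce to unipotent/neat monodromy, apply \Cref{thm:semiample Hodge} to get semiampleness on a log smooth model $(X,D)$, identify the resulting projective target with $\proj C_Y$ via moderate growth, and use strict nefness to see that $Y$ embeds as a dense open. The paper phrases the target as the algebraization of $X(\bC)/R_{\mathrm{curve}}$ (via \Cref{thm:easy algebraize} and \Cref{lemma:semiample on open}) rather than the Stein factorization of $|M_X^n|$, but these agree. Your remark that strict nefness forces the period map of $V^{\mathrm{tr}}$ to be quasifinite, whence $M_X$ is big by integrability, is correct and is implicit in the paper's identification $\bar Y\setminus f(D)\cong Y$.

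For part~(2) your route diverges. The paper first establishes the analytic Borel extension \Cref{thm:bbextensino}: any locally liftable $(\Delta^*)^k\to Y^{\an}$ extends to $\Delta^k\to Y^{\BBH,\an}$, proved by using definability of period maps to obtain a meromorphic extension, resolving indeterminacy by blowups, and observing that the Schmid extension of $M$ on the blowup is pulled back from $\Delta^k$, so the exceptional fibers are contracted; the algebraic extension then follows by applying this in local coordinates on $(Z,D_Z)$. Your direct argument---pull back the embedding sections of $Y^\BBH$ to $Z$---is a legitimate alternative for the algebraic statement and avoids the o-minimal input, but two steps need sharpening. First, ``normality of $Z$ to fill in a codimension $\ge 2$ locus'' is not the right mechanism: instead take a log smooth $(Z',D_{Z'})$ with a birational morphism $p\colon Z'\to Z$ and a morphism $q\colon Z'\to X$ extending $Z\setminus D_Z\to Y$, use $M_{Z'}=p^*M_Z=q^*M_X$ (functoriality of Schmid extensions in the unipotent case, \Cref{lem:unipgriffithspullback}), and descend sections via $p_*\cO_{Z'}=\cO_Z$. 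Second, ``ring homomorphism $C_Y\to\bigoplus_k H^0(Z,\ldots)$, hence the extension $\bar g$'' hides the requirement that the pulled-back sections be basepoint-free along $D_Z$; this does follow from the same refinement (basepoint-free on $X$ implies basepoint-free on $Z'$ implies basepoint-free on $Z$ since $p$ is surjective), but must be said. The detour through proving $M_Z$ semiample by inheriting integrability and torsion combinatorial monodromy along $g$ is then unnecessary.
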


Experts, including Koll\'ar and Shokurov, conjectured that the moduli part of an lc-trivial fibration should be the pullback of an ample $\bQ$-divisor along a rational map to a compactified moduli space of the general fibers, as in the case of elliptic fibrations;
see, e.g. \cite[\S 8.3.8]{Koll07} and \cite{shokurov13}. 
The compactification $Y^\BBH$ accomplishes it from a Hodge-theoretic viewpoint.

Functorial compactifications of moduli spaces of varieties with positive or negative canonical bundle have been extensively studied in the literature, see, e.g., the monographs \cite{kbook, Xu2025}. The case of trivial canonical bundle was a long-standing open question; cf.~for instance \cite{KX20,Bir22, babwild,BL24}. As a corollary, we prove the existence of a Hodge-theoretic compactification of the moduli space of smooth Calabi–Yau varieties on which the Hodge line bundle extends amply, which was conjectured by several authors, including, e.g., Shokurov in the 90s, Koll\'{a}r, and in more recent years Laza and Odaka.

\begin{cor}[cf. \Cref{cor:CYmod}]\label{cor:introCYmod}  Let $\cY$ be a moduli stack of 
polarized smooth Calabi--Yau varieties.
Then the coarse space $Y$ has a unique normal compactification $Y^\BBH$ for which some power $M_Y^{(n)}$ of the Hodge bundle of the variation of Hodge structures on middle cohomology extends to an ample bundle $\cO_{Y^\BBH}(n)$ and such that for any family $g:\testsp\bs D_\testsp\to \cY$
for a log smooth algebraic space $(\Theta,D_\Theta)$, the map on coarse spaces extends to $\bar g:\testsp\to Y^\BBH$ with the property that $\bar g^*\cO_{Y^\BBH}(n)$ pulls back to $(M^n_{\testsp\bs D_\testsp})_\testsp$.
\end{cor}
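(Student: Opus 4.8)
The plan is to deduce \Cref{cor:introCYmod} from \Cref{thm:BBHodgemain} applied to the moduli stack $\cY$ of polarized smooth Calabi--Yau $n$-folds, so that the task reduces to verifying the three hypotheses of that theorem for the Hodge bundle of the middle-cohomology variation. First I would record the structural facts: a polarization rigidifies, so automorphism groups are finite and $\cY$ is Deligne--Mumford; Calabi--Yau varieties are not uniruled, so $\cY$ is separated by Matsusaka--Mumford; and $\cY$ is smooth, hence reduced and normal, by Bogomolov--Tian--Todorov unobstructedness. For the universal family $f\colon\cX\to\cY$, the primitive part $V$ of $R^nf_*\bZ$ is a polarizable integral pure variation of Hodge structures of weight $n$ which is of Calabi--Yau type since $h^{n,0}=1$, and its Hodge bundle is $M_\cY=F^nV=f_*\omega_{\cX/\cY}$, a line bundle because $\omega_{\cX/\cY}$ is trivial on fibres. (Passing to a finite level cover to make the local monodromy unipotent changes neither $Y$ nor the statement, so I will suppress it.) It then remains to verify that $M_\cY$ is strictly nef, integrable in the sense of \Cref{defn:integrable}, and has torsion combinatorial monodromy in the sense of \Cref{defn:torsion combo}.

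Strict nefness I would obtain from infinitesimal Torelli, which holds for polarized Calabi--Yau varieties: the isomorphism $T_X\cong\Omega^{n-1}_X$ induced by $\omega_X\cong\cO_X$ identifies $H^1(X,T_X)$ with $H^{n-1,1}$, and contraction with a generator of $H^{n,0}$ shows the component $H^1(X,T_X)\to\Hom(H^{n,0},H^{n-1,1})$ of the period differential is an isomorphism. Hence the period map of $V$ is an immersion whose derivative already moves $F^n=M_\cY$ everywhere, so on any complete curve $C\subset\cY$ the (smooth) Hodge metric on $M_\cY|_C$ has semipositive curvature, strictly positive on a dense open subset, and $\deg(M_\cY|_C)>0$; together with the nefness coming from the nilpotent orbit theorem this gives strict nefness. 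The same immersivity shows $M_X$ is big on every subvariety of a log smooth model meeting the interior, so the remaining two conditions only need to be checked along the boundary.

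For integrability and torsion combinatorial monodromy I would use that $f\colon\cX\to\cY$ is an lc-trivial fibration with $\Delta=0$: working over a finite cover of $\cY$ by a variety (or over \'etale charts) to reduce to a fibration over a variety, both conditions are precisely those verified in the proof of \Cref{thm:Bsemimain}, and since the fibres are smooth no reduction to minimal lc centres is needed. Concretely, torsion combinatorial monodromy of $M_\cY$ comes from the method of Green--Griffiths--Robles \cite{GGRtorsion} (cf. \Cref{thm:GGR}) applied to the deepest Hodge bundle $F^n$: along a nodal curve on which $M_\cY$ is numerically trivial the Hodge metric is flat, and the resulting combinatorial monodromy, being both unitary and arithmetic, is torsion. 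Integrability amounts to exhibiting, along each subvariety $Z$ of a log smooth model on which $M_X$ is not big, a $\bQ$-sub-object of the limit mixed Hodge structure variation on $Z$ which contains the Hodge bundle and whose period map is not generically finite; this is supplied by Ambro's work together with Koll\'ar's results on the geometry of minimal lc centres, exactly as in \Cref{thm:Bsemimain}. Granting all of this, \Cref{thm:BBHodgemain} gives that $C_Y$ is finitely generated, that $Y^\BBH=\proj C_Y$ is a normal projective compactification of $Y$ on which $\cO_{Y^\BBH}(n)$ restricts to $M_Y^{(n)}$, that every family $g\colon Z\bs D_Z\to\cY$ over a log smooth algebraic space induces $\bar g\colon Z\to Y^\BBH$ with $\bar g^*\cO_{Y^\BBH}(n)$ the Schmid extension $(M^n_{Z\bs D_Z})_Z$, and that $Y^\BBH$ is the unique normal compactification of $Y$ with these properties---which is exactly \Cref{cor:introCYmod}.

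I expect the main obstacle to be the verification of integrability. The Deligne--Mumford stack properties and strict nefness are classical, and torsion combinatorial monodromy is essentially \cite{GGRtorsion}, but integrability genuinely requires controlling the limit mixed Hodge structures of degenerating Calabi--Yau families along the locus where the Hodge bundle fails to be big and identifying in those limits a $\bQ$-defined piece through which the period map cannot be generically finite---and it is here that the geometry of (degenerate) Calabi--Yau varieties, via the identification of the universal family with an lc-trivial fibration and the input of Ambro and Koll\'ar used for \Cref{thm:Bsemimain}, becomes unavoidable.
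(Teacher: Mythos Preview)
Your overall strategy---verify the hypotheses of \Cref{thm:BBHodgemain} for the moduli stack and invoke Bogomolov--Tian--Todorov for normality---is the paper's approach, and your treatment of the stack properties and of strict nefness via infinitesimal Torelli is fine (the paper instead deduces strict nefness from Ambro's theorem \cite[Thm.~3.3]{Amb05} together with the observation that $Y$ carries no nonconstant isotrivial families, but for smooth Calabi--Yau fibers your route is equally valid and arguably more direct).

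There is, however, a genuine misunderstanding in your treatment of torsion combinatorial monodromy.  You correctly point to the proof of \Cref{thm:Bsemimain} as the source, but your ``concretely'' elaboration is wrong: \Cref{thm:GGR} and the method of \cite{GGRtorsion} concern the \emph{Griffiths} bundle, whose conjugate-self-duality (\Cref{lem:conjsdgriffiths}) forces norm-one combinatorial monodromy, after which \Cref{lem:GGR} applies.  The Hodge bundle of a CY variation has no such conjugate-self-duality in general, and the paper's \Cref{eg:int not tor} exhibits an integral CY variation whose Hodge bundle fails torsion combinatorial monodromy.  The actual argument (\Cref{thm:torsion}) is of a different nature: over each component of the test nodal curve the family of sources is isotrivial by Ambro, and the glueing over the nodes is by crepant birational automorphisms coming from Koll\'ar's $\bP^1$-linking of minimal lc centers; finiteness of B-representations \cite{HX2011} then bounds the monodromy.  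Relatedly, your remark that ``since the fibres are smooth no reduction to minimal lc centres is needed'' is misleading: the nodal test curve lives on a log smooth \emph{compactification}, its nodes may land in the boundary divisor, and it is precisely the structure of the degenerate fibers---analyzed through sources---that controls the combinatorial monodromy.
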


As in \Cref{thm:BBmain2}, the compactifications from \Cref{thm:BBHodgemain} and \Cref{cor:introCYmod} are stratified by varieties which are naturally equipped with CY variations with quasifinite period maps,
so their codimension can be estimated from the numerics of the limit mixed Hodge structure---see \Cref{rem:CYnumerics}.  In fact, along codimension one strata, these variations are the transcendental part of the middle cohomology of the minimal lc center, or \emph{source}; in general, they are obtained by iterating this procedure. In \cite[p.5]{shokurov13}, Shokurov predicted that the image of this compactified moduli map can be described in terms of the equivalence relation determined by having crepant birational sources; this is true for $Y^\BBH$, albeit only up to finite ambiguity, and we do not pursue this here.
More on the Hodge theory of sources will appear in \cite{Laza2025}.
The smoothness assumption in \Cref{cor:introCYmod} may be dropped at the expense of taking the normalization of the coarse space $Y$---see \Cref{cor:CYmod}.  It would be interesting to compare the compactification $Y^{\BBH}$ with the compactifications in \cite{KX20} and \cite[Theorem 1.14]{Bir22} and the conjectural compactifications in \cite[Conj.~B.1]{Odaka2022} and in \cite[\S 4.1.3]{Spotti25}. 

Note that in the classical case where $Y$ is a Shimura variety, we may form both $Y^\BB$ and $Y^\BBH$, and there is no difference between them.  Indeed, for any level $\leq 2$ polarizable variation of Hodge structures $V$ with Hodge bundle $F^mV_\cO$, the Griffiths bundle of $V$ is a power of the Hodge bundle of $\Exterior^{\rk F^mV_\cO}V$, since $F^mV_\cO\cong (\gr_F^{m-2}V_\cO)^\vee$ implies $\det F^mV_\cO\cong \det F^{m-1}V_\cO$ as $F^{m-2}V_\cO=V_\cO$ has trivial determinant.  This also applies to the variation on middle cohomology for a family of surfaces.  In general, for instance, for many moduli spaces of higher-dimensional Calabi--Yau varieties as in \Cref{cor:CYmod}, they can be different---see \Cref{eg:BBvsBBH}.  There is however always a morphism $Y^\BB\to Y^\BBH$.

We deduce a version of the Borel extension theorem for both $Y^\BB$ and $Y^\BBH$:
\begin{thm}[\Cref{thm:bbextensino}]\label{thm:borelextmain}  Let $Y$ be as in \Cref{thm:BBmain2} (resp. \Cref{thm:BBHodgemain}), and $Y^\BB$ (resp. $Y^\BBH$) its Baily--Borel compactification with respect to the Griffiths (resp. Hodge) bundle.  Then any analytic morphism $(\Delta^*)^k\to Y^\an$ for which $(\Delta^*)^k\to\Gamma\bs\bD$ is locally liftable extends to a morphism $\Delta^k\to Y^{\BB,\an}$ (resp. $\Delta^k\to Y^{\BBH,\an}$).
\end{thm}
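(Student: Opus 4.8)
\smallskip

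The plan is to reduce \Cref{thm:borelextmain} to the extension property for \emph{algebraic} log smooth sources already proven in \Cref{thm:BBmain2}(2) (resp.\ \Cref{thm:BBHodgemain}(2)), via Schmid's nilpotent orbit theorem. First I would reduce to unipotent local monodromy: after the finite base change $\Delta^k\to\Delta^k$, $z_j\mapsto z_j^N$, the monodromy theorem makes the local monodromy of $\phi^{*}V$ unipotent, and since $(\Delta^{*})^k$ is dense in $\Delta^k$ and $Y^{\BB,\an}$ (resp.\ $Y^{\BBH,\an}$) is separated, any holomorphic extension over the cover is automatically invariant under the deck group, hence descends to a map $\Delta^k\to Y^{\BB,\an}$ which is again holomorphic by Riemann's extension theorem applied to local coordinate functions. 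Writing $Y^{\BB}=\proj B_Y$ (resp.\ $Y^{\BBH}=\proj C_Y$), a morphism $\Delta^k\to Y^{\BB,\an}$ extending $\phi$ is the same datum as: a line bundle $\bar{\cL}$ on $\Delta^k$ restricting to $\phi^{*}L_Y^{(n)}$ on $(\Delta^{*})^k$ (for $n$ sufficiently divisible), together with extensions over $\Delta^k$ of the pullbacks $\phi^{*}s$ of all moderate growth sections $s\in B_Y$, such that these extended sections have no common zero at the origin $0\in\Delta^k$. The line bundle $\bar{\cL}$ is forced, by uniqueness of such extensions together with \Cref{thm:BBmain2}(2), to be the Schmid extension of $\phi^{*}L_Y^{(n)}$ provided by the nilpotent orbit theorem. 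Once the sections extend and are base-point-free at $0$, one lets $\bar\phi(0)$ lie in the affine chart $\{s\neq 0\}$ for any $s$ with $\phi^{*}s(0)\neq 0$: the coordinate functions $\phi^{*}(s'/s)=\phi^{*}s'/\phi^{*}s$ are then bounded near $0$ and extend holomorphically by Riemann's extension theorem, so $\bar\phi$ is a well-defined holomorphic extension of $\phi$ (landing in $Y^{\BB,\an}_s$ near $0$). The same reformulation applies to $Y^{\BBH}$, using that $Y^{\BBH}$ is normal.

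For the extension of the sections, the point is that although ``moderate growth'' is defined via algebraic models, it is an intrinsic condition on the boundary behaviour of the section: by the Hodge-norm estimates of Schmid and Cattani--Kaplan--Schmid, a section of $L_Y^{(n)}$ extends in the Schmid--Deligne extension along \emph{any} log smooth model, algebraic or analytic, exactly when its Hodge norm grows at most like a power of $\log(1/\mathrm{dist})$ near the boundary. This is the content of the ``(hence any)'' in the definition of moderate growth, applied now to analytic polydisks; hence $\phi^{*}s$ extends over $\Delta^k$ in the Schmid extension $\bar{\cL}^{\otimes \bullet}$ for every $s\in B_Y$ (resp.\ $s\in C_Y$).

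The crux is base-point-freeness at the origin: one must show that the Griffiths (resp.\ Hodge) line of the limit mixed Hodge structure $(F_\infty,W,\{N_j\})$ of $\phi$ at $0$, equipped with its Schmid extension structure, is a point of $Y^{\BB}$ (resp.\ $Y^{\BBH}$)---i.e.\ that some moderate growth section is nonzero there. The strategy is to realize this limit mixed Hodge structure algebraically and invoke the algebraic extension theorem. By Schmid's theorem $\tilde\phi(z)=\exp(\sum_j z_j N_j)\,\psi(e^{2\pi i z})$ with $\psi(0)=F_\infty$, and the associated nilpotent orbit descends, on a small punctured polydisk, to the period map of a polarizable integral variation whose period image one checks to lie in the closed subvariety $Y$ (using closedness of $Y$ and the nilpotent orbit estimates) and whose limit mixed Hodge structure at the origin is again $(F_\infty,W,\{N_j\})$. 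Realizing this variation on a log smooth algebraic pair $(Z,D_Z)$ with $Z\bs D_Z\to Y$ and applying \Cref{thm:BBmain2}(2) (resp.\ \Cref{thm:BBHodgemain}(2)) yields an extension $Z\to Y^{\BB}$ under which $\cO_{Y^{\BB}}(n)$ pulls back to the Schmid extension, whose fibre over the corresponding boundary point of $Z$ is exactly the Griffiths (resp.\ Hodge) line of $F_\infty$; since this extension lands in some affine chart $\{s\neq 0\}$ of $Y^{\BB}$, the section $s$ does not vanish on that fibre, which is what was needed.

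The main obstacle is the algebraization step inside the last paragraph: making rigorous that the limit mixed Hodge structure of an \emph{analytic} period map on a punctured polydisk is realized by a log smooth \emph{algebraic} model mapping to $Y$ (equivalently, that the torus-equivariant part of the degeneration---the nilpotent orbit---descends to an algebraic variation on a suitable toric compactification). This is the analogue, in the non-classical setting, of Borel's use of the explicit Siegel-domain description of Shimura varieties near the cusps, and is the one point at which one genuinely leaves the analytic polydisk; the comparison of the moderate growth condition for algebraic versus analytic boundaries is a secondary technical matter.
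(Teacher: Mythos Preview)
Your reduction to unipotent monodromy and your identification of the Schmid extension as the only candidate for $\bar\phi^*\cO_{Y^{\BB}}(n)$ are both correct and are used in the paper. The extension of moderate-growth sections to the Schmid extension via the Hodge-norm estimates is also fine. The gap is exactly where you flag it, and it is genuine: in the nilpotent-orbit step there are two unresolved problems. First, the image of the nilpotent orbit $z\mapsto\exp(\sum_j z_jN_j)F_\infty$ need not lie in $Y^\an\subset\Gamma\backslash\bD$; Schmid's estimate gives only asymptotic proximity of $\phi$ to the nilpotent orbit, not containment of the latter in the closed subvariety $Y$, so ``closedness of $Y$ and the nilpotent orbit estimates'' do not suffice. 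Second, even granting that, you still need an \emph{algebraic} log smooth pair $(Z,D_Z)$ with $Z\setminus D_Z\to Y$ whose limit mixed Hodge structure at a chosen boundary point is $(F_\infty,W,\{N_j\})$; producing such a pair from an analytic punctured-polydisk variation is precisely the difficulty you are trying to avoid, and nothing in the proposal addresses it. Without this, base-point-freeness at $0$ is unproven and the argument does not close.

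The paper takes a completely different route that sidesteps algebraization of the limit. The key input you are missing is that, after shrinking $\Delta^k$, the period map $(\Delta^*)^k\to\Gamma\backslash\bD$ is \emph{definable} in $\bR_{\an,\exp}$ by \cite{bkt}; definability then forces $\phi:(\Delta^*)^k\to Y$ to extend meromorphically to $\Delta^k$ with respect to the projective compactification $Y^{\BB}$. One resolves the indeterminacy by a Hironaka tower of blowups $f:X_r\to\cdots\to\Delta^k$ with $(X_r,D_r)$ log smooth, obtaining a morphism $\phi_r:X_r\to Y^{\BB,\an}$. Since $\cO_{Y^{\BB}}(n)$ is locally generated by a section whose Hodge norm and inverse both have moderate growth, $\phi_r^*\cO_{Y^{\BB}}(n)$ coincides with the Schmid extension on $X_r$, which in turn equals $f^*$ of the Schmid extension on $\Delta^k$ (this is your Schmid-compatibility observation, applied on the blowup rather than on $\Delta^k$). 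Hence $\phi_r^*\cO_{Y^{\BB}}(n)$ is trivial on every fibre of $f$, and as $\cO_{Y^{\BB}}(n)$ is ample, $\phi_r$ contracts those fibres and factors through $f$, giving the desired $\Delta^k\to Y^{\BB,\an}$.
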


A version of Borel extension showing that any holomorphic map $(\Delta^*)^k\to Y^\an$ to a period image\footnote{In fact, to a variety admitting a complex variation of Hodge structure with period map with discrete fibers.} satisfying the local liftability extends meromorphically with respect to any compactification was proven by Deng \cite{dengpicard} and also follows directly from the definability of the period map \cite{bkt} (see the proof of \Cref{thm:bbextensino}).

Lastly, \Cref{thm:Bsemimain} has applications to foundational statements in the MMP.
Among others, it allows one to descend lc singularities along lc-trivial fibrations and to formulate adjunction and inversion thereof for arbitrary lc centers.
For the precise statements, we refer the reader to \S \ref{sec:applicationbsemiampleness}.

\subsection{Proof outline}
\def\quotientY{Z}

In the classical case, Baily--Borel's proof \cite{BB66} first builds the quotient space $Y^\BB$ set-theoretically as a union of Shimura varieties, then endows it with a sheaf of analytic functions provided by certain modular forms, and upgrades this to an algebraic structure using GAGA. In this more general setting, a quotient space $Y^\BB$ which is stratified by period images may be constructed\footnote{Although there are some subtleties---see \Cref{lem:equiv reln stuff}, where we must use \Cref{lem:unpol orbit}.}, and each stratum is algebraic by \cite{BBT23}.  On the one hand, there are algebraic sections of the Griffiths bundle on strata which separate points but are not ``Hodge-theoretic'' since they ultimately come from algebraic geometry rather than universally on $\Gamma\bs\bD$, so it is not clear how to lift them to a neighborhood of a stratum.  On the other hand, analytic ``Hodge-theoretic'' sections may be constructed locally in $Y^\BB$ around any particular stratum, but their behavior off of that stratum is unclear.  In particular, it is not clear there are enough such sections locally at a point of a stratum to separate points in a larger stratum specializing to it.

We blend these two perspectives and work inductively, adding strata one at a time, starting with the highest-dimensional stratum, showing (i) that the resulting space is a definable analytic space, and (ii) using definable GAGA \cite{BBT23} to algebraize it.  Furthermore, we show this procedure can be carried out in general for a CY variation $V$ given the hypotheses in \Cref{thm:Hodgesemimain}, so the proofs of \Cref{thm:BBmain2} and \Cref{thm:Hodgesemimain} are intertwined.  Step (i) is achieved by the methods of \cite[Theorem 5.4]{BBT23} (see \Cref{thm:vanishing}) combined with the construction of ``Hodge-theoretic'' sections which exist definably locally on the stratum using a certain part of the period map of the stratum which lifts to a neighborhood.   

A more precise summary is as follows:

\begin{enumerate}[itemsep=.5em]

\item\label{step:begin} We start with a proper log smooth algebraic space $(X,D)$ equipped with a variation of Hodge structures $V$ on $X\bs D$.  The algebraic space $X$ comes equipped with a natural locally closed stratification $\{X_\Sigma\}$.  We also form an auxiliary CY variation $E$ on $X\bs D$ with Hodge bundle $M_X$:  
\vskip.5em
\begin{enumerate}[leftmargin=7em]
\item[(Thm \ref{thm:BBmain2})]  We take $E=\Sym^N\Grif(V)$ for an appropriate $N$, so $M_X$ is the $N$th power of the Griffiths bundle of $V$.
\item[(Thm \ref{thm:Hodgesemimain})]  We take $E=V$.
\end{enumerate}

\item\label{step:relations} We define a proper algebraic equivalence relation $R$ on $X$ and show that we may modify $(X,D)$ so as to make $R$ as nice as possible.   This allows us to construct a quotient space $\quotientY$ as a (definable) topological space with an induced stratification $\{\quotientY_T\}$. 
\vskip.5em
\begin{enumerate}[leftmargin=7em]
\item[(Thm \ref{thm:BBmain2})]  Let $Y$ be the image of the period map associated to $V$.  The closure of the equivalence relation on $X\bs D$ defining the map $X\bs D\to Y$, together with the relation of being connected by curves of degree 0 with respect to $M_X$, generates an equivalence relation $R$ on $X$.
\item[(Thm \ref{thm:Hodgesemimain})]  We take $R$ to be the relation of being connected by curves of degree 0 with respect to $M_X$.  
\end{enumerate}
Both of these relations must be proven to be algebraic.

\item\label{step:boundary period map} Each stratum $X_\Sigma$ of $X$ is naturally equipped with a variation of mixed Hodge structures $E_\Sigma$ coming from the part of the limit mixed Hodge structure which is invariant under local monodromy.  There is a smallest subquotient $E_\Sigma^\trans$ containing $M_X|_{X_{\Sigma}}$,
which is a pure variation, and a smallest quotient $E_\Sigma^\mini$ containing $M_X$, which is a mixed variation.  We naturally have $E_\Sigma^\trans\subset E_\Sigma^\mini$, and the underlying local systems $E_{\Sigma,\bQ}^\trans,E_{\Sigma,\bQ}^\mini$ extend to a tubular neighborhood $X_\Sigma\subset \fT_X(\Sigma)\subset X$.

The full period map $\widetilde{X_\Sigma}^{E^\trans_\Sigma}\to\bD_\Sigma$ of $E^\trans_\Sigma$, where $\widetilde{X_\Sigma}^{E^\trans_\Sigma}$ is the minimal cover on which $E^\trans_\Sigma$ is trivialized, factors through some $\widetilde{\quotientY_T}$ but does not lift to $\widetilde{\fT_X(\Sigma)}^{E^\trans_\Sigma}$.  We show that the map $\widetilde{X_\Sigma}^{E^\mini_\Sigma}\to\bP(E_{\Sigma,\bC,x}^\mini)$ obtained by only remembering $M_X\subset E_\Sigma^\mini$ will: (i) have the same (connected components of) fibers along $\widetilde{X_\Sigma}^{E^\mini_\Sigma}$ as the period map $\widetilde{X_\Sigma}^{E^\trans_\Sigma}\to\bD_\Sigma$, and (ii) lift to $\widetilde{\fT_X(\Sigma)}^{E^\mini_\Sigma}$.  This construction provides the definable analytic ``Hodge-theoretic'' sections described above definably locally on $\quotientY$ and crucially uses the fact that we are working with a CY variation, which is why even in the case of \Cref{thm:BBmain2} it is important to consider the auxiliary CY variation $E$.  It is also in this step that, in the case of \Cref{thm:Hodgesemimain}, we use the integrability to ensure (i) and the torsion combinatorial monodromy condition to ensure that $\widetilde{X_\Sigma}^{E^\mini_\Sigma}\to\bP(E_{\Sigma,\bC,x}^\mini)$ has compact connected components of fibers.

\item\label{step:induction} Proceeding inductively, we algebraize larger and larger unions of strata. Suppose $U\subset\quotientY$ is an open union of strata and $\quotientY_T\subset U$ a stratum which is closed in $U$ such that $U'\coloneqq  U\bs \quotientY_T$ has been algebraized. To algebraize $U$, we first use \cite[Theorem 5.4]{BBT23} to produce global sections of a power of $M_X$ which separate fibers of $X\to Z$ over $U'$.  Combining these with the sections of $M_X$ coming from (3) which exist definably locally on $\quotientY_T$ and separate points on $\quotientY_T$, we obtain a definable analytic projective embedding definably locally on $U$ (using the compactness of the fibers of the map in (3)), and the definable analytic structures of the images glue to give a definable analytic structure on $U$.  This then gives an algebraic structure on $U$ by \cite[Theorem 1.3]{BBT23} (see \Cref{thm:image}) to which a power of $M_X$ descends to a line bundle $M_{U}^{(m)}$ by definable GAGA which is moreover ample by the ampleness criterion \cite[Theorem 5.4]{BBT23} again.  By induction, the quotient map $X\to Z$ is algebraic and a power of $M_X$ descends to an ample bundle $M_Z^{(m)}$ on $Z$. 

\item\label{step:finish}  This completes the proof of \Cref{thm:Hodgesemimain}, and \Cref{thm:BBHodgemain} easily follows since in this case the algebraic structure on $Z$ may be taken to be normal. The corresponding statements in \Cref{thm:BBmain2} is more subtle, since the normality assumption is now dropped. Indeed, if $Y$ is normal, the quotient map $X\to \quotientY$, which may be assumed to have connected fibers, admits a preferred algebraization for which the morphism is a fibration, and this algebraic structure is determined by the underlying topology. We show \Cref{thm:BBmain2} via a descent argument, after applying \Cref{thm:BBHodgemain} to the normalization of $Y$. The descent along the normalization is delicate and critically uses the fact that $E$ comes from $V$ via the $\Grif(-)$ construction.

\end{enumerate}

Finally, the  b-semiampleness conjecture (\Cref{thm:Bsemimain}) follows from \Cref{thm:Hodgesemimain},
provided we verify that a ``geometric'' Hodge bundle, i.e., coming from an lc-trivial fibration $f \colon (Y, \Delta) \to X$, is automatically integrable with torsion combinatorial monodromy. To this end, we explore the geometric significance of these two conditions.

\begin{enumerate}\setcounter{enumi}{5}
\item
The key geometric input is the notion of \emph{source} of an lc-trivial fibration, roughly the smallest stratum of $(Y, \Delta)$ dominating the base; see \Cref{defnsource} for the formal definition, and cf.~also \cite[\S 4.5]{kol13}.
We study how the moduli part of $f \colon (Y,\Delta) \to X$ restricts to a prime divisor $D_X \subset X$. Up to an alteration of the base, we identify the restriction to $D_X$ of the moduli part of $f$ with the moduli part of the source $(S, \Delta_S) \to D_{X}$ of the restricted lc-trivial fibration $(Y,\Delta)\times_{D_X} X \to D_X.$

\item The integrability of the Hodge bundle follows from the fact that the variation of the source is maximal if and only if its Hodge bundle is big; cf.~\cite{Amb05}.

\item The torsion combinatorial monodromy is a consequence of the isotriviality of lc-trivial fibrations with torsion moduli part (over a normal base) due to \cite{Amb05}, and the finiteness of  b-representations, proved in increasing level of generality by \cite{NU73,Fuj00,Gon13,HX2011,FG14b}. The latter gives that the group $\mathrm{Bir}(F, \Delta_F)$ of crepant birational automorphisms of the general fiber $(F, \Delta_{F})$ of $(S, \Delta_S) \to D_X$ acts finitely on $H^0(F, \omega^{[m]}_{F}(m\Delta_F))$ for $m \gg 1$. Now, to check torsion combinatorial monodromy, consider a testing nodal curve $C \to X$ such that the moduli part vanishes on each irreducible component of $C$. Up to some technical reductions, the sources of the pullback family $Y_{C} \to C$ are isotrivial families along each irreducible of $C$, connected by crepant birational automorphisms of their fibers over the nodes of the $C$ by Koll\'ar's theory of $\bP^1$-linking \cite[\S 4.4]{kol13}. Hence, the monodromies of multiples of the Hodge bundle factor through the finite  b-representations of $\mathrm{Bir}(F, \Delta_F)$, which entails the torsion of the combinatorial monodromy.
\end{enumerate}

\subsection{Paper outline}

The paper is organized as follows.
\begin{enumerate}
\item[($\S2$)]  We recall the variations of (monodromy invariant) limiting mixed Hodge structures one obtains from a variation $V$ on each stratum of a log smooth space $(X,D)$, and introduce their transcendental and CY-minimal quotient pieces.  We define the integrability and torsion combinatorial monodromy conditions and explain why both conditions are satisfied in the case of the Griffiths bundle.  The latter is a result of \cite{GGRtorsion}. 
\item[($\S3$)]  We discuss the algebraicity of equivalence relations of Hodge-theoretic nature as in \eqref{step:relations} of the proof outline.  We also prove some lemmas regarding refinement of log smooth spaces with respect to these data.  Finally, we construct the maps $\widetilde{X_\Sigma}\to\bP(E_{\Sigma,\bC,x}^\mini)$ from \eqref{step:boundary period map} of the proof outline.
\item[($\S4$)] We prove the semiampleness statement in \Cref{thm:BBmain2} (i.e. \Cref{cor:BB}) and \Cref{thm:Hodgesemimain} as in \eqref{step:induction} of the proof outline.
\item[($\S5$)]  We deduce \Cref{thm:BBHodgemain} and prove \Cref{thm:BBmain2} as in \eqref{step:induction} and \eqref{step:finish} of the proof outline.  We also prove the Borel extension result \Cref{thm:borelextmain}.
\item[($\S 6$)]  We recall the notion of moduli part and source of an lc-trivial fibration. We compare the variation of Hodge structures associated to them, and we make some preliminary reductions for the proof of \Cref{thm:Bsemimain}.
\item[($\S 7$)] Using the preparations from $\S6$, we verify the integrability and torsion combinatorial monodromy conditions for the moduli part to deduce \Cref{thm:Bsemimain}. Then we discuss applications of the  b-semiampleness conjecture in birational geometry (\S \ref{sec:applicationbsemiampleness}) and for the moduli theory of log Calabi--Yau pairs (\S\ref{sec:moduli}).
\end{enumerate}
\subsection*{Notation}
\begin{itemize}
\item Throughout, analytic spaces, definable analytic spaces, and algebraic spaces are always taken over $\bC$ and to be separated.  Algebraic spaces are always of finite type over $\bC$.
\item By a log smooth algebraic space or snc pair $(X,D)$ we mean the datum of a smooth algebraic space $X$ together with a divisor $D\subset X$ with simple normal crossings.  By a morphism $f:(X,D)\to (X',D')$ of log smooth algebraic spaces we mean a morphism $f:X\to X'$ with $ f^{-1}(D')^\reduced\subset D$.
\item Throughout by a fibration we mean a proper morphism $f:X\to Y$ between normal spaces such that the pullback map $\cO_Y\xrightarrow{\cong}f_*\cO_X$ is an isomorphism.
\item CY variations play the central role in the paper, so we refer to them by the letter $V$ in $\S2$-4,6 and typically use the letter $L$ for the Hodge bundle.  In the case of \Cref{thm:BBmain2} where the relevant CY variation is obtained by the $\Grif(-)$ construction, we refer to the original variation as ${_\orig V}$, so $V=\Grif(_\orig V)$. Most of $\S5$ is devoted to the Griffiths case, so we once again reserve $L$ for the Griffiths bundle and use $M$ for the Hodge bundle. 
\end{itemize}

\subsection*{Acknowledgements} We would like to thank Florin Ambro, Fabio Bernasconi, Harold Blum, Yohan Brunebarbe, Philip Engel, Christopher D. Hacon, Giovanni Inchiostro, Radu Laza, Konstantin Loginov, Yuji Odaka, Colleen Robles, 
Vyacheslav V. Shokurov, Roberto Svaldi, and Chenyang Xu for useful mathematical discussions.
We thank the Oberwolfach Research Institute for Mathematics (MFO), the Hausdorff Institute for Mathematics in Bonn (HIM), and the American
Institute of Mathematics (AIM) for providing a supportive research environment. 

\def\Coh{\operatorname{Coh}}
\section{Hodge-theoretic preliminaries}

\subsection{Definable analytic spaces}

We shall use the notion of definable analytic spaces from \cite{BBT23} freely. As in the majority of that paper, we shall only ever work with the o-minimal structure $\bR_{\an,\exp}$.  For the convenience of the reader, we reproduce here the three main results from \cite{BBT23} we will be using.

\begin{thm}[{Definable GAGA, \cite[Theorem 1.4]{BBT23}}]\label{thm:GAGA} Let $X$ be an algebraic space and $X^\define$ the associated definable analytic space.  The definabilization functor $\Coh(X)\to\Coh(X^\define)$ is fully faithful, exact,
 and its essential image is closed under subobjects and quotients.
\end{thm}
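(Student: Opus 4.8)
The plan is to deduce all three assertions from two inputs: a comparison of coherent cohomology, $H^i(X,\cF)\xrightarrow{\ \sim\ }H^i(X^\define,\cF^\define)$ for every $\cF\in\Coh(X)$ and all $i$, and flatness of the morphism of structure sheaves $\cO_X\to\cO_{X^\define}$; and then to prove the cohomology comparison by dévissage to the affine case, where it rests on definable analogues of Cartan's theorems A and B together with a definable Liouville/Chow statement. Granting the cohomology comparison, full faithfulness follows by applying it with $i=0$ to the coherent sheaf $\cH om(\cF,\cG)$, using that definabilization commutes with $\cH om$ of coherent sheaves (a finite-presentation construction, checkable on an affine étale atlas) and that $\Hom_{\cO_X}(\cF,\cG)=H^0(X,\cH om(\cF,\cG))$. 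Exactness is local and amounts to flatness of $\cO_{X^\define,x}$ over $\cO_{X,x}$: since $\cO_{X^\define,x}$ sits between $\cO_{X,x}$ and $\cO_{X^\an,x}$, it is Noetherian with completion $\widehat{\cO_{X,x}}$, and the local criterion for a Noetherian local homomorphism inducing an isomorphism on completions gives that it is faithfully flat over $\cO_{X,x}$.

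For the essential image it suffices to show that every coherent subsheaf $\cG\hookrightarrow\cF^\define$ in $\Coh(X^\define)$ with $\cF$ algebraic is itself algebraic: a quotient $\cF^\define\twoheadrightarrow\cG'$ then has algebraic kernel $\mathcal K={\mathcal K'}^{\define}$, whence $\cG'=(\cF/\mathcal K')^{\define}$ by exactness. On an affine étale chart $U\to X$, definable Cartan A (below) makes $\cG|_{U^\define}$ generated by $\Gamma(U^\define,\cG)$, which by the $H^0$-comparison lies in $\Gamma(U^\define,\cF^\define)=\Gamma(U,\cF)$ as a submodule over $\Gamma(U^\define,\cO)=\Gamma(U,\cO)$; this submodule is finitely generated since $\Gamma(U,\cO)$ is Noetherian, so it cuts out a coherent algebraic subsheaf of $\cF|_U$ whose definabilization is $\cG|_{U^\define}$. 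Full faithfulness over the overlaps of the chart shows these local algebraic subsheaves glue to a coherent $\cG'\subseteq\cF$ with ${\cG'}^{\define}\cong\cG$.

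It remains to prove the cohomology comparison. Given a finite-type separated algebraic space $X/\bC$, choose a surjective étale map $U_0\to X$ from an affine scheme; separatedness forces every fiber product $U_p=U_0\times_X\cdots\times_X U_0$ to be an affine scheme, and the associated simplicial space computes both $H^\ast(X,\cF)$ and $H^\ast(X^\define,\cF^\define)$ by étale-local (Čech) descent, $X^\define$ being assembled from the atlas $U_\bullet^\define$. One thus reduces to $X$ affine, where algebraically $H^{>0}(X,\cF)=0$ and $\cF$ has a finite free presentation; the definable-side inputs are (a) $\Gamma(X^\define,\cO)=\Gamma(X,\cO)$, i.e. definable holomorphic functions on an affine variety are regular; (b) definable Cartan B, $H^{>0}(X^\define,\cG)=0$ for coherent definable-analytic $\cG$; and (c) definable Cartan A, such $\cG$ are globally generated. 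Granting (a)–(c), $\cF^\define$ admits a finite free presentation matching that of $\cF$; applying $\Gamma(X^\define,-)$ and using (a) and (b) identifies $\Gamma(X^\define,\cF^\define)$ with $\Gamma(X,\cF)$ and kills higher cohomology on both sides.

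\textbf{Main obstacle.} The crux is establishing (a)–(c): these definable avatars of Liouville's (or Chow's) theorem and of Cartan's theorems A and B replace Serre's explicit computation of $H^\ast(\bP^n,\cO(d))$ and are where o-minimality is genuinely used — tameness forces definable holomorphic functions to be algebraic, and it is what permits running definable partition-of-unity and $\overline{\partial}$-type arguments on the non-compact affine definable analytic space. A secondary technical point, needed for exactness and for the finite-generation step above, is the Noetherianity of the definable analytic local rings and the identification of their completions with the algebraic ones; this too is an o-minimal input obtained by squeezing $\cO_{X^\define,x}$ between $\cO_{X,x}$ and $\cO_{X^\an,x}$. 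Once these are in hand, the remaining work — the descent bookkeeping over the affine atlas, the $\cH om$ manipulation, and the gluing of algebraic subsheaves — is formal.
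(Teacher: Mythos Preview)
The paper does not prove this theorem: it is quoted verbatim from \cite[Theorem 1.4]{BBT23} as one of three results ``reproduced here for the convenience of the reader,'' and no argument is given. There is therefore no proof in this paper to compare your proposal against.

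For what it is worth, your sketch is broadly in the spirit of the proof in the source paper \cite{BBT23}: reduce by \'etale descent to the affine case and use definable analogues of Cartan's theorems A and B together with the definable Chow theorem of Peterzil--Starchenko. You correctly identify that the genuine content lies in establishing (a)--(c), which in \cite{BBT23} occupies most of the work. One point to be careful about: the Noetherianity of the definable analytic local rings and the flatness of $\cO_{X,x}\to\cO_{X^{\define},x}$ are not automatic from being sandwiched between $\cO_{X,x}$ and $\cO_{X^{\an},x}$; in \cite{BBT23} this is handled by a separate argument showing the definable local rings are Henselian and have the right completion. But since the present paper treats the theorem as a black box, any further critique belongs to a reading of \cite{BBT23} rather than this paper.
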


\begin{thm}[{Definable images, \cite[Theorem 1.3]{BBT23}}]\label{thm:image} Let $X$ be an algebraic space and $\phi:X^\define\to\cZ$ a proper morphism of definable analytic spaces.  Then there is a factorization
\[\begin{tikzcd}
X^\define\ar[dr,swap,"f^\define"]\ar[rr,"\phi"]&&\cZ\\
&Y^\define\ar[ur,swap,"\iota"] &
\end{tikzcd}\]
where $f:X\to Y$ is a proper dominant\footnote{Here we mean ``scheme-theoretically'' dominant, that is, $X\to Y$ is surjective on points and $\cO_Y\to f_*\cO_X$ is injective.} morphism of algebraic spaces and $\iota:Y^\define \to \cZ$ is a closed embedding of definable analytic spaces.  Moreover, $f$ is uniquely determined as a morphism with fixed source.
\end{thm}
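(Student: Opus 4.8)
The plan is to deduce this from definable GAGA (\Cref{thm:GAGA}); the assertion is \cite[Theorem 1.3]{BBT23}. I would begin with formal reductions. Since $\phi$ is proper, $\phi_*\mathcal{O}_{X^\define}$ is a coherent sheaf of $\mathcal{O}_{\cZ}$-algebras --- proper direct image of a coherent sheaf is coherent, the definable analogue of Grauert's coherence theorem being established in \cite{BBT23} --- so the kernel of $\mathcal{O}_{\cZ}\to\phi_*\mathcal{O}_{X^\define}$ is a coherent ideal, and it cuts out a closed definable analytic subspace $\cZ'\subseteq\cZ$, the scheme-theoretic image, through which $\phi$ factors. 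Replacing $\cZ$ by $\cZ'$ we may assume $\phi$ surjective with $\mathcal{O}_{\cZ}\hookrightarrow\phi_*\mathcal{O}_{X^\define}$, and it then suffices to produce an algebraic space $Y$ and an isomorphism $\cZ\cong Y^\define$ under which $\phi=f^\define$ for some $f\colon X\to Y$: granting this, $f$ is automatically proper (properness of a morphism of algebraic spaces can be checked after definabilization, \cite{BBT23}) and dominant ($\mathcal{O}_Y\hookrightarrow f_*\mathcal{O}_X$ follows from $\mathcal{O}_{\cZ}\hookrightarrow\phi_*\mathcal{O}_{X^\define}$ together with the exactness of $(-)^\define$ and its compatibility with $\phi_*$), and the uniqueness of $f$ with fixed source follows because $(-)^\define$ is fully faithful on algebraic spaces. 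This last fact is itself a consequence of \Cref{thm:GAGA}: any morphism between definabilizations is the graph of an algebraic morphism by definable Chow, i.e.\ the statement that a closed definable analytic subspace $Z\subseteq X^\define$ of the definabilization of an algebraic space is algebraic --- its ideal is a coherent $\mathcal{O}_{X^\define}$-submodule of $\mathcal{O}_{X^\define}=(\mathcal{O}_X)^\define$, hence lies in the essential image of $(-)^\define$ by \Cref{thm:GAGA}, so $Z$ is the definabilization of the corresponding closed subspace of $X$.

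For the construction, I would reduce via Chow's lemma for algebraic spaces to the case that $X$ is a quasi-projective scheme carrying an ample line bundle $L$, and put $\mathcal{L}:=L^\define$. Form the Stein factorization $X^\define\xrightarrow{g}\mathcal{W}\xrightarrow{h}\cZ$, where $\mathcal{W}:=\underline{\Spec}_{\cZ}(\phi_*\mathcal{O}_{X^\define})$; then $h$ is finite, and $g$ is a proper fibration for which $\mathcal{L}$ is $g$-ample, since it restricts to an ample bundle on each fiber of $g$; in particular $g$ is projective, with $X^\define\hookrightarrow\mathbb{P}_{\mathcal{W}}(g_*\mathcal{L}^{\otimes m})$ a closed immersion for $m\gg0$. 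The point is that all the ``algebraic'' input now lives on $X$ and its self-products: the closed definable analytic subspaces $R:=X^\define\times_{\mathcal{W}}X^\define\subseteq(X\times X)^\define$ and (once $\mathcal{W}$ has been algebraized) $\rho:=\mathcal{W}\times_{\cZ}\mathcal{W}\subseteq(W\times W)^\define$ are algebraic by definable Chow, and they define algebraic equivalence relations $R_0$ on $X$, with $R_0\to X$ proper, and $\rho_0$ on $W$, with $\rho_0\to W$ finite. One then constructs $W:=X/R_0$ and $Y:=W/\rho_0$ as algebraic spaces, identifies $W^\define\cong\mathcal{W}$ and $Y^\define\cong\cZ$, and --- using full faithfulness of $(-)^\define$ --- recognizes $g$ and $h$ as the definabilizations of $X\to W$ and $W\to Y$, which completes the proof.

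I expect the main obstacle to be the existence and identification of these two quotients as algebraic spaces. For $Y=W/\rho_0$ this is tractable because $\rho_0\to W$ is finite, so one can appeal to the theory of quotients by finite equivalence relations. The genuinely delicate case is $W=X/R_0$: here $R_0\to X$ is only proper, not flat or finite, so no general quotient theorem applies directly, and one must instead use the projectivity of $g$ (the $g$-ampleness of $\mathcal{L}$) to realize $\mathcal{W}$ as an algebraic contraction of $X$ --- for instance by descending the relatively ample linear system along $R_0$, in the spirit of Artin's work on algebraic spaces and contractions --- and then verify that $W^\define\cong\mathcal{W}$ compatibly with $g$. This step, together with the definable coherence theorem used in the reductions, is where the real work lies, and it is carried out in \cite{BBT23}.
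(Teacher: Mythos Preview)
The paper does not contain a proof of this theorem. It is quoted verbatim from \cite[Theorem~1.3]{BBT23} as one of three black-box inputs from that paper (together with definable GAGA and \Cref{thm:vanishing}), and is used as a tool throughout \S3--5 without any argument being supplied here. There is therefore nothing in the present paper to compare your proposal against.

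That said, your sketch is a plausible outline of how such a result is established, and you correctly identify the essential ingredients (definable proper direct image is coherent; definable Chow via \Cref{thm:GAGA}; Stein factorization; algebraizing the equivalence relation). The point you flag as delicate---constructing the algebraic quotient $X/R_0$ when $R_0\to X$ is merely proper---is indeed where the actual work in \cite{BBT23} lies, and your instinct that the relative ampleness of $\mathcal{L}$ is the lever is correct. If you want to see the details, you should consult \cite{BBT23} directly; the present paper simply imports the statement.
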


\begin{setup}\label{setup}
Let $L_Y$ be a line bundle on an algebraic space $Y$ with the following property.  For every reduced closed subspace $Z \hookrightarrow Y$ and any proper log smooth algebraic space $(X,D)$ with a proper birational morphism $\pi:X\bs D\to Z$, the pullback $L_{X\bs D}\coloneqq  \pi^*L_Z$ of the restriction $L_{Z}$ extends to a nef and big line bundle $L_{X}$ on $X$.  Moreover, for any two such $Z,Z'$ with corresponding $(X,D),(X',D')$ and a morphism $g:(X',D')\to (X,D)$ with $\pi\circ g|_{X'\bs D'}=\pi'$ we have $g^*L_X\cong L_{X'}$.
\end{setup}

\begin{defn}\label{vanishingsect}
Assume \Cref{setup}.  Given a closed subscheme $Z \hookrightarrow Y$, we say a section $s$ of $L^n_Z$ \emph{vanishes at the boundary} if for some (hence any) $(X,D)$ as above resolving and compactifying the reduction $Z^\mathrm{red}$, the section $s$ pulls backs and extends to a section of $L_{X}^n(-D)$.  We let $H^0_{\mathrm{van}}(Z,L^n_Z)\subset \Gamma(Z,L^n_Z)$ denote the linear subspace of sections vanishing at the boundary. If $Z$ is reduced, then $H^0_{\mathrm{van}}(Z,L^n_Z)$ is finite-dimensional as $H^0_{\mathrm{van}}(Z,L^n_Z)$ injects into $H^0(X,L_{ X}^n)$.
\end{defn}

\begin{thm}[{\cite[Theorem 5.4]{BBT23}}]\label{thm:vanishing}
Assume \Cref{setup}.  Then $Y$ is a scheme and $L_Y$ is an ample line bundle. Moreover, for every $n \gg 1$, the natural morphism $Y \rightarrow \mathbb{P}(H^0_{\mathrm{van}}(Y,L_Y^n)^\vee)$ is defined everywhere and is a locally closed embedding.
\end{thm}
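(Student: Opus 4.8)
The plan is to construct, for $n\gg 1$, enough sections in $H^0_{\mathrm{van}}(Y,L_Y^n)$ that the natural map to $\mathbb{P}(H^0_{\mathrm{van}}(Y,L_Y^n)^\vee)$ becomes a definable analytic locally closed embedding, and then to upgrade this to an algebraic locally closed embedding via o-minimal GAGA; this forces $Y$ to be a quasi-projective scheme, $L_Y^n$ to be the pullback of $\mathcal O(1)$, and hence $L_Y$ to be ample. First I would reduce to $Y$ reduced --- a nilpotent thickening of a quasi-projective scheme is quasi-projective, and once $L_{Y_{\mathrm{red}}}$ is known ample the lifting of sections follows from Serre vanishing, the lifts still vanishing at the boundary by the compatibility in \Cref{setup} --- and then induct on $d=\dim Y$, the case $d=0$ being trivial. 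Since \Cref{setup} restricts to any reduced closed $Z\subseteq Y$, the inductive hypothesis gives that every proper closed subspace $Z\subsetneq Y$ is a quasi-projective scheme, $L_Z$ is ample, and $Z\hookrightarrow\mathbb{P}(H^0_{\mathrm{van}}(Z,L_Z^m)^\vee)$ is a locally closed embedding for $m\gg 1$.

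I would then fix a log resolution $\pi\colon X\setminus D\to Y$ with $(X,D)$ proper log smooth; by the compatibility clause of \Cref{setup} the space $H^0_{\mathrm{van}}(Y,L_Y^n)$ may be computed with this choice, and it is identified --- up to the failure of normality of $Y$ --- with $H^0(X,L^n_X(-D))$. On $X$ the line bundle $L_X$ is nef and big, so $L^n_X(-D)$ is big for $n\gg 1$; by classical positivity (Kodaira's lemma, or Nakamaye's description of the augmented base locus of a nef and big line bundle, applied after passing to a projective modification if necessary) the system $|L^n_X(-D)|$ is base-point free and separates points and tangent vectors away from a proper closed subset $B\subsetneq X$. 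Pushing forward, and adjoining the locus over which $\pi$ is not an isomorphism, we obtain a closed subspace $W\subsetneq Y$ with $\dim W<d$ such that the natural map $Y\to\mathbb{P}(H^0_{\mathrm{van}}(Y,L_Y^n)^\vee)$ is already a locally closed embedding over the dense open $U\coloneqq Y\setminus W$; the subspace $W$ is then controlled by the inductive hypothesis.

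The crux --- and the step I expect to be the main obstacle --- is to extend the embedding across the bad locus $W$: one needs, for $n\gg 1$, sections of $L_Y^n$ vanishing at the boundary that do not vanish at a given point of $W$, and enough of them to separate points and tangent vectors of $W$ and to separate $W$ from $U$. Here classical birational geometry is unavailable: the fibers of $\pi$ over $W$ may lie entirely inside the null locus of $L_X$, so $|L^n_X(-D)|$ genuinely has base points there, and since there is no relation between $L_X$ and $K_X$ there is no Kawamata--Viehweg vanishing with which to lift the (algebraic, and ample by induction) sections on $W$ to global sections on $Y$. This is precisely where the o-minimal theory is needed. I would construct the missing sections \emph{definable-analytically} on a definable neighborhood of $W$ --- transporting sections along the tower of compatible resolutions furnished by \Cref{setup} and exploiting that $L_Y|_W$ is honestly ample --- and glue them with the algebraic sections over $U$ to conclude that $Y^{\mathrm{def}}\to\mathbb{P}(H^0_{\mathrm{van}}(Y,L_Y^n)^\vee)^{\mathrm{def}}$ is a definable analytic locally closed embedding. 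Its image is then a definable analytic locally closed subvariety of projective space, hence algebraic by the definable Chow theorem (equivalently, one packages the construction as a proper definable morphism from an algebraic space and applies \Cref{thm:image}), and the morphism from $Y$ to the image induces an isomorphism on definabilizations, hence is an isomorphism by the faithfulness in \Cref{thm:GAGA}. Thus $Y$ is a quasi-projective scheme and $L_Y$ is ample; completing the induction, the asserted locally closed embedding by $H^0_{\mathrm{van}}(Y,L_Y^n)$ holds for all $n\gg 1$. The technical heart --- and the place the o-minimal machinery replaces classical MMP vanishing --- is the construction and gluing of the definable analytic sections near $W$.
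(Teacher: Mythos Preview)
The paper does not prove this theorem at all: it is stated in \S2.1 as one of ``the three main results from \cite{BBT23} we will be using,'' quoted verbatim from \cite[Theorem~5.4]{BBT23} with no proof given. There is therefore no proof in the present paper against which to compare your attempt; what you have written is a reconstruction of the argument from the cited reference.

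That said, your sketch captures the broad shape one would expect for such a result: induction on $\dim Y$, production of enough sections of $L_X^n(-D)$ on a proper log smooth model via nef-and-big positivity to embed a dense open, and then the use of the o-minimal machinery (definable GAGA, \Cref{thm:image}) to handle the bad locus where classical vanishing theorems are unavailable. You are right to flag the extension across the bad locus $W$ as the crux, and right that this is where definable techniques substitute for Kawamata--Viehweg-type input. Whether your specific mechanism for producing the definable-analytic sections near $W$ matches the actual argument in \cite{BBT23} cannot be assessed from this paper alone; for that you would need to consult the original reference directly.
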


Finally, the following notions will be useful; see \cite[\S3]{BBTshaf} for more details.

\begin{defn}\label{defn:pi1definable}For a definable analytic space $\cX$ with a choice of basepoint $x\in\cX$, a definable analytic space $\cP$ admitting an action of $\pi_1(\cX,x)$ by definable analytic automorphisms, and a covering space $\widetilde{\cX}\to \cX^\an$, we say a $\pi_1(\cX,x)$-equivariant analytic morphism $\phi:\widetilde{\cX}\to \cP^\an$ is \emph{$\pi_1$-definable analytic} if its restriction to any continuous lift of a definable open $\cU\subset\cX$ is definable analytic.  A \emph{$\pi_1$-definable analytic coherent sheaf} $\cE$ on $\widetilde{X}$ is an analytic coherent sheaf which is equipped with the structure of a definable analytic coherent sheaf on any such lift $\cU$, compatibly with respect to intersections and the $\pi_1(\cX,x)$ action. It is clear that this is equivalent to a definable analytic coherent sheaf on $\cX$.    
\end{defn}
Observe that if $\mathfrak{X} $ is a definable topological space and $\widetilde{\mathfrak{X}}$ a covering space, a $\pi_1(\mathfrak{X},x)$-equivariant sheaf of $\bC$-algebras $\cO$ on $\widetilde{\mathfrak{X}}$ (with respect to covers pulled back from $\mathfrak{X}$), which is the structure sheaf of a definable analytic space on any continuous lift of a definable open subset of $\mathfrak{X}$, is equivalent to a definable analytic space structure on $\mathfrak{X}$.

\subsection{Trivializing covers of local systems}\label{subsect:covers}
For a local system $V$ on an analytic space $X$, we denote by $\widetilde {X}^V\to X$ the minimal covering space of $X$ on which $V$ is trivialized---that is, the cover corresponding to the quotient of $\pi_1(X,x)$ given by the monodromy representation $\pi_1(X,x)\to \mathrm{GL}(V_x)$ for some choice of basepoint $x$.  If the analytic space is $X^\an$ for an algebraic space $X$, we denote the cover by $\widetilde{X}^V$ (as opposed to $\widetilde{X^\an}^V$).


\subsection{DR-neighborhoods}\label{sect:DR}
Let $X$ be a definable topological space. For any locally closed definable topological space $Z$, we say that a neighborhood $Z\subset \fT_X(Z)\subset X$ is a \textit{DR-neighborhood} of $Z$ if it has a strong deformation retraction onto $Z$.

\begin{lemma}\label{lem:subsgood}Let $X$ be a definable topological space, $i:Z\to X$ the inclusion of a locally closed definable subspace, $j:X\bs Z\to X$ the inclusion of the complement, $\mathfrak{T}(Z)\coloneqq  \fT_X(Z)$ a DR-neighborhood of $Z$, and $E$ a local system on $X\bs Z$.  For any subsheaf $G_Z\subset i^*j_*E$ which is a local system, there is a unique subsheaf $G(Z)\subset j_*E|_{\mathfrak{T}(Z)}$ which is a local system for which $i^*G(Z)=G_Z$.
\end{lemma}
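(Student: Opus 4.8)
The plan is to reduce the statement, via the deformation retraction, to a formal consequence of the fact that $i^*$ is an equivalence on categories of local systems. Write $\mathcal{E}\coloneqq j_*E|_{\mathfrak{T}(Z)}$, and note that $\mathcal{E}=k_*(E|_U)$ for the open inclusion $k\colon U\coloneqq\mathfrak{T}(Z)\setminus Z\hookrightarrow\mathfrak{T}(Z)$. First I would make two harmless reductions: the assertion is local on $Z$ (subsheaves are determined locally, so uniqueness is automatic and local solutions over DR-neighborhoods of small opens of $Z$ glue by uniqueness), so I may take $Z$ connected; and if $Z$ has nonempty interior in $\mathfrak{T}(Z)$ then $(i^*j_*E)_z=(j_*E)_z=0$ at an interior point (small punctured neighborhoods being empty), forcing $G_Z=0$ and $G(Z)=0$, so I may assume $Z$ is nowhere dense, hence $Z\subseteq\overline{U}$. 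Now, since $\mathfrak{T}(Z)$ is a DR-neighborhood, $i\colon Z\hookrightarrow\mathfrak{T}(Z)$ is a homotopy equivalence with homotopy inverse the retraction $r$, so $i^*$ is an equivalence from local systems on $\mathfrak{T}(Z)$ to local systems on $Z$ with quasi-inverse $r^*$. It therefore suffices to lift the inclusion $\iota\colon G_Z\hookrightarrow i^*\mathcal{E}$ to a morphism $\phi\colon r^*G_Z\to\mathcal{E}$ with $i^*\phi=\iota$, uniquely, and to check such a $\phi$ is injective, whereupon $G(Z)\coloneqq\operatorname{im}\phi$ is the unique answer.

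For the existence and uniqueness of the lift $\phi$, the point is that $i^*$ induces a bijection $\operatorname{Hom}_{\mathfrak{T}(Z)}(r^*G_Z,\mathcal{E})\xrightarrow{\ \sim\ }\operatorname{Hom}_Z(G_Z,i^*\mathcal{E})$: using $\mathcal{E}=k_*(E|_U)$, the adjunction for $k_*$, and the projection formula, the left side is $\Gamma\!\big(U,\,((r|_U)^*G_Z)^\vee\otimes E|_U\big)$, while the right side is $\varinjlim_{W\supseteq Z}\Gamma\!\big(W\setminus Z,\,((r|_{W\setminus Z})^*G_Z)^\vee\otimes E|_{W\setminus Z}\big)$, and these agree because for a cofinal family of neighborhoods $W$ of $Z$ the inclusion $W\setminus Z\hookrightarrow U$ is a homotopy equivalence (punctured DR-neighborhoods deformation retracting onto a common link), so $H^0$ of a local system is unchanged. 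Hence $\phi$ exists and is unique. To see $\phi$ is injective: $i^*\ker\phi=\ker(i^*\phi)=\ker\iota=0$, so $\ker\phi$ has vanishing stalks along $Z$; and $\ker\phi|_U$ is a sub-local-system of $(r|_U)^*G_Z$, whose monodromy factors through $\pi_1(U)\to\pi_1(Z)$ and is hence trivial on the loops around $Z$, so near each $z\in Z$ the sheaf $\ker\phi$ is constant with fibre $(\ker\phi)_z=0$; thus $\ker\phi$ vanishes on a neighborhood of $Z$, and since its rank is locally constant on $U$ with $Z\subseteq\overline{U}$, it vanishes on all of $U$, hence on $\mathfrak{T}(Z)$. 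Then $G(Z)=\operatorname{im}\phi\cong r^*G_Z$ is a local system and $i^*G(Z)=\operatorname{im}(i^*\phi)=G_Z$.

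Uniqueness of $G(Z)$ follows from the same bijection: any sub-local-system of $\mathcal{E}$ restricting to $G_Z$ on $Z$ is the image of a lift of $\iota$ precomposed with an automorphism of $G_Z$, which by the bijection is $\phi$ up to that automorphism and hence has the same image; the global $G(Z)$ is then assembled from the local ones. I expect the main obstacle to be exactly the homotopy-theoretic input used twice above — that all sufficiently small punctured DR-neighborhoods of $Z$ share a homotopy type, in particular are connected — since this is the one place where the deformation-retract hypothesis (as opposed to that of a bare neighborhood) is genuinely needed, and where the point-set topology has to be handled with some care; in the settings where the lemma is applied this is guaranteed by the local irreducibility of the ambient space $X$ along $Z$.
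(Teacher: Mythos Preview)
Your approach is genuinely different from the paper's. The paper constructs $G(Z)_x$ directly: for each $x\in U=\mathfrak{T}(Z)\setminus Z$ it picks a path $\gamma$ from a basepoint $z\in Z$ to $x$ with $\gamma((0,1])\subset X\setminus Z$, and defines $G(Z)_x\subset E_x$ via sections of $j_*E$ over a neighborhood of $\gamma$ whose germ at $z$ lies in $G_{Z,z}$. Independence of $\gamma$ is argued by using the strong deformation retraction to show that any two such paths differ, up to a homotopy through $X\setminus Z$ away from the basepoint, by a loop in $Z$; such loops act through the monodromy of the local system $G_Z$ and hence preserve the subspace. The argument is path-by-path and never compares whole punctured neighborhoods.

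Your argument hinges on the bijection $\operatorname{Hom}_{\mathfrak{T}(Z)}(r^*G_Z,\mathcal{E})\xrightarrow{\sim}\operatorname{Hom}_Z(G_Z,i^*\mathcal{E})$, and this is where it breaks. Two problems. First, your identification of the right-hand side with $\varinjlim_{W\supseteq Z}\Gamma(W\setminus Z, F)$ computes the \emph{presheaf} inverse image, which agrees with the sheaf $i^*$ only under hypotheses you have not imposed (e.g.\ $Z$ compact). Second, and more seriously, the claimed homotopy equivalence $W\setminus Z\hookrightarrow U$ for a cofinal family of $W$ does not follow from the existence of a single strong deformation retraction of $\mathfrak{T}(Z)$ onto $Z$: the retraction says nothing about how points of $U$ move \emph{relative to $Z$}, so it gives no control over punctured neighborhoods, nor does it produce a cofinal family of smaller DR-neighborhoods. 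Your proposed remedy, ``local irreducibility of $X$ along $Z$'', is the wrong condition --- it yields at best connectedness of small $V\setminus Z$, not constancy of their homotopy type --- and is in any case not a hypothesis of the lemma. The same issue infects your opening localization step, which presupposes that small opens of $Z$ themselves admit DR-neighborhoods inside $\mathfrak{T}(Z)$. All of this extra tubular-neighborhood structure is indeed present in the applications (strata of an snc pair), but the paper's path-based argument is designed to work from the single deformation retraction actually assumed.
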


\begin{proof}
Without loss of generality, we may assume that both $Z$ and $X\bs Z$ are connected and nonempty, and $X=\fT(Z)$. Take $z\in Z$.  For any $x\in X\bs Z$, pick a path $\gamma$ such that $\gamma(0)=z,\gamma(1)=x,\gamma((0,1])\subset X\bs Z$, and define $G^\gamma(Z)_{x}\subset E_{x}$
to be the space of elements which are the restriction of a section of $\tilde{\gamma}^*j_*E(U)$, where $\tilde{\gamma} \colon U \to X$ is any open immersion from a contractible topological space $U$, containing $[0,1]$, extending $\gamma$, i.e., $\tilde{\gamma}|_{[0,1]}=\gamma$.  

We claim this is independent of $\gamma$. To see this, suppose $\gamma'$ is another such path. Set $\gamma'(0)=z' \in Z$.
Since $X$ strongly deformation retracts onto $Z$, there is an arc $\gamma_0$ in $Z$, with endpoints $z'$ and $z$, such that $\gamma$ and $\gamma'' \coloneqq  \gamma' \gamma_0$ are homotopic (as paths from $z$ to $x$). Up to push $\gamma_{0}((0,1])$ inside $X \setminus Z$,  we can replace $\gamma'$ with $\gamma''$. Indeed, parallel transport along $\gamma_0$ identifies $G^{\gamma'}(Z)_x = G^{\gamma''}(Z)_x$. Now,
we may assume there is a homotopy $H$ between $\gamma$ and $\gamma''$ such that $H\big((0,1]\times [0,1]\big)\subset X\bs Z$. Then by continuing along $H$ we see that $G^\gamma(Z)_x=G^{\gamma''}(Z)_x$. We thus obtained a well-defined local system $G(Z)\subset E$, and it is clear that $G(Z)$ extends to a subsheaf of $j_*E$ as required.
\end{proof}

If $(X,D)$ is a log smooth algebraic space and $Z\subset X$ a connected component of an intersection of irreducible components of $D$, we may take $\fT_X(Z)$ to have the property that the retraction preserves $D$ and that the resulting map $\fT_X(Z)\setminus D\to Z$ strongly deformation retracts onto a torus fibration.


\subsection{Griffiths and Hodge bundles}\label{subsect:G/H bundles}
Let $(X,D)$ be a proper log smooth algebraic space.  For a complex local system $V_\bC$ on $(X\bs D)^\an$ we denote by $V_\cO$ the algebraic flat vector bundle on $X\bs D$ (with regular singularities) corresponding to $V_\bC$ via the Riemann--Hilbert correspondence, so $(V_\cO,\nabla)^\an\cong (V_{\cO^\an},\nabla)\coloneqq  (\cO_{(X\bs D)^\an}\otimes V_\bC,d\otimes 1)$.  We denote by $(\cV,\nabla)$ the lower canonical Deligne extension of $(V_\cO,\nabla)$, that is, the unique logarithmic flat vector bundle on $(X,D)$ whose analytification extends the flat vector bundle $(V_{\cO^\an},\nabla)$ whose residues have eigenvalues with real part contained in $[0,1)$. If $j:X\bs D\to X$ is the inclusion, we have a natural inclusion
$j^\an_*V_\bC\hookrightarrow \cV^\an$.

\begin{defn}
    Let $(X,D)$ be a proper log smooth algebraic space.  Let $(V_\bC,F^\bullet V_\cO)$ be a graded polarizable admissible mixed complex variation of Hodge structures on $X\bs D$ and $(\cV, F^\bullet \cV)$ its Deligne/Schmid extension to $X$.  We say $(V_\bC,F^\bullet V_\cO)$ is a CY variation if the deepest nonzero part of the Hodge filtration $F^{\pmax} V_\cO\cong\gr^\pmax_F V_\cO$ has rank one, in which case we call $L=F^\pmax\cV\cong\gr_F^\pmax\cV$ the Hodge bundle.  

\end{defn}

We will need a version of the transcendental part of a CY variation.
\begin{defn}\label{defn:minimal}Let $(X,D)$ be a proper log smooth algebraic space and $V=(V_\bQ,W_\bullet V_\bQ,F^\bullet V_\cO)$ an admissible graded polarizable rational mixed variation of Hodge structures on $X\bs D$.  Let $F^mV_\cO\neq 0$ be the smallest nonzero piece of the Hodge filtration and assume $\rk F^m V_\cO=1$.  
\begin{enumerate}
\item There is a unique minimal quotient $V\to V^\mathrm{min}$ in the category of rational mixed variations for which $\gr^m_F V_\cO^\mathrm{min}\neq 0$, which factors through all other such quotients.  We refer to $V^\mathrm{min}$ as the CY-minimal quotient.
\item There is a unique minimal subvariation $U\subset V$ with $\gr_F^m U_\cO\neq0$ which we call the CY-minimal subobject.  We don't introduce notation for the CY-minimal subobject.
\item The CY-minimal subobject of the CY-minimal quotient of $V$ is canonically identified with the CY-minimal quotient of the CY-minimal subobject of $V$; we call it the transcendental part $V^\trans$.  It is naturally identified with the smallest Hodge substructure $V^\trans\subset \gr_k^W V$ for which $\gr_F^m V^\trans\neq 0$, where $k$ is the unique weight with $\gr_F^m\gr^W_k V\neq 0$.
\end{enumerate}

\end{defn}
Note that if $V$ is a graded polarizable mixed Hodge structure with deepest nonzero Hodge filtration piece $F^mV_\bC$ of rank 1, there is a unique $k$ such that $\gr_F^m\gr^W_kV_\bC\neq 0$.  Then $V\to V^\mini$ factors through the quotient $V/W_{k-1}V$, $U\subset W_kV$, and $V^\trans$ is both the lowest weight subobject of $V^\mathrm{min}$ and the highest weight quotient of $U$.  Moreover, $V^\trans$ is simple, since $W_kV^\mini$ is a pure polarizable Hodge structure.

For a polarizable rational pure variation $V$ which is not necessarily CY with deepest nontrivial Hodge filtration piece $F^mV_\cO$, we will sometimes refer to $V^\trans\subset V$ as the smallest rational subvariation for which $F^mV^\trans=F^m V$.

\begin{lem}\label{lem:inject autos}Let $V$ be a graded-polarizable rational mixed Hodge structure such that the smallest nonzero piece $F^mV_\bC$ of the Hodge filtration has $\rk F^m V_\bC=1$.  Let $V\to V^\mini$ be the CY-minimal quotient and $V^\trans\subset V^\mini$ the transcendental part.  Then
\[\Aut(V^\mini)\hookrightarrow \Aut(V^\trans) \hookrightarrow \Aut(\gr_F^m V_\bC)=\Aut(\gr_F^m V_\bC^\mini)=\Aut(\gr_F^m V_\bC^\trans)\]
via the restriction maps, where the first two groups are automorphisms in the category of rational mixed Hodge structures and the last three are automorphisms in the category of vector spaces.
\end{lem}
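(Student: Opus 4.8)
The plan is to establish the two nontrivial injectivity claims—$\Aut(V^\mini)\hookrightarrow\Aut(V^\trans)$ and $\Aut(V^\trans)\hookrightarrow\Aut(\gr_F^m V_\bC)$—since the final equalities $\Aut(\gr_F^m V_\bC)=\Aut(\gr_F^m V_\bC^\mini)=\Aut(\gr_F^m V_\bC^\trans)$ are immediate: all three lines are the rank-one Hodge pieces and the quotient/sub maps are isomorphisms on the $\gr_F^m$ level by the defining minimality (this is exactly the content of $\gr_F^m V_\cO^\mini\neq 0$ and $\gr_F^m V^\trans\neq 0$ together with rank one). So the substance is the first two arrows.

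For $\Aut(V^\trans)\hookrightarrow\Aut(\gr_F^m V_\bC)$: first I would recall from the discussion following \Cref{defn:minimal} that $V^\trans$ is \emph{simple} as a rational Hodge structure, being a Hodge substructure of the polarizable pure piece $\gr^W_k V$. Now suppose $\psi\in\Aut(V^\trans)$ restricts to the identity on $\gr_F^m V_\bC^\trans=F^m V_\bC^\trans$. Consider $\psi-\id\in\End(V^\trans)$: it is a morphism of rational Hodge structures whose image is a proper Hodge substructure (it kills the nonzero piece $F^m$, so it is not surjective). By simplicity, its image is zero, i.e. $\psi=\id$. (One must be slightly careful that $\psi-\id$ is indeed a morphism of Hodge structures, but $\End$ of a Hodge structure is again a Hodge structure and morphisms form the $(0,0)$-part, which is closed under subtraction; and $\id$ is such a morphism.) Hence the restriction map is injective.

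For $\Aut(V^\mini)\hookrightarrow\Aut(V^\trans)$: here $V^\mini$ is mixed, not simple, so I cannot argue as above directly. Instead I would use that $V^\trans$ is the \emph{lowest-weight} subobject of $V^\mini$—concretely $V^\trans=W_k V^\mini$ with $\gr^W_k V^\mini$ pure polarizable, and by minimality of the quotient, $V^\mini$ has no subquotient with $\gr_F^m$ nonzero other than through $V^\trans$. Suppose $\varphi\in\Aut(V^\mini)$ restricts to the identity on $V^\trans$. Then $\varphi-\id$ is an endomorphism of the mixed Hodge structure $V^\mini$ vanishing on $W_k V^\mini=V^\trans$, hence factors as $V^\mini\to V^\mini/W_k V^\mini\to V^\mini$. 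But strictness of morphisms of mixed Hodge structures with respect to $W_\bullet$ forces the composite to land in $W_k V^\mini=V^\trans$; combined with vanishing on $V^\trans$ and the fact that the only place $\gr_F^m$ appears is inside $V^\trans$, I want to conclude $\varphi-\id$ induces the zero map on $\gr_F^m$ and then—using that any nonzero quotient of $V^\mini$ retaining $\gr_F^m$ must surject onto something containing $V^\trans$ by minimality—bootstrap to $\varphi-\id=0$. The cleanest route: let $K=\ker(\varphi-\id)\supseteq V^\trans$; then $V^\mini/K$ is a quotient of $V^\mini$ isomorphic (via $\varphi-\id$) to a subobject of $V^\mini$, and since $\varphi-\id$ kills $V^\trans$, the image subobject has $\gr_F^m=0$, so $V^\mini/K$ has $\gr_F^m=0$, so $K$ is a subobject with $\gr_F^m K=\gr_F^m V^\mini\neq 0$; by minimality of the quotient $V^\mini$ among such, $K=V^\mini$, i.e. $\varphi=\id$.

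The main obstacle I anticipate is the $\Aut(V^\mini)\hookrightarrow\Aut(V^\trans)$ step: unlike the pure case there is no simplicity to lean on, and one has to carefully exploit the precise universal property defining $V^\mini$ (minimal quotient retaining $\gr_F^m$) together with strictness of MHS morphisms, making sure the bootstrapping argument above is airtight—in particular that "$\varphi-\id$ has image with $\gr_F^m=0$" really does force the kernel to be everything. The pure-case arguments ($V^\trans$ simple) and the trivial equalities on $\gr_F^m$ should be routine.
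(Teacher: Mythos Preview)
Your approach and the paper's are the same: for $f$ in the kernel of either restriction, one shows that $\operatorname{im}(f-\id)$ is a sub-(mixed-)Hodge structure with $\gr_F^m = 0$, hence zero. (The paper's one-sentence proof appears to contain a typo, writing $\ker(f-\id)$ where the image is clearly what is meant.)

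Your argument for $\Aut(V^\trans)\hookrightarrow\Aut(\gr_F^m V_\bC)$ via simplicity is fine. For $\Aut(V^\mini)\hookrightarrow\Aut(V^\trans)$ you correctly observe that $\operatorname{im}(\varphi-\id)\subset V^\mini$ has $\gr_F^m=0$, but your concluding sentence---``$K$ is a subobject with $\gr_F^m K\neq 0$; by minimality of the quotient $V^\mini$, $K=V^\mini$''---does not follow: $V^\trans$ itself is typically a \emph{proper} subobject of $V^\mini$ with $\gr_F^m\neq 0$, so having $\gr_F^m K\neq 0$ cannot force $K=V^\mini$. The correct finish stays with the image. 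Minimality of $V^\mini$ among quotients of $V$ with $\gr_F^m\neq 0$ is equivalent to the statement that $V^\mini$ has no nonzero sub-MHS $L$ with $\gr_F^m L=0$: if $L$ were such, then $V\to V^\mini/L$ would be a quotient with $\gr_F^m\neq 0$ through which $V\to V^\mini$ must factor, so the surjection $V^\mini\to V^\mini/L$ is split and hence an isomorphism, giving $L=0$. Applying this to $L=\operatorname{im}(\varphi-\id)$ yields $\varphi=\id$.
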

\begin{proof}For any $f$ in the kernel of either of the above two restriction maps, $\ker(f-\id)$ is a Hodge substructure with $\gr_F^m\ker(f-\id)=0$, which must therefore be trivial.
\end{proof}

In the general context of variations of Hodge structures, we will pass to a certain CY variation whose Hodge bundle detects variation in any filtration piece of the original variation.
\begin{defn}\label{defn:griffiths wedge}Let $(X,D)$ be a proper log smooth algebraic space.  Let $_\orig V=(_\orig V_\bZ,F^\bullet _\orig V_\cO)$ be a polarizable pure integral variation of Hodge structures on $X\bs D$.  We define
\[V=\bigotimes_p\Exterior^{\rk F^p_\orig V_\cO}{_\orig V}.\]
It is a polarizable integral pure CY variation of Hodge structures which has unipotent local monodromy if $_\orig V$ does.  We refer to the Hodge bundle $L=\bigotimes_p\det F^p_\orig \cV$ of $V$ as the Griffiths bundle of $_\orig V$.

\end{defn}


\subsection{Boundary variations}\label{subsect:boundary data}
Let $(X,D)$ be a proper log smooth algebraic space.  After a modification, we may assume the irreducible components of $D$ are smooth, and that the intersection of any number of irreducible components of $D$ is connected (though possibly empty). We call such a space a proper \textit{strictly} log smooth algebraic space.

Note that the set of irreducible components of $D$ is naturally identified with $\pi_0(D^\reg)$ by taking closure. Moreover, by the above assumption, any component of the natural locally closed stratification of $X$ induced by $D$ can be uniquely characterized by which irreducible components of $D$ they are contained in, or, equivalently, the dual complex of $D$ is simplicial.  Thus, we make the following definition:  for any subset $\Sigma\subset \pi_0(D^\reg)$, we take $X_\Sigma \coloneqq  \bigcap_{E\in \Sigma} \bar E\bs \bigcup_{E\notin \Sigma} \bar E$. 
\vskip1em
Let $V=(V_\bZ,F^\bullet V_\cO)$ be a polarizable integral pure variation of Hodge structures of weight $w$ on $X_\varnothing=X\bs D$.  We collect here the various variations one obtains on the strata by degenerating; see, for example, \cite{peterssteenbrink} for background.

\subsubsection{For each stratum $X_\Sigma $ there is:}
\begin{itemize}
    \item A DR-neighborhood $X_\Sigma \subset   {\mathfrak{T}_X(X_\Sigma) } \eqqcolon \fT(\Sigma)$ in the sense of \Cref{sect:DR}.  Set $\fT^*(\Sigma) \coloneqq  {\mathfrak{T}(\Sigma) }\bs D$.
 
    \item For each $E\in \Sigma$, there is a globally
defined nilpotent operator  $N_{E}:V_\bQ|_{\fT^*(\Sigma)}\to V_\bQ|_{\fT^*(\Sigma)}$ given by the logarithm of the unipotent part of the local monodromy around $E$.  Indeed, there is a dense Zariski open set $U\subset X$ meeting $X_\Sigma$ on which every divisor in $\Sigma$ has a local defining equation.  If $\mathfrak{T}(U\cap X_\Sigma)$ is a bundle over $U\cap X_\Sigma$ with fiber $(\Delta^*)^{\Sigma}$, then we have a commutative diagram with exact rows
\begin{equation}\label{diagram:fibration}\begin{tikzcd}
1\ar[r]&\pi_1((\Delta^*)^{\Sigma})\ar[d, equals]\ar[r]&\pi_1(U\cap \fT^*(\Sigma))\ar[d,two heads]\ar[r]&\pi_1(U\cap \fT(\Sigma))\ar[d,two heads]\ar[r]&1\\
&\pi_1((\Delta^*)^{\Sigma})\ar[r]&\pi_1(\fT^*(\Sigma))\ar[r]&\pi_1(\fT(\Sigma))\ar[r]&1
\end{tikzcd}\end{equation}
where the top row is split by the defining equations.  Hence, the meridian winding around $E$ is central in $\pi_1(\fT^*(\Sigma))$, and so $N_{E}$ intertwines the monodromy representation of $V_\bQ|_{\fT^*(\Sigma)}$. 
   \item Associated to the local monodromy logarithms $\{N_E\}_{E\in\Sigma}$ there is a weight filtration $W(\Sigma)_\bullet V_\bQ|_{\fT^*(\Sigma)}$ on $V_\bQ|_{\fT^*(\Sigma)}$ for which each $N_E$ is degree $-2$.  Each one has a natural saturated integral structure which we denote $W(\Sigma)_\bullet V_\bZ|_{\fT^*(\Sigma)}$.

    \item The Hodge filtration $F^\bullet V_\cO$ extends to a locally split filtration $ F^\bullet \cV $ of the Deligne extension $\cV$.
    \item The restriction of the Deligne extension $ \cV|_{\mathfrak{T}(\Sigma)}$ is naturally filtered by logarithmic flat vector sub-bundles $W(\Sigma)_\bullet  \cV|_{\mathfrak{T}(\Sigma)}$ which are the Deligne extensions of the local systems $W(\Sigma)_\bullet V_\bQ|_{\fT^*(\Sigma)}$.  There are natural flat morphisms $N_E:\cV|_{\mathfrak{T}(\Sigma)}\to\cV|_{\mathfrak{T}(\Sigma)}$ with degree $-2$ with respect to $W(\Sigma)_\bullet  \cV|_{\mathfrak{T}(\Sigma)}$.  
    
\end{itemize}
\subsubsection{If we further suppose $V_\bZ$ has unipotent local monodromy, then:}
\begin{itemize}
\item Each $\gr^{W(\Sigma)}_k V_\bZ|_{\fT^*(\Sigma)}$ extends as a local system to $\mathfrak{T}(\Sigma)$. We somewhat abusively denote the extension by $\gr^{W(\Sigma)}_k V_\bZ$.
\item Define 
\[V(\Sigma)_\bQ|_{\fT^*(\Sigma)}\coloneqq  \coker\left(\oplus_{E\in \Sigma}N_E:\bigoplus_{E\in \Sigma}V_\bQ|_{\fT^*(\Sigma)}\to V_\bQ|_{\fT^*(\Sigma)}\right)\]
which has a natural integral structure $V(\Sigma)_\bZ|_{\fT^*(\Sigma)}$ coming from the image of $V_\bZ|_{\fT^*(\Sigma)}$, and a natural filtration $W_\bullet V(\Sigma)_\bQ|_{\fT^*(\Sigma)}$ coming from the image of $W(\Sigma)_\bullet V_\bQ|_{\fT^*(\Sigma)}$.  Then $V(\Sigma)_\bZ|_{\fT^*(\Sigma)}$ (together with its filtration) extends to $V(\Sigma)_\bZ$ on $\mathfrak{T}(\Sigma)$.

Observe that $\gr^W_k V(\Sigma)_\bQ$ is naturally identified with the primitive
part of $\gr^{W(\Sigma)}_k V_\bQ$ with respect to the (simultaneous) hard Lefschetz decomposition with respect to the maps $N_E:\gr^{W(\Sigma)}_{k}V_\bQ\to \gr^{W(\Sigma)}_{k-2}V(\Sigma)_\bQ$ for $E\in\Sigma$.

\item The graded pieces $\gr^{W(\Sigma)}_k \cV|_{\mathfrak{T}(\Sigma)}$ of the Deligne extension have connection with no residue and are identified with the Deligne extensions of $\gr^{W(\Sigma)}_kV_\bZ$.  The same is true for 
\[ V(\Sigma)_\cO\coloneqq  \coker\left(\oplus_{E\in \Sigma}N_E:\bigoplus_{E\in \Sigma} \cV|_{\mathfrak{T}(\Sigma)}\to \cV|_{\mathfrak{T}(\Sigma)}\right).\]

    \item  $V_\Sigma\coloneqq  (V(\Sigma)_\bZ|_{X_\Sigma}, W_\bullet V(\Sigma)_\bQ|_{X_\Sigma},F^\bullet V(\Sigma)_\cO|_{X_\Sigma})$ is an admissible graded polarizable integral variation of mixed Hodge structures.  Recall that a polarization on $\gr^W_{w+k}V_\Sigma$ for $k\geq 0$ is given by $q(-,N^k-)$ for any $N$ in the cone $\sum_{E\in\Sigma} \bR_{>0}N_E $, where $q$ is a polarization on $V$.  Importantly, the polarization depends on the local monodromy. 
    \end{itemize}

\subsubsection{    If finally $V$ is a CY variation with unipotent local monodromy and Hodge bundle $F^\pmax V_\cO$, then:}\label{subsub:CY with unip}
  
    \begin{itemize}
    \item  For each $\Sigma$ there is a unique integer $k_\Sigma$ such that $\gr^\pmax_F\gr^W_{k_\Sigma}V_\Sigma\neq 0$.  Then $\gr^W_{k_\Sigma}V_\Sigma$ is a CY variation with Hodge bundle $F^m\gr^W_{ k_\Sigma}V_\Sigma$.
    \item We denote 
    by $V_\Sigma\to V_\Sigma^\mathrm{min}$ the CY-minimal quotient of $V_\Sigma$ and by $ V^\trans_\Sigma$ the transcendental part of $V_\Sigma$ in the sense of \Cref{defn:minimal}.  Note that $V_\Sigma^\trans$ is also the transcendental part of $\gr^W_{k_\Sigma}V_\Sigma$.  Note also that even on the stratum $\Sigma=\varnothing$ where $X_\varnothing=X\bs D$ and $V_\varnothing=V$, the transcendental part $V^\trans_\varnothing$ and the CY-minimal quotient $V^\mathrm{min}_\varnothing$ (which is equal to $V^\trans_\varnothing$) may be strictly smaller than $V$. 

    \item For a single point $x\in X$, there is a unique $X_\Sigma$ containing $x$ and we define $V^{\min}(x)$ (resp. $V^{\trans}(x)$) as the CY-minimal quotient (resp. the transcendental part) of the mixed Hodge structure $V_{\Sigma,x}$.  Note that at a very general point $x\in X_\Sigma(\bC)$ we simply have $V^\trans(x)=V^\trans_{\Sigma,x}$ and $V^\mathrm{min}(x)=V^\mathrm{min}_{\Sigma,x}$.

    \item The restriction of the Schmid extension of the Hodge bundle $F^\pmax V_\cO$ to $\overline{ X_\Sigma}$ is naturally identified with the Schmid extension of the Hodge bundle of each of the following:
    \[\gr^{W(\Sigma)}_{k_\Sigma}\cV|_{X_\Sigma}\]
    \[V_\Sigma\]
     \[\gr^W_{\geq k_\Sigma}V_\Sigma\coloneqq  V_\Sigma/W_{k_\Sigma-1}V_\Sigma\]
   \[\gr^W_{k_\Sigma}V_\Sigma\]
   \[V^\trans_\Sigma.\]
   Indeed, for the first two this is because the above constructions are compatible on the level of filtered logarithmic flat vector bundles, and for the last four this is standard.  

\item After choosing a very general basepoint $x_\Sigma\in X_\Sigma$, we thereby obtain a period map
\[\phi^\trans_\Sigma:\widetilde {X_\Sigma}^{\cV^\trans_{\Sigma,\bQ}}\to \check\bD(V^\trans_{\Sigma,\bC,x_\Sigma})^\an \]
associated to $V^\trans_\Sigma$, where $\check\bD(V^\trans_{\Sigma,\bC,x_\Sigma})$ is a flag variety of filtrations on $V^\trans_{\Sigma,\bC,x_\Sigma}$.

\item When $V$ arises from a general variation $_\orig V$ as in \Cref{defn:griffiths wedge}, and assuming the local monodromy of $_\orig V$ is unipotent, we further define $_\orig V_\Sigma^{\gr}$ to be the graded polarizable integral mixed variation with underlying local system $_\orig V_{\Sigma,\bZ}^{\gr}=\bigoplus_k\gr_k^{W(\Sigma)}{_\orig V_\bZ}|_{X_\Sigma}$.  For $x\in X_\Sigma$ we take ${_\orig V}^{\gr}(x)={_\orig V}_{\Sigma,x}^{\gr}$.

\item We let $V^{\vee}$ denote the dual variation to $V$. We may canonically identify

\[V(\Sigma)^\vee_\bQ|_{\fT^*(\Sigma)}\coloneqq  \ker\left(\oplus_{E\in \Sigma}N^\vee_E:V^{\vee}_\bQ|_{\fT^*(\Sigma)}\to \bigoplus_{E\in \Sigma}V^{\vee}_\bQ|_{\fT^*(\Sigma)}\right)\subset j_*(V_\bQ^\vee)|_{\fT^*(\Sigma)}\]
and thus $V(\Sigma)_\bQ^\vee\subset j_*(V_\bQ^\vee)|_{\mathfrak{T}(\Sigma)}$.

 \item Moreover, we have a subvariation $V^{\min,\vee}_\Sigma\coloneqq  (V^{\min}_\Sigma)^{\vee}\subset V^{\vee}_\Sigma $, which is the smallest subvariation which pairs non-trivially with $\gr^m_F V_\Sigma$, and likewise $V^{\trans,\vee}_\Sigma$ is the highest graded piece of $V_\Sigma^{\mathrm{min,\vee}}$.  Note that the Hodge bundle of $V_\Sigma^{\trans,\vee}$ is $F^{m-k_\Sigma}V_\Sigma^{\trans,\vee}$.

 \end{itemize}

\begin{lemma}\label{lem:compat}

\begin{enumerate}

\item  The local system $V^{\min,\vee}_{\Sigma,\bQ}$ is the restriction of a local system $V^{\min,\vee}(\Sigma)_\bQ$ on $\mathfrak{T}(\Sigma)$, which is naturally a subsheaf of $\cV^\vee\mid_{\mathfrak{T}(\Sigma)}$ consisting of flat sections. The fibers of $V^{\min,\vee}(\Sigma)_{\bQ,x}$ at a point $x\in \mathfrak{T}^*(\Sigma)$ consist of all elements $s_x\in \cV^{\vee}_x$ which extend to a flat section $s$ along any contractible subset $U\subset \mathfrak{T}(\Sigma)$ whose restriction to $X_{\Sigma}$ lands in $V^{\min,\vee}_{\Sigma,\bQ}.$ Moreover, it is sufficient to check this condition for a single contractible $U$ which non-trivially intersects $X_\Sigma$.

\item If $\Sigma \subset \Sigma'$, we have the containments
$V^{\min,\vee}(\Sigma')_\bQ\subset V^{\min,\vee}(\Sigma)_\bQ$ within $\mathfrak{T}(\Sigma)\cap \mathfrak{T}(\Sigma')$.

\end{enumerate}

\end{lemma}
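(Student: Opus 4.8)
The plan for \emph{(1)} is to apply \Cref{lem:subsgood} once. Since the local monodromy of $V$ is unipotent, $V(\Sigma)_\bZ$, and hence its dual local system $V(\Sigma)^\vee_\bQ$, extends to a local system on all of $\fT(\Sigma)$; in particular $E\coloneqq V(\Sigma)^\vee_\bQ|_{\fT(\Sigma)\setminus X_\Sigma}$ is a local system on $\fT(\Sigma)\setminus X_\Sigma$, and $\fT(\Sigma)$ is a DR-neighborhood of $X_\Sigma$. Writing $i\colon X_\Sigma\hookrightarrow\fT(\Sigma)$ and $j\colon\fT(\Sigma)\setminus X_\Sigma\hookrightarrow\fT(\Sigma)$, the fact that $V(\Sigma)^\vee_\bQ$ is already a local system across $X_\Sigma$ gives an embedding $V(\Sigma)^\vee_\bQ\hookrightarrow j_*E$, so $V^{\min,\vee}_{\Sigma,\bQ}\subset V^\vee_\Sigma=i^*V(\Sigma)^\vee_\bQ\subset i^*j_*E$ is valid input for \Cref{lem:subsgood}. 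The resulting local subsheaf $V^{\min,\vee}(\Sigma)_\bQ\subset j_*E$ on $\fT(\Sigma)$ restricts to $V^{\min,\vee}_{\Sigma,\bQ}$ on $X_\Sigma$; being a local system it lies inside $V(\Sigma)^\vee_\bQ$ on $\fT(\Sigma)\setminus X_\Sigma$, hence on all of $\fT(\Sigma)$ since $\pi_1(\fT(\Sigma)\setminus X_\Sigma)\twoheadrightarrow\pi_1(\fT(\Sigma))$, and therefore $V^{\min,\vee}(\Sigma)_\bQ\subset V(\Sigma)^\vee_\bQ\subset j_*V^\vee_\bQ\subset\cV^\vee|_{\fT(\Sigma)}$ with $\nabla$-flat sections (they are flat on the dense $\fT^*(\Sigma)$ and $\nabla$ is logarithmic). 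Finally, the explicit fibre description at $x\in\fT^*(\Sigma)$ is exactly the one produced in the proof of \Cref{lem:subsgood}: a contractible $U\subset\fT(\Sigma)$ meeting $X_\Sigma$ is, up to homotopy, a neighborhood of a path from $X_\Sigma$ to $x$, a flat section along $U$ is a section of $j_*V^\vee_\bQ$ over $U$, and the independence of the chosen path proved there — which uses the strong deformation retraction of $\fT(\Sigma)$ onto $X_\Sigma$ — is precisely the statement that a single $U$ suffices.

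For \emph{(2)}, I would choose the tubular neighborhoods compatibly along the stratification so that $\fT(\Sigma')\subset\fT(\Sigma)$ whenever $\Sigma\subset\Sigma'$; then $\fT(\Sigma)\cap\fT(\Sigma')=\fT(\Sigma')$ and both $V^{\min,\vee}(\Sigma)_\bQ$ (by restriction) and $V^{\min,\vee}(\Sigma')_\bQ$ are local systems on $\fT(\Sigma')$ contained in $V(\Sigma)^\vee_\bQ$. Applying the uniqueness in \Cref{lem:subsgood} with ambient space $\fT(\Sigma')$, subspace $X_{\Sigma'}$ and ambient local system $V(\Sigma)^\vee_\bQ|_{\fT(\Sigma')\setminus X_{\Sigma'}}$, one sees that $V^{\min,\vee}(\Sigma)_\bQ|_{\fT(\Sigma')}$ is the extension of $W\coloneqq V^{\min,\vee}(\Sigma)_\bQ|_{X_{\Sigma'}}$ and that $V^{\min,\vee}(\Sigma')_\bQ$ is the extension of $V^{\min,\vee}_{\Sigma',\bQ}$, both inside $V(\Sigma)^\vee_\bQ$. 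Since the extension operation of \Cref{lem:subsgood} is monotone in its input subsheaf (immediate from the fibre description in \emph{(1)}), \emph{(2)} reduces to the pointwise inclusion $V^{\min,\vee}_{\Sigma',\bQ}\subset W$ inside $V(\Sigma)^\vee_\bQ|_{X_{\Sigma'}}$.

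To prove this inclusion I would work at a very general $y\in X_{\Sigma'}$. By the construction via \Cref{lem:subsgood}, a flat section of $V(\Sigma)^\vee_\bQ$ near $y$ lies in $V^{\min,\vee}_{\Sigma,\bQ}$ along $X_\Sigma$ precisely when its value at $y$ lies in the fibre at $y$ of the Deligne--Schmid extension of the variation $V^{\min,\vee}_\Sigma$ to $\overline{X_\Sigma}$; hence $W_y$ is the underlying space of the limit mixed Hodge structure of $V^{\min,\vee}_\Sigma$ at $y$, a mixed Hodge substructure of the limit of $V^\vee_\Sigma$ at $y$, which in turn contains $V^\vee_{\Sigma'}$ as a mixed Hodge substructure by the theory of several-variable degenerations (see e.g.\ \cite{cks,peterssteenbrink}). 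By the compatibility of Schmid extensions of Hodge bundles recorded in \Cref{subsub:CY with unip}, applied to $V_\Sigma$ degenerating along $X_{\Sigma'}$ and to the dual CY-minimal constructions, the deepest nonzero Hodge piece of the limit of $V^{\min,\vee}_\Sigma$ at $y$ — the rank-one Hodge bundle $F^{m-k_\Sigma}V^{\trans,\vee}_\Sigma$ of \Cref{defn:minimal} — restricts to the Hodge bundle of $V^{\trans,\vee}_{\Sigma'}$ at $y$; in particular this line lies in $V^\vee_{\Sigma'}$ and pairs nontrivially with $\gr^m_F V_{\Sigma'}$. The mixed Hodge substructure of $V^\vee_{\Sigma'}$ generated by this line therefore lies in $W_y$ and pairs nontrivially with $\gr^m_F V_{\Sigma'}$, so the minimality characterization of $V^{\min,\vee}_{\Sigma'}$ (\Cref{defn:minimal}, \Cref{lem:inject autos}) forces $V^{\min,\vee}_{\Sigma',\bQ}\subset W_y$, as needed.

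The main obstacle is this last step: relating the limit mixed Hodge structure of the CY-minimal dual of $V_\Sigma$ to the CY-minimal dual of the limit $V_{\Sigma'}$. It rests on the compatibility of the Hodge bundle under iterated degeneration recorded in \Cref{subsub:CY with unip} — and on extending that compatibility to the minimal and transcendental pieces — together with the fact that the deepest Hodge piece, being the Hodge bundle, is already detected inside the transcendental summand $V^{\trans,\vee}_{\Sigma'}\subset V^\vee_{\Sigma'}$ rather than spreading into the remainder of the limit. The preliminary identification of $V^{\min,\vee}(\Sigma)_\bQ$ with a limit of Deligne extensions, while routine, also requires some care with the canonical extensions.
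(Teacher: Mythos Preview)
Your approach is essentially the same as the paper's. For (1), both apply \Cref{lem:subsgood} to extend the local system and read off the fibre description from its proof; the only cosmetic difference is that the paper works directly inside $j_*V^\vee_\bQ|_{\fT(\Sigma)}$ while you first sit inside $V(\Sigma)^\vee_\bQ$ and then embed. For (2), the paper's argument is more direct than yours but rests on the identical idea: the Deligne extension of $V^{\min,\vee}_{\Sigma,\bQ}$, restricted to $X_{\Sigma'}$, underlies a rational sub-mixed-Hodge-structure with $\gr_F^{-m}\neq 0$ (this is the Hodge-bundle compatibility you invoke from \S\ref{subsub:CY with unip}), and therefore by the defining minimality of $V^{\min,\vee}_{\Sigma'}$ it contains $V^{\min,\vee}_{\Sigma',\bQ}$; one then transfers this inclusion to $\fT(\Sigma)\cap\fT(\Sigma')$ using the fibre description from (1), taking a contractible $U$ that meets $X_{\Sigma'}$ and hence $X_\Sigma$. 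Your detour through ``the mixed Hodge substructure generated by the Hodge line'' reaches the same conclusion but is unnecessary: once you know the limit of $V^{\min,\vee}_\Sigma$ is itself a sub-MHS with nonzero $\gr_F^{-m}$, minimality of $V^{\min,\vee}_{\Sigma'}$ gives the inclusion immediately.
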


\begin{proof}

For (1), first note that $V^{\min,\vee}_{\Sigma,\bQ}$ extends as a subsheaf of $j_*V^\vee_\bQ\mid_{\fT(\Sigma)}$ by \Cref{lem:subsgood}, and hence as a subsheaf of $(\cV^\vee)^{\an}\mid_{\fT(\Sigma)}$ since $j_*V^\vee_\bQ\subset (\cV^\vee)^{\an}$. Moreover, the image of any section of $j_*V^\vee_\bQ$ is flat. Finally, it is clear by our construction that any element of $V^{\min,\vee}(\Sigma)_{\bQ,x}$ extends along any contractible set $U$ to a section, and the fiber along $X_{\Sigma}\cap U$ will land in $V_{\Sigma,\bQ}^{\min,\vee}$. To see that it is sufficient to consider a single contractible open set $U$ which intersects $X_{\Sigma}$, note that any such $U$ gives a canonical identification of stalks which preserves the restrictions of global flat sections.

For (2), first note that the Deligne extension of $V^{\min,\vee}_{\Sigma,\bQ}$ along $X_{\Sigma'}$ is naturally a flat subbundle of $\cV^{\vee}|_{\bar{ X}_\Sigma}$ whose restriction to $X_{\Sigma'}$ underlies a rational subvariation of Hodge structures with $\gr_F^{-m}\neq 0$  hence contains $V^{\min,\vee}_{\Sigma',\bQ}$. It is clear that $V(\Sigma')^\vee_\bQ\subset V(\Sigma)^\vee_\bQ$ on $\mathfrak{T}(\Sigma)\cap \mathfrak{T}(\Sigma')$. Now take a contractible $U \subset \fT(\Sigma')$ which intersects $X_{\Sigma'}$ (and therefore also $X_{\Sigma}$).  Let $x\in \fT(\Sigma)\cap U\bs D$ be a point whose path-component in $\fT(\Sigma)\cap U$ meets $X_{\Sigma}$.  By (1) any element of $V^{\min,\vee}(\Sigma')_{\bQ,x}$ extends to a flat global section $s$ over $U$. By the above, the restriction $s\mid_{X_{\Sigma}}$ is contained in $V^{\min,\vee}_{\Sigma,\bQ}$, so by (1) again the conclusion follows.
\end{proof}


\subsection{The CY-minimal quotient}\label{subsect:map to Pn}

Let $(X,D)$ be a proper strictly log smooth algebraic space and $V=(V_\bZ,F^\bullet V_\cO)$ a polarizable integral pure CY variation on $X\bs D$ with unipotent local monodromy.  By \cite{BBT23}, for any $\Sigma\subset\pi_0(D^\reg)$
we may consider the factorization of the period map of $V^\trans_\Sigma$
\[\begin{tikzcd}
X_\Sigma^\define\ar[r,"f_\Sigma^\define"]&Y_\Sigma^\define\ar[r,"\psi_\Sigma"]&\Gamma_\Sigma\backslash \bD_\Sigma
\end{tikzcd}\]
where $f_\Sigma$ is dominant with geometrically connected general fiber, $Y_\Sigma$ is normal, and $\psi_\Sigma$ is finite. Note that $f_\Sigma$ does not depend on $\Gamma_\Sigma$ as it is the Stein factorization of any relative compactification of any period map associated to $V^\trans_\Sigma$.  If the monodromy of $V^\trans_\Sigma$ is neat, then $V^\trans_\Sigma$ is pulled back from a variation on $Y_\Sigma$, say $V^\trans_\Sigma=f_\Sigma^*U$.  Recall that $V^\trans_\Sigma$ is the lowest weight piece of the CY-minimal quotient $V_\Sigma\to V_\Sigma^\mathrm{min}$, that is, $V_\Sigma^\trans=W_{k_\Sigma}V^\mathrm{min}_\Sigma$.

\begin{lem}\label{lem:tr split}  Assume the monodromy of $V_\bZ$ is neat.  There is a dense open subset $Y_\Sigma^\circ\subset Y_\Sigma$ such that, setting $X_\Sigma^\circ=f_\Sigma^{-1}(Y_\Sigma^\circ)$, we have that $V_\Sigma^\mathrm{min}|_{X_\Sigma^\circ}$ is pulled back from a local system on $Y_\Sigma^\circ$. 
\end{lem}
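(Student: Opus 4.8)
The plan is to work over a dense Zariski open $Y_\Sigma^\circ\subseteq Y_\Sigma$ over which $f_\Sigma$ is analytically a locally trivial topological fibration with connected fibre $F=f_\Sigma^{-1}(y)$ (such a $Y_\Sigma^\circ$ exists by generic local triviality). Then $\pi_1(F)\to\pi_1(X_\Sigma^\circ)\to\pi_1(Y_\Sigma^\circ)\to 1$ is exact, and the image $T\subseteq\pi_1(X_\Sigma^\circ)$ of $\pi_1(F)$ equals $\ker(\pi_1(X_\Sigma^\circ)\to\pi_1(Y_\Sigma^\circ))$; hence $V_\Sigma^{\mathrm{min}}|_{X_\Sigma^\circ}$ is pulled back from $Y_\Sigma^\circ$ if and only if $T$ acts trivially on $V_{\Sigma,x}^{\mathrm{min}}$. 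Since $V_\bZ$ has neat monodromy, any subquotient local system of $V_\Sigma^{\mathrm{min}}$ that underlies a $\bQ$-variation of Hodge structure and has constant period map along $F$ automatically has trivial $T$-action (its $T$-image preserves a polarized Hodge structure, hence is relatively compact, and is torsion-free by neatness, so trivial). It therefore suffices to prove that the period map of $V_\Sigma^{\mathrm{min}}$ is constant on $F$, i.e.\ that in the Stein factorisation $X_\Sigma^\circ\xrightarrow{g} Z_\Sigma^\circ\to Y_\Sigma^\circ$ of the period map of $V_\Sigma^{\mathrm{min}}$ the second map is an isomorphism, after possibly shrinking $Y_\Sigma$.

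Next I would reduce to the graded pieces of the weight filtration. The local system $V(\Sigma)_\bQ$ is a quotient of $V_\bQ|_{\fT^*(\Sigma)}$ (via $\oplus_E N_E$) whose monodromy factors through $\pi_1(\fT(\Sigma))=\pi_1(X_\Sigma)$, because the meridians around the $E\in\Sigma$ act unipotently with logarithm $N_E$ and $N_E$ vanishes on the cokernel; as $V_\bQ|_{\fT^*(\Sigma)}$ underlies a polarizable variation, its monodromy is semisimple, hence so is that of $V_\Sigma$ and of its quotient $V_\Sigma^{\mathrm{min}}$. Thus the weight filtration $W_\bullet V_\Sigma^{\mathrm{min}}$, being a flag of sub-local-systems of a semisimple local system, splits, and $V_\Sigma^{\mathrm{min}}\cong\bigoplus_j\gr^W_j V_\Sigma^{\mathrm{min}}$ as local systems on $X_\Sigma^\circ$. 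So it is enough to show that each of the finitely many polarizable variations $\gr^W_j V_\Sigma^{\mathrm{min}}$ descends, after shrinking $Y_\Sigma$, to $Y_\Sigma^\circ$. For $j=k_\Sigma$ this is precisely the hypothesis $\gr^W_{k_\Sigma}V_\Sigma^{\mathrm{min}}=V_\Sigma^\trans=f_\Sigma^*U$, so the substance is in the pieces with $j>k_\Sigma$.

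The main obstacle is thus to show that no graded piece $\gr^W_j V_\Sigma^{\mathrm{min}}$ with $j>k_\Sigma$ varies nontrivially along $F$---equivalently, that the period map of $V_\Sigma^{\mathrm{min}}$ is no finer than that of $V_\Sigma^\trans$. The relevant tool is the relative theorem of the fixed part for admissible graded-polarizable variations of mixed Hodge structures (Kashiwara, Steenbrink--Zucker, M.~Saito): after shrinking $Y_\Sigma$, the fibrewise invariants $(V_\Sigma^{\mathrm{min}})^{T}$ underlie a sub-variation of mixed Hodge structures of $V_\Sigma^{\mathrm{min}}$ that is pulled back from $Y_\Sigma^\circ$ and contains $V_\Sigma^\trans$; one must then prove $(V_\Sigma^{\mathrm{min}})^{T}=V_\Sigma^{\mathrm{min}}$. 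I expect this to be the technical heart, and---since the CY-minimality of $V_\Sigma\to V_\Sigma^{\mathrm{min}}$ by itself only forces the \emph{sub}-variation generated by the Hodge bundle, namely $V_\Sigma^\trans$, to lie in $(V_\Sigma^{\mathrm{min}})^T$---to require the explicit description of $V_\Sigma^{\mathrm{min}}$ through the limiting mixed Hodge structure of the pure variation $V$: using the $N_E$-hard-Lefschetz decomposition of $\gr^{W(\Sigma)}V$, the compatibility of all the constructions at the level of filtered logarithmic flat vector bundles, and the dual description of $V_\Sigma^{\mathrm{min},\vee}$ as the smallest subvariation of $V_\Sigma^\vee$ detecting $\gr^m_F V_\Sigma$ (together with its top graded quotient $(V_\Sigma^\trans)^\vee=f_\Sigma^*U^\vee$), one aims to conclude that all of $V_\Sigma^{\mathrm{min},\vee}$, and dually all of $V_\Sigma^{\mathrm{min}}$, is $T$-invariant. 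Granting this, combining the descent of every $\gr^W_j V_\Sigma^{\mathrm{min}}$ with the local-system splitting from the second step yields that $V_\Sigma^{\mathrm{min}}|_{X_\Sigma^\circ}$ is pulled back from $Y_\Sigma^\circ$; the leftover points (that the period map of $V_\Sigma^{\mathrm{min}}$ factors through $f_\Sigma$ to begin with, and that the finitely many shrinkings of $Y_\Sigma$ can be performed simultaneously) are routine.
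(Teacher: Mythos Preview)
Your semisimplicity step is incorrect. You claim that $V_\bQ|_{\fT^*(\Sigma)}$ has semisimple monodromy because it underlies a polarizable variation, but $\fT^*(\Sigma)$ is only an analytic tubular neighborhood, not a Zariski open, and Deligne's semisimplicity theorem genuinely requires quasi-projectivity. In fact the representation is essentially never semisimple here: the meridians around the $E\in\Sigma$ are central in $\pi_1(\fT^*(\Sigma))$ and act by the nontrivial unipotents $\exp(N_E)$, and a nontrivial central unipotent forces a representation to be non-semisimple (by Schur it would act by a scalar, hence trivially, on each irreducible summand). So the weight filtration of $V_\Sigma^{\mathrm{min}}$ need not split as local systems, and your reduction to graded pieces is unjustified. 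Even ignoring this, knowing that each $\gr^W_j V_\Sigma^{\mathrm{min}}$ has trivial $T$-action would only give that $T$ acts unipotently on $V_\Sigma^{\mathrm{min}}$, and neatness does not kill unipotents.

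More seriously, your plan for the ``technical heart'' is not an argument: you list the hard Lefschetz decomposition, the dual description of $V_\Sigma^{\mathrm{min},\vee}$, and the theorem of the fixed part, but never say how they combine to force $(V_\Sigma^{\mathrm{min}})^T=V_\Sigma^{\mathrm{min}}$. The paper's proof supplies exactly the missing idea, and it lives on the dual side. One works with $V_\Sigma^{\mathrm{min},\vee}$ and shows that its deepest Hodge piece $F^{m-k_\Sigma}V_\Sigma^{\mathrm{min},\vee}$ is flat along a very general fiber $Z$ of $f_\Sigma$: the Griffiths transversality map $\gr_F^{m-k_\Sigma}V_\Sigma^{\mathrm{min},\vee}\to\gr_F^{m-k_\Sigma-1}V_\Sigma^{\mathrm{min},\vee}\otimes\Omega_Z$ factors through $\gr_F^{m-k_\Sigma-1}V_\Sigma^{\trans,\vee}\otimes\Omega_Z$ (which vanishes since $V_\Sigma^{\trans}$ descends to $Y_\Sigma$) because the complementary piece $\gr_F^{m-k_\Sigma-1}W_{-k_\Sigma-1}V_\Sigma^{\mathrm{min},\vee}$ is zero by a Hodge number count (there is no $(p,q)$-type with $q\le -m$ and $p+q<-k_\Sigma$, this being exactly the constraint that $\gr_F^m V_\Sigma^{\mathrm{min}}$ sits in weight $k_\Sigma$). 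Thus $F^{m-k_\Sigma}V_\Sigma^{\mathrm{min},\vee}|_Z$ is flat with trivial monodromy, so the theorem of the fixed part produces a sub-mixed-Hodge-structure of $V_\Sigma^{\mathrm{min},\vee}|_Z$ containing the Hodge line; now minimality on the \emph{dual} side (where $V_\Sigma^{\mathrm{min},\vee}$ is the \emph{smallest} subvariation of $V_\Sigma^\vee$ pairing nontrivially with $\gr_F^m$) forces this to be everything, at a very general point. This dualization is precisely what resolves the difficulty you flag, that minimality of the quotient $V_\Sigma\to V_\Sigma^{\mathrm{min}}$ says nothing about subobjects of $V_\Sigma^{\mathrm{min}}$.
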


\begin{proof} Let $Y^\circ_\Sigma\subset Y_\Sigma$ be a dense open set for which the fibers of $f_\Sigma$ are smooth and $f_\Sigma$ is a topological fibration.  As the fibers are connected, it suffices to show $V^{\mini,\vee}_\Sigma$ is trivial on a very general fiber $Z$ of $f_\Sigma$ over $Y_\Sigma^\circ$.  The connection yields a morphism $\gr_F^{m-k_\Sigma} V^{\mini,\vee}_\Sigma|_Z\to \gr_F^{m-k_\Sigma-1}V^{\mini,\vee}_\Sigma|_Z\otimes \Omega_{Z}$
and we have a commutative diagram
\[\begin{tikzcd}
\gr_F^{m-k_\Sigma} W_{-k_\Sigma-1}V^{\mini,\vee}|_Z\ar[d]\ar[r]&\gr_F^{m-k_\Sigma} V^{\mini,\vee}_\Sigma|_Z\ar[r,"\cong"]\ar[d]&\gr_F^{m-k_\Sigma} V^{\trans,\vee}_\Sigma|_Z\ar[d,"0"] \\
0=\gr_F^{m-k_\Sigma-1}W_{-k_\Sigma-1}V^{\mini,\vee}_\Sigma|_Z\otimes \Omega_{Z} \ar[r]&\gr_F^{m-k_\Sigma-1} V^{\mini,\vee}|_Z\otimes \Omega_{Z} \ar[r]& \gr_F^{m-k_\Sigma-1}V^{\trans,\vee}_\Sigma|_Z\otimes \Omega_{Z}
\end{tikzcd}\]
where the bottom left vanishing follows from the fact that $F^m \gr_{>k_\Sigma}^WV_\Sigma^\mini=0$, so $V_\Sigma^\mini$ has no Hodge weight $(p,q)$ piece for $p\geq m$ and $p+q>k_\Sigma$, hence $V_\Sigma^{\mini,\vee}$ has no Hodge weight $(p,q)$ piece for $q\leq -m$ and $p+q<-k_\Sigma$, and in particular for $p=m-k_\Sigma-1$.  Thus, the middle vertical morphism vanishes, and $F^{m-k_\Sigma} V_\Sigma^{\mini,\vee}$ is flat on $Z$.
It is therefore isomorphic as a flat bundle to $\gr_F^{m-k_\Sigma}V_\Sigma^{\trans,\vee}$, hence has trivial monodromy.
By the theorem of the fixed part \cite{andredeligne}, the fixed part $H^0(Z,V_\Sigma^{\mini,\vee})$ comes from a sub-variation of mixed Hodge structures which contains the Hodge bundle, hence $(V_\Sigma|_{Z})^{\mini,\vee}$ has trivial monodromy.  Since $Z$ is a very general fiber, we have $(V_\Sigma|_{Z})^{\mini,\vee}=V_\Sigma^{\mini,\vee}|_Z$, and this proves the claim.
\end{proof}

\begin{cor}\label{cor:strata lift}Assume the monodromy of $V_\bZ$ is neat.  For each stratum $\Sigma$ we have a commutative diagram
\begin{equation}
\begin{tikzcd}
\widetilde{\fT^\circ(\Sigma)}^{V^{\mini}_{\Sigma,\bQ}|_{\fT^\circ(\Sigma)}}\ar[rrr,"\rho(\Sigma)"]&&&\bP(V^\mathrm{min}_{\Sigma,\bC,x_\Sigma})^\an\\
\widetilde{X^\circ_\Sigma}^{V^{\mini}_{\Sigma,\bQ}|_{X^\circ_\Sigma}}\ar[u]\ar[r]&\widetilde{X^\circ_\Sigma}^{V^\trans_{\Sigma,\bQ}|_{X^\circ_\Sigma}}\ar[r,"\phi^{\trans}_\Sigma"]&\check\bD(V^\trans_{\Sigma,\bC,x_\Sigma})^\an\ar[r]&\bP(V^\trans_{\Sigma,\bC,x_\Sigma})^\an\ar[u]
\end{tikzcd}\label{fixed part}
\end{equation}
where $\fT^\circ(\Sigma)$ is a DR-neighborhood of $X^\circ_\Sigma$ as in \Cref{lem:tr split}.  Here, the bottom right horizontal map is the forgetful map which only remembers the Hodge line, the right vertical map is obtained from the inclusion $V_\Sigma^\trans\hookrightarrow V_\Sigma^\mathrm{min}$, and the top map is obtained by taking the image of the Hodge bundle $F^m\cV$ under the natural morphism of logarithmic flat bundles
\begin{equation}\label{eqn quotient}\cV|_{\fT^\circ(\Sigma)}\to V^\mathrm{min}(\Sigma)_\cO|_{\fT^\circ(\Sigma)} \end{equation}
where $V^\mathrm{min}(\Sigma)_\cO|_{\fT^\circ(\Sigma)}$ is the flat bundle associated to the extension of the local system $V^\mathrm{min}_{\Sigma,\bQ}|_{X^\circ_\Sigma}$ to $\fT^\circ(\Sigma)$.
 
\end{cor}
\begin{proof} The morphism \eqref{eqn quotient} arises from the dual of the inclusion of the extension guaranteed by \Cref{lem:compat}. The above composition is full rank on $F^m\cV$ in restriction to $X_\Sigma$ and this is an open condition.
\end{proof}

\begin{rem}\label{rem:GGR flat} The analysis of the differential in \Cref{lem:tr split} (in the case of the Griffiths bundle) is related to the ``infinitesimal period relation'' of \cite[\S2.6]{GGRtorsion}. The flat connection on the Griffiths bundle in tubular neighborhoods of numerically Hodge-trivial subvarieties constructed therein is obtained by the trivializing sections pulled back via the map $\rho(\Sigma)$ of \Cref{cor:strata lift}.  
\end{rem}

\subsection{Integrability}\label{subsect:integrability}
In the following for a stratum $\Sigma\subset \pi_0(D^\reg)$ we define $D_\Sigma\coloneqq  \bigcup_{E\notin\Sigma}\bar E|_{\overline{X_\Sigma}}$ to be the natural log smooth divisor of $\overline{X_\Sigma}$.
\begin{lem}\label{lem:everything is compatible}Let $(X,D)$ be a proper log smooth algebraic space and $V=(V_\bC,F^\bullet V_\cO)$ a CY polarizable complex variation of Hodge structures on $X\bs D$ with unipotent local monodromy.  Let $L=F^\pmax \cV$ be the Hodge bundle, that is, the Schmid extension of $F^\pmax V_\cO$.

\begin{enumerate}
\item $L$ is nef.
\item The Schmid extension of any power $(F^mV_\cO)^k$ is naturally identified with the same power $L^k$ of the Hodge bundle.
\item For any proper log smooth $(Y,D_Y)$ and morphism $g\colon (Y,D_Y)\to (\overline{X_\Sigma},D_\Sigma)$, the pullback of the Hodge bundle $g^*L$ is naturally identified with the Hodge bundle of the pullback $g|_{Y\bs D_Y}^*V^\trans_\Sigma$. 
\end{enumerate}
\end{lem}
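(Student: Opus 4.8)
The plan is to read (1), (2), (3) as three compatibilities of the Schmid/Deligne extension---with positivity, with tensor powers, and with pullback along log morphisms---in each of which the point of the unipotency hypothesis is that it forces the residues of the relevant Deligne extensions to be nilpotent, hence to have all eigenvalues $0\in(-1,0]$, a normalization preserved by every operation involved. For (1) I would equip the bottom Hodge piece $F^\pmax V_\cO$ with its Hodge metric $h$ on $X\bs D$; by Griffiths' curvature computation for the deepest piece of the Hodge filtration of a polarized variation, $(F^\pmax V_\cO,h)$ is semi-positively curved, and by Schmid's nilpotent orbit theorem and the Cattani--Kaplan--Schmid norm estimates $h$ extends to a singular Hermitian metric on the Schmid extension $L$ whose local weights are dominated by $\log$ of local equations of $D$ (so locally bounded) and whose curvature current is $\geq 0$ on all of $X$. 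Restricting to an arbitrary curve $C$ gives $L\cdot C=\int_C c_1(L,h|_C)\geq 0$, so $L$ is nef. (Alternatively, pull $V$ back to a smooth projective curve mapping to $X$, after a ramified base change making the monodromy unipotent, and invoke the classical semipositivity of the Deligne extension of the bottom Hodge piece over a curve.)

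For (2): $V^{\otimes k}$ is again a CY polarizable variation with unipotent local monodromy, with rank-one Hodge bundle $(F^\pmax V_\cO)^{\otimes k}=(F^\pmax V_\cO)^k$. The residues of $\cV$ along the components of $D$ are nilpotent and pairwise commuting, so those of $\cV^{\otimes k}$---built from them by the Leibniz rule---are nilpotent as well, whence $\cV^{\otimes k}$ is the Deligne extension of $V_\cO^{\otimes k}$. For the Hodge filtration, the nilpotent orbit theorem characterizes $F^\bullet\cV$ as the unique extension by holomorphic subbundles admitting a multivalued adapted frame whose Hodge norm is of moderate growth (polynomial in the $\log|t_i|$); the tensor product of $k$ such frames is an adapted frame for $\cV^{\otimes k}$ with the tensor filtration, and the Hodge norm is multiplicative under $\otimes$, so the growth condition persists. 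Thus the Schmid extension of $V_\cO^{\otimes k}$ is $(\cV^{\otimes k},F^\bullet\cV^{\otimes\bullet})$, and taking its deepest nonzero piece (using $\rk F^\pmax\cV=1$) identifies the Schmid extension of $(F^\pmax V_\cO)^k$ with $(F^\pmax\cV)^{\otimes k}=L^k$.

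For (3): by the identifications of Schmid extensions recorded in \S\ref{subsect:boundary data} in the CY unipotent case, $L|_{\overline{X_\Sigma}}$ is the Schmid extension over $\overline{X_\Sigma}$ of the Hodge bundle of the CY variation $V^\trans_\Sigma$ on $X_\Sigma=\overline{X_\Sigma}\bs D_\Sigma$, which, like $V$, has unipotent local monodromy (here along $D_\Sigma$). So it suffices to check that the Schmid extension is compatible with pullback along $g\colon(Y,D_Y)\to(\overline{X_\Sigma},D_\Sigma)$. On the level of log flat bundles: if a component of $D_Y$ maps into $D_\Sigma$ with multiplicities $(m_E)$, the residue of $g^*\cV_\Sigma$ along it is the $\bZ_{\geq 0}$-combination $\sum_E m_E\, g^*(\Res_E)$ of the commuting nilpotent residues of $\cV_\Sigma$, hence nilpotent, so $g^*\cV_\Sigma$ is the Deligne extension of $g^*(V^\trans_\Sigma)_\cO$. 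On the level of the Hodge filtration: pulling an adapted frame of moderate-growth Hodge norm back along $g$ yields, in local coordinates on $Y$, a frame whose Hodge norm is still of moderate growth (a polynomial in the $-\log|t_i|$ becomes a polynomial in the $-\log|s_j|$ since each $-\log|t_i|$ is a $\bZ_{\geq 0}$-combination of the $-\log|s_j|$), so $g^*F^\bullet\cV_\Sigma$ is the Schmid extension of $g^*F^\bullet(V^\trans_\Sigma)_\cO$. Passing to deepest graded pieces, $g^*L=g^*(F^\pmax\cV_\Sigma|_{\overline{X_\Sigma}})$ is identified with the Hodge bundle of $g|_{Y\bs D_Y}^*V^\trans_\Sigma$.

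I expect no serious obstacle: the substance is the correct invocation of the nilpotent orbit theorem and its standard consequences. The one place needing genuine care is the functoriality used in (2) and (3)---compatibility of the Deligne extension, and hence of the Schmid-extended Hodge filtration, with tensor products and with pullback; this fails without control of residue eigenvalues, and works here precisely because unipotency makes all residue eigenvalues $0$ while $\bZ_{\geq 0}$-combinations, tensor products, and pullbacks of commuting nilpotents remain nilpotent. For (1) the only genuinely external ingredient is the semipositivity of the Hodge metric on the bottom filtration piece together with Schmid's norm estimates.
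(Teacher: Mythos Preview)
Your proposal is correct and follows essentially the same approach as the paper: the paper's proof is a two-line appeal to \cite[Lemma 6.15]{BBT23} for (1) and to the functoriality of the Deligne extension under pullbacks and tensor operations in the unipotent case for (2) and (3), and you have simply unpacked what that functoriality means (nilpotent residues remain nilpotent under tensor products and $\bZ_{\geq 0}$-combinations, hence the $(-1,0]$ normalization is preserved). Your treatment of (1) via the Hodge metric and Schmid's norm estimates is the standard content behind the cited lemma.
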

\begin{proof}  Part (1) is as in \cite[Lemma 6.15]{BBT23}.  Parts (2) and (3) follow from the fact that the Deligne extension is functorial with respect to pullbacks and tensor operations if the local monodromy is unipotent. 
\end{proof}

\begin{defn}\label{defn:integrable}  Let $(X,D)$ be a proper log smooth algebraic space and $V=(V_\bC,F^\bullet V_\cO)$ a polarizable integral CY variation of Hodge structures on $X\bs D$ with unipotent local monodromy with Deligne/Schmid extension $(\cV,F^\bullet\cV)$.  We say the Hodge bundle $L=F^\pmax\cV$ is \emph{integrable} if, after replacing $(X,D)$ with a strictly log smooth modification, for any irreducible proper strictly log smooth $(Y,D_Y)$ with a morphism $g\colon (Y,D_Y)\to (\overline{X_\Sigma},D_\Sigma)$ which is generically finite onto its image, $g^*L$ is big whenever either of the following equivalent conditions is satisfied:
\begin{enumerate}
\item The Griffiths bundle of the transcendental part of the pullback $(g|_{Y\bs D_Y}^*V^\trans_\Sigma)^\trans$ is big.

\item The period map of the transcendental part of the pullback $(g|_{Y\bs D_Y}^*V^\trans_\Sigma)^\trans$ on $Y\bs D_Y$ is generically immersive.

\end{enumerate}
\end{defn}
Concretely, this means that if $g^*L$ is not big, then some subvariation of the pullback $g|_{Y\bs D_Y}^*V^\trans_\Sigma$ containing the Hodge bundle is isotrivial on a curve through the generic
point of $Y$.  Note the following important property of the Hodge bundle:

\begin{lem}[{\cite[Cor. 4.2.8.(iii).(b)]{DeligneII}}]\label{lem:deligne}Let $V$ be a polarizable integral CY variation of Hodge structures on an algebraic variety $Y$ with Hodge bundle $L$.  Then the following are equivalent:
\begin{enumerate}
\item $L$ is flat;
\item $L$ is torsion;
\item $V^\trans$ is isotrivial.
\end{enumerate}
\end{lem}

\begin{rem}\label{rmk:torsion Hodge}Using \Cref{lem:deligne}, integrability of the Hodge bundle as in \Cref{defn:integrable} is equivalent to either of the following conditions, which explains the name:  
\begin{enumerate}
\item For any $\Sigma$, $x\in X_\Sigma$, and reduced analytic germ $T\subset X_\Sigma$ at $x$, if $L|_T$ is flat (as a subbundle of $(V_\cO,\nabla)$) then $L|_{T^\mathrm{Zar}}$ is flat, where $T^\mathrm{Zar}$ is the Zariski closure.
\item The leaves of the foliation given by the fibers of the map which only remembers the Hodge bundle
\[\widetilde{X_\Sigma}^{V_{\Sigma,\bQ}^\trans}\to \bP(V^\trans_{\Sigma,\bC,x_\Sigma})^\an\]
are algebraic.
\end{enumerate}
Indeed, these two conditions are obviously equivalent to each other.  Moreover, with the notation of \Cref{defn:integrable}, $\widetilde{(Y\setminus D_Y)}^{g^*V^\trans_\Sigma}\to\bP(V^\trans_{\Sigma,\bC,x_\Sigma})^\an$ is generically immersive if and only if $g^*L$ is big, by \cite[Lemma 6.17]{BBT23}.  

Now, to see the equivalence of the above conditions with \Cref{defn:integrable}, first suppose the Hodge bundle is integrable in the sense of the definition, and take $Y'$ to be a log smooth compactification of a resolution of the Zariski closure of a leaf $\cL$.  If $\cL$ is not algebraic, by taking complete intersections we may find $Y\subset Y'$ for which the very general leaf of (2) on $Y$ is 1-dimensional and not algebraic.  Then $g^*L$ is not big and it is not flat on any curve through a very general point, which is a contradiction.  Conversely, if the foliation is algebraically integrable, then for $Y$ as in the definition for which $g^*L$ is not big, $Y$ is covered by positive-dimensional subvarieties (namely the leaves) along which $(g|_{Y\backslash D_Y}^*V_\Sigma^\trans)^\trans$ is isotrivial.
\end{rem}

\begin{rem}  It is proven in \cite{BBT23} that the Griffiths bundle is semiample on $X\bs D$.  The same is proven for the Hodge bundle of a CY variation whenever the Kodaira--Spencer map on the Hodge bundle is immersive. The strategy of \Cref{sect:semiample} can be used to prove the Hodge bundle of a CY variation is semiample on $X\bs D$ if it is integrable (on $X\bs D$) in the above sense.   
\end{rem}

\begin{lem}\label{lemma:Griffiths is integrable}Let $(X,D)$ be a proper log smooth algebraic space and $(_\orig V_\bC,F^\bullet {_\orig V_\cO})$ a polarizable complex variation of Hodge structures on $X\bs D$ with unipotent local monodromy with Deligne/Schmid extension $(_\orig\cV,F^\bullet{_\orig\cV})$.  Set  \[V\coloneqq  \bigotimes_p  \Exterior^{\rk F^p{_\orig V_\cO}}{_\orig V}.\]
Then:
\begin{enumerate}
\item For any subvariety $Y\subset \overline{X_\Sigma}$ with $Y_\Sigma\coloneqq  Y\cap X_\Sigma\neq\varnothing$, the following are equivalent: 
\begin{enumerate}
\item The Hodge bundle of $V$ is big in restriction to $Y$.
\item The Griffiths bundle of $V$ is big in restriction to $Y$.
\end{enumerate}
\item The Hodge bundle of $V$ is integrable.
\end{enumerate}
\end{lem}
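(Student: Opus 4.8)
The statement to prove is Lemma~\ref{lemma:Griffiths is integrable}: for $V=\bigotimes_p \Exterior^{\rk F^p{_\orig V_\cO}}{_\orig V}$, the Hodge bundle of $V$ (= Griffiths bundle of $_\orig V$) is big in restriction to $Y$ iff the Griffiths bundle of $V$ is, and consequently that the Hodge bundle of $V$ is integrable. The plan is to first reduce part (2) to part (1), then to prove part (1).

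\textit{Reduction of (2) to (1).} Recall from \Cref{subsub:CY with unip} (last bullet about $_\orig V_\Sigma^{\gr}$, and the identification of the Schmid extension of the Hodge bundle of $V$ with that of $V^\trans_\Sigma$) that the Hodge bundle of $V$, restricted to $\overline{X_\Sigma}$, is the Hodge bundle of $V^\trans_\Sigma$, which is $\bigotimes_p\det F^p$ of $_\orig V_\Sigma^{\gr}$ (one must check that the $\Grif(-)$ construction commutes with passing to the associated graded of the monodromy weight filtration, which follows because $\rk F^p$ is constant in a family and taking $\gr^W$ of a tensor construction on local systems is the corresponding tensor construction applied to $\gr^W$; this is where I would be careful). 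So after pulling back along $g\colon (Y,D_Y)\to(\overline{X_\Sigma},D_\Sigma)$ generically finite onto its image, $g^*L$ is the Griffiths bundle of the variation $W\coloneqq (g|_{Y\bs D_Y})^*{_\orig V}_\Sigma^{\gr,\trans}$ on $Y\bs D_Y$ — more precisely of its transcendental part in the sense of \Cref{defn:minimal} for pure variations. By definition of integrability, I must show $g^*L$ is big whenever the Griffiths bundle of $W^\trans$ is big. But $g^*L$ \emph{is} (a power of) the Griffiths bundle of (a variation whose transcendental part is) $W^\trans$, and by part (1) applied on $Y$ (with trivial stratum, i.e. $\Sigma=\varnothing$, and the variation $W$ in place of $_\orig V$), the Griffiths bundle of $W$ restricted to $Y$ is big iff the Hodge bundle of $\bigotimes_p\Exterior^{\rk}W$ restricted to $Y$ is big — and the latter is exactly $g^*L$ up to power. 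The bigness hypothesis of \Cref{defn:integrable}(1) is precisely bigness of the Griffiths bundle of $W^\trans$, and since replacing $W$ by its transcendental part does not change the Hodge bundle, part (1) closes the loop.

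\textit{Proof of (1).} This is the crux and the main obstacle. One direction is formal: the Griffiths bundle of $V$ is a positive tensor power times the Hodge bundle of $V$ times other $\det F^p$ factors, each of which is nef by \Cref{lem:everything is compatible}(1) (applied to the CY variations $\gr^W_k$ of the relevant exterior powers, or rather to each $\det F^p$ viewed as the Hodge bundle of $\Exterior^{\rk F^p}{_\orig V}$); a big line bundle plus nef is big, so (a) $\Rightarrow$ (b) is immediate. For (b) $\Rightarrow$ (a) — bigness of the full Griffiths bundle forces bigness of just the deepest piece $F^\pmax$ — the idea is that bigness of the Griffiths bundle means the period map of $_\orig V$ (or rather of the limit variation $_\orig V_\Sigma^{\gr}$) is generically immersive on $Y_\Sigma$, i.e. the total Kodaira--Spencer map is injective at a general point; one then must argue that injectivity of the full graded Kodaira--Spencer map already forces the piece landing in $F^\pmax$ to ``see'' all directions, i.e. that $\mathrm{KS}$ restricted to $F^\pmax \cV$ is generically injective on $Y$. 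Concretely: if a curve $C$ through a general point of $Y$ has $L|_C$ not big (so degree zero, since $L$ is nef), then by \Cref{cor:strata lift} / the infinitesimal period relation (\Cref{rem:GGR flat}) the Hodge line of $V$ is flat along $C$; by Griffiths transversality and the CY condition, flatness of $F^\pmax$ propagates up the Hodge filtration of $_\orig V$ — $F^\pmax \cV$ being $\bigotimes_p\det F^p{_\orig\cV}$, its flatness along $C$ forces each $\det F^p{_\orig\cV}$ (all nef) to be flat along $C$, hence degree zero along $C$ — so the Griffiths bundle of $_\orig V$, being a power of $\bigotimes_p\det F^p{_\orig\cV}$, also has degree zero along $C$, contradicting bigness of the Griffiths bundle. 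The hard point to nail down rigorously is the claim ``$F^\pmax{_\orig\cV}$ flat along $C$ $\Rightarrow$ each $\det F^p{_\orig\cV}$ flat along $C$'': this should come from the fact that $F^\pmax = \bigotimes_p\det F^p$ as a \emph{tensor product of nef line bundles}, so if the product has degree $0$ on $C$ and each factor has degree $\geq 0$, each factor has degree exactly $0$; combined with nef $+$ degree-zero-on-a-covering-family $\Rightarrow$ not big, we conclude the Griffiths bundle of $_\orig V$ (a fixed positive power of $\bigotimes_p\det F^p$) is not big. So the genuine content is just: a tensor product of nef line bundles is big iff it is big after restricting to every member of a covering family and at least one factor forces that — which reduces to the elementary fact that for nef $L_1,\dots,L_r$ on $Y$, $\bigotimes L_i$ is big iff $\bigotimes L_i$ restricted to a general complete-intersection curve has positive degree, and that degree is the sum of the (nonnegative) degrees $\deg L_i|_C$. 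I expect assembling these pieces — particularly matching up ``$V^\trans$ for pure variations'' with the $\gr^W$ of the $\Grif$ construction on the boundary — to be the only subtle bookkeeping; the geometric heart is the nef-tensor-product observation.
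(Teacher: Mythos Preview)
Your argument for (b) $\Rightarrow$ (a) in part (1) has a genuine gap: it is circular. You assume the Hodge bundle $L$ of $V$ has degree zero on a curve $C$ through a general point, observe that $L=\bigotimes_p\det F^p{_\orig\cV}$ is a tensor product of nef line bundles, and deduce that each $\det F^p{_\orig\cV}$ has degree zero on $C$. You then conclude that ``the Griffiths bundle of $_\orig V$, being a power of $\bigotimes_p\det F^p{_\orig\cV}$, also has degree zero along $C$, contradicting bigness of the Griffiths bundle.'' But the Griffiths bundle of $_\orig V$ \emph{is} $\bigotimes_p\det F^p{_\orig\cV}=L$ itself, so you have only restated your hypothesis. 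The hypothesis (b) concerns the Griffiths bundle of $V$, not of $_\orig V$, and these are different: the Griffiths bundle of $V$ is $\bigotimes_q\det F^qV_\cO$ for the much larger variation $V=\bigotimes_p\Exterior^{\rk F^p{_\orig V_\cO}}{_\orig V}$. Your nef-factor decomposition never touches this object.

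The missing step, and the paper's key input, is that the Griffiths bundle is ample on the image of a period map (from \cite{BBT23}). Thus if the Griffiths bundle of $_\orig V$ (equivalently, the Hodge bundle of $V$) has degree zero on $C$, the period map of $_\orig V$ is constant on $C$, so $_\orig V$ is isotrivial on $C$. Since $V$ is a tensor construction on $_\orig V$, $V$ is also isotrivial on $C$, and therefore the Griffiths bundle of $V$ has degree zero on $C$ as well, so it is not big. This is the bridge from $_\orig V$ to $V$ that your argument lacks. The same idea gives (2) directly (rather than by reduction to (1) as you outline): if $g^*L$ is not big, then ${_\orig V}_\Sigma^{\gr}$ is isotrivial on a curve through the generic point, hence so is a subvariation of $V_\Sigma$ containing the Hodge bundle, which is exactly the contrapositive of integrability.
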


\begin{proof}For (1), $(a)\Rightarrow(b)$ since the Griffiths bundle is the sum of the Hodge bundle and a semipositive line bundle. For the converse, first note that the Hodge bundle of $V_\Sigma$ is the Griffiths bundle of the associated graded of the entire nearby cycles functor $\psi_\Sigma ({_\orig V})$ along $X_\Sigma$.  
In particular, $(_\orig V)_\Sigma$ is isotrivial in restriction to $Y$ if and only if $\psi_\Sigma ({_\orig V})$ is by hard Lefschetz; see \S \ref{subsub:CY with unip}.  It also follows from hard Lefschetz that the Hodge bundle of $V$ on $Y$ is the sum of the Griffiths bundle of $(_\orig V)_\Sigma$ and a semipositive bundle, and that a psotive multiple of the Griffiths bundle of $(_\orig V)_\Sigma$ is the sum of the Hodge bundle of $V$ and a semipositive bundle.  

Now, recall that the Griffiths bundle is ample on the image of a period map
by \cite{BBT23}. Thus, if the Hodge bundle of $V$ on $Y$ is not big, then $(_\orig V)_\Sigma$ is isotrivial on a curve through the general point of $Y$, as therefore is $V_\Sigma$, so the Griffiths bundle of $V$ is not big on $Y$. 
Statement (2) is an immediate consequence, since if the Hodge bundle of $V$ is not big in restriction to $Y$,  $(_\orig V)_\Sigma$ is again isotrivial on a curve through the generic point of $Y$, as therefore is a part of $V_\Sigma$ which contains the Hodge bundle.
\end{proof}


\subsection{Unpolarized Hodge structures}\label{subsect:unpol orbits}

Let $\bfM$ be the Mumford--Tate group of a polarizable integral pure Hodge structure $(V_\bZ,F^\bullet V_\bC)$, $\bD$ the $\bfM(\bR)$-orbit of $F^\bullet V_\bC$ in the flag variety of filtrations on the vector space $V_\bC$, and $\bfM(\bZ)\subset \bfM(\bQ)$ the subgroup stabilizing the lattice induced by $V_\bZ$ in a chosen faithful representation of $\bfM$.
The following generalizes a result of Narasimhan--Nori \cite{Na81} in the case of abelian varieties and Huybrechts \cite[Cor 1.8]{Hu18} in the case of hyperk\"ahler varieties.
    \begin{lem}\label{lem:unpol orbit}For any $x\in \bfM(\bZ)\backslash \bD$, there are finitely many points $x'\in \bfM(\bZ)\backslash \bD$ for which the associated integral Hodge structures $V(x),V(x')$ are isomorphic.
\end{lem}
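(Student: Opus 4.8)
The plan is to reduce the statement to a finiteness-of-orbits property for the action of $\bfM(\bZ)$ on $\bD$, exploiting that a Hodge structure isomorphism is in particular an isomorphism of the underlying lattice. Concretely, suppose $x, x'\in\bfM(\bZ)\backslash\bD$ give isomorphic integral Hodge structures $V(x)\cong V(x')$. Lift $x,x'$ to filtrations $F^\bullet_x, F^\bullet_{x'}$ on $V_\bC$; an isomorphism $\psi\colon V(x)\to V(x')$ is then an element of $\GL(V_\bZ)$ carrying $F^\bullet_x$ to $F^\bullet_{x'}$, hence (conjugating Mumford--Tate groups) also carrying $\bfM$ to $\bfM$. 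The first point I would establish is that such $\psi$ lies in the normalizer $N\coloneqq N_{\GL(V_\bQ)}(\bfM)(\bZ)$, and that $\bfM(\bZ)$ is finite-index in $N$: indeed $\bfM$ has finite index in its normalizer inside $\GL(V_\bQ)$ because $\bfM$ is reductive and equals its own connected centralizer-times-center up to finite data (the outer automorphism group of a reductive group is finite, and the inner part is absorbed into $\bfM(\bQ)$ itself), so $N/\bfM(\bZ)$ is finite. Thus, modulo finitely many cosets, we may assume $\psi\in\bfM(\bZ)$, and then $x,x'$ lie in the same $\bfM(\bZ)$-orbit, i.e.\ $x=x'$ in $\bfM(\bZ)\backslash\bD$.

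The content that remains is the bookkeeping of the finitely many cosets: choosing coset representatives $\psi_1,\dots,\psi_r$ for $N/\bfM(\bZ)$, any $x'$ with $V(x')\cong V(x)$ satisfies $x'\in\{\,[\psi_i]\cdot x : i=1,\dots,r\,\}$ inside $\bfM(\bZ)\backslash\bD$, where the action of $\psi_i\in N$ on $\bfM(\bZ)\backslash\bD$ is well-defined because $\psi_i$ normalizes $\bfM$ and $\bfM(\bZ)$ is normal in $N$. Hence there are at most $r$ such points $x'$, which is the claim. To make the reduction to $N$ clean I would use that the Mumford--Tate group is a Hodge-theoretic invariant: if $V(x)\cong V(x')$ as integral Hodge structures then the isomorphism identifies their Mumford--Tate groups, and since both $x,x'\in\bfM(\bZ)\backslash\bD$ have Mumford--Tate group contained in $\bfM$ (with equality on a dense set, but in general only $\subseteq$), one must be slightly careful: a general point of $\bD$ has Mumford--Tate group all of $\bfM$, but special points have smaller groups, so I would instead argue directly that $\psi$ conjugates the ambient orbit $\bD$ (as an $\bfM(\bR)$-homogeneous space sitting in the flag variety) to itself, which holds as soon as $\psi$ maps one point of $\bD$ to another point of $\bD$ and normalizes $\bfM(\bR)$ — and the normalization of $\bfM(\bR)$ follows because $\psi\bfM\psi^{-1}$ and $\bfM$ are both Mumford--Tate-type groups sharing the orbit $\bD$.

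The main obstacle I anticipate is precisely this last point: showing that an integral isometry $\psi$ carrying $F^\bullet_x$ into $\bD$ (the fixed $\bfM(\bR)$-orbit we started with) must normalize $\bfM$. This is where the analogues in the abelian-variety case (Narasimhan--Nori) and the hyperkähler case (Huybrechts) use extra structure, and in general one should argue as follows: $\psi$ maps the $\bfM(\bR)$-orbit of $F^\bullet_x$ — which is $\bD$ — to the $(\psi\bfM(\bR)\psi^{-1})$-orbit of $F^\bullet_{x'}$; if this latter orbit is again contained in (hence equal to) $\bD$, then the two subgroups $\bfM(\bR)$ and $\psi\bfM(\bR)\psi^{-1}$ of $\GL(V_\bR)$ both act transitively on $\bD$, and a standard rigidity argument (the identity component of the isometry group of the period domain preserving the Hodge-theoretic structure is $\bfM(\bR)$, or: the Lie algebra $\mathfrak m$ is reconstructed from $\bD$ as the span of the horizontal-and-compatible vector fields) forces $\psi\bfM(\bR)\psi^{-1}=\bfM(\bR)$, hence $\psi$ normalizes $\bfM$ over $\bQ$ since $\psi\in\GL(V_\bQ)$. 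Once normalization is in hand the rest is the elementary finite-index argument above; I would allocate essentially all of the proof's effort to this normalization step and treat the coset count as routine.
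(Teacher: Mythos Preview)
Your approach has a genuine gap, and it is in both of the steps you identified. The finite-index claim is simply false: the normalizer $N_{\GL(V)}(\bfM)$ contains the full centralizer $Z_{\GL(V)}(\bfM)$, which by Tannakian duality is $\End_{\bfM}(V)^\times=\End_{\mathrm{HS}}(V(x_0))^\times$ for a generic $x_0\in\bD$, and this is typically a positive-dimensional reductive group (e.g.\ $\GL_n$ whenever $V$ contains $n$ copies of a simple Hodge summand). Your reasoning conflates $\Aut(\bfM)$ with $N_{\GL(V)}(\bfM)$; the finiteness of outer automorphisms only bounds $N/(Z\cdot\bfM)$, not $N/\bfM$.

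The normalization step (a) also fails, and for the same underlying reason the lemma is nontrivial. Take the abelian-variety case $\bfM=\mathrm{GSp}(V,Q)$. An unpolarized isomorphism $\psi\colon V(x)\to V(x')$ conjugates $\bfM$ to $\mathrm{GSp}(V,(\psi^{-1})^*Q)$, which equals $\bfM$ only when $\psi$ is a symplectic similitude---exactly what is not assumed. Knowing $\psi(F^\bullet_x)\in\bD$ tells you one point of the $(\psi\bfM\psi^{-1})(\bR)$-orbit lies in $\bD$; it does not force the two orbits to coincide, and your ``rigidity argument'' has no content here since $\psi\bfM\psi^{-1}$ and $\bfM$ are a priori different subgroups of $\GL(V_\bR)$ each acting transitively on its own orbit.

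The paper's proof proceeds entirely differently, following Narasimhan--Nori. After a finite reduction to an honest polarized period space $\Aut(V_\bZ,Q)\backslash\bD'$, one fixes $x$ and translates the question into: how many polarizations of discriminant $\operatorname{disc}(Q)$ does the single Hodge structure $V(x)$ carry, modulo its Hodge-automorphism group $G(\bZ)$ where $G=B^\times$ and $B=\End_{\mathrm{HS}}(V(x))$? One embeds the set of polarizations into the $\theta$-fixed part of $B_\bQ$ (for $\theta$ the Rosati-type involution coming from $Q$), checks that the $G(\bC)$-orbits on the relevant degree-one locus are finitely many and closed, and then invokes Borel--Harish-Chandra reduction theory \cite{BHC62} to deduce finitely many $G(\bZ)$-orbits on lattice points. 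The arithmetic input is essential; there is no purely group-theoretic shortcut through normalizers.
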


Note that as $x$ varies, the size of the set of such $x'$ is not in general bounded; see, e.g., \cite[\S 1.5]{Hu18}.
\begin{proof} In \cite{Na81}, this claim is proven for abelian varieties, and the same proof works in this more general context. We reproduce the proof for the ease of the reader.

First, note that $\bfM(\bZ)\bs\bD$ has a finite map to a usual period space $\Aut(V_\bZ,Q)\bs\bD'$ corresponding to a choice of polarized lattice $(V_\bZ,Q)$, and so it is enough to work with the latter space.

Next, consider the algebra $B=\End(V(x))$ of unpolarized Hodge endomorphisms. Since polarizable Hodge structures are a semisimple category, it follows that $B_{\bQ}$ is a semisimple algebra over $\bQ$. Moreover, the polarization gives an involution $\theta$ of $B_\bQ$; let $ B^{\theta}\subset B_\bQ$ be the elements fixed by $\theta$. Letting $G$ be the algebraic group $B^{\times}$, we see that $\theta$ gives an involution $G\to G^{\op}$. We shall prove that $V(x)$ admits only finitely many orbits of polarizations of discriminant $\mathrm{disc}(Q)$ for the action of $G(\bZ)$, which will prove the lemma.

Let $P$ be the set of polarizations of $V(x)$. There is a natural injection $\iota:P\to B_\bQ$
given by $\iota(Q')\coloneqq  \phi^{-1}_Q\circ \phi_{Q'}$ where $\phi_{Q'}:V(x)\to V(x)^{\vee}$. Let $F\subset B^{\theta}$ be the minimal lattice containing $\iota(P)$. There is a natural action of $G$ on $B^{\theta}$ given by $\pi(g)s\coloneqq  \theta(g^{-1})\circ s\circ g^{-1}$ for which $\iota$ is equivariant, and for which $G(\bZ)$ preserves $F$. Finally, let $F_1\coloneqq  \{f\in B^{\theta}_{\bC}\mid \det(f)=1\}$. As in \cite[Lemma 3.1]{Na81}\footnote{This lemma uses only that $(B_\bQ,\theta)$ is an involutive semisimple algebra.} the orbits of $G_\bC$ on $F_1$ are finite in number and closed. The result now follows by \cite[Thm 6.9]{BHC62}.
\end{proof}


\subsection{Combinatorial monodromy}\label{subsect:combin monodromy}

For any proper algebraic space $X$, a torsion line bundle $L$ has a canonical flat connection:  if $L^N\cong \cO_X$, then the local flat sections are those sections $s$ for which $s^N$ extends to a global section.  Equivalently, there is a finite \'etale cover $\pi:X'\to X$ for which $\pi^*L\cong \cO_{X'}$, and the flat connection on $L$ is inherited from the trivial connection on $X'$ whose flat sections are global sections.

If $X$ is now a nodal curve with normalization $\nu_X:X'\to X$ and $L$ a line bundle for which $\nu_X^*L$ is torsion, $L$ has a canonical flat connection whose local flat sections are sections $s$ whose restriction to $X^\reg$ are flat with respect to the above connection on $L|_{X^\reg}$, or equivalently those for which $\nu_X^*s$ is flat.

\begin{defn}\label{defn:torsion combo}
    Let $X$ be a proper algebraic space with a line bundle $L$.  We say $L$ has torsion combinatorial monodromy if for every proper nodal curve $C$ and morphism $g:C\to X$ for which $(g\circ \nu_C)^*L$ is torsion, the canonical flat connection on $g^*L$ has torsion monodromy.  Note that if $L$ is a Hodge bundle of a polarizable integral CY variation of Hodge structure, $(g\circ\nu_C)^*L$ is torsion if and only if $\deg g^*L=0$ by \Cref{lem:deligne}.
\end{defn}


\subsubsection{Torsion combinatorial monodromy of the Griffiths bundle}\label{subsect:Griffths torsion}
Thanks to a result of \cite{GGRtorsion}, the Griffiths bundle always has torsion combinatorial monodromy.

\begin{thm}[{Green--Griffiths--Robles \cite[Theorem 5.22]{GGRtorsion}}]\label{thm:GGR}Let $(X,D)$ be a proper log smooth algebraic space and $_\orig V=(_\orig V_\bZ,F^\bullet _\orig V_\cO)$ a polarizable integral pure variation of Hodge structures on $X\bs D$ with unipotent local monodromy.  Then the Griffiths bundle has torsion combinatorial monodromy.
\end{thm}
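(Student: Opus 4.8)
The plan is to reconstruct the argument of \cite[Theorem~5.22]{GGRtorsion}. Write $V=\Grif({_\orig V})$ for the auxiliary CY variation, with Hodge bundle $L=F^\pmax\cV$ (nef by \cite{BBT23}), fix a proper nodal curve $C$ with normalization $\nu_C\colon C'\to C$ and a morphism $g\colon C\to X$ with $(g\circ\nu_C)^*L$ torsion, and let $\chi\colon\pi_1(C)\to\bC^\times$ be the character determined by the canonical flat connection on $g^*L$; the goal is that $\chi$ has finite image. The first step is a reduction to combinatorial loops. Writing $\Gamma$ for the dual graph of $C$, the group $H_1(C;\bZ)$ is generated by the images of $H_1(C_i';\bZ)$, for $C_i'$ the components of $C'$, together with lifts of a generating set of $H_1(\Gamma;\bZ)$, so it is enough to bound the order of $\chi$ on each of these. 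On $H_1(C_i';\bZ)$ this is immediate: the restriction of $\chi$ is by construction the monodromy of the canonical flat connection of the torsion line bundle $L$ pulled back to the smooth proper curve $C_i'$, whose order divides any $N$ annihilating that bundle. So it remains to bound the order of $\chi(\gamma)$ for a simple combinatorial loop $\gamma$ (such loops generate $H_1(\Gamma;\bZ)$); passing to the subcurve of $C$ supported on the components and nodes traversed by $\gamma$, I may assume $C$ is a cycle of smooth curves $C_1,\dots,C_r$ joined at nodes $p_1,\dots,p_r$, with $\gamma$ the evident generator of $H_1(C;\bZ)\cong\bZ$.

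The main step — and the one I expect to be the true obstacle — is to exhibit $\chi(\gamma)$ as coming from a Hodge automorphism. Since $g^*L$ restricts to a numerically trivial bundle on each $C_j$, the Griffiths form (the curvature of the Hodge metric on $L$) vanishes on $C_j$, so the Hodge metric is flat along $C_j$ and the canonical flat connection on $g^*L|_{C_j}$ is unitary; this already forces $|\chi(\gamma)|=1$, but not yet that $\chi(\gamma)$ is torsion. For the latter one analyzes the transition data of $g^*L$ at the nodes. Near $p_j$, lying in a boundary stratum $X_{\Sigma_j}$, the residue-free logarithmic connection on the Deligne/Schmid extension $\cV$ induces a flat connection on $L$ along $g(C_j)$ recovering the canonical one (this is the ``infinitesimal period relation'' of \Cref{lem:tr split}; cf.\ \Cref{rem:GGR flat}), and Schmid's nilpotent orbit theorem identifies the discrepancy between the two branch-trivializations of the Hodge line at $p_j$ with an automorphism, in the category of mixed Hodge structures, of the CY-minimal quotient $V^{\min}_{\Sigma_j}$ of the boundary variation. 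Composing around the cycle, one realizes $\chi(\gamma)$ as the scalar by which a single automorphism $\psi$ of the limiting mixed Hodge structure $V^{\min}(x_0)$ at a base node $x_0$ acts on $\gr^\pmax_F$; restricting $\psi$ to the lowest weight graded piece gives an automorphism of the transcendental part $V^{\trans}(x_0)$, a polarizable integral \emph{pure} Hodge structure. The CY hypothesis $\rk F^\pmax=1$ is what makes this useful, since by \Cref{lem:inject autos} no information about $\psi$ is lost in passing to its action on $\gr^\pmax_F$. The genuinely technical part is the bookkeeping with relative monodromy weight filtrations needed to see that the node-by-node trivializations compose into an honest automorphism defined over $\bQ$ (up to a bounded root-of-unity ambiguity, absorbed by replacing $\gamma$ with a multiple); this is the content of \cite[\S2,4]{GGRtorsion}.

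Granting this, the conclusion is soft. The automorphism $\psi$ preserves the integral structure $V_\bZ(x_0)$, since it is assembled from the $\bZ$-local system transported around $\gamma$ and the nilpotent-orbit identifications, which are rational; and it preserves a polarization on the pure piece $V^{\trans}(x_0)$. Preserving the associated positive-definite Hodge form, $\psi$ lies in a compact group; preserving the lattice, it lies in a discrete one; hence $\psi$ has finite order. (Alternatively: by \Cref{lem:unpol orbit} the points $\psi^n\cdot x_0$ in the relevant arithmetic quotient carry isomorphic integral Hodge structures, and are therefore finite in number, which forces $\psi$ to have finite order.) Consequently $\chi(\gamma)$, the image of $\psi$ in $\Aut(\gr^\pmax_F)=\bC^\times$, is a root of unity; combined with the first step, this proves that $\chi$ has finite image, i.e. that the Griffiths bundle has torsion combinatorial monodromy.
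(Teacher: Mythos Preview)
There is a genuine gap in the final step. You assert that $\psi$ ``preserves a polarization on the pure piece $V^{\trans}(x_0)$'', but this is precisely what is at stake: the isotrivial variation $V^{\trans}(C)$ assembled along the cycle is only known to be polarizable on each irreducible component, and the graded polarizations induced at the different nodes (via $q(N_{\Sigma_j}^{k}\,\cdot\,,\,\cdot\,)$ for the varying strata $\Sigma_j$) need not coincide. Likewise, your norm-one claim from flatness of the Hodge metric concerns only the open part of each component and says nothing about the node transitions, where the Hodge metric degenerates. (Your alternative via \Cref{lem:unpol orbit} does not rescue this: $\psi$ is by construction a Hodge automorphism and so fixes the filtration, hence the point $x_0$; there is no nontrivial orbit to which the lemma applies.) So both the norm-one step and the polarization-preservation step remain unjustified for the combinatorial loop.

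The paper closes this gap with a computation specific to the Griffiths construction. Along any Griffiths-degree-zero curve $g\colon C\to X$ one has the isotrivial graded variation ${_\orig V}^{\gr}(C)$ with pure pieces $E_k=\gr_k^W{_\orig V}^{\gr}(C)$. The global polarization $q$ on ${_\orig V}$ yields a \emph{global} isomorphism $E_{w+k}\cong E_{w-k}^\vee(-w)$ over all of $C$, and combined with complex conjugation one obtains $L_{w+k}\cong\overline{L_{w-k}}^\vee$ for the Griffiths bundle of each $E_k$; hence $g^*L=\bigotimes_kL_k$ is canonically conjugate-self-dual, forcing $|\chi|\equiv 1$. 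Only then does the paper invoke the Hodge-theoretic step you had in mind (\Cref{claim:torsion}): a Hodge automorphism of a simple polarizable integral CY Hodge structure acting with norm one on the Hodge line automatically preserves every polarization, hence is torsion. The conjugate-self-duality is exactly the Griffiths-specific ingredient that fails for a general CY Hodge bundle (cf.\ \Cref{eg:int not tor}), and it is what your sketch is missing.
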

We give a slight generalization below.  We say the Hodge bundle of a CY variation has norm one combinatorial monodromy if the monodromy of the canonical connection on any Hodge degree 0 nodal curve acts by a character of complex norm one.
\begin{lem}\label{lem:GGR}Let $(X,D)$ be a proper log smooth algebraic space and $(V_\bZ,F^\bullet V_\cO)$ a polarizable integral pure CY variation of Hodge structures on $X\bs D$ with unipotent local monodromy.  If the Hodge bundle has norm one combinatorial monodromy, then it has torsion combinatorial monodromy.
\end{lem}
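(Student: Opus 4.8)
The plan is to reduce, as usual, to the following: given a connected proper nodal curve $g\colon C\to X$ such that $(g\circ\nu_C)^*L$ has finite order on each irreducible component of $C$, show that the canonical flat connection on $g^*L$ has finite monodromy $\rho\colon\pi_1(C^{\an})\to\bC^*$. Since $\pi_1(C^{\an})^{\mathrm{ab}}$ is finitely generated and $\rho$ factors through it, it is enough to prove that each value $\rho(\gamma)$ is a root of unity. The strategy is to show (i) that these values are algebraic units, (ii) that by hypothesis they have absolute value one, and (iii) that in fact all of their Galois conjugates lie on the unit circle, so that Kronecker's theorem applies.

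First I would establish that $g^*L$, with its canonical connection, is realized as a flat sub-$\bC$-local system of $g^*V_\bC$. On each component $\widetilde C_i$ of the normalization, $(g\circ\nu_C)^*L|_{\widetilde C_i}$ is torsion, hence numerically trivial; by the flatness of numerically trivial Hodge bundles along such subvarieties (the ``no residues'' phenomenon recalled in \Cref{rem:GGR flat}, made explicit by \Cref{cor:strata lift}) together with \Cref{lem:everything is compatible}, the Hodge bundle $F^{\pmax}(g\circ\nu_C)^*\cV|_{\widetilde C_i}$ is a $\nabla$-parallel line sub-bundle, and unipotence of the local monodromy of $V$ forces its local monodromy along $(g\circ\nu_C)^{-1}(D)$ to be trivial (the only eigenvalue of a unipotent operator on the Hodge line is $1$); thus it extends flatly across the boundary as a sub-line bundle $\ell_i\subset(g\circ\nu_C)^*\cV|_{\widetilde C_i}$ which coincides with the limiting Hodge line $F^{\pmax}\cV_{g(c)}$ at every point $c$. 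Since the two branches at a node of $C$ map to the same point of $X$ and therefore produce the same limiting Hodge line, the $\ell_i$ patch to a flat line bundle $\ell$ on $C$ which, by \Cref{defn:torsion combo}, is exactly $g^*L$ with its canonical connection. Restricting to $C^{\circ}\coloneqq C\setminus g^{-1}(D)$, where $g^*V_\bZ$ is an honest integral local system, $\ell|_{C^{\circ}}$ is a sub-local system of $g^*V_\bC|_{C^{\circ}}$; as $\ell$ has trivial local monodromy along $g^{-1}(D)$, the character $\rho$ is computed by the monodromy of $g^*V_\bZ$ on $C^{\circ}$, so each $\rho(\gamma)$ is an eigenvalue of an element of $\GL(g^*V_{\bZ,x})$, hence an algebraic unit, and by the norm one hypothesis $|\rho(\gamma)|=1$.

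The last step---promoting ``algebraic unit of absolute value one'' to ``root of unity''---is the heart of the matter and the only place where the norm one hypothesis is genuinely used. I would pass to $\ell_\bQ\subseteq g^*V_\bQ|_{C^{\circ}}$, the smallest $\bQ$-sub-local system with $\ell\subseteq\ell_\bQ\otimes\bC$: it is defined over $\bZ$, on each component it underlies a polarized CY sub-variation whose Hodge bundle is the flat line $\ell$, and the eigenvalues of its monodromy are precisely the Galois conjugates of the numbers $\rho(\gamma)$. It then suffices to show that $\ell_\bQ$ has finite monodromy, equivalently that all these conjugates lie on the unit circle. Here I expect to rerun the curvature and boundary analysis of \cite{GGRtorsion} underlying \Cref{thm:GGR}, with the assumed norm one bound playing the role that, for the Griffiths bundle, is played by the \emph{automatic} vanishing of the atomic mass of the Hodge-metric curvature of the Hodge bundle at the boundary: given norm one, the Hodge metric restricts to a flat metric on $\ell_\bQ$ along each component, and together with the semipositivity and the vanishing degree of the Hodge bundle on each $\widetilde C_i$ this forces the monodromy of $\ell_\bQ$ to be precompact, hence---being integral---finite. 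I expect this finiteness of the monodromy of $\ell_\bQ$ to be the main obstacle: the reductions and the integrality in the previous paragraph are essentially formal, whereas passing from ``the Hodge line is flat on each component'' (which norm one does not obviously improve) to ``the $\bQ$-sub-local system it generates has finite monodromy'' requires the quantitative input of \cite{GGRtorsion} combined with the norm one hypothesis, exactly as in the proof of \Cref{thm:GGR}.
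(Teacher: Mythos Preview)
Your proposal has a genuine gap at exactly the point you flag as the ``main obstacle,'' and the paper's proof shows that this obstacle dissolves once you identify the right object to work with.

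You correctly reduce to showing that the character $\rho$ takes values in roots of unity, and you observe that each $\rho(\gamma)$ is an algebraic unit of absolute value one. But your plan for the Kronecker step---``rerun the curvature and boundary analysis of \cite{GGRtorsion}'' to force the monodromy of $\ell_\bQ$ to be precompact---is not an argument; you yourself describe it as an expectation. The norm one hypothesis constrains only the eigenvalue on the Hodge line, and nothing you have written explains why this propagates to the other eigenvalues of the monodromy on $\ell_\bQ$. There is also a setup issue: you restrict to $C^\circ=C\setminus g^{-1}(D)$ to get an honest integral local system $g^*V_\bZ$, but $g(C)$ may lie entirely inside $D$, in which case $C^\circ=\varnothing$ and your embedding $\ell\subset g^*V_\bC|_{C^\circ}$ is vacuous.

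The paper avoids all of this by working not with $V$ but with the pointwise \emph{transcendental part} $V^\trans(g(c))$ (\Cref{defn:minimal}, \S\ref{subsub:CY with unip}), which is a \emph{simple} polarizable integral pure CY Hodge structure at every point of $C$. Along a Hodge-degree-zero curve these assemble into an isotrivial integral variation $V^\trans(C)$, polarizable on each component, whose monodromy $\gamma$ is an integral Hodge automorphism of a fixed simple CY Hodge structure $U$ acting with norm one on $F^mU_\bC$. The key step is then a one-line algebraic claim: for any polarization $q$ on $U$, the map $q-\gamma^*q\colon U_\bQ\to U_\bQ^\vee(-w)$ is a morphism of Hodge structures whose kernel contains the Hodge line (since $q(v,\bar v)=|\alpha|^2q(v,\bar v)$ when $\gamma v=\alpha v$ with $|\alpha|=1$, and $q(v,u)=0=q(\gamma v,\gamma u)$ for $u$ with no $\overline{F^mU_\bC}$-component). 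By simplicity of $U$, this kernel is all of $U_\bQ$, so $\gamma$ preserves $q$. Hence $\gamma$ lies in the compact group $\Aut(U_\bZ,q)\cap\Aut(U_\bR,F^\bullet)$ and is therefore of finite order. Simplicity of $V^\trans$ is the missing idea: it converts ``norm one on the Hodge line'' directly into ``preserves the polarization,'' bypassing any Kronecker-type argument or curvature analysis.
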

\begin{proof} For any connected nodal curve $g:C\to X$ with Hodge degree 0, the pointwise transcendental parts $V^\trans(g(c))$ form an isotrivial simple not-necessarily-polarizable integral pure variation of Hodge structures $V^\trans(C)$ which is polarizable on each irreducible component.  It suffices to show $V^\trans(C)$ is polarizable, which is a consequence of the following: 
\begin{claim}\label{claim:torsion}Let $U=(U_\bZ,F^\bullet U_\bC)$ be a simple polarizable integral pure CY Hodge structure with deepest Hodge filtration piece $F^\pmax U_\bC$ and $\gamma$ a Hodge automorphism of $U_\bZ$ which acts with norm one eigenvalues on $F^\pmax U_\bC$.  Then $\gamma$ preserves any polarization $q$ on $U$; in particular, it is torsion.
\end{claim}
\begin{proof} Following the proof of \cite{GGRtorsion}, consider
\[q-\gamma^*q:U_\bQ\to U_\bQ^\vee(-w)\]
where $w$ is the weight of $U$.  For $v\in F^\pmax U_\bC$, if $\gamma v=\alpha v$ with $|\alpha|^2=1$ then we have $q(v,\bar v)=|\alpha|^2q(v,\bar v)=q(\gamma v,\gamma \bar v)$.  On the other hand, for $u\in U_\bC$ with no Hodge component in $\overline{F^\pmax U_\bC}$, the same is true for $\gamma u$, and thus $q(v,u)=0=q(\gamma v,\gamma u)$.  Thus, $v\in\ker(q-\gamma^*q)$.  But $\ker(q-\gamma^*q)\subset U_\bQ$ is then a nonzero sub-$\bQ$-Hodge structure of $U_\bQ$ which must be all of $U_\bQ$ since $U_\bQ$ is simple.
\end{proof}
\end{proof}
\Cref{thm:GGR} follows from \Cref{lem:GGR} as follows.  Let $L$ be the Griffiths bundle of $_\orig V$.  As in the proof of \Cref{lem:GGR}, for any connected curve $g:C\to X$ with Griffiths degree 0 we obtain a well-defined isotrivial integral mixed variation ${ _\orig V}^{\gr}(C)$ which is graded polarizable on each irreducible component.  Set $E_k\coloneqq  \gr_k^W{ _\orig V}^{\gr}(C)$.  The polarization $q$ of $_\orig V$ gives a \emph{global} isomorphism $E_{w+k}\cong E_{w-k}^\vee (-w)$ where $w$ is the weight of $_\orig V$.  We also have global isomorphisms $F^pE_k\cong \overline{F^{k-p+1}E_k}$ for all $p$ and $k$.  Thus, 
\[F^pE_{w+k}\cong (E_{w-k}/F^{w-p+1}E_{w-k})^\vee\cong \overline{F^{p-k}E_{w-k}}^\vee.\]
If $L_k$ is the Griffiths bundle of $E_k$, then it follows that $L_{w+k}\cong \overline{L_{w-k}}^\vee$, and since $g^*L=\bigotimes_k L_{k}$, we have $g^*L\cong \overline{g^*L}^\vee$. Thus, $g^*L$ is conjugate-self dual, and hence the induced character of the $\pi_1(C)$ action on $L$ is norm 1. 

For later, we record the following consequence of the argument:

\begin{lemma}\label{lem:conjsdgriffiths}

For any $x\in X$, the Griffiths line of $_\orig V^{\gr}(x)$ is canonically conjugate self-dual.
\end{lemma}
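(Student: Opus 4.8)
The plan is to recognize that this assertion is precisely the pointwise incarnation of the computation already carried out above in deducing \Cref{thm:GGR} from \Cref{lem:GGR}, and to rerun that argument at the fiber over $x$. First I would unwind the definitions: if $\Sigma$ is the unique stratum with $x\in X_\Sigma$, then ${_\orig V}^{\gr}(x)=\bigoplus_k E_k$ with $E_k\coloneqq\gr_k^{W(\Sigma)}{_\orig V}_{\Sigma,x}$ a pure polarizable Hodge structure of weight $k$, so that its Griffiths line is $\bigotimes_k\Gamma(E_k)$, where for a Hodge structure $H$ I write $\Gamma(H)=\bigotimes_p\det F^pH$ for the (finite) tensor product over the $p$ with $F^pH$ a proper nonzero subspace. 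Writing $w$ for the weight of ${_\orig V}$, the goal is a canonical isomorphism $\bigotimes_k\Gamma(E_k)\cong\overline{\bigotimes_k\Gamma(E_k)}^{\vee}$.

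The key steps, in order, are: (1) The polarization $q$ of ${_\orig V}$ is part of the given data, hence canonical; restricting to a DR-neighborhood of $X_\Sigma$ and passing to the Deligne extension---licit since the local monodromy is unipotent and Deligne extensions commute with tensor and dual---the form $q$ induces a flat pairing which, being strict for the relative monodromy weight filtration $W(\Sigma)_\bullet$, specializes at $x$ to a perfect pairing $E_{w+k}\otimes E_{w-k}\to\bQ(-w)$, i.e.\ to a canonical isomorphism of Hodge structures $E_{w+k}\cong E_{w-k}^{\vee}(-w)$. (2) Combining this strictness with the Hodge symmetry $E_{w-k,\bC}=F^{w-p+1}E_{w-k}\oplus\overline{F^{p-k}E_{w-k}}$, exactly as in the cited deduction, one obtains for every $p$ a canonical isomorphism $F^pE_{w+k}\cong(E_{w-k}/F^{w-p+1}E_{w-k})^{\vee}\cong\overline{F^{p-k}E_{w-k}}^{\vee}$. (3) Taking determinants and tensoring over $p$ yields a canonical isomorphism $\Gamma(E_{w+k})\cong\overline{\Gamma(E_{w-k})}^{\vee}$. (4) Tensoring these over all $k$ and reindexing via the bijection $k\mapsto w-k$ of $\bZ$ gives $\bigotimes_k\Gamma(E_k)=\bigotimes_k\Gamma(E_{w+k})\cong\bigotimes_k\overline{\Gamma(E_{w-k})}^{\vee}=\overline{\bigl(\bigotimes_k\Gamma(E_k)\bigr)}^{\vee}$, which is the desired conjugate self-duality.

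I do not anticipate a serious obstacle: the statement is essentially the curve case of the deduction of \Cref{thm:GGR}, specialized to a point, and its only non-formal ingredient---strictness of the ambient polarization with respect to the monodromy weight filtration on the limit mixed Hodge structure---is standard (Schmid, Cattani--Kaplan--Schmid) and is already implicitly used there. What demands a little care is the bookkeeping in step (3): one must fix conventions for the ``nontrivial range'' of $p$ in the definition of $\Gamma$ so that passing to determinants and reindexing is legitimate, and check that each isomorphism invoked (the polarization pairing, Hodge symmetry, passage to determinants, the reindexing) is canonical, so that the resulting self-duality is itself canonical rather than a mere existence statement. One should also note that the argument uses the fiber of ${_\orig V}^{\gr}_\Sigma$---equivalently, the limit mixed Hodge structure at $x$ and the pairings $q$ induces on its graded pieces---so that no genericity hypothesis on $x$ is required.
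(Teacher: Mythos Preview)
Your proposal is correct and takes essentially the same approach as the paper: the lemma is recorded there simply as a consequence of the computation used to deduce \Cref{thm:GGR} from \Cref{lem:GGR}, and your write-up is exactly that computation specialized to the fiber over $x$, with the added care of tracking canonicity of each isomorphism.
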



\section{Quotient spaces}


\subsection{Equivalence relations}\label{subsect:equiv reln}Let $X$ be a proper algebraic space, and let $R\subset X(\bC)\times X(\bC)$ be a closed reflexive symmetric constructible\footnote{A closed constructible $R$ in $X(\bC)\times X(\bC)$ consists of the $\bC$-points of an algebraic variety in $X\times X$. All our relations are set-theoretic, and we refrain to call them algebraic, since we do not prescribe their schematic structure; see, e.g., \cite[Ex.~3]{K2012} for the subtleties of schematic equivalence relations.} relation and let $p_i:R\to X(\bC)$ be the two projections.  Then
\[R^+\coloneqq  (p_1\times p_2)(R\,{_{p_2}\times_{p_1}} R)\subset X(\bC)\times X(\bC)\]
is also a closed reflexive symmetric constructible relation, which on the level of points is defined by $x\sim_+ y$ if $x\sim z\sim y$ for some $z$.  Note that if for some constructible $U\subset X(\bC)$ we have 
\[R^+\cap (U\times X(\bC))=R\cap (U\times X(\bC))\]
then we also have 
\[(R^+)^+\cap(U\times X(\bC))=R\cap(U\times X(\bC)).\]
In this case, we say the $R$-related classes of $U$ are stable.  There is a maximal constructible subset $U\subset X(\bC)$ whose $R$-related classes are stable, given by $U=X(\bC)\bs p^+_1(R^+\bs R) $.

The equivalence relation generated by a closed, reflexive, symmetric, constructible relation $R$ is obtained by setting $R_0=R$ and letting $R_{j+1}=(R_j)^+$.  Then $R_j$ is the relation of being connected by a chain of $2^j$ $R$-equivalences, and $R^e\coloneqq  \bigcup_j R_j$ is the smallest equivalence relation containing $R$.  By the above, there is a maximal constructible $U_j\subset X(\bC)$ whose $R_j$-related classes are stable, and the $U_j$ form an increasing sequence of subsets of $X$.
\begin{lem}\label{lem:equiv reln}Let $X$ be a proper algebraic space and $R\subset X(\bC)\times X(\bC)$ a closed reflexive symmetric constructible relation.  Then there is a closed constructible subset $\Delta\subset X(\bC)$ such that: 
\begin{enumerate}
\item Every $x\in X(\bC)$ with non-constructible $R^e$-equivalence class is contained in $\Delta$.
\item The set of points $x\in \Delta$ with constructible $R^e$-equivalence class is contained in a countable union of nowhere dense constructible subsets of $\Delta$.
\end{enumerate}

\end{lem}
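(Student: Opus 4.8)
The plan is to take $\Delta$ to be the Zariski closure of $X(\bC)\setminus U_j$ for $j$ large enough that these closures have stabilized, and then to identify the good locus inside $\Delta$ with $\bigcup_j U_j$. Write $C_j(x)=\{\,y\in X(\bC):(x,y)\in R_j\,\}$ for the $R_j$-class of a point $x$, so that $C(x)\coloneqq\bigcup_j C_j(x)$ is the $R^e$-equivalence class of $x$ and the $C_j(x)$ form an increasing chain. I would begin with two preliminary observations. First, each $R_j$ is both closed and constructible: this holds for $R_0=R$, and inductively $R_{j+1}=R_j^+$ is the image of the closed set $R_j\,{_{p_2}\times_{p_1}}R_j\subset X^3$ under the projection to $X^2$, which is proper because $X$ is, and constructible by Chevalley; hence $C_j(x)=p_2\big((\{x\}\times X)\cap R_j\big)$ is constructible for every $x$. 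Second, since $U_j=X(\bC)\setminus p_1^+(R_j^+\setminus R_j)$, one has $x\in U_j$ if and only if $C_{j+1}(x)=C_j(x)$, and applying the stability observation recalled in the text (that if $R_j^+$ and $R_j$ agree over $U\times X(\bC)$ then so do $(R_j^+)^+$ and $R_j$) with $U=\{x\}$ forces $C_{j'}(x)=C_j(x)$ for all $j'\ge j$, so that $C(x)=C_j(x)$ is constructible. In particular every point of $\bigcup_j U_j$ has constructible $R^e$-class. The sets $X(\bC)\setminus U_j$ decrease, so by noetherianity the closures $\overline{X(\bC)\setminus U_j}$ stabilize; I set $\Delta\coloneqq\overline{X(\bC)\setminus U_{j_0}}$ for $j_0\gg0$, a closed (hence constructible) subset of $X(\bC)$.

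Property (1) is then immediate: if $C(x)$ is not constructible, then $x\notin U_j$ for all $j$ by the above, so $x\in X(\bC)\setminus U_j\subseteq\Delta$. For property (2) I would use the countable family $\{\Delta\cap U_j\}_{j\ge j_0}$. Each $\Delta\cap U_j$ is nowhere dense in $\Delta$: by construction $X(\bC)\setminus U_j$ is dense in $\Delta$, so as a dense constructible set it meets every irreducible component $W$ of $\Delta$ in a dense constructible subset, which therefore contains a dense open of $W$; hence $\Delta\cap U_j$ is contained in a proper closed subset of each component of $\Delta$, so its closure contains no component of $\Delta$ and has empty interior. Finally, if $x\in\Delta$ has constructible $C(x)$, then the increasing chain $C_0(x)\subseteq C_1(x)\subseteq\cdots$ of constructible sets has constructible union, so it stabilizes, i.e.\ $C_{j+1}(x)=C_j(x)$ for some $j$, i.e.\ $x\in U_j$; since $U_j\subseteq U_{j_0}$ for $j\le j_0$, this puts $x$ in some member of the family. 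Hence the family witnesses property (2).

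The one genuinely non-formal ingredient, and the step I expect to require the most care, is the claim that an increasing chain $C_0\subseteq C_1\subseteq\cdots$ of constructible subsets of $X(\bC)$ with constructible union $C$ must stabilize. I would prove it by passing to the decreasing chain $D_j\coloneqq C\setminus C_j$ of constructible sets, which has $\bigcap_j D_j=\varnothing$: the closures $\overline{D_j}$ stabilize to some closed $Y$, and if $Y\neq\varnothing$ then for $j\gg0$ each $D_j$ is dense in $Y$, hence contains a dense Zariski-open subset $O_j^\alpha$ of every irreducible component $Y_\alpha$ of $Y$; but then $\bigcap_j O_j^\alpha$ is $Y_\alpha(\bC)$ with only countably many proper closed subsets removed, which is nonempty because $\bC$ is uncountable, contradicting $\bigcap_j D_j=\varnothing$. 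Thus $Y=\varnothing$, so $D_j=\varnothing$, i.e.\ $C_j=C$, for $j\gg0$. The only other thing to watch is the bookkeeping, when $\Delta$ is reducible, about which constructible sets are dense in which irreducible components; everything else is formal, using Chevalley's theorem and the descending chain condition for closed subsets of a noetherian space.
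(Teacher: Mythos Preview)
Your proof is correct and follows exactly the same approach as the paper: take $\Delta$ to be the stable value of $\overline{X(\bC)\setminus U_j}$, use that $x\in U_j$ forces stabilization of the chain $C_j(x)$, and exhibit $\Delta\cap\bigcup_j U_j$ as the countable union of the nowhere dense sets $\Delta\cap U_j$. You have simply fleshed out the details the paper leaves implicit---notably the Baire-type argument for why an increasing chain of constructible sets with constructible union stabilizes, and the component-by-component check that $\Delta\cap U_j$ is nowhere dense.
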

\begin{proof}By the above, the $R^e$-equivalence class of $x$ is a union of the $R_j$-related classes $C_j$ of $x$, which form an increasing sequence of constructible subsets.  If $\bigcup_j C_j$ is constructible, then it must stabilize, $C_{j'}=\bigcup_j C_j$, for some ${j'}$.  In the above notation, let $\Delta_j=X(\bC)\bs U_j$.  Then the $\Delta_j$ form a decreasing sequence of constructible subsets, hence the closures $\overline \Delta_j$ stabilize to $\Delta$, and $\Delta\cap \bigcup_jU_j$ is a countable union of nowhere dense constructible subsets of $\Delta$. 
\end{proof}

\begin{rem}For any algebraic space $X$ and proper constructible equivalence relation $R\subset X(\bC)\times X(\bC)$, the quotient $X(\bC)/R$ exists in the category of definable topological spaces and we will always mean it as such.  Note that $X(\bC)/R$ can also be endowed with a Zariski topology, which is the quotient topology obtained by endowing $X(\bC)$ with the Zariski topology.  If $q:X(\bC)\to X(\bC)/R$ is the quotient, then for any closed constructible $Z\subset X(\bC)$ its saturation $q^{-1}(q(Z))\subset X(\bC)$ is closed constructible, either by definable Chow or because it is identified with $p_2(R\cap p_1^{-1}(Z))$. 
\end{rem}


\subsection{Hodge-theoretic equivalence relations}\label{subsect:Hodge equiv reln}

Let $(X,D)$ be a proper log smooth algebraic space and $(V_\bZ,F^\bullet V_\cO)$ a polarizable integral pure CY variation on $X\bs D$ with unipotent local monodromy and integrable Hodge bundle $L$.  There are several natural equivalence relations on $X(\bC)$.  
\subsubsection{}\label{subsubsect:R_tr}We define $R_\trans\subset X(\bC)\times X(\bC)$ to be the equivalence relation defined by $x\sim_\trans y$ if $V^\trans(x)\cong V^\trans(y)$ as (unpolarized) integral pure Hodge structures.
\subsubsection{}\label{subsubsect:R_curve}We define $R_\mathrm{curve}\subset X(\bC)\times X(\bC)$ to be the equivalence relation defined by $x\sim_\mathrm{curve}y$ if there is a proper connected curve $g:C\to X$ with $x,y\in g(C)$ for which $\deg g^*L=0$. For any irreducible component $C_0$ of such a curve, and $X_\Sigma$ the unique stratum containing the generic point of $C_0$, the transcendental part of $V^\trans_\Sigma$ restricted to $C_0\cap X_\Sigma$ is isotrivial by \Cref{lem:deligne}.  It follows that the pointwise transcendental parts $V^\trans(g(c))$ for $c\in C$ form an isotrivial not-necessarily-polarizable variation of integral pure Hodge structures $V^\trans(C)$ over $C$ which is polarizable in restriction to each irreducible component.  In particular, $R_\mathrm{curve}\subset R_\trans$.
\begin{lem}\label{lem:torsion comb mono hodge vs tr}The Hodge bundle $L=F^m\cV$ has torsion combinatorial monodromy if and only if for any proper curve $g:C\to X$ with $\deg g^*L=0$, the isotrivial variation $V^\trans(C)$ has finite monodromy.
\end{lem}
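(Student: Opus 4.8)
The plan is to identify, on any proper connected curve $g\colon C\to X$ with $\deg g^*L=0$, the line bundle $g^*L$ together with its canonical flat connection with the Hodge line bundle of the isotrivial variation $V^\trans(C)$, and then to translate finiteness of the monodromy $\rho_C$ of $V^\trans(C)$ into a norm-one condition on the action of $\rho_C$ on the one-dimensional Hodge line; the key input is that the pointwise transcendental parts are \emph{simple} (\Cref{claim:torsion}).

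First I would record some facts valid for every such $g\colon C\to X$. Since $L$ is nef (\Cref{lem:everything is compatible}), the vanishing $\deg g^*L=0$ forces $\deg(L|_{C_i'})=0$ on the normalization $C_i'$ of each irreducible component $C_i$. By the discussion preceding the lemma, $V^\trans(C)$ is an isotrivial integral pure Hodge structure which is \emph{polarizable} in restriction to each $C_i'$, so its monodromy on $C_i'$ lands in the automorphism group of a polarized integral Hodge structure, which is finite (a discrete subgroup of the compact group of real Hodge isometries). Hence the character $\chi_i\colon\pi_1(C_i')\to\bC^\times$ by which this monodromy acts on the Hodge line $F^\pmax$ has finite order; and since the Schmid extension of $F^\pmax V_\cO$ restricts to the Hodge bundle of the transcendental part and is compatible with pullback (\Cref{lem:everything is compatible}(3) and the constructions of \Cref{subsect:boundary data}), $L|_{C_i'}$ is this Hodge bundle, hence torsion, with canonical flat connection (\Cref{subsect:combin monodromy}) equal to the flat line bundle $L_{\chi_i}$. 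In particular $(g\circ\nu_C)^*L$ is torsion. If $C$ is moreover nodal, the canonical flat connection on $g^*L$ is characterized by the property that a local section is flat exactly when its pullback to $\bigsqcup_i C_i'$ is, and the Hodge line bundle of the isotrivial local system $V^\trans(C)$ on $C$ shares this property; so $(g^*L,\text{canonical connection})\cong L_{\chi_C}$, where $\chi_C\colon\pi_1(C)\to\bC^\times$ records the action of $\rho_C$ on the Hodge line.

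Granting this, the implication ``$V^\trans(C)$ has finite monodromy for all such $C$'' $\Rightarrow$ ``$L$ has torsion combinatorial monodromy'' is immediate: a nodal $g\colon C\to X$ with $(g\circ\nu_C)^*L$ torsion has $\deg g^*L=0$, so $\rho_C$, hence $\chi_C$, has finite image, and $g^*L\cong L_{\chi_C}$ has torsion monodromy. For the converse I would assume $L$ has torsion combinatorial monodromy, take $g\colon C\to X$ connected with $\deg g^*L=0$, fix $x_0\in C$ and set $U=V^\trans(x_0)$, a simple polarizable integral pure CY Hodge structure with a chosen polarization $q$. By \Cref{claim:torsion}, if $|\chi_C(\gamma)|=1$ for every $\gamma\in\pi_1(C,x_0)$ then every $\rho_C(\gamma)$ preserves $q$, so $\rho_C$ has image in the finite group of automorphisms of $(U_\bZ,q)$; thus it suffices to prove $|\chi_C(\gamma)|=1$ for all $\gamma$. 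For $\gamma$ supported in one component this is part of the facts above. For general $\gamma$ I would realize it inside a nodal curve: writing $\gamma$ up to homotopy as a concatenation of paths lying in the components $C_{i_1},\dots,C_{i_k}$ it traverses and joining the singular points $p_1,\dots,p_k$ of $C$ it passes through, form the nodal curve $\widehat C$ obtained by arranging copies $D_l=C_{i_l}'$ in a cycle, gluing $D_l$ to $D_{l+1}$ at a node over $p_l$ (inserting constant components, of $L$-degree $0$, if needed so that $\widehat C$ is nodal), equipped with the evident morphism $\widehat g\colon\widehat C\to X$ and continuous map $\tau\colon|\widehat C|\to|C|$ sending the cycle loop $\widehat\gamma$ to $\gamma$. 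Then $\deg(\widehat g\circ\nu_{\widehat C})^*L=\sum_l\deg(L|_{C_{i_l}'})=0$, so by the facts above and torsion combinatorial monodromy the canonical connection on $\widehat g^*L$ has torsion monodromy; since $V^\trans(\widehat C)$ is the pullback of $V^\trans(C)$ along $\tau$, this connection is $L_{\chi_C\circ\tau_*}$, whose monodromy around $\widehat\gamma$ is $\chi_C(\gamma)$, so $\chi_C(\gamma)$ is a root of unity.

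The main obstacle will be the bookkeeping in the converse: general test curves are not nodal, so one cannot apply the definition of torsion combinatorial monodromy to $C$ directly and $\pi_1(C)$ has generators running through non-nodal singularities, which I would handle by realizing each such loop inside an auxiliary nodal curve assembled from normalizations of components; this in turn requires checking carefully that the canonical connection of \Cref{subsect:combin monodromy} on these curves is genuinely computed by the Hodge line of $V^\trans$, using the pullback-compatibility of the Schmid extension and the uniqueness built into the definition of the canonical connection. Everything else is formal given \Cref{claim:torsion} and the finiteness of automorphism groups of polarized integral Hodge structures.
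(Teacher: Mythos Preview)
Your proof is correct, but considerably more elaborate than the paper's one-line argument. The paper simply observes: since $V^\trans(g(c))$ is a \emph{simple} polarizable Hodge structure, any Hodge automorphism of it is trivial if and only if it acts trivially on the Hodge line $\gr_F^m$ (this is \Cref{lem:inject autos}, or the argument in \Cref{claim:torsion} specialized to trivial action). Hence the monodromy group of $V^\trans(C)$ injects into---and is therefore isomorphic to---its image under restriction to the Hodge line, which is exactly the combinatorial monodromy of $g^*L$. Since $\pi_1(C)$ is finitely generated, ``finite image'' and ``torsion image'' are equivalent, and the lemma follows.

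Your route through norm-one eigenvalues and \Cref{claim:torsion} (concluding that $\rho_C$ preserves a polarization) certainly works, but it is a detour: you first show $\chi_C(\gamma)$ is a root of unity, then weaken this to $|\chi_C(\gamma)|=1$, then invoke \Cref{claim:torsion} to recover finiteness---whereas injectivity gives $\ord\rho_C(\gamma)=\ord\chi_C(\gamma)$ directly. Your careful construction of an auxiliary nodal curve $\widehat C$ realizing each loop $\gamma$ is correct and handles the discrepancy between ``nodal curve'' in \Cref{defn:torsion combo} and ``any proper curve'' in the lemma statement; the paper glosses over this, and in practice only applies the lemma in conjunction with \Cref{lem:nodal curve monodromy}, which furnishes a nodal curve with surjective $\pi_1$ mapping to any given connected algebraic space. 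So your bookkeeping, while valid, is subsumed by that forward reference.
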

\begin{proof}Let $c\in C$ be a basepoint. Since $V^\trans(C)$ is an isotrivial variation of pure Hodge structures (see previous paragraph), its monodromy acts via Hodge automorphisms.  Since $V^\trans(g(c))=V^\trans(C)_{c}$ is simple, a Hodge automorphism is trivial if and only if it is trivial on the component $\gr^m_FV^\trans(C)_{c}$. 
\end{proof}

\begin{lem}\label{lem:equiv reln stuff}\hspace{.5in}
\begin{enumerate}
\item\label{lem:equiv reln stuff p0} For any connected constructible subset $Z$ of a $R_\trans$-equivalence class, the closure $\bar Z$ is contained in an $R_\mathrm{curve}$-equivalence class.  
\item\label{lem:equiv reln stuff p1} If a $R_\trans$-equivalence class is constructible, then it is closed and its connected components are $R_\mathrm{curve}$-equivalence classes.  
\item\label{lem:equiv reln stuff p2} For any point $x\in X(\bC)$ for which $V^\trans(x)$ is maximal rank, the $R_\trans$-equivalence class of $x$ is constructible.
\item\label{lem:equiv reln stuff p3} For any closed reflexive symmetric constructible relation $R\subset X(\bC)\times X(\bC)$ for which $R_\mathrm{curve}\subset R^e\subset R_\trans$, $R^e$ is a closed constructible equivalence relation whose equivalence classes are finite unions of $R_\mathrm{curve}$-equivalence classes.
\end{enumerate}
\end{lem}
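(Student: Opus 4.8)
I would treat \eqref{lem:equiv reln stuff p0} as the substantive statement: \eqref{lem:equiv reln stuff p1} is then a formal consequence, \eqref{lem:equiv reln stuff p3} follows from \eqref{lem:equiv reln stuff p0} plus a Baire/Noetherianity argument, and \eqref{lem:equiv reln stuff p2} is essentially independent. The common thread is that constancy of $V^\trans$ on a subvariety forces numerical triviality of the Hodge bundle there, which forces any two of its points to be joined by an $L$-degree-zero curve; \Cref{lem:unpol orbit} is what turns the transcendental hypotheses into the finiteness that drives this.

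\textbf{Plan for \eqref{lem:equiv reln stuff p0}.} Since $\bar Z$ is connected, any two of its points lie on a connected chain of irreducible curves in $\bar Z$, and a chain of $L$-degree-zero curves glues to a single connected $L$-degree-zero curve, so it suffices to prove $\deg(g^*L)=0$ for every $g\colon C\to\bar Z$ from an irreducible curve. Decomposing $\bar Z$ into irreducible components, each the closure of a connected constructible dense subset of $Z$, and using that the components form a connected chain, I would reduce to the case that $\bar Z$ is irreducible, with generic stratum $X_\Sigma$; after shrinking, $Z$ may be taken to be a dense open subvariety of $\bar Z$ contained in $X_\Sigma$. Set $T\coloneqq (V^\trans_\Sigma|_Z)^\trans$, a polarizable integral sub-variation of $V^\trans_\Sigma|_Z$ containing the Hodge line, so that $F^\pmax T=L|_Z$. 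At a very general $w\in Z$ the theorem of the fixed part \cite{andredeligne} identifies $T_w$ with $V^\trans(w)\cong V^\trans(x_0)$; by \Cref{lem:unpol orbit} the filtrations inducing a Hodge structure abstractly isomorphic to $V^\trans(x_0)$ form finitely many orbits of the arithmetic automorphism group, hence a discrete subset of the period domain of $T$, so the period map of $T$ — defined on a connected space and taking values, at a dense set of points, in that discrete set — is constant, and $T$ has finite monodromy. Since the local monodromy of $V_\bZ$, hence of $T$, is unipotent, these finite local monodromies are trivial, so on a proper strictly log smooth model $h\colon(Z',D_{Z'})\to(\overline{X_\Sigma},D_\Sigma)$ birational onto $\bar Z$ with $Z'\setminus D_{Z'}$ a dense open of $Z$, the monodromy of $T$ factors through $\pi_1(Z')$, and a finite étale cover $Z_1'\to Z'$ trivializes it. Then $F^\pmax T$ pulls back to a trivial flat line bundle, so by \Cref{lem:everything is compatible}(3) and functoriality of the Deligne extension the pullback of $L$ to $Z_1'$ is $\cO_{Z_1'}$; as $Z_1'\to\bar Z$ is surjective, $\deg(g^*L)=0$ for all $g$, as desired.

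\textbf{Plan for \eqref{lem:equiv reln stuff p1}--\eqref{lem:equiv reln stuff p3}.} For \eqref{lem:equiv reln stuff p1}: a constructible $R_\trans$-class $E$ has finitely many connected components $Z$, and \eqref{lem:equiv reln stuff p0} places each $\bar Z$ in an $R_\mathrm{curve}$-class $\mathcal C$; since $\mathcal C$ is connected, meets $Z$, and (as $R_\mathrm{curve}\subseteq R_\trans$) lies in $E$, one gets $\mathcal C=Z=\bar Z$, so $E$ is closed with components equal to $R_\mathrm{curve}$-classes. For \eqref{lem:equiv reln stuff p2}: if $\rk V^\trans(x)=r$ is maximal, then on each stratum $X_\Sigma$ the set $\{y:V^\trans(y)\cong V^\trans(x)\}$ is empty unless the generic rank of $V^\trans_\Sigma$ is $r$, in which case (using simplicity of $V^\trans(x)$) it equals $\{y:(V^\trans_\Sigma)_y\cong V^\trans(x)\}$; via the Stein-factored period map $X_\Sigma\to Y_\Sigma\to\Gamma_\Sigma\backslash\bD_\Sigma$ with $Y_\Sigma$ algebraic \cite{BBT23} and the second map finite, together with \Cref{lem:unpol orbit} (only finitely many points of $\Gamma_\Sigma\backslash\bD_\Sigma$ carry a Hodge structure abstractly isomorphic to $V^\trans(x)$), this is the preimage of a finite set, hence constructible; summing over finitely many strata gives \eqref{lem:equiv reln stuff p2}. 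For \eqref{lem:equiv reln stuff p3}: an $R^e$-class $\mathcal E=\bigcup_j C_j$ (with $C_j$ the increasing constructible classes of the relations $R_j$ generating $R^e$) is a union of $R_\mathrm{curve}$-classes inside one $R_\trans$-class; for each irreducible component $W_i$ of the closed set $\overline{\mathcal E}$, density of $\mathcal E\cap W_i$ in $W_i$ and uncountability of $\bC$ force some $C_j\cap W_i$ to contain a dense open of $W_i$, and \eqref{lem:equiv reln stuff p0} then puts $W_i$ in an $R_\mathrm{curve}$-class that meets, hence lies in, $\mathcal E$; so $\overline{\mathcal E}\subseteq\mathcal E$ and, counting components, $\mathcal E$ is a finite union of $R_\mathrm{curve}$-classes. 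Finally each $R_j$ is closed constructible (properness of $X$ makes $R_{j+1}$ the image of a closed set under a proper map), and since every $R^e$-class is now constructible the stability loci $U_j$ of \Cref{subsect:equiv reln} exhaust $X$; a Noetherianity argument as in \Cref{lem:equiv reln} then gives $U_j=X$ for $j\gg0$, i.e. $R_j=R^e$ is closed constructible.

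\textbf{Main obstacle.} The crux is \eqref{lem:equiv reln stuff p0}, and within it the passage from constancy of $V^\trans$ on $Z$ to numerical triviality of $L$ on $\bar Z$. Three points need care: the hypothesis controls only the \emph{pointwise} transcendental parts (handled by replacing them with the variational transcendental part $T$ and the theorem of the fixed part); the favorable behavior holds a priori only on a dense open subset of $\bar Z$ (handled by passing to a proper model $Z_1'$ surjecting onto $\bar Z$); and finiteness of monodromy only yields numerical triviality after being combined with unipotence of the local monodromy to trivialize $T$ on a cover of the model. Everything else is formal once \eqref{lem:equiv reln stuff p0} and \Cref{lem:unpol orbit} are in hand.
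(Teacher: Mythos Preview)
Your proposal is correct and follows the same essential ideas as the paper, though with noticeably different packaging in parts \eqref{lem:equiv reln stuff p0} and \eqref{lem:equiv reln stuff p3}.

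For \eqref{lem:equiv reln stuff p0}, the paper argues curve-by-curve: for any proper irreducible curve $C$ with generic point in $Z$, the isotriviality of $V^\trans(C)$ (which, as you correctly identify, comes from \Cref{lem:unpol orbit} via discreteness of the relevant period locus) gives $\deg g^*L=0$ directly, and connectedness of $\bar Z$ finishes.  You instead pass to a proper log smooth model of $\bar Z$, trivialize the Hodge bundle on a finite \'etale cover, and deduce numerical triviality on all of $\bar Z$ at once.  Your route is more elaborate but makes the role of unipotent local monodromy and the Deligne/Schmid extension fully explicit---points the paper's one-line argument leaves implicit.  (Incidentally, in your Baire step in \eqref{lem:equiv reln stuff p3}, the claim that some $C_j\cap W_i$ contains a dense open follows already from Noetherianity: the increasing closures $\overline{C_j\cap W_i}$ stabilize, and if the stable value were proper it would contradict density.)

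For \eqref{lem:equiv reln stuff p3}, the paper takes the bad locus $\Delta$ from \Cref{lem:equiv reln}, then applies \eqref{lem:equiv reln stuff p2} and \eqref{lem:equiv reln stuff p1} on a resolution of each component of $\Delta$ to produce, at a generic point of $\Delta\setminus\Xi$, a constructible $R_\trans|_\Delta$-class that splits into finitely many $R^e$-classes---a contradiction.  You instead show directly that every $R^e$-class is closed (via \eqref{lem:equiv reln stuff p0} applied to a dense open of each irreducible component of its closure), and only then invoke the $\Delta$-machinery to conclude $R_j=R^e$ for large $j$.  Your approach uses only \eqref{lem:equiv reln stuff p0}, not \eqref{lem:equiv reln stuff p1}--\eqref{lem:equiv reln stuff p2}, and avoids passing to resolutions of $\Delta$; the paper's approach is shorter but leans on having already set up the pointwise stratum-by-stratum analysis in \eqref{lem:equiv reln stuff p2}.
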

\begin{proof}
For \eqref{lem:equiv reln stuff p0}, if a $R_\trans$-equivalence class contains a constructible set $Z$, then for any proper connected curve $C\subset X$ whose generic point is contained in $Z$, $V^\trans(C)$ as above is isotrivial, so $C$ is contained in a $R_\mathrm{curve}$-equivalence class, hence $C\subset Z$.  Thus $\bar Z$ is contained in a $R_\mathrm{curve}$-equivalence class.  Part \eqref{lem:equiv reln stuff p1} follows immediately from \eqref{lem:equiv reln stuff p0}.

For \eqref{lem:equiv reln stuff p2}, let $x\in X(\bC)$ be such a point and $Z$ its $R_\trans$-equivalence class.  Then for any $z\in Z\cap X_\Sigma$, $V^\trans(x)=V^\trans_\Sigma$.  Applying \Cref{lem:unpol orbit}, $Z\cap X_\Sigma$ is constructible, hence $Z$ is.

For \eqref{lem:equiv reln stuff p3}, by \Cref{lem:equiv reln} there is a closed constructible $\Delta\subset X(\bC)$ such that outside a countable union of nowhere dense constructible subsets $\Xi\subset \Delta$, each $R$-equivalence class is non-constructible. However applying \eqref{lem:equiv reln stuff p2} to (a resolution of) each irreducible component $\Delta_0$ of $\Delta$, each $R_\trans|_{\Delta_0}$-equivalence class in $\Delta_0$ of maximal rank transcendental part is constructible, as therefore is any $R_\trans|_\Delta$-equivalence class in $\Delta$ of maximal rank transcendental part.  If $\Delta$ were nonempty, there would then be a point $x\in\Delta\bs \Xi$ of maximal rank transcendental part, hence constructible $R_\trans|_\Delta$-equivalence class $Z$.  By \eqref{lem:equiv reln stuff p1}, $Z$ is partitioned into finitely many constructible $R_\mathrm{curve}$-equivalence classes, hence also into finitely many constructible $R^e$-equivalence classes.  This is a contradiction, so $\Delta=\varnothing$. 
\end{proof}

\subsection{Properties of stratifications}\label{subsect:good strat}
Let $(X,D)$ be a proper strictly log smooth algebraic space and $V=(V_\bZ,F^\bullet V_\cO)$ a polarizable integral pure CY variation on $X\bs D$ with unipotent local monodromy.  As in \Cref{subsect:map to Pn}, again consider the period map of $V^\trans_\Sigma$ 
\[\begin{tikzcd}
X_\Sigma^\define\ar[r,"f_\Sigma^\define"]&Y_\Sigma^\define\ar[r,"\psi_\Sigma"]&\Gamma_\Sigma\backslash \bD_\Sigma.
\end{tikzcd}\]
Note that some power of the Hodge bundle naturally descends to $Y_\Sigma$.  Recall that by the Griffiths criterion, $f_\Sigma:X_\Sigma\to Y_\Sigma$  extends to a proper map $\breve f_\Sigma:\breve X_\Sigma\to Y_\Sigma$ where $\breve X_\Sigma$ is the union of strata obtained from $\overline{X_\Sigma}$ by deleting divisors $E$ along which $V^\trans_\Sigma$ has nontrivial monodromy. 
\begin{property}\label{defn:clarified}Let $(X,D)$ be a proper strictly log smooth algebraic space and $V=(V_\bZ,F^\bullet V_\cO)$ a polarizable integral pure CY variation on $X\bs D$ with  neat\footnote{Local unipotent monodromy is sufficient to define (B1) and (B3). The neatness is used only for (B2).} 
monodromy. Let $R$ be a closed constructible equivalence relation on $X(\bC)$ such that $R_\mathrm{curve}\subset R\subset R_\trans$. We define the following properties of a boundary component $\Sigma\subset \pi_0(D^\reg)$. 
\begin{enumerate}
\item[(B1)\>\>\>]\label{b1} For any irreducible curve $C\subset X_\Sigma$ whose closure $\bar C\subset\overline{X_\Sigma}$ has degree 0 with respect to the Hodge bundle, $C$ is contained in a fiber of $f_\Sigma$.  
\item[(B2)\>\>\>]\label{b2} The open set $Y^\circ_\Sigma\subset Y_\Sigma$ from \Cref{lem:tr split} is all of $Y_\Sigma$.
\item[(B3)$_R$]\label{b3} For any other stratum
 $X_{\Sigma'}$, every irreducible component of $R\mid_{X_{\Sigma}\times X_{\Sigma'}}$ is surjective onto $X_\Sigma$.
\end{enumerate}
We say $(X,D)$ satisfies (B1), (B2), or (B3)$_R$ if every boundary stratum does so. For $R=R_{\mathrm{curve}}$
we simply write (B3) with no subscript.
\end{property}
\begin{remark} 
Condition \customref{b1}{(B1)} is equivalent to the Hodge bundle being strictly nef on $Y_\Sigma$, i.e., on a log smooth compactification of a resolution, the extended Hodge bundle has positive degree on any curve meeting the interior. For any irreducible curve $C\subset X_\Sigma$ whose closure $\bar C\subset\overline{X_\Sigma}$ has degree 0 with respect to the Hodge bundle,  $(V^{\trans}_{\Sigma}|_{C})^{\trans}$ is isotrivial by \Cref{lem:deligne}, but in general $V^{\trans}_{\Sigma}|_{C}$ is not and the period map $f_{\Sigma}$ may not be constant along $C$; see \S\ref{eg:BBvsBBH}. However, the integrablity grants that the locus of such curves is contained in a strict algebraic subvariety of $X_{\Sigma}$, so, up to an snc modification, $X_{\Sigma}$ satisfies \customref{b1}{(B1)} as shown in \Cref{lem:clarified}.
Condition \customref{b3}{(B3)}${}_{R}$ grants that the  stratification of $X$ by $X_{\Sigma}$ induces a stratification $\fY_{S}$ on $\fY\coloneqq  X(\bC)/R$ making $q \colon |X| \to \fY\coloneqq  X(\bC)/R$ a morphism of stratified spaces. Condition \customref{b2}{(B2)} allows to descent $V^{\trans}_{\Sigma}$ to $\fY_{S}$; see proof of \Cref{thm:easy algebraize}. 
\end{remark}

\begin{lem}\label{lem:dominanceXYSigma}

Consider a closed constructible equivalence relation $R$ as above satisfying \customref{b1}{(B1)}. Let $R_1=R\cap X_{\Sigma}\times X_{\Sigma'}$ and let $R_0\subset R_1$ be an irreducible component. 

\begin{enumerate}

\item If $R_0$ is not dominant over $X_\Sigma$, then it is not dominant over $Y_\Sigma$.

\item If $R_0$ is dominant over $X_{\Sigma}$, then the complement of its projection $X_{\Sigma}\bs\pi_{X_{\Sigma}}(R_0)$ is not dominant over $Y_\Sigma$.

\end{enumerate}

\end{lem}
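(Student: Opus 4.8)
The plan is to transport the two dominance statements between $X_\Sigma$ and $Y_\Sigma$ along $f_\Sigma$, using crucially that $R$ already contains the fibrewise equivalence relation of the proper extension $\breve f_\Sigma$. The key reduction I would make is to show that over a dense open subset of $Y_\Sigma$ the relation $R\cap(X_\Sigma\times X_{\Sigma'})$ is \emph{saturated} for the map $\Phi\coloneqq f_\Sigma\times\mathrm{id}_{X_{\Sigma'}}$; once this is known, each of its irreducible components — in particular $R_0$ — is, generically, the $\Phi$-preimage of an irreducible subset of $Y_\Sigma\times X_{\Sigma'}$, and both (1) and (2) follow by elementary manipulation of images and preimages.

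\emph{Step 1: fibres of $\breve f_\Sigma$ lie in a single $R$-equivalence class.} First I would record that, by the construction of $\breve f_\Sigma$, the variation $V^\trans_\Sigma$ extends across the boundary of $\breve X_\Sigma$ and is pulled back from $Y_\Sigma$, say $V^\trans_\Sigma=\breve f_\Sigma^* U$ (using neatness of the monodromy of $V_\bZ$). By \Cref{lem:everything is compatible}, applied after a log resolution, $L|_{\breve X_\Sigma}$ is the Hodge bundle of $V^\trans_\Sigma|_{\breve X_\Sigma}$, hence a power $L^N|_{\breve X_\Sigma}$ is the pullback of a line bundle on $Y_\Sigma$; in particular $L$ is numerically trivial on every fibre of $\breve f_\Sigma$. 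Since $\breve f_\Sigma$ is proper, for general $y\in Y_\Sigma$ the closure $\overline{F_y}$ of the fibre $F_y=f_\Sigma^{-1}(y)$ is an irreducible proper subvariety contained in $\breve f_\Sigma^{-1}(y)$ (the fibres of $f_\Sigma$ being geometrically connected, and smooth after shrinking $Y_\Sigma$), so every curve in $\overline{F_y}$ has $L$-degree $0$. As an irreducible variety is connected by a chain of curves, it follows from $R_\mathrm{curve}\subset R$ and transitivity of $R$ that $\overline{F_y}\times\overline{F_y}\subset R$ for general $y$. I would use \customref{b1}{(B1)}--\customref{b2}{(B2)} to make this uniform over all of $Y_\Sigma$ — they identify $f_\Sigma$ with a fibration whose fibres are numerically $L$-trivial, hence each contained in a single $R$-class — but only the generic statement is strictly needed below.

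\emph{Step 2: descent of $R_0$ and conclusion.} Let $R^{\mathrm{sat}}\coloneqq R\cap(X_\Sigma\times X_{\Sigma'})$, and choose a dense open $Y^{\ast}\subset Y_\Sigma$ over which $f_\Sigma$ is flat and surjective with integral fibres and over which Step 1 applies. If $(z,z')\in R^{\mathrm{sat}}$ with $f_\Sigma(z)\in Y^{\ast}$ and $z''\in f_\Sigma^{-1}(f_\Sigma(z))$, then $(z'',z)\in R$ by Step 1 and $(z,z')\in R$, so $(z'',z')\in R^{\mathrm{sat}}$; hence $R^{\mathrm{sat}}\cap\Phi^{-1}(Y^{\ast}\times X_{\Sigma'})$ is $\Phi$-saturated. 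Because $\Phi$ is flat (hence open) with integral fibres over $Y^{\ast}\times X_{\Sigma'}$, the image of a saturated closed set is closed and the preimage of an irreducible set is irreducible, so the irreducible components of $R^{\mathrm{sat}}\cap\Phi^{-1}(Y^{\ast}\times X_{\Sigma'})$ are precisely the preimages of those of $\Phi(R^{\mathrm{sat}})\cap(Y^{\ast}\times X_{\Sigma'})$. Thus — unless $R_0$ lies entirely over $Y_\Sigma\setminus Y^{\ast}$, in which case $\pi_{X_\Sigma}(R_0)$ maps into a proper closed subset of $Y_\Sigma$ and both claims are immediate — there is an irreducible $S_0\subset Y_\Sigma\times X_{\Sigma'}$ with $R_0$ and $\Phi^{-1}(S_0)$ agreeing over $Y^{\ast}$, so that $\pi_{X_\Sigma}(R_0)$ coincides with $f_\Sigma^{-1}(\pi_{Y_\Sigma}(S_0))$ away from a set lying over $Y_\Sigma\setminus Y^{\ast}$, and $f_\Sigma(\pi_{X_\Sigma}(R_0))\subset\pi_{Y_\Sigma}(S_0)\cup(Y_\Sigma\setminus Y^{\ast})$. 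For (1): if $R_0$ were dominant over $Y_\Sigma$, the last inclusion forces $\pi_{Y_\Sigma}(S_0)$ to be dense, hence to contain a dense open $Y^{\circ}\subset Y^{\ast}$, so $\pi_{X_\Sigma}(R_0)\supset f_\Sigma^{-1}(Y^{\circ})$ is dense in $X_\Sigma$ — the contrapositive of (1). For (2): if $R_0$ is dominant over $X_\Sigma$, then $f_\Sigma^{-1}(\pi_{Y_\Sigma}(S_0)\cap Y^{\ast})$ is dense in $X_\Sigma$, so $\pi_{Y_\Sigma}(S_0)$ is dense and contains a dense open $Y^{\circ}\subset Y^{\ast}$; then $f_\Sigma^{-1}(Y^{\circ})\subset\pi_{X_\Sigma}(R_0)$, so $X_\Sigma\setminus\pi_{X_\Sigma}(R_0)\subset f_\Sigma^{-1}(Y_\Sigma\setminus Y^{\circ})$ maps into the proper closed subset $Y_\Sigma\setminus Y^{\circ}$ and is not dominant over $Y_\Sigma$.

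\emph{Expected main difficulty.} The only real content is Step 1 — that $R$ contains the fibrewise relation of $\breve f_\Sigma$ — and within it the delicate point is the identification of $L|_{\breve X_\Sigma}$ with the pullback of a Hodge bundle on $Y_\Sigma$, which requires invoking the compatibilities of \S\ref{subsect:boundary data} and \Cref{lem:everything is compatible} together with a log resolution adapted to $\breve f_\Sigma$. Everything after that is bookkeeping of the generic loci over which $f_\Sigma$ and the saturation are well-behaved, and this is precisely where \customref{b1}{(B1)}--\customref{b2}{(B2)}, by making $f_\Sigma$ a fibration over all of $Y_\Sigma$ with $R$-equivalent fibres, streamline the argument.
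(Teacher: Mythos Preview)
Your proof is correct, and the core observation --- that $R_1=R\cap(X_\Sigma\times X_{\Sigma'})$ is saturated for the map to $Y_\Sigma$ in the first factor because $R_\curve\subset R$ --- is the same one the paper uses. The execution differs: the paper descends in \emph{both} coordinates via $f_\Sigma\times f_{\Sigma'}$ to a relation $S\subset Y_\Sigma\times Y_{\Sigma'}$ and then invokes $R\subset R_\trans$ together with \customref{b1}{(B1)} to see that $S$ is quasifinite over $Y_\Sigma$; the conclusion is then reached by a dimension count on the fibres $f_\Sigma^{-1}(y)\times f_{\Sigma'}^{-1}(y')$. You descend only in the first coordinate via $\Phi=f_\Sigma\times\id$ and argue directly from the $\Phi$-saturation of $R_1$ over a generic locus of $Y_\Sigma$, bypassing the quasifiniteness of $S$ (and hence the hypothesis $R\subset R_\trans$) altogether. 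This is a genuine simplification: your argument uses only $R_\curve\subset R$ and the transitivity of $R$. The paper's route, on the other hand, yields the additional information that $S$ is quasifinite over $Y_\Sigma$, which is conceptually useful but not needed for the lemma itself. Your Step~1 is also slightly more explicit than the paper's one-line appeal to $R_\curve\subset R$ (the paper is implicitly using \Cref{lem:equiv reln stuff curve} to get $R_\Sigma\subset R_\curve$ exactly, not just generically), though as you note the generic version suffices for your Step~2.
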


\begin{proof}
Note that for a general $y\in Y_{\Sigma}$, the fiber $f_{\Sigma}^{-1}(y)$ is equidimensional of dimension $\dim X_{\Sigma}-\dim Y_{\Sigma}$. Since $R_\curve\subset R$, it follows that there is a relation $S\subset Y_\Sigma\times Y_{\Sigma'}$ such that $R_1$ is the pullback of $S$.  By \customref{b1}{(B1)}, and since $R\subset R_{\trans}$ is a closed constructible equivalence relation, it follows from \Cref{lem:equiv reln stuff}.\eqref{lem:equiv reln stuff p3} that $S$ is quasifinite over $Y_{\Sigma}$. Since $R_0$ is an irreducible component of $R_1$ it follows that it is an irreducible component of $(f_{\Sigma}\times f_{\Sigma'})^{-1}(S_0) \subset X_{\Sigma} \times X_{\Sigma'}$
for some irreducible component $S_0\subset S$.

For (1), assume that $R_0$ is dominant over $Y_{\Sigma}$. Then for a general $y\in Y_{\Sigma}$, for $(y,y')\in S_0$ we must have that the fiber $R_{0,(y,y')}$ of $R_0$ over $(y,y')$ is an irreducible component of $f_{\Sigma}^{-1}(y)\times f_{\Sigma'}^{-1}(y')$, and hence dominates an irreducible component of $f_{\Sigma}^{-1}(y)$. Thus $R_0$ dominates a subset of $X_\Sigma$ of dimension $\dim X_{\Sigma}$, and hence is dominant over $X_\Sigma$ as desired.

For (2), assume that $R_0$ is dominant over $X_{\Sigma}$. Then it is also dominant over $Y_{\Sigma}$, and so just like the above it follows that $\pi_{X_{\Sigma}}(R_0)$ contains an irreducible component of $f_{\Sigma}^{-1}(y)$ for a generic $y\in Y_{\Sigma}$. Since $X_{\Sigma}$ is irreducible it follows that $\pi_{X_{\Sigma}}(R_0)$ in fact contains all of $f_{\Sigma}^{-1}(y)$ for a generic $y\in Y_{\Sigma}$, which implies the claim. 
\end{proof}

\begin{lem}\label{lem:clarified}Let $(X,D)$ be a proper log smooth algebraic space and $V=(V_\bZ,F^\bullet V_\cO)$ a polarizable integral pure CY variation on $X\bs D$ with unipotent local monodromy, and integrable Hodge bundle $L$.  
\begin{enumerate}
\item Let $(X',D')$ be a proper strictly log smooth algebraic space and $\pi:X'\to X$ a modification with $\pi^{-1}(D)\subset D'$.  If $X_\Sigma$ satisfies \customref{b1}{(B1)} (resp. \customref{b2}{(B2)}), the so does any stratum mapping to $X_\Sigma$.
\item There is a proper strictly log smooth algebraic space $(X',D')$ and a modification $\pi:X'\to X$ with $\pi^{-1}(D)\subset D'$ such that $(X',D')$ satisfies \customref{b1}{(B1)}.  If the monodromy of $V_\bZ$ is neat, and $R$ is a closed constructible equivalence relation as in \Cref{defn:clarified}, then we may take $(X',D')$ to satisfy \customref{b2}{(B2)} and \customref{b3}{(B3)$_{R'}$}, where $R'$ is the pullback of $R$ to $X'$.
\end{enumerate}

\end{lem}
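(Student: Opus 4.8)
The plan for part (1) is to pull back the Hodge data and exploit the integrability hypothesis. Write $\pi_{\Sigma'}\colon\overline{X'_{\Sigma'}}\to\overline{X_\Sigma}$ for the induced map of closed strata; since $\pi(X'_{\Sigma'})\subset X_\Sigma$ forces $\pi_{\Sigma'}^{-1}(D_\Sigma)\subset D'_{\Sigma'}$, this sends strata to strata, and by \Cref{lem:everything is compatible} the Hodge bundle of the boundary variation on $\overline{X'_{\Sigma'}}$ is $\pi_{\Sigma'}^*(L|_{\overline{X_\Sigma}})$. By integrability --- concretely, by the discussion around \Cref{cor:strata lift}, that the map remembering only the Hodge line has the same fibres as the period map --- a curve in $X_\Sigma$ is contracted by $f_\Sigma$ (and a curve in $X'_{\Sigma'}$ is contracted by the corresponding period map $f'_{\Sigma'}$) if and only if the Hodge line is constant along it. Thus, given an irreducible curve $C'\subset X'_{\Sigma'}$ whose closure has degree $0$ against the pulled-back Hodge bundle: if $\pi_{\Sigma'}(C')$ is a point, the Hodge line is constant on $C'$; otherwise $\pi_{\Sigma'}(C')$ is an irreducible curve in $X_\Sigma$ whose closure has $L$-degree $0$, so by \customref{b1}{(B1)} for $\Sigma$ the Hodge line of $V^\trans_\Sigma$ is constant along it, hence so is its pullback, the Hodge line on $C'$. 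Either way $C'$ is $f'_{\Sigma'}$-contracted, which is \customref{b1}{(B1)} for $\Sigma'$. For \customref{b2}{(B2)} one runs the same analysis to produce a morphism $\bar\pi\colon Y'_{\Sigma'}\to Y_\Sigma$ through which $f_\Sigma\circ\pi_{\Sigma'}$ factors, and checks with \Cref{lem:tr split} that descent of $V^\mathrm{min}_\Sigma$ over all of $Y_\Sigma$ forces descent of the minimal quotient over all of $Y'_{\Sigma'}$.

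For part (2) I will handle the strata one at a time in order of decreasing dimension, using part (1) to make sure later modifications do not undo what has been achieved on higher-dimensional strata. Fix $\Sigma$; let $f_\Sigma\colon X_\Sigma\to Y_\Sigma$ be the period map of $V^\trans_\Sigma$ and $A$ the ample line bundle on $Y_\Sigma$ to which a power $L^k$ of the Hodge bundle descends. I will choose a projective compactification $Y_\Sigma\subset\overline{Y_\Sigma}$ --- for instance the closure under an embedding by sections of a power of $A$ --- together with a line bundle $\overline A$ on $\overline{Y_\Sigma}$ restricting to $A$, arranged so that $\overline A$ is nef, is positive on every curve meeting $Y_\Sigma$, and has the property that pullbacks of its sections grow moderately. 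I then take a modification $\pi\colon X'\to X$, an isomorphism over $\breve X_\Sigma$, resolving the indeterminacy of the rational map $\overline{X_\Sigma}\dashrightarrow\overline{Y_\Sigma}$, so that $f_\Sigma$ extends to a morphism $\widehat f_\Sigma\colon\overline{X'_{\Sigma'}}\to\overline{Y_\Sigma}$, where $X'_{\Sigma'}$ is the stratum of $X'$ dominating $X_\Sigma$ and contains a copy of $X_\Sigma$ as a dense open subset. The class $\pi_{\Sigma'}^*(L^k|_{\overline{X_\Sigma}})-\widehat f_\Sigma^*\overline A$ is then supported on $\overline{X'_{\Sigma'}}\setminus X'_{\Sigma'}$, and the crucial point is that it is \emph{effective}: pullbacks of sections of $\overline A$ have moderate growth and so lie in the Schmid extension, giving an inclusion $\widehat f_\Sigma^*\overline A\hookrightarrow L^k$ with cokernel supported on the boundary. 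Consequently any irreducible $C'\subset X'_{\Sigma'}$ with $\overline{C'}$ of degree $0$ against the Hodge bundle has $\widehat f_\Sigma^*\overline A\cdot\overline{C'}\le 0$, so $\widehat f_\Sigma(\overline{C'})$ is a point and $C'$ is $f'_{\Sigma'}$-contracted: this is \customref{b1}{(B1)} for $\Sigma'$. Running over the finitely many strata yields $(X',D')$ satisfying \customref{b1}{(B1)}.

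Now assume that the monodromy of $V_\bZ$ is neat. To arrange \customref{b2}{(B2)}: by \Cref{lem:tr split}, $V^\mathrm{min}_\Sigma$ descends over the open locus $Y^\circ_\Sigma\subset Y_\Sigma$ on which $f_\Sigma$ is a topological fibration with smooth fibres, and by a resolution of $X_\Sigma$ (extended to a modification of $X$) together with a flattening of $f_\Sigma$ one may assume $f_\Sigma$ is a topological fibration over all of $Y_\Sigma$, i.e.\ $Y^\circ_\Sigma=Y_\Sigma$; part (1) again preserves \customref{b1}{(B1)}--\customref{b2}{(B2)} on the already-treated strata, and neatness rules out torsion-monodromy obstructions. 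Finally, to arrange \customref{b3}{(B3)$_{R'}$}: since \customref{b1}{(B1)}--\customref{b2}{(B2)} now hold, \Cref{lem:dominanceXYSigma} shows that for any pair of strata $\Sigma,\Sigma'$ the locus in $X_\Sigma$ over which some irreducible component of $R|_{X_\Sigma\times X_{\Sigma'}}$ fails to surject onto $X_\Sigma$ is contained in $f_\Sigma^{-1}(Z_\Sigma)$ for some proper closed $Z_\Sigma\subsetneq Y_\Sigma$; blowing up the closures of these $f_\Sigma^{-1}(Z_\Sigma)$ moves the offending components of the pullback of $R$ into exceptional divisors, hence onto strictly lower-dimensional strata, and a descending induction on dimension shows the process terminates, producing $(X',D')$ with \customref{b3}{(B3)$_{R'}$} while, by part (1) and the boundedness of these last blow-ups, \customref{b1}{(B1)}--\customref{b2}{(B2)} are retained.

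The main obstacle is the effectivity of $\pi_{\Sigma'}^*(L^k|_{\overline{X_\Sigma}})-\widehat f_\Sigma^*\overline A$ in part (2): it rests on the nefness of the Schmid extension (\Cref{lem:everything is compatible}(1)) and on the characterization of its sections as the sections of moderate growth, and it is here that the choice of the compactification $\overline{Y_\Sigma}$ and of $\overline A$ must be made with care. A secondary point is the verification, in part (1), that the minimal quotient descends over all of $Y'_{\Sigma'}$; and the termination of the blow-up process for \customref{b3}{(B3)$_{R'}$} is routine but mildly delicate Noetherian bookkeeping.
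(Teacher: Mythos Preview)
Your argument for part (1) works, though you invoke integrability where the paper does not: since $V'^{\trans}_{\Sigma'}$ and $V'^{\mini}_{\Sigma'}$ are literally the pullbacks of $V^{\trans}_\Sigma$ and $V^{\mini}_\Sigma$, the period map $f'_{\Sigma'}$ factors through $f_\Sigma\circ\pi_{\Sigma'}$ (up to a finite map), and both (B1) and (B2) are inherited immediately.

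The real issue is your treatment of (B1) in part (2). You compactify $Y_\Sigma$ to $\overline{Y_\Sigma}$, extend $A$ to an ample $\overline A$, resolve the indeterminacy of $\overline{X_\Sigma}\dashrightarrow\overline{Y_\Sigma}$, and then assert that $L^k-\widehat f_\Sigma^*\overline A$ is effective. You yourself flag this as the ``main obstacle'' and say the compactification ``must be made with care,'' but you never say how. The difficulty is genuine: for an arbitrary projective compactification, sections of $\overline A$ have no reason to have moderate Hodge-norm growth, so the inclusion $\widehat f_\Sigma^*\overline A\hookrightarrow L^k$ is not automatic. Conversely, if you take for $\overline A$ the Schmid extension on a log smooth compactification of a resolution of $Y_\Sigma$, you get the effectivity for free but lose the ampleness you need. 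One could try to close the gap via \Cref{thm:vanishing}, but that requires \Cref{setup}, i.e.\ bigness of the Hodge bundle on compactified resolutions of \emph{every} closed subvariety of $Y_\Sigma$ --- which is exactly where integrability must enter, and you have not argued this. As written, your (B1) argument does not use the integrability hypothesis at all, which should be a warning sign.

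The paper's route is different and more direct: integrability gives that the Hodge bundle is big on a log smooth compactification $\bar Y_0$ of a resolution of $Y_\Sigma$, so the non-big locus in $\bar Y_0$ is a proper closed subset with image $Z\subsetneq Y_\Sigma$. One then takes an embedded log resolution of $f_\Sigma^{-1}(Z)$; the stratum over $X_\Sigma\setminus f_\Sigma^{-1}(Z)$ now satisfies (B1), and every new stratum mapping to $X_\Sigma$ has strictly smaller-dimensional non-big locus, so induction (together with part (1)) finishes.

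Your (B2) argument also has a gap: flattening $f_\Sigma$ does \emph{not} make it a topological fibration over all of $Y_\Sigma$; it only makes fibers equidimensional. The paper again uses the blow-up-and-induct strategy, this time taking $Z=Y_\Sigma\setminus Y^\circ_\Sigma$. Your treatment of (B3)$_{R'}$ via \Cref{lem:dominanceXYSigma} and descending induction on $\dim Y_\Sigma$ is essentially the paper's argument.
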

\begin{proof}
The first part is clear, since if $X'_{\Sigma'}$ maps to $ X_\Sigma$, then $V'^\trans_{\Sigma'}$ and $V'^\mini_{\Sigma'}$ are naturally the pullbacks of $V^\trans_\Sigma$ and $V^\mini_\Sigma$.

For the second part, we may assume $(X,D)$ is strictly log smooth.  For each stratum $\Sigma\subset \pi_0(D^\reg)$, by the integrability assumption, the Hodge bundle is big on a log smooth compactification $\bar Y_0$ of a resolution $Y_0$ of $Y_\Sigma$.  Thus, there is a closed strict subvariety $Z\subset Y_\Sigma$ containing all subvarieties for which the Hodge bundle is not big on a log smooth compactification of a resolution, namely the image in $Y_\Sigma$ of the non-big locus of the Hodge bundle in $\bar Y_0$.  Let $\pi:X'\to X$ be an embedded log resolution of the preimage $f_\Sigma^{-1}( Z)$, and let $D'$ be the union of the (reduced) exceptional divisors and the reduction of $\pi^{-1}(D)$.  Then for any stratum $X'_{\Sigma'}$ of $(X',D')$ mapping to $X_\Sigma$, the non-big locus of the Hodge bundle of $Y'_{\Sigma'}$ has strictly smaller dimension, and the stratum mapping to $X_\Sigma\bs f_\Sigma^{-1}(Z)$ satisfies \customref{b1}{(B1)}.  By induction on the dimensions of the non-big locus in the period image using (1), it follows that for each $\Sigma$ there is a proper log smooth $(X'',D'')$ and a modification $\sigma:X''\to X$ with $\sigma^{-1}(D)\subset D''$ for which every stratum mapping to $X_\Sigma$ satisfies \customref{b1}{(B1)}.  We may find a proper log smooth $(X''',D''')$ with a modification $\tau:X'''\to X$ with $\tau^{-1}(D)\subset D'''$ which factors through the modification $X''\to X$ we thereby construct for each $\Sigma$, and again using (1) it follows that $(X''',D'''')$ satisfies \customref{b1}{(B1)}.  

The claim for \customref{b2}{(B2)} follows by the same argument, except in the argument from the previous paragraph, we take $Z\subset Y_\Sigma$ to be the complement of $Y^\circ_\Sigma$.

We prove the final claim for \customref{b3}{(B3)$_R$} by descending induction on the dimension of $Y_\Sigma$.  Thus, assume the condition holds for any $\Sigma\subset\pi_0(D^\reg)$ with $\dim Y_\Sigma>k$, and consider a stratum $\Sigma$ with $\dim Y_\Sigma=k$. Suppose that there are strata $X_{\Sigma'}$ such that there are components of $R\mid_{X_{\Sigma}\times X_{\Sigma'}}$ whose projections are not surjective onto $X_{\Sigma}$. For each such component $R_0$, let $Z_0$ be its projection to $X_\Sigma$ if it is not dominant onto $X_\Sigma$, and the complement in $X_{\Sigma}$ of its projection if it is dominant.  Let $Z\subset X_{\Sigma}$ be the union of the closures of all of these $Z_0$s (as $R_0$ ranges over all components), and now pass to a log resolution of $f_{\Sigma}^{-1}(\ol{f_{\Sigma}(Z)})$. By \Cref{lem:dominanceXYSigma}, it follows that only strata with period image of dimension strictly smaller than $k$ are produced. On the other hand, the stratum above $X_{\Sigma}\bs f_{\Sigma}^{-1}(\ol{f_{\Sigma}(Z)})$ now satisfies \customref{b3}{(B3)$_R$}.  Continuing in this way, we are done by induction.
 \end{proof}


\subsection{The quotient by $R_\curve$}\label{subsect:R_per}
Suppose $(X,D)$ satisfies Property \customref{b1}{(B1)} and for each stratum $\Sigma$ let $f_\Sigma:X_\Sigma\to Y_\Sigma$ be the Stein factorization of the period map as introduced therein.
Define
\[R_\Sigma\coloneqq  X_\Sigma\times_{Y_\Sigma} X_\Sigma\subset  X_\Sigma\times X_\Sigma\]
and 
\[R_{\mathrm{per}}=\bigcup_{\Sigma\subset\pi_0(D^\reg)}\overline{R_\Sigma}(\bC)\subset X(\bC)\times X(\bC)\]
which is a closed, reflexive, symmetric, constructible relation on $X$.  Denote $R_{\mathrm{per}}^e$ the equivalence relation it generates.


\begin{lem}\label{lem:equiv reln stuff curve}Suppose that $V_\bZ$ has local unipotent monodromy and $(X,D)$ satisfies Property \customref{b1}{(B1)}.  Then $R_\mathrm{curve}=R_\mathrm{per}^e$.
\end{lem}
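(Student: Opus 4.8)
The plan is to prove the two inclusions separately; throughout, recall that $R_\mathrm{curve}\subseteq R_\mathrm{trans}$ and that by \Cref{lem:everything is compatible} the Hodge bundle $L$ is nef. For the inclusion \emph{$R_\mathrm{curve}\subseteq R_\mathrm{per}^e$}: since $R_\mathrm{per}^e$ is an equivalence relation, it suffices to show $x\sim_{R_\mathrm{per}^e}y$ whenever $x\sim_\mathrm{curve}y$. Decomposing a connecting curve of $L$-degree $0$ into irreducible components — each of which again has $L$-degree $0$, as $L$ is nef — and chaining along the component graph, I reduce to a single irreducible proper curve $g\colon C\to X$ of $L$-degree $0$ with $x,y\in g(C)$. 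Let $X_\Sigma$ be the stratum containing the generic point of $g(C)$; then $g(C)\cap X_\Sigma$ is an irreducible curve in $X_\Sigma$ whose closure in $\overline{X_\Sigma}$ has degree $0$, so by Property \customref{b1}{(B1)} it is contained in a single fiber of $f_\Sigma$. Fixing $a\in g(C)\cap X_\Sigma$, this gives $\{a\}\times\bigl(g(C)\cap X_\Sigma\bigr)\subseteq R_\Sigma$, hence $\{a\}\times g(C)\subseteq\overline{R_\Sigma}\subseteq R_\mathrm{per}$ upon taking closures; thus every point of $g(C)$, in particular $x$ and $y$, is $R_\mathrm{per}$-equivalent to $a$, and so $x\sim_{R_\mathrm{per}^e}y$.

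For the reverse inclusion I first observe that \emph{$R_\Sigma\subseteq R_\mathrm{curve}$}: by the Griffiths criterion $f_\Sigma$ extends to a proper $\breve f_\Sigma\colon\breve X_\Sigma\to Y_\Sigma$, which — being the Stein factorization of a relative compactification of a period map over the normal space $Y_\Sigma$ — has connected fibers, and a power of the Hodge bundle descends to $Y_\Sigma$ (see \S\ref{subsect:good strat}, \S\ref{subsect:boundary data}); by functoriality of the Schmid extension, $L$ restricts to the trivial bundle on a log resolution of the closure of any fiber of $\breve f_\Sigma$, so such closures are proper connected sets on which $L$ has degree $0$, whence any two points in a common fiber of $f_\Sigma$ are $R_\mathrm{curve}$-related. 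I then claim \emph{$\overline{R_\Sigma}\subseteq R_\mathrm{trans}$}: given $(x_0,y_0)\in\overline{R_\Sigma}$, choose an irreducible curve in a component of $\overline{R_\Sigma}$ through $(x_0,y_0)$ whose general point lies in $R_\Sigma$; normalizing and compactifying yields a smooth proper curve $B$ with morphisms $a,b\colon B\to\overline{X_\Sigma}$, a point $t_0$ with $(a(t_0),b(t_0))=(x_0,y_0)$, and, for general $t$, $a(t),b(t)\in X_\Sigma$ with $f_\Sigma(a(t))=f_\Sigma(b(t))$. Hence $a^*V^\trans_\Sigma$ and $b^*V^\trans_\Sigma$ have the same period map over the general locus of $B$, so they are isomorphic as integral polarized variations there; their limit mixed Hodge structures at $t_0$ are therefore isomorphic, and by the compatibilities of \S\ref{subsect:boundary data} the transcendental part of each is canonically identified with $V^\trans(x_0)$, respectively $V^\trans(y_0)$, so $(x_0,y_0)\in R_\mathrm{trans}$.

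Granting these, $R_\mathrm{per}$ is a closed reflexive symmetric constructible relation with $R_\mathrm{curve}\subseteq R_\mathrm{per}^e\subseteq R_\mathrm{trans}$, so \Cref{lem:equiv reln stuff}\eqref{lem:equiv reln stuff p3} shows that $R_\mathrm{per}^e$ is a closed constructible equivalence relation whose equivalence classes are finite unions of $R_\mathrm{curve}$-equivalence classes. To finish I upgrade this to a single class: each $R_\mathrm{per}^e$-class $E$ is closed, constructible, contained in one $R_\mathrm{trans}$-class, and connected (it is assembled from fibers of the maps $\breve f_\Sigma$, which are connected, and their closures); by \Cref{lem:equiv reln stuff}\eqref{lem:equiv reln stuff p0} every irreducible component of $E$ lies in a single $R_\mathrm{curve}$-class, and connectedness forces all of them into the same one, so $E$ is exactly one $R_\mathrm{curve}$-class. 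This yields $R_\mathrm{per}^e\subseteq R_\mathrm{curve}$ and completes the proof.

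The step I expect to be the main obstacle is the inclusion $\overline{R_\Sigma}\subseteq R_\mathrm{trans}$ — and, in the same spirit, the vanishing of $L$ on the closures of the fibers of $\breve f_\Sigma$ when these meet boundary divisors along which $V^\trans_\Sigma$ acquires monodromy. Both require controlling the transcendental part of the limit mixed Hodge structure under degeneration onto deeper strata, which is precisely what the machinery of \S\ref{subsect:boundary data} and \S\ref{subsect:map to Pn} (the identifications among the Hodge bundle, its restrictions to $\overline{X_\Sigma}$, the boundary variations $V^\trans_\Sigma$, and the maps $\rho(\Sigma)$) is designed to supply; by contrast, the connectedness of the $R_\mathrm{per}^e$-classes used at the end and the reduction to irreducible curves in the first inclusion are comparatively routine.
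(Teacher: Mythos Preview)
Your first inclusion $R_\mathrm{curve}\subseteq R_\mathrm{per}^e$ matches the paper's, and your auxiliary observations $R_\Sigma\subseteq R_\mathrm{curve}$ and $\overline{R_\Sigma}\subseteq R_\mathrm{trans}$ are reasonable. But there is a genuine gap in the final step, precisely where you least expect it: the claim that each $R_\mathrm{per}^e$-class $E$ is connected is not justified by the reason you give.

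The parenthetical ``assembled from fibers of the maps $\breve f_\Sigma$ and their closures'' only handles the easy case. If $(x_i,x_{i+1})\in\overline{R_\Sigma}$, pick a curve $C\subset R_\Sigma$ with $(x_i,x_{i+1})$ in its closure. When the composite $C\to Y_\Sigma$ is constant, both points land in a single proper fiber $F$ of $\breve f_\Sigma$, and you are done. But when $C$ maps to a genuine curve in $Y_\Sigma$, the two projections $p_1(C),p_2(C)$ sweep through \emph{different} fibers of $\breve f_\Sigma$, and there is no single fiber (or fiber-closure) containing both limit points $x_i,x_{i+1}$. The slice $p_1^{-1}(x_i)\subset\overline{R_\Sigma}$ need not be connected, and you have no way to link $x_i$ to $x_{i+1}$ within $E$. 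Equivalently: you have reduced the problem to showing $R_\mathrm{curve}$ is closed (so that $R_\Sigma\subseteq R_\mathrm{curve}$ implies $\overline{R_\Sigma}\subseteq R_\mathrm{curve}$), but closedness of $R_\mathrm{curve}$ is \Cref{cor:Rcurve is alg}, whose proof \emph{uses} the present lemma.

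The paper avoids this entirely by a direct geometric construction: given $(x,y)\in\overline{R_\Sigma}$, it views the curve $C\to R_\Sigma$ as two sections of the base-change $(X_\Sigma)_C\to C$, picks a surface $S\subset(X_\Sigma)_C$ flat over $C$ with geometrically connected generic fiber containing both sections, and compactifies to a flat family $\bar S\to\bar C$ with a map $\bar S\to X$. The special fiber $F$ over the point of $\bar C$ lying over $(x,y)$ is then a connected proper curve in $X$ of Hodge degree $0$ (by flatness, since the generic fiber is) whose image contains both $x$ and $y$, giving $x\sim_\mathrm{curve}y$ outright. This surface-degeneration step is the actual content of the lemma, and your route does not supply a substitute for it. Your self-assessment at the end is inverted: the limit-Hodge-structure inclusion $\overline{R_\Sigma}\subseteq R_\mathrm{trans}$ is routine, while the connectedness is not.
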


\begin{proof}
The containment
$R_\mathrm{curve}\subset R^e_\mathrm{per}$ is immediate from Property \customref{b1}{(B1)}, so it suffices to prove $R_\mathrm{per}\subset R_\mathrm{curve}$.  For any point $(x,y)\in R_\mathrm{per}$, there is a smooth curve $g:C\to R_\Sigma$ for some $\Sigma$ containing $(x,y)$ in the closure of its image.  This means we have two maps $C\rightrightarrows X_\Sigma$ with the same composition to $Y_\Sigma$.  The base-change $(X_\Sigma)_C 
\coloneqq C \times_{Y_{\Sigma}} X_{\Sigma}$ then admits two sections and has geometrically connected generic fiber.  There is therefore a surface $S\subset (X_\Sigma)_C$ flat over $C$ containing the two sections whose generic fiber is geometrically connected.  There is then a proper surface $\bar S$ flat over $\bar C$ compactifying $S/C$ with a map $\bar S\to X$ extending the map $S\to X$ such that the fibers of $\bar S/\bar C$ are connected and one of of these fibers $F$ has image in $X$ containing both $x$ and $y$.  The Hodge bundle has degree 0 on $F$ since it does so on the generic fiber of $\bar S/\bar C$, so $x\sim_\mathrm{curve}y$.
\end{proof}

\begin{cor}\label{cor:Rcurve is alg} Let $(X,D)$ be a proper log smooth algebraic space and $V=(V_\bZ,F^\bullet V_\cO)$ a polarizable integral pure CY variation on $X\bs D$ with unipotent local monodromy and integrable Hodge bundle $L$.  Then $R_\mathrm{curve}$ is a closed constructible equivalence relation on $X(\bC)$.
\end{cor}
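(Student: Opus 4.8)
\end{cor}

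\begin{proof}
The plan is to reduce, through a sequence of harmless modifications of $(X,D)$, to the hypotheses of \Cref{lem:equiv reln stuff curve}, and then to conclude with \Cref{lem:equiv reln stuff}. First note that $R_\mathrm{curve}$ contains $\Delta_X$, is symmetric, and is transitive --- gluing two connected proper $L$-degree-$0$ curves at a pair of points lying over a shared point of $X$ produces another such curve --- so it is an equivalence relation, and since $X$ is proper it only remains to show it is Zariski-closed in $X(\bC)\times X(\bC)$.

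We would first observe that this last property is insensitive to pullback along a proper surjection $q\colon(X',D')\to(X,D)$ of proper log smooth algebraic spaces with $q^{-1}(D)\subset D'$ for which $q^*V$ keeps unipotent local monodromy and an integrable Hodge bundle, and hence that we are free to replace $(X,D)$ by such an $(X',D')$. Indeed, the Hodge bundle of $q^*V$ is $q^*L$ by functoriality of the Schmid extension for unipotent monodromy (\Cref{lem:everything is compatible}), and since $L$ is nef (\Cref{lem:everything is compatible}) a connected proper curve has $L$-degree $0$ exactly when each of its components does; thus $R_\mathrm{curve}$ is the equivalence relation generated by the relation ``lying on a common irreducible curve of $L$-degree $0$''. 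Lifting irreducible curves through $q$ (by normalizing a component of $C\times_X X'$ dominating $C$) and pushing curves forward along $q$ identifies $R_\mathrm{curve}$ on $X$ with the image of $R_\mathrm{curve}$ on $X'$ under $q\times q$; since $q\times q$ is proper (as $X$ is), this transports Zariski-closedness from $X'$ to $X$.

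We then run two reductions. First, passing to the finite cover of $X\bs D$ attached to a neat finite-index subgroup of the monodromy group of $V$ and compactifying (normalize $X$, resolve, and blow up to make the branch divisor simple normal crossings) produces such a $q_1\colon X_1\to X$ for which $q_1^*V$ has neat, hence unipotent, local monodromy; its Hodge bundle stays integrable because \Cref{defn:integrable} is formulated only in terms of generically finite maps to boundary strata and of the associated transcendental parts and Hodge bundles, which all behave well under (generically finite) pullback, cf.\ the proof of \Cref{lem:clarified}. Second, by the second part of \Cref{lem:clarified} we may, after a further modification $q_2\colon X_1'\to X_1$, assume $(X_1',D_1')$ is strictly log smooth and satisfies \customref{b1}{(B1)}, the monodromy remaining neat and the Hodge bundle remaining integrable. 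Having replaced $(X,D)$ by $(X_1',D_1')$, \Cref{lem:equiv reln stuff curve} yields $R_\mathrm{curve}=R_\mathrm{per}^e$, where $R_\mathrm{per}$ is the closed reflexive symmetric constructible relation of \S\ref{subsect:R_per}; since $R_\mathrm{curve}\subset R_\trans$ by construction (\S\ref{subsect:Hodge equiv reln}), the relation $R=R_\mathrm{per}$ satisfies $R_\mathrm{curve}\subset R^e\subset R_\trans$, so \eqref{lem:equiv reln stuff p3} of \Cref{lem:equiv reln stuff} applies and shows $R^e=R_\mathrm{curve}$ is a closed constructible equivalence relation, as desired.

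All the substance here is carried by results already in hand --- the identification $R_\mathrm{curve}=R_\mathrm{per}^e$ in \Cref{lem:equiv reln stuff curve}, and the finiteness behind \eqref{lem:equiv reln stuff p3} of \Cref{lem:equiv reln stuff}, which ultimately rests on \Cref{lem:unpol orbit} --- so the genuine work is the bookkeeping of the reductions. I expect the one mildly delicate point to be verifying that integrability persists under the finite cover in the first reduction: \Cref{defn:integrable} is itself a birational notion, so one must check it reduces, along the cover, to the corresponding condition on $X$, which it does thanks to the compatibility of the strata constructions with generically finite pullback.
\end{proof}
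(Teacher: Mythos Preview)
Your approach is essentially the paper's: reduce to Property \customref{b1}{(B1)} via \Cref{lem:clarified}, then invoke \Cref{lem:equiv reln stuff curve} and \Cref{lem:equiv reln stuff}\eqref{lem:equiv reln stuff p3}. The paper is terser and uses \emph{only} a modification $\pi\colon X'\to X$; since $\pi$ has connected fibers, the identity $R_\mathrm{curve}(X)=(\pi\times\pi)(R_\mathrm{curve}(X'))$ really holds (any connected $L$-degree-$0$ curve $C\subset X$ has connected fiber product $C\times_X X'$, and every curve therein has $\pi^*L$-degree $0$). The neatness hypothesis written into \Cref{lem:equiv reln stuff curve} is in fact never used in that lemma's proof, so the paper simply applies it after the modification.

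Your extra neatness reduction via a generically finite cover $q_1$ introduces a gap. Lifting \emph{irreducible} $L$-degree-$0$ curves only shows that the generating relation $R_0(X)$ lies in $(q_1\times q_1)(R_0(X_1))$; it does not give $R_\mathrm{curve}(X)=(q_1\times q_1)(R_\mathrm{curve}(X_1))$. The image $(q_1\times q_1)(R_\mathrm{curve}(X_1))$ need not be transitive: if $C=C_1\cup C_2$ is a reducible degree-$0$ curve meeting at $p$, the lifts $C_1',C_2'$ may hit distinct points of $q_1^{-1}(p)$, and for a genuinely finite $q_1$ there is no degree-$0$ curve in the (zero-dimensional) fiber to connect them. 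So closedness of $R_\mathrm{curve}(X_1)$ does not transfer directly.

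The fix is painless. Either drop the neatness reduction entirely and use only the modification from \Cref{lem:clarified} (this is what the paper does), or keep your cover but argue as follows: push forward the closed constructible $R_\mathrm{per}(X_1')$ to a closed constructible relation $R$ on $X$; then $R\subset R_\trans(X)$ and, by your irreducible-curve lifting, $R_\mathrm{curve}(X)\subset R^e$, so \Cref{lem:equiv reln stuff}\eqref{lem:equiv reln stuff p3} applied on $X$ gives that $R^e=R_\mathrm{curve}(X)$ is closed constructible.
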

\begin{proof}  According to \Cref{lem:clarified} there is a modification $\pi:X'\to X$ such that $(X',D')$ satisfies Property \customref{b1}{(B1)} with respect to the pullback variation.  The relation $R_\mathrm{curve}$ on $X(\bC)$ is clearly the image of the corresponding relation on $X'(\bC)$, so the claim follows from \Cref{lem:equiv reln stuff}\eqref{lem:equiv reln stuff p3} and \Cref{lem:equiv reln stuff curve}.
\end{proof}


\subsection{Hodge strata}\label{subsect:Hodge strata}
Let $(X,D)$ be a proper strictly log smooth algebraic space and $V=(V_\bZ,F^\bullet V_\cO)$ a polarizable integral pure CY variation on $X\bs D$ with unipotent local monodromy and integrable Hodge bundle.  Let $R\subset X(\bC)\times X(\bC)$ be a closed constructible equivalence relation with $R_\curve\subset R\subset R_\trans$ and assume $(X,D)$ satisfies Property \customref{b3}{(B3)$_R$}.  Then for any strata $\Sigma,\Sigma'\subset \pi_0(D^\reg)$, there exist $x\in X_\Sigma$ and $x'\in X_{\Sigma'}$ such that $x\sim_R x'$ if and only if for every point $x\in X_\Sigma$ there is a point $x'\in X_{\Sigma'}$ such that $x\sim_R x'$.  Thus, the saturation of any stratum $X_\Sigma$ with respect to $R$ is a union of strata. 
\begin{defn}\label{defn:hodge strata}  
In the above situation, we say $\Sigma\sim_R\Sigma'$ if there are points $x\in X_{\Sigma},x'\in X_{\Sigma'}$ with $x\sim_R x'$.  We refer to an equivalence class $S\subset P(\pi_0(D^\reg))$ with respect to this relation, as well as $X_S\coloneqq  \bigcup_{\Sigma\in S}X_\Sigma$, as an $R$-stratum.  We refer to $R_\curve$-strata as Hodge strata. 

\end{defn}

\begin{lem}\label{lem:glueingalongHodge} In the above situation, suppose $(X,D)$ satisfies Properties \customref{b1}{(B1-3)}. Then there exists a local system $V^{\min,\vee}(S)_\bQ\subset j_*(V^\vee_\bQ)\mid_{\mathfrak{T}(S)}$ and a quotient local system $V^{\mathrm{min},\vee}(S)_\bQ\to V^{\trans,\vee}(S)_\bQ$ whose restriction to each $\mathfrak{T}(\Sigma)$ for $X_\Sigma\subset X_S$ agrees with $V^{\min,\vee}(\Sigma)_{\bQ}\subset j_*(V^\vee_\bQ)\mid_{\mathfrak{T}(S)}$ and $V^{\mathrm{min},\vee}(\Sigma)_\bQ\to V^{\trans,\vee}(\Sigma)_\bQ$.

\end{lem}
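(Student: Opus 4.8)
The plan is to build $V^{\min,\vee}(S)_\bQ$ by gluing the local systems $V^{\min,\vee}(\Sigma)_\bQ$ over the strata $X_\Sigma\subset X_S$. First I would choose the DR-neighborhood $\mathfrak{T}(S)$ of $X_S$ together with DR-neighborhoods of the $X_\Sigma\subset X_S$ so that $\mathfrak{T}(S)=\bigcup_{\Sigma}\bigl(\mathfrak{T}(S)\cap\mathfrak{T}(\Sigma)\bigr)$ and each $\mathfrak{T}(S)\cap\mathfrak{T}(\Sigma)$ is again a DR-neighborhood of $X_\Sigma$. By \Cref{lem:subsgood}, a local subsystem of $j_*(V^\vee_\bQ)$ on $\mathfrak{T}(S)$ is the same datum as a family of local subsystems on the $\mathfrak{T}(\Sigma)$ which agree on overlaps, so the statement reduces to showing that $V^{\min,\vee}(\Sigma)_\bQ$ and $V^{\min,\vee}(\Sigma')_\bQ$ agree on $\mathfrak{T}(\Sigma)\cap\mathfrak{T}(\Sigma')$ for all $\Sigma,\Sigma'\in S$ (and similarly for the quotients to $V^{\trans,\vee}$).

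Next I would reduce the pairwise comparison to the case of nested strata. Since $(X,D)$ is strictly log smooth, $\overline{X_\Sigma}\cap\overline{X_{\Sigma'}}=\overline{X_{\Sigma\cup\Sigma'}}$, so after shrinking, $\mathfrak{T}(\Sigma)\cap\mathfrak{T}(\Sigma')$ deformation retracts onto a neighborhood of $X_{\Sigma\cup\Sigma'}$; moreover $\Sigma\cup\Sigma'\in S$, because $R$ is closed and, together with \customref{b3}{(B3)$_R$} and the inclusion $R\supset R_\mathrm{per}$, the equivalence relation propagates to the common boundary stratum. Hence it suffices to treat $\Sigma\subset\Sigma''$ with both in $S$, where \Cref{lem:compat}(2) already gives $V^{\min,\vee}(\Sigma'')_\bQ\subset V^{\min,\vee}(\Sigma)_\bQ$ on the overlap; it remains to promote this inclusion to an equality.

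This equality is the heart of the argument. I would descend $R\cap(X_\Sigma\times X_{\Sigma''})$ through the period maps $f_\Sigma$ and $f_{\Sigma''}$ to a correspondence $S_0\subset Y_\Sigma\times Y_{\Sigma''}$, as in the proof of \Cref{lem:dominanceXYSigma}: by \customref{b1}{(B1)} and the fact that $R\subset R_\trans$ is an equivalence relation, $S_0$ is quasi-finite over $Y_\Sigma$, and by \customref{b3}{(B3)$_R$} applied in both orders (using the symmetry of $R$) it is surjective onto both $Y_\Sigma$ and $Y_{\Sigma''}$; thus it is a finite surjective correspondence. Since $R\subset R_\trans$, over $S_0$ the two pulled-back transcendental variations become isomorphic, and by rigidity (\Cref{lem:inject autos}, together with the finiteness in \Cref{lem:unpol orbit}) this isomorphism is unique, so the two transcendental local systems literally agree over $S_0$. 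Now \Cref{lem:tr split} combined with \customref{b2}{(B2)} gives that the entire CY-minimal variations $V^{\min}_\Sigma$ and $V^{\min}_{\Sigma''}$ descend to $Y_\Sigma$ and $Y_{\Sigma''}$ respectively, and the same rigidity pins down $V^{\min}_{\Sigma''}$ from $V^{\min}_\Sigma$ along $S_0$; in particular $\rk V^{\min}_{\Sigma''}=\rk V^{\min}_\Sigma$, which forces the inclusion above to be an equality, so the two local systems agree on the overlap. The quotients $V^{\min,\vee}(\Sigma)_\bQ\to V^{\trans,\vee}(\Sigma)_\bQ$ then glue, since $V^{\trans,\vee}$ is the top weight-graded piece of $V^{\min,\vee}$ and the identifications just constructed are compatible with the weight filtrations; this yields the quotient local system $V^{\trans,\vee}(S)_\bQ$.

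I expect the equality step to be the main obstacle, and within it the crucial point is that the rank of the full CY-minimal quotient $V^{\min}$ — not merely of its transcendental part, which $R\subset R_\trans$ controls directly — is locally constant along an $R$-stratum. This is exactly where \customref{b2}{(B2)} enters: it ensures the ``extra'' mixed part $V^{\min}_\Sigma/V^{\trans}_\Sigma$ also descends to $Y_\Sigma$, so that it cannot jump when one passes to a boundary stratum lying in the same $R$-stratum. A secondary point requiring care is the verification that $\Sigma\cup\Sigma'\in S$ in the reduction to the nested case, where one must use the closedness of $R$ and Property \customref{b3}{(B3)$_R$} to move the relation into the intersection stratum.
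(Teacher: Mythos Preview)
Your overall skeleton---reduce via \Cref{lem:subsgood} to matching $V^{\min,\vee}(\Sigma)$ on overlaps, then pass to nested strata and upgrade the inclusion of \Cref{lem:compat}(2) to an equality---is sound and matches the paper's strategy. The gap is in your ``equality step.'' You invoke \Cref{lem:inject autos} to conclude that ``rigidity pins down $V^{\min}_{\Sigma''}$ from $V^{\min}_\Sigma$ along $S_0$; in particular $\rk V^{\min}_{\Sigma''}=\rk V^{\min}_\Sigma$.'' But \Cref{lem:inject autos} only says that $\Aut(V^{\min})\hookrightarrow\Aut(V^{\trans})$ for a \emph{fixed} mixed Hodge structure; it does not say that $V^{\trans}$ determines $V^{\min}$. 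Here $V^{\min}_\Sigma$ and $V^{\min}_{\Sigma''}$ are CY-minimal quotients of \emph{different} limit mixed Hodge structures $V_\Sigma$ and $V_{\Sigma''}$, and an isomorphism $V^{\trans}_\Sigma\cong V^{\trans}_{\Sigma''}$ over your correspondence $S_0$ places no a priori constraint on the higher-weight pieces $V^{\min}/V^{\trans}$. Your closing remark that (B2) ``ensures the extra mixed part\dots cannot jump'' is the right instinct, but descent to $Y_\Sigma$ alone does not prevent a jump along a boundary stratum: you still need a map from $X_{\Sigma''}$ to $Y_\Sigma$ along which to pull back.

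The paper supplies exactly this missing geometric input, bypassing the correspondence argument. Because $X_\Sigma$ and $X_{\Sigma''}$ lie in the same Hodge stratum, there is a curve $C\subset X_\Sigma$ contracted by $f_\Sigma$ whose closure meets the relevant boundary divisor; by the Griffiths criterion the period map $f_\Sigma$ itself extends to $\breve f_\Sigma:\breve X_\Sigma\to Y_\Sigma$ on the closure of $X_\Sigma$ inside $X_S$, and since $V^{\min,\vee}_\Sigma$ is pulled back from $Y_\Sigma$ (by (B2) and \Cref{lem:tr split}), it extends along with $\breve f_\Sigma$. Property (B3) then gives that $X_{\Sigma''}\subset\breve X_\Sigma$ \emph{dominates} $Y_\Sigma$ via $\breve f_\Sigma$, so the restriction of $V^{\min,\vee}_{\breve X_\Sigma}$ to $X_{\Sigma''}$ is computed at the generic point of $Y_\Sigma$ and hence agrees with $V^{\min,\vee}_{\Sigma''}$. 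This also sidesteps your secondary issue about $\Sigma\cup\Sigma'\in S$: the paper glues over the closed pieces $\breve X_\Sigma$ rather than over the open $\fT(\Sigma)$, so no reduction to a common deeper stratum is needed.
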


\begin{proof}For any union of strata $Z$, let $i_Z:Z\to X$ denote the inclusion.  By \Cref{lem:subsgood}, it suffices to show there is a subsheaf $V_{S,\bQ}^{\mathrm{min},\vee}\subset i^*_{X_S}j_*(V_\bQ^\vee)$ (resp. a quotient $V_{S,\bQ}^{\mathrm{min},\vee}\to V_{S,\bQ}^{\trans,\vee}$) restricting to $V_{\Sigma,\bQ}^{\mathrm{min},\vee}\subset i^*_{X_\Sigma}j_*(V_\bQ^\vee)$ (resp. $V_{\Sigma,\bQ}^{\mathrm{min},\vee}\to V_{\Sigma,\bQ}^{\trans,\vee}$) for each $X_\Sigma\subset X_S$.

Let $E\cap \overline{X_\Sigma}$ be a divisorial boundary component of $X_\Sigma$ in the same Hodge stratum, which means there is a curve $C\subset X_\Sigma$ whose closure meets $E\cap \overline{X_\Sigma}$ and is contracted by $f_\Sigma$. Then by the Griffiths criterion, $V_\Sigma^{\trans,\vee}$ and $f_\Sigma$ extend over $E\cap \overline{X_\Sigma}$, as therefore does $V_\Sigma^{\min,\vee}$ by \Cref{lem:tr split} using \customref{b2}{(B2)}. Thus, both extend to the closure $\breve X_\Sigma$ of $X_\Sigma$ in the Hodge stratum $X_S$ containing $X_\Sigma$.  Call these extensions $ V^{\trans,\vee}_{\breve X_\Sigma}$ and $ V^{\min,\vee}_{\breve X_\Sigma}$; note that the underlying local system $ V^{\min,\vee}_{\breve X_\Sigma,\bQ}$ is naturally a subsheaf of $i^*_{X_\Sigma}j_*(V^\vee_\bQ)$.

A subsheaf of $i_{X_S}^*j_*(V^\vee_\bQ)$ is uniquely determined by subsheaves of $i_Z^*j_*(V^\vee_\bQ)$ for each closed union of strata $Z\subset X_S$ which agree on intersections.  By Property \customref{b3}{(B3)}, any stratum $X_{\Sigma'}$ in $\breve X_\Sigma$ dominates $Y_\Sigma$, so $i^*_{X_{\Sigma'}}V_{\breve X_\Sigma,\bQ}^{\mathrm{min},\vee}=V_{X_{\Sigma'},\bQ}^{\mathrm{min},\vee}$, hence there is a subsheaf $V_{S,\bQ}^{\mathrm{min},\vee}\subset i_{X_S}^*j_*(V^\vee_\bQ)$ restricting to each $V_{\Sigma,\bQ}^{\mathrm{min},\vee}$ which is therefore a local system.  Likewise, there is a quotient $V_{S,\bQ}^{\mathrm{min},\vee}\to V_{S,\bQ}^{\trans,\vee}$ restricting to the quotient $V_{\Sigma,\bQ}^{\mathrm{min},\vee}\to V_{\Sigma,\bQ}^{\trans,\vee}$ for each $X_\Sigma\subset X_S$, and this completes the proof.
\end{proof}
By the previous lemma, we obtain the following diagram by projecting the Hodge bundle to $V^\mathrm{min}(S)_\cO$ which restricts to the diagram in \Cref{cor:strata lift} for every $X_\Sigma\subset X_S$, after choosing a path from $x_\Sigma$ to $x_S$.

\begin{equation}
\begin{tikzcd}\label{boundary period}
\widetilde{\mathfrak{T}(S)}^{V^{\mini}(S)_\bQ}\ar[rrr,"\rho(S)"]&&&\bP(V^\mathrm{min}_{S,\bC,x_S})^\an \eqqcolon \bP_S^\an\\
\widetilde{X_S}^{V^{\mini}_{S,\bQ}}\ar[rrr,bend right=15,swap,"\rho_S"]\ar[u]\ar[r]&\widetilde{X_S}^{V^\trans_{S,\bQ}}\ar[r,"\phi^{\trans}_S"]&\check\bD(V^\trans_{S,\bC,x_S})^\an\ar[r]&\bP(V^\trans_{S,\bC,x_S})^\an.\ar[u]
\end{tikzcd}
\end{equation}

\begin{prop}\label{lem:thickened period maps Hodge strata} Let $(X,D)$ be a proper log smooth algebraic space and $V=(V_\bZ,F^\bullet V_\cO)$ a polarizable integral pure CY variation on $X\bs D$ with neat monodromy, integrable Hodge bundle $L$, and $(X,D)$ satisfying Property \customref{b3}{(B1-3)}.  For any Hodge stratum $S$, the morphism $\rho(S)$ from \eqref{boundary period} is $\pi_1$-definable analytic as in \Cref{defn:pi1definable}.  The pullback of the Hodge bundle to $\widetilde{\mathfrak{T}(S)}^{V^{\mini}(S)_\bQ}$ is naturally identified (as a $\pi_1$-definable analytic line bundle) with the pullback of $\cO_{\bP_S^\define}(-1)$. Finally, if the Hodge bundle has torsion combinatorial monodromy, then the connected components of the fibers of $\rho_S$ are identified via the covering map with the fibers of $X_S\to \fY\coloneqq  X(\bC)/R_\curve$.  In particular, they are compact.

\end{prop}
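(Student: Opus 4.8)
The plan is to dispatch the $\pi_1$-definability and the line-bundle identification quickly from the construction, and then to spend the bulk of the argument on the fiber statement, which is where Properties \customref{b1}{(B1)}--\customref{b3}{(B3)}, hence integrability, and torsion combinatorial monodromy are actually used.

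\textbf{$\pi_1$-definability and the line bundle.} Recall from \Cref{cor:strata lift} and the lemma preceding the statement that $\rho(S)$ is the map induced by the image of the Hodge line $F^m\cV$ under the morphism of logarithmic flat bundles $\cV\mid_{\fT(S)}\to V^{\mathrm{min}}(S)_\cO\mid_{\fT(S)}$; since this morphism is full rank on $F^m\cV$ along $X_S$ it remains so after shrinking $\fT(S)$, so $\rho(S)$ is a genuine holomorphic (hence continuous) map and the tautological identification exhibits $\rho(S)^*\cO_{\bP_S}(1)$ as $L=F^m\cV$. It is equivariant for the monodromy action of $V^{\mathrm{min}}(S)$ on $\bP_S$, which preserves the linear subspace $\bP(V^\trans_{S,\bC,x(S)})\supseteq\img\rho(S)$ and there restricts to the monodromy of $V^\trans(S)$; by \Cref{lem:inject autos} this action on the image descends to the deck group of $\widetilde{\fT(S)}^{V^\trans(S)_\bQ}$. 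For definability, over $\fT^*(S)=\fT(S)\bs D$ the map $\rho(S)$ factors as the period map of $V$ (definable by \cite{bkt}) followed by the algebraic quotient to $V^{\mathrm{min}}(S)$ and the forgetful map to $\bP_S$, so it is definable analytic there; over all of $\fT(S)$ its graph above any definable open $\cU$ is, by continuity, the closure of the definable graph above $\cU\cap\fT^*(S)$, and closures of definable sets are definable. Since all the bundles, filtrations and morphisms in sight are definable analytic (using \Cref{lem:everything is compatible}), this yields the $\pi_1$-definable analyticity in the sense of \Cref{defn:pi1definable} and the identification of $\pi_1$-definable analytic line bundles.

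\textbf{Reduction of the fiber statement.} Write $\rho_S=\iota\circ p\circ\phi^\trans_S$ with $p$ the forgetful map retaining only the Hodge line and $\iota$ the linear embedding, and note $\iota$ is injective. The first point is that \emph{$p\circ\phi^\trans_S$ has the same fibers as $\phi^\trans_S$}: if some irreducible $\tilde Z\subset\widetilde{X_S}^{V^\trans_{S,\bQ}}$ had $p\circ\phi^\trans_S$ constant but $\phi^\trans_S$ non-constant, then along $\tilde Z$ the Hodge line would be a constant sub-line of the trivialized bundle $V^\trans(S)_\cO\mid_{\tilde Z}$, so $L\mid_{\tilde Z}$ would be trivial; choosing a curve $\tilde C\subset\tilde Z$ lying over the open stratum $X_{\Sigma_0}$ of the image of $\tilde Z$ on which $\phi^\trans_S$ is non-constant, the image $C$ is not contracted by $f_{\Sigma_0}$, so by \customref{b1}{(B1)} $\bar C$ has positive Hodge degree---contradicting that $L\mid_{\tilde C}$ is trivial with unipotent local monodromy, hence of degree $0$ on a compactification by functoriality of the Schmid extension (\Cref{lem:everything is compatible}). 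Thus the fibers of $\rho_S$ coincide with those of $\phi^\trans_S$; this is where \customref{b1}{(B1)}, arranged via integrability by \Cref{lem:clarified} (cf.\ \Cref{lemma:Griffiths is integrable} in the Griffiths case), is the essential input. Next, by \customref{b2}{(B2)} and \Cref{lem:tr split}, over each $X_\Sigma\subset X_S$ both $V^\trans_\Sigma$ and $V^{\mathrm{min}}_\Sigma$ are pulled back from $Y_\Sigma$, so $\widetilde{X_\Sigma}^{V^\trans_{\Sigma,\bQ}}=X_\Sigma\times_{Y_\Sigma}\widetilde{Y_\Sigma}^{U_{\Sigma,\bQ}}$ and $\phi^\trans_S$ is there $\Phi_\Sigma\circ f_\Sigma$ for the period map $\Phi_\Sigma$ of the polarizable variation $U_\Sigma$. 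Neatness of the monodromy gives trivial point-stabilizers of the monodromy group of $U_\Sigma$ on the polarized domain $\bD_\Sigma$, so $\Phi_\Sigma$ is injective on the fibers of $\widetilde{Y_\Sigma}^{U_{\Sigma,\bQ}}\to Y_\Sigma$, and finiteness of $\psi_\Sigma$ makes its fibers finite. Hence a connected component $\tilde F$ of a fiber of $\rho_S$ meets the part over each $X_\Sigma$ in a single fiber of $f_\Sigma$, so the covering map $c\colon\widetilde{X_S}^{V^\trans_{S,\bQ}}\to X_S$ is injective on $\tilde F$ (distinct strata have disjoint images). Surjectivity of $c\mid_{\tilde F}$ onto the corresponding $R_\curve$-class is the identity $R_\curve=R^e_{\mathrm{per}}$ of \Cref{lem:equiv reln stuff curve}: an $R_\curve$-class is built by chaining fibers of the various $f_\Sigma$, each of which lifts into the $\phi^\trans_S$-fiber. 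So $c$ restricts to a continuous bijection from $\tilde F$ onto a fiber of $X_S\to\fY$.

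\textbf{Compactness, and the main obstacle.} It remains to upgrade this to compactness of $\tilde F$, and this is precisely where torsion combinatorial monodromy enters. Each $R_\curve$-class $F$ is closed (\Cref{cor:Rcurve is alg}) and connected, hence compact; moreover $V^\trans_S\mid_F$ is isotrivial (as $F$ lies in a single $R_\trans$-class) and, by \Cref{lem:torsion comb mono hodge vs tr}, the hypothesis forces $V^\trans(C)$ to have finite monodromy---so constant period map---along every degree-$0$ curve $C$ making up $F$; since all these monodromies fix the common Hodge point of $V^\trans(x)$, the (discrete) monodromy group of $V^\trans_S\mid_F$ lies in the compact stabilizer of that point and is finite. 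Therefore $c^{-1}(F)\to F$ is a finite covering, $c^{-1}(F)$ is compact, its connected component $\tilde F$---which is closed in $c^{-1}(F)$, and is an entire connected component of $c^{-1}(F)$ since every curve in $F$ has Hodge degree $0$ so the Claim above forbids $\rho_S$ from being non-constant on any larger connected set---is compact, and the continuous bijection $c\mid_{\tilde F}\colon\tilde F\to F$ onto the Hausdorff space $F$ is a homeomorphism; this is exactly the compactness of the fibers of $\rho_S$. The two substantive points---and where one must be careful---are (i) the Claim that remembering only the Hodge line loses no fiber information, which rests on \customref{b1}{(B1)} and hence on integrability, and (ii) showing the monodromy attached to an $R_\curve$-class is finite so that the fibers do not spiral off to infinity in the cover, which is the content of torsion combinatorial monodromy; the remaining point-set bookkeeping (matching connected components of $\rho_S$-fibers with those of $c^{-1}(F)$, and descending across the boundary strata of $X_S$) is routine given these inputs.
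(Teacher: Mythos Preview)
Your argument and the paper's share the same skeleton---(B1) to identify fibers of $\rho_S$ with fibers of the stratum period maps, and torsion combinatorial monodromy to control the cover over an $R_\curve$-class---but the paper's execution is considerably more direct, and your version has a gap that is only repaired by the step you omit.

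The paper does not do your stratum-by-stratum analysis via (B2). Instead it observes that the image in $X_S$ of a fiber $\tilde F$ of $\rho_S$ is closed algebraic by definable Chow, argues directly that this algebraic set $F$ is proper (curves in $F$ have their closures in $F$ since $V^\trans_{S,\bQ}$ has trivial local monodromy at their boundary points) and is a full $R_\curve$-class (any degree-zero curve meeting $F$ lifts into $\tilde F$). The crucial final step is that torsion combinatorial monodromy plus \emph{neatness} forces the monodromy of $V^\trans_{S,\bQ}|_F$ to be \emph{trivial}, not merely finite: a neat group has no nontrivial elements of finite order. Hence $c^{-1}(F)$ is a disjoint union of copies of $F$ and each connected component of $\tilde F$ is one such copy.

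You use neatness only locally (to get $\Phi_\Sigma$ injective on covering fibers) and miss this global consequence, which leaves your endgame incomplete. First, the intermediate claim ``a connected component $\tilde F$ meets the part over each $X_\Sigma$ in a single fiber of $f_\Sigma$'' is not justified: $\tilde F\cap\widetilde{X_\Sigma}$ could consist of several $\tilde f_\Sigma$-fibers over distinct points of $Y_\Sigma$ with the same $\psi_\Sigma$-image, joined through deeper strata. (Your injectivity of $c|_{\tilde F}$ does survive, since by neatness these fibers lie over distinct points of $Y_\Sigma$, but the stated reason is wrong.) Second, your argument that $\tilde F$ is an entire connected component of $c^{-1}(F)$---i.e.\ that $\rho_S$ is constant on such a component $G$---is not complete: $G\to F$ is a nontrivial finite cover when the monodromy is merely finite, and a Hodge-degree-zero nodal curve in $F$ need not lift to $G$, so ``degree zero on every curve'' does not by itself force $\rho_S|_G$ constant. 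Both issues dissolve once you observe that finite monodromy inside a neat group is trivial, after which $c^{-1}(F)$ splits as copies of $F$ and the whole final paragraph becomes a one-line conclusion.
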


\begin{proof}The definability is clear by \cite{BMull}, as is the statement about the Hodge bundle, so it remains to prove the statement about the fibers.  Let $\widetilde {F}$ be a fiber of $\rho_S$.  Since the union of $\pi_1$-translates of $\widetilde {F}$ is definable on the lift of any definable open of $X_S$, by definable Chow \cite[Corollary 4.5]{definechow} it is the inverse image of a closed algebraic $F\subset X_S$.  The Hodge bundle is flat along each component of $F$, and since the boundary satisfies Property \customref{b1}{(B1)}, the variation $V^\trans_{S,\bQ}$ is isotrivial on $F$.  For any irreducible curve $C$ in $F$ and $\bar C$ the closure in $X$, the local monodromy of $V^\trans_{S,\bQ}|_C$ is then trivial by neatness, so $\bar C\subset X_S$ and therefore $\bar C\subset F$.  Thus, $F$ is proper.  Clearly the image of $F$ in $\fY$ has dimension 0; for any proper connected curve $C$ in a fiber of $|X|\to\fY$ meeting $F$, the inverse image in $\widetilde{X_S}^{V^{\mini}_{S,\bQ}}$ is contained in a fiber of $\rho_S$ and meets $\widetilde {F}$, hence is contained in $\widetilde{F}$. Thus, $C\subset F$, and $F$ is a full fiber of $|X|\to\fY$.  Finally, by \Cref{lem:torsion comb mono hodge vs tr} and the \Cref{lem:nodal curve monodromy} below, the monodromy of $V^\trans_{S,\bQ}|_F$ is finite since the Hodge bundle has torsion combinatorial monodromy, as therefore is the monodromy of $V^\mini_{S,\bQ}|_F$ by \Cref{lem:tr split} and \Cref{lem:inject autos}.  By neatness both have trivial monodromy.  It follows that every connected component of $\widetilde{ F}$ is a copy of $F$. \end{proof}
\begin{lem}\label{lem:nodal curve monodromy}

Let $F$ be a connected algebraic space.  Then there is a nodal curve $g:C\to F$ such that $g_*:\pi_1(C^\an)\to \pi_1(F^\an)$ is surjective.
\end{lem}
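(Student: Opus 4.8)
The plan is to induct on $d = \dim F$, reducing at each stage to the one–dimensional case, which is the heart of the matter. First note that $F^{\an}$ and $(F_{\mathrm{red}})^{\an}$ are canonically homeomorphic, so I may assume $F$ is reduced, and it suffices to treat $F$ connected. If $d=0$ then $F$ is a point and there is nothing to prove.

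Suppose $d=1$. Then $F$ is a reduced connected curve (a separated algebraic space of dimension $\le 1$ over $\bC$ is automatically a scheme). Let $\nu\colon \widetilde F\to F$ be the normalization, a disjoint union of smooth curves, let $Z\subset F$ be the finite non-normal locus, and for $s\in Z$ write $\nu^{-1}(s)=\{p_{s,1},\dots,p_{s,m_s}\}$ for its $m_s\ge 1$ branches; topologically, $F^{\an}$ is obtained from $\widetilde F^{\,\an}$ by collapsing each fibre $\nu^{-1}(s)$ to a point. I build $C$ as follows: to $\widetilde F$, for each $s$ with $m_s\ge 2$, attach one copy $E_s$ of $\bP^1$ by gluing $m_s$ distinct chosen points of $E_s$ to $p_{s,1},\dots,p_{s,m_s}$. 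The result is a nodal curve (each glued point has exactly two branches), connected if and only if $F$ is (attaching $E_s$ this way has the same effect on the dual graph as the clique of branches at $s$), and proper if $F$ is. Define $g\colon C\to F$ by $\nu$ on $\widetilde F$ and by the constant map to $s$ on each $E_s$; this is well defined since $\nu(p_{s,a})=s$. Each $E_s\cong\bP^1$ is simply connected, so collapsing all the $E_s$ one at a time is a $\pi_1$-isomorphism (van Kampen for the mapping cone); after the collapse $C$ becomes exactly $\widetilde F$ with each $\nu^{-1}(s)$ identified to a point, i.e. $F^{\an}$, and $g$ factors through this collapse. Hence $\pi_1(C^{\an})\xrightarrow{\ \sim\ }\pi_1(F^{\an})$.

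For $d\ge 2$, let $\nu\colon F^\nu\to F$ be the normalization, with non-normal locus $Z\subset F$ (closed, $\dim Z\le d-1$) and $Z^\nu=\nu^{-1}(Z)$. The conductor square exhibits $F^{\an}$ as the topological pushout of $F^{\nu,\an}\hookleftarrow Z^{\nu,\an}\to Z^{\an}$, so by van Kampen $\pi_1(F^{\an})$ is generated by the images of the fundamental groups of the connected components of $F^\nu$ and of $Z$, together with finitely many loops encoding the nerve of this gluing. For $Z$: since $\dim Z<d$, the inductive hypothesis gives on each component a nodal curve mapping to $Z$, hence to $F$, which is $\pi_1$-surjective. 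For $F^\nu$: each connected component is normal, hence irreducible; resolving it gives a smooth algebraic space with a proper birational map onto it, which is $\pi_1$-surjective because the target is normal (the smooth locus of a normal variety is $\pi_1$-surjective, being a dense open, and a dense open of a manifold is $\pi_1$-surjective onto it). If this smooth space is already a curve we are done; otherwise apply Chow's lemma — again $\pi_1$-surjective since its target is normal — to reduce to an irreducible quasi-projective scheme, and cut with a general hyperplane: by the Lefschetz–Bertini theorem for fundamental groups of quasi-projective varieties (Hamm–L\^e, Goresky–MacPherson) the section is connected, $\pi_1$-surjective, and of dimension $d-1$, so the inductive hypothesis again yields a $\pi_1$-surjective nodal curve mapping to it and hence to $F$. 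Finally, assemble all of these curves into a single connected nodal curve $C\to F$: since $F$ is curve-connected and one is free to glue in auxiliary copies of $\bP^1$ at nodes (mapping them to points of $F$ or to connecting curves in $F$), one links the disjoint pieces into a connected nodal curve that also carries the finitely many nerve loops; gluing at nodes only enlarges $\pi_1$, so $\pi_1(C^{\an})\to\pi_1(F^{\an})$ stays surjective. Properness is preserved throughout when $F$ is proper, which is the case needed in the application.

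The main obstacle is the final assembly step: one must carry out the van Kampen bookkeeping carefully enough to guarantee that the single connected nodal curve one builds realizes not only the fundamental groups of the normalization components and of the non-normal locus, but also the finitely many extra generators coming from the way $F$ is glued from its normalization — i.e. that the auxiliary $\bP^1$-connectors are positioned so as to route each nerve loop through $C$ without accidentally killing the $\pi_1$ of any piece. A subsidiary technical point is the appeal to the Lefschetz theorem for fundamental groups of singular (reduced, irreducible) quasi-projective varieties, and the passage from algebraic spaces to schemes, which must only ever be performed on normal — in practice, smooth — spaces, since Chow's lemma and normalization are $\pi_1$-surjective only onto normal targets (for non-normal targets, e.g. a nodal curve, they are not).
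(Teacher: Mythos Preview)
Your approach is genuinely different from the paper's and considerably more elaborate. The paper gives a short local argument: since $F^{\an}$ is locally contractible and $\pi_1(F^{\an})$ is finitely generated, it suffices to show that any two points in a small contractible neighborhood can be joined by an algebraic curve. For this the paper passes (via an \'etale chart) to an affine scheme and then, by Noether normalization, to a finite cover of an open in affine space, where curve-connectedness is obvious since lines pull back to curves. One then strings together chains of such curves along representatives of finitely many generators of $\pi_1$.

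Your route---induction on dimension using normalization, resolution, Chow's lemma, and Lefschetz--Bertini for $\pi_1$---is in principle viable, and your $d=1$ case is clean. But the assembly step you yourself flag as ``the main obstacle'' is a real gap, not just bookkeeping. A generator of $\pi_1(F^{\an})$ arising from the van Kampen pushout may weave between components of $F^\nu$ and $Z$ several times; to lift it to $C$ you must produce specific curves in $F$ connecting chosen basepoints of the separately-built pieces, and verify that the resulting loop in $C$ maps to something homotopic to the original generator. You also invoke ``$F$ is curve-connected'' to glue the pieces, but this is not proved---and it is essentially the local statement the paper establishes directly via Noether normalization. Once you grant curve-connectedness in contractible opens, the lemma follows immediately by the paper's argument, so the Lefschetz/resolution machinery becomes unnecessary. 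In short: your proof is not wrong, but it is incomplete at precisely the point where the paper's simpler local argument would finish the job.
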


\begin{proof}

Note that an algebraic space is locally contractible, and so it is enough to show that for any point $p\in F$ there exists a contractible neighborhood $p\in U\subset F$ that the points of $U$ are locally connected via algebraic curves. 
To prove this, first note that by passing to an \'etale open neighborhood of $p$, we may assume $F$ is an affine scheme.  By Noether normalization, we may assume that $F$ has a finite surjective map to affine space, and since the pullback of a curve under a finite map is still a curve, the claim follows.
\end{proof}


\subsection{Algebraizations}\label{subsect:algebraize}
\begin{defn}
Let $X$ be an algebraic space, and $\phi:|X|\to \mathfrak{M}$ be a continuous surjective map of definable topological spaces. We say $\phi$ is algebraic with source $X$ (or just algebraic if $X$ is clear from context) if there exists a morphism of algebraic spaces $f:X\to Y$ and an identification $\mathfrak{M}\cong |Y| $ such that $|f|:|X|\to |Y|$ is identified with $\phi$ via this identification.
\end{defn}

Note that the algebraic space $Y$ is not unique, but if $\phi$ is proper with connected fibers, we may require $\cO_Y\to f_*\cO_X$ to be an isomorphism, in which case the algebraic space structure on $Y$ is unique.

\begin{lem}\label{lemma:semiample on open}Let $(X,D)$ be a proper log smooth algebraic space and $V=(V_\bZ,F^\bullet V_\cO)$ a polarizable integral pure CY variation on $X\bs D$ with neat monodromy, integrable Hodge bundle $L$ with torsion combinatorial monodromy, and $(X,D)$ satisfying Property \customref{b3}{(B1-3)}.    Let $q \colon |X|\to \fY\coloneqq  X(\bC)/R_\curve$ be the quotient map, $X_\mathfrak{U}\subset X$ an open union of Hodge strata, and $\mathfrak{U}\coloneqq  q(|X_\mathfrak{U}|)$ the image.  If $|X_\mathfrak{U}|\to\mathfrak{U}$ is algebraized by a fibration $f_\mathfrak{U}:X_\mathfrak{U}\to Y_\mathfrak{U}$, then for some $k>0$, $L^k|_{X_\mathfrak{U}}=f_{\mathfrak{U}}^*A$ for an ample line bundle $A$ on $Y_\mathfrak{U}$.  Moreover, the vanishing sections (in the sense of \Cref{thm:vanishing}) of some power of $A$ define a locally closed embedding $Y_\mathfrak{U}\to \bP^N$.
\end{lem}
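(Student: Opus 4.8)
The plan is to construct $A$ by descent along $f_\mathfrak{U}$ and then obtain \emph{both} the ampleness of $A$ and the embedding by vanishing sections in one stroke from \Cref{thm:vanishing}; thus the substantive part is checking that the pair $(Y_\mathfrak{U},A)$ satisfies the hypotheses of \Cref{setup}.

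First I would establish the descent. Every fiber $F$ of $f_\mathfrak{U}$ is a fiber of $q$ and lies in a single Hodge stratum $X_S$; by \Cref{lem:thickened period maps Hodge strata} the fibers of $f_\mathfrak{U}|_{X_S}=f_S$ are, via the covering map, the (compact) fibers of $\rho_S$, along which $L$ pulls back from $\cO_{\bP_S}(1)$. Since $\rho_S$ is constant on such a fiber, $L|_F$ is analytically trivial, hence algebraically trivial by \Cref{thm:GAGA}; equivalently, $V^\trans_S$ is isotrivial on $F$ by \customref{b1}{(B1)} and $L|_F\cong\cO_F$ by \Cref{lem:everything is compatible}. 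As $f_\mathfrak{U}$ is a fibration and $L^k$ is fiberwise trivial for suitable $k$, the bundle $L^k|_{X_\mathfrak{U}}$ descends along $f_\mathfrak{U}$ to a line bundle $A$ on $Y_\mathfrak{U}$ with $f_\mathfrak{U}^*A\cong L^k|_{X_\mathfrak{U}}$.

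Next I would verify \Cref{setup} for $(Y_\mathfrak{U},A)$. Let $Z\hookrightarrow Y_\mathfrak{U}$ be reduced and closed; we may assume $Z$ irreducible, and its generic point lies in the image $Y_{S_0}$ of a unique Hodge stratum $X_{S_0}$. By construction $Y_{S_0}$ is the Stein factorization of the period map of the simple CY variation $V^\trans_{S_0}$, with $\psi_{S_0}$ finite, so $Y_{S_0}$ is a period image on which $A$ restricts to the $k$-th power of the Hodge bundle of the descended variation. Fix $(X',D')$ proper log smooth with $\pi\colon X'\bs D'\to Z$ proper birational. Choose a multisection $W\subset f_\mathfrak{U}^{-1}(Z)$, i.e. an irreducible subvariety with $W\to Z$ generically finite of some degree $d$; then $W$ lies generically in $X_{S_0}$, so $\overline W\subset\overline{X_{S_0}}$. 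Taking a log resolution $\widetilde W$ of $\overline W$ inside $\overline{X_{S_0}}$, then resolving the main component of $(\widetilde W\bs D_W)\times_Z(X'\bs D')$, we produce a proper log smooth $(X'',D'')$ with a morphism $g\colon X''\to\overline{X_{S_0}}$ which is generically finite onto its image and carries the boundary into $D_{S_0}$, and a morphism $h\colon X''\to X'$ generically finite of degree $d$, compatibly with the maps down to $Z$. By \Cref{lem:everything is compatible}, $g^*L^k|_{X''}$ is the Schmid extension of a power of the Hodge bundle of the pullback of $V^\trans_{S_0}$ to $X''$, hence nef.

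The crux is bigness: I would show $g^*L^k|_{X''}$ is big via the integrability hypothesis (\Cref{defn:integrable}), for which it suffices that the period map of the transcendental part of the pullback of $V^\trans_{S_0}$ to $X''$ be generically immersive. Here this pullback is the pullback of $V^\trans_{S_0}$ along the birational map $X''\bs D''\to W$, and the composite $W\to Y_{S_0}\xrightarrow{\psi_{S_0}}\Gamma_{S_0}\bs\bD_{S_0}$ is generically finite followed by a finite (hence generically immersive) morphism; so the period map of $V^\trans_{S_0}|_W$ is generically immersive, and one passes this to the transcendental part using that $V^\trans_{S_0}$ is simple together with the structure of the transcendental and CY-minimal parts (\Cref{defn:minimal}, \Cref{lem:inject autos}). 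This passage from the full pulled-back variation — where generic immersivity is transparent — to the transcendental part that literally occurs in \Cref{defn:integrable} is the step I expect to be the main obstacle; in the Griffiths-bundle case of \Cref{thm:BBmain2} it is handled cleanly by \Cref{lemma:Griffiths is integrable}. Granting this, take $A_{X'}$ to be the Schmid extension on $X'$ of the relevant Hodge bundle of the variation pulled back from $Y_{S_0}$ over the open locus where $\pi$ is an isomorphism, so that $A_{X'}$ extends $\pi^*A|_Z$ (using the strata-compatibility of Schmid extensions, \Cref{lem:everything is compatible}); functoriality of Schmid extensions identifies $h^*A_{X'}$ with $g^*L^k|_{X''}$, whence $A_{X'}$ is nef and big by the projection formula, and the compatibility clause of \Cref{setup} is immediate from the same functoriality. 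Finally \Cref{thm:vanishing} applies: $Y_\mathfrak{U}$ is a scheme, $A$ is ample, and for $n\gg1$ the vanishing sections of $A^n$ define an everywhere-defined locally closed embedding $Y_\mathfrak{U}\hookrightarrow\bP\bigl(H^0_{\mathrm{van}}(Y_\mathfrak{U},A^n)^\vee\bigr)$, which is the asserted $\bP^N$.
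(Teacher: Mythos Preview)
Your approach is sound, and the obstacle you flag is genuine but resolvable --- though not by the route you sketch. Simplicity of $V^\trans_{S_0}$ together with \Cref{lem:inject autos} does not by itself prevent the pullback $g^*V^\trans_\Sigma$ from splitting with a constant transcendental summand while some other summand varies. The resolution instead uses Property \customref{b1}{(B1)}: if the period map of $(g^*V^\trans_\Sigma)^\trans$ were not generically immersive, its Stein factorization would drop dimension, producing a curve $C$ through the generic point of $X''$ on which this variation (hence its Hodge line $g^*L$) is isotrivial; then $L\cdot g(\bar C)=0$ in $\overline{X_\Sigma}$, so by (B1) the curve $g(\bar C)\cap X_\Sigma$ lies in a fiber of $f_\Sigma$ and $g(\bar C)$ maps to a point in $Y_\fU$ --- contradicting generic finiteness of $X''\to Z$.

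The paper takes a shorter route: it \emph{descends the variation} rather than lifting $Z$. Since $X_S\to Y_S$ has connected fibers, the finite part $Y'_S\to Y_S$ of the Stein factorization is a birational homeomorphism, and \Cref{lem:inject autos} (automorphisms of the simple Hodge structure $V^\trans_{S,x}$ inject into those of its Hodge line, which already descends) lets the full variation $V^\trans_S$ descend to $Y'_S$, with $g^*L_{Y_S}$ as its Hodge bundle. With a bona fide CY variation living on each stratum of $Y_\fU$, the paper invokes \Cref{lem:everything is compatible} to verify Setup directly --- no multisection $W$, no fiber product $(X'',D'')$, no compatibility check $h^*A_{X'}\cong g^*L^k|_{X''}$. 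The bigness step is terse in the paper and ultimately rests on the same integrability/(B1) input as yours. Your approach has the merit of applying integrability verbatim in its original form (\Cref{defn:integrable}, as a condition on maps into $\overline{X_\Sigma}$); the paper's buys brevity via the descent.
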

\begin{proof}By \Cref{lem:thickened period maps Hodge strata} the line bundle $L$ is trivial on the completion of $X$ along any fiber, hence descends to $L_Y$ on $Y_\mathfrak{U}$.  For each Hodge stratum $X_S\subset X_\fU$, the image $Y_S\subset Y_\fU$ is algebraic, and since $X_S\to Y_S$ has connected fibers, the finite part $g:Y'_S\to Y_S$ of the Stein factorization $X_S\to Y_S'\to Y_S$ is a homeomorphism, and in particular, birational.  By \Cref{lem:torsion comb mono hodge vs tr} and \Cref{lem:inject autos} the variation $V^\trans_S$ descends to $Y_S'$ and has $g^*L_{Y_S}$ as its Hodge bundle.  By \Cref{lem:everything is compatible} the Hodge bundle satisfies the requirements of \Cref{setup}, so the claim then follows from \Cref{thm:vanishing}.
\end{proof}


\section{Semiampleness}\label{sect:semiample}

In this section, we prove the following:
\begin{thm}\label{thm:semiample Hodge}
Let $( X,D)$ be a proper log smooth algebraic space,  $(V_\bZ,F^\bullet V_\cO)$ a polarizable integral pure CY variation of Hodge structures on $X\bs D$ with unipotent local monodromy, and $L$ the Hodge bundle on $X$.  If $ L$ is integrable with torsion combinatorial monodromy, then it is semiample. 
\end{thm}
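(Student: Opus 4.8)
The plan is to descend a sufficiently divisible power of the Hodge bundle $L$ to an ample line bundle on the quotient $\fY := X(\bC)/R_\curve$, following steps \eqref{step:relations}--\eqref{step:induction} of the proof outline; semiampleness of $L$ is then immediate. First I would make $(X,D)$ as convenient as possible. Since $L$ is semiample whenever its pullback to a finite surjective cover of $X$ (in characteristic zero) or to a proper modification of $X$ is semiample, and since by \Cref{lem:everything is compatible} the Hodge bundle pulls back to the Hodge bundle of the pulled-back variation when the local monodromy is unipotent, one may pass to a finite cover to assume the monodromy of $V_\bZ$ is neat, then to a modification to assume $(X,D)$ is strictly log smooth, and finally, applying \Cref{lem:clarified}(2) with $R=R_\curve$, to assume $(X,D)$ satisfies Property \customref{b1}{(B1-3)}. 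Integrability persists because it is formulated after an arbitrary strictly log smooth modification, and torsion combinatorial monodromy persists by \Cref{lem:torsion comb mono hodge vs tr}, the isotriviality of the relevant transcendental variations along curves being insensitive to these operations. By \Cref{cor:Rcurve is alg} the relation $R_\curve$ is then a closed constructible equivalence relation whose classes are connected, so $q\colon |X|\to\fY$ is a proper surjection with connected fibres, and $\fY$ is a definable topological space stratified by the images $\fY_S$ of the Hodge strata $X_S$.

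\emph{Inductive algebraization.} The heart of the argument is to prove, by induction on open unions of Hodge strata, that for every open union $U$ of strata of $\fY$ the map $|X_U|\to U$ (with $X_U:=q^{-1}(U)$) is algebraized by a fibration $f_U\colon X_U\to Y_U$ to which a fixed power $L^m$ descends as an ample bundle $M^{(m)}_U$, compatibly under restriction to smaller such $U$. Applied to $U=\fY$ this produces a fibration $f\colon X\to Z$ with $L^m=f^*M^{(m)}_Z$ and $M^{(m)}_Z$ ample, so $L$ is semiample. The base case---the top Hodge stratum, which is a period image---is algebraic with ample Hodge bundle by \cite{BBT23}. For the inductive step I would add one stratum at a time from the top: let $\fY_T\subset U$ be a stratum closed in $U$, let $U':=U\setminus\fY_T$ be already algebraized by $f_{U'}\colon X_{U'}\to Y_{U'}$ with $L^m|_{X_{U'}}=f_{U'}^*M^{(m)}_{U'}$ ample, and let $S$ be the Hodge stratum with $q(X_S)=\fY_T$. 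By \Cref{lemma:semiample on open}, after enlarging $m$ we may write $M^{(m)}_{U'}=f_{U'}^*A$ with $A$ ample and some power of $A$ defining a locally closed embedding $Y_{U'}\hookrightarrow\bP^N$ via its vanishing sections; pulling back gives global sections of a power of $L$ over $X_{U'}$ separating the fibres of $q$ there. Near $\fY_T$, \Cref{lem:thickened period maps Hodge strata} supplies the $\pi_1$-definable analytic map $\rho(S)\colon\widetilde{\fT(S)}^{V^\trans(S)_\bQ}\to\bP_S^\an$ pulling $\cO_{\bP_S}(1)$ back to (the pullback of) $L$, whose restriction to $\widetilde{X_S}$ has---thanks to the torsion combinatorial monodromy---compact fibres whose components map isomorphically onto the fibres of $q|_{X_S}$. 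These two packages of sections must be glued: on a definable neighbourhood of any point of $\fY_T$ they together define a proper definable analytic map out of the relevant part of $X_U^\define$ which is a locally closed embedding on the fibres of $q$ and which over $U'$ recovers the composite with $Y_{U'}\hookrightarrow\bP^N$; by \Cref{thm:image} this factors through a closed definable analytic embedding of the definabilization of an algebraic space, and by the uniqueness clause there these local algebraizations patch to the desired $f_U\colon X_U\to Y_U$. By \Cref{thm:GAGA} the power $L^m$ descends to a line bundle $M^{(m)}_U$ on $Y_U$, and by \Cref{lem:everything is compatible} the pair $(Y_U,M^{(m)}_U)$ verifies the hypotheses of \Cref{setup}---it is precisely the integrability of $L$ that forces $L^m$ to pull back to a nef and big bundle on every strictly log smooth model of every subvariety of $Y_U$---so $M^{(m)}_U$ is ample by \Cref{thm:vanishing}. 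This closes the induction.

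\emph{Main obstacle.} I expect the delicate point to be exactly this gluing in the inductive step: fusing the \emph{global algebraic} sections living over the already-algebraized open piece $U'$ with the \emph{local analytic, Hodge-theoretic} sections near the new stratum $\fY_T$---which exist only on a cover of a tubular neighbourhood and are only $\pi_1$-equivariant---into one definable analytic structure on $U$ that o-minimal GAGA can then algebraize. This is where both hypotheses of the theorem enter essentially: torsion combinatorial monodromy is what makes the fibres of $\rho(S)$ compact, hence the relevant map proper so that \Cref{thm:image} applies; and integrability, through Property \customref{b1}{(B1)} and \Cref{lem:clarified}, is what guarantees that $V^\trans_S$ extends across $\fY_T$ with enough variation for the Hodge-theoretic sections to separate points of $\fY_T$ transversally to $U'$. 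Verifying that the two kinds of sections agree where they overlap---both being governed by the period data of the transcendental variations---and that the resulting local structures patch coherently, is the main work; the remainder is bookkeeping about stratifications and applications of the o-minimal GAGA package of \cite{BBT23}.
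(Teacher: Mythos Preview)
Your plan is the paper's plan, and you have correctly located the crux. One technical point in your inductive step needs correction, however. You propose to apply \Cref{thm:image} \emph{locally} near points of $\fY_T$ and then patch the resulting local algebraizations by uniqueness. That does not work as stated: \Cref{thm:image} requires the source to be the definabilization of an algebraic space, but the preimage $q^{-1}(\fU_i)$ of a small definable open $\fU_i\subset U$ is only a definable analytic open of $X^\define$, not itself algebraic. The paper resolves this by inserting an intermediate step you omit: one first endows all of $U$ with a \emph{definable analytic} structure, and only then applies \Cref{thm:image} once to the global map $X_U^\define\to\cU$.

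Concretely, take a definable cover of $\fT_\fY(S)$ by contractible opens $\fU_i$; these lift to the cover $\widetilde{\fT_{\fY}(S)}^{V^\trans_{\fY}(S)_\bQ}$, and hence the $\pi_1$-definable morphism $\sigma(S)$ becomes an honest definable analytic morphism $X_{\fU_i}\to(\bP^N)^\define$ on each $q^{-1}(\fU_i)$. Now apply \emph{definable Stein factorization} \cite[Thm.~1.7]{BBTshaf} to each of these, obtaining $X_{\fU_i}\to\cU_i\to(\bP^N)^\define$ with $|\cU_i|\cong\fU_i$; the structure sheaves of the $\cU_i$ glue (and glue with $(Y_{U'})^\define$) to produce a definable analytic space $\cU$ with underlying space $U$ and a morphism $X_U^\define\to\cU$. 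Only at this point does \Cref{thm:image} apply, algebraizing $\cU$ in one shot. The rest of your argument (descent of $L^m$ via \Cref{thm:GAGA}, ampleness via \Cref{thm:vanishing}) then goes through as you wrote.
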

We deduce the same statement for the Griffiths bundle, where the integrability and combinatorial monodromy conditions are automatic:
\begin{cor}\label{thm:semiample Griffiths}
    Let $( X,D)$ be a proper log smooth algebraic space,  $({_\orig V}_\bZ,F^\bullet {_\orig V}_\cO)$ a polarizable integral pure variation of Hodge structures on $X\bs D$ with unipotent local monodromy, and $L$ the Griffiths bundle on $ X$.  Then $ L$ is semiample. 
\end{cor}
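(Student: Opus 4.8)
The plan is to deduce \Cref{thm:semiample Griffiths} from \Cref{thm:semiample Hodge} by replacing $_\orig V$ with the auxiliary CY variation of \Cref{defn:griffiths wedge}. Concretely, I would set
\[ V = \bigotimes_p \Exterior^{\rk F^p {_\orig V}_\cO} {_\orig V}, \]
and recall from \Cref{defn:griffiths wedge} that $V$ is a polarizable integral pure CY variation of Hodge structures on $X\bs D$, with unipotent local monodromy because $_\orig V$ has it, and whose Hodge bundle is exactly the Griffiths bundle $L$ of $_\orig V$: the deepest Hodge piece of each factor $\Exterior^{\rk F^p {_\orig V}_\cO} {_\orig V}$ is $\det F^p {_\orig V}_\cO$, of rank one, so $V$ is CY, and on the level of Schmid extensions $L = \bigotimes_p \det F^p {_\orig \cV} = F^m\cV$, compatibly with the $\Grif(-)$ construction by \Cref{lem:everything is compatible}(2)--(3). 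Thus \Cref{thm:semiample Hodge} will give the semiampleness of $L$ the moment its two hypotheses for $V$ — integrability, and torsion combinatorial monodromy — are verified.

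Both hypotheses are already supplied by the preceding sections, so this is where I would point the reader. The integrability of $L$ is precisely \Cref{lemma:Griffiths is integrable}(2): there the equivalence ``the Hodge bundle of $V$ is big on $Y$ $\iff$ the Griffiths bundle of $V$ is big on $Y$'' together with the ampleness of the Griffiths bundle on a period image from \cite{BBT23} forces that, whenever $g^*L$ fails to be big, $_\orig V$ (hence $V$) is isotrivial on a curve through the generic point of $Y$, which is the isotriviality of a subvariation containing the Hodge bundle demanded by \Cref{defn:integrable}. The torsion combinatorial monodromy of $L$ is the Green--Griffiths--Robles \Cref{thm:GGR}; equivalently, one invokes \Cref{lem:GGR} after checking that $L$ has norm-one combinatorial monodromy, which is exactly the conjugate-self-duality of the Griffiths line recorded in \Cref{lem:conjsdgriffiths} and the tensor computation following \Cref{thm:GGR}. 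With both conditions checked, \Cref{thm:semiample Hodge} applies to $V$ and yields that $L$ is semiample on $X$, completing the proof.

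I do not expect a genuinely new obstacle at this stage: essentially all of the difficulty is already absorbed into \Cref{thm:semiample Hodge} (whose proof occupies \Cref{sect:semiample} through the inductive algebraization of the quotient $\fY = X(\bC)/R_\curve$) and into the two geometric inputs, \Cref{lemma:Griffiths is integrable} and \Cref{thm:GGR}. If anything still requires care, it is only the bookkeeping that $L$, viewed as the Hodge bundle of the wedge--tensor variation $V$, genuinely coincides with the Schmid extension of the Griffiths line bundle of $_\orig V$ — i.e. that $\Grif(-)$ commutes with Deligne/Schmid extension in the unipotent case — which is \Cref{lem:everything is compatible}; everything else is an immediate citation.
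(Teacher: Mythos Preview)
Your proposal is correct and matches the paper's approach exactly: the paper's proof is the one-line ``Apply \Cref{lemma:Griffiths is integrable} and \Cref{thm:GGR},'' and you have simply unpacked what that line means. The extra bookkeeping you flag (that $L$ really is the Hodge bundle of $\Grif({_\orig V})$ and that this is compatible with Schmid extension) is handled by \Cref{defn:griffiths wedge} and \Cref{lem:everything is compatible}, as you note.
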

\begin{proof}[Proof of \Cref{thm:semiample Griffiths} given \Cref{thm:semiample Hodge}]  Apply \Cref{lemma:Griffiths is integrable} and \Cref{thm:GGR}.
\end{proof}
\begin{rem}\label{rem: alteration check}We remark that both the integrability of the Hodge bundle and the torsion combinatorial monodromy condition can clearly be checked after pulling back along a dominant proper morphism $(X',D')\to (X,D)$.
\end{rem}

\subsection{Proof of \Cref{thm:semiample Hodge}}
The main step in the proof of \Cref{thm:semiample Hodge} is the following:

\begin{thm}\label{thm:easy algebraize}
    Let $(X,D)$ and $V=(V_\bZ,F^\bullet V_\cO)$ be as in \Cref{thm:semiample Hodge}.  Then the quotient $q:|X|\to \fY\coloneqq  X(\bC)/R_\mathrm{curve}$ is algebraic.
\end{thm}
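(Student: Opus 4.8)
The plan is to follow step (4) of the proof outline: build a definable analytic structure on $\fY$ for which $q$ is definable analytic, one Hodge stratum at a time, and then algebraize by definable GAGA. First I would reduce to the case in which the monodromy of $V_\bZ$ is neat and $(X,D)$ satisfies Properties \customref{b1}{(B1)}--\customref{b3}{(B3)}: neatness is arranged by replacing $X\bs D$ by a finite \'etale cover and passing to a log smooth compactification, and Properties \customref{b1}{(B1)}--\customref{b3}{(B3)} by \Cref{lem:clarified}(2). Each such replacement is a proper generically finite morphism $(X',D')\to(X,D)$ under which the hypotheses are preserved by \Cref{rem: alteration check}, and $R_\curve$ on $X$ is the image of $R_\curve$ on $X'$ (pullbacks of degree-zero curves have degree zero); since the resulting map $X'(\bC)/R_\curve\to X(\bC)/R_\curve$ is then surjective with finite fibers, algebraicity of the quotient descends, and from now on I assume that $V_\bZ$ is neat and $(X,D)$ satisfies \customref{b1}{(B1)}--\customref{b3}{(B3)}.

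By \Cref{cor:Rcurve is alg}, $R_\curve$ is a closed constructible equivalence relation with connected classes, so $\fY=X(\bC)/R_\curve$ exists as a definable topological space over which $q$ is proper with connected fibers; by Property \customref{b3}{(B3)} it is stratified by the images $\fY_S$ of the Hodge strata $X_S$, and each $\fY_S$ --- being the image of the period map of $V^\trans_S$ --- is algebraic by \cite{BBT23}. I fix a filtration $\varnothing=U_0\subset U_1\subset\cdots\subset U_r=\fY$ by open unions of Hodge strata with each $\fY_{S_i}=U_i\bs U_{i-1}$ closed in $U_i$, and prove by induction on $i$ that $q^{-1}(U_i)=|X_{U_i}|\to U_i$ is algebraized by a fibration $f_i\colon X_{U_i}\to U_i$; the statement of the theorem is the case $i=r$.

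For the inductive step put $U=U_i$, $U'=U_{i-1}$, $S=S_i$, so $U'$ is already algebraized by a fibration $f_{U'}\colon X_{U'}\to U'$ and $\fY_S=U\bs U'$ is closed in $U$. By \Cref{lemma:semiample on open} there is a $k>0$ and an ample bundle $A$ on $U'$ with $L^k|_{X_{U'}}\cong f_{U'}^*A$, together with finitely many vanishing sections of a power $A^m$ defining a locally closed embedding of $U'$; pulled back, these give global sections of $L^{km}(-D)$ on $X$ whose associated rational map restricts over $U'$ to a locally closed embedding factoring through $q$. Near $\fY_S$, \Cref{lem:thickened period maps Hodge strata} supplies on a DR-neighborhood $\fT(S)$ of $X_S$ the $\pi_1$-definable analytic map $\rho(S)\colon\widetilde{\fT(S)}^{V^\trans(S)_\bQ}\to\bP_S$, which pulls $\cO_{\bP_S}(1)$ back to the Hodge bundle and whose restriction $\rho_S$ over $\widetilde{X_S}$ has compact fibers, the connected components of which are the fibers of $X_S\to\fY$ over $\fY_S$; choosing finitely many elements of $\Sym^{km}\big((V^\mini_{S,\bC,x(S)})^\vee\big)$ thus yields, definably locally on a tube around $\fY_S$ in $\fY$, definable analytic sections of $L^{km}$ separating the points of $\fY_S$. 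Combining these ``Hodge-theoretic'' sections with the sections coming from $U'$ gives, definably locally on $U$, a definable analytic locally closed embedding into projective space (here the compactness of the $\rho_S$-fibers and Property \customref{b1}{(B1)} are used), and --- all of these maps being built from sections of the \emph{single} line bundle $L^{km}$ --- the definable analytic structures on the images glue to a definable analytic structure on $U$ extending the algebraic one on $U'$ and making $q_U\colon X_U^\define\to U$ definable analytic. Since $q_U$ is proper (a base change of $q$), surjective, and has connected fibers, \Cref{thm:image} identifies $U$ with an algebraic space and $q_U$ with a fibration, and \Cref{thm:GAGA} together with \Cref{lemma:semiample on open} (via \Cref{thm:vanishing}) shows that a power of $L$ descends to an ample bundle over it. This closes the induction.

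The main obstacle is exactly the gluing in the inductive step. The sections inherited from the already-algebraized open part $U'$ are honest global sections but carry no information along $\fY_S$, while the ``Hodge-theoretic'' sections near $\fY_S$ are only defined definably locally and only separate points \emph{within} $\fY_S$; one must check that together they define a definable analytic locally closed embedding on a full definable neighborhood of $\fY_S$ in $\fY$ --- in particular that no point of $\fY_S$ gets identified with a nearby point of $U'$, and that the differential is injective in the directions transverse to $\fY_S$. This is precisely where the compactness of the $\rho_S$-fibers (equivalently, the properness guaranteed by the torsion combinatorial monodromy hypothesis, via \Cref{lem:torsion comb mono hodge vs tr} and \Cref{lem:nodal curve monodromy}) and the strict nefness encoded in Property \customref{b1}{(B1)} are indispensable, and it is carried out by the methods of \cite[Theorem 5.4]{BBT23}.
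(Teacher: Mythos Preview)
Your proposal follows essentially the same route as the paper's proof: reduce to neat monodromy and Properties \customref{b1}{(B1)}--\customref{b3}{(B3)}, then induct on Hodge strata, combining the global sections from \Cref{lemma:semiample on open} over the already-algebraized open $U'$ with the local $\pi_1$-definable analytic sections from \Cref{lem:thickened period maps Hodge strata} near the new stratum, and algebraize via \Cref{thm:image}. One small point: in your reduction to neat monodromy, the claim ``$R_\curve$ on $X$ is the image of $R_\curve$ on $X'$'' is not justified by ``pullbacks of degree-zero curves have degree zero'' when $X'\to X$ is generically finite (the pullback of a connected degree-zero curve need not be connected), and ``surjective with finite fibers'' alone does not make algebraicity descend --- the paper instead passes through semiampleness via the norm map, which then yields the algebraic quotient; your reduction can be completed by observing that the Galois group $G$ acts on $X'(\bC)/R_\curve$ with quotient $X(\bC)/R_\curve$. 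In the inductive step the paper is also a bit more explicit than you are: the combined map $\sigma(S)$ is not itself a locally closed embedding but has compact connected fibers equal to those of $q$, and the definable analytic structure on $U$ is obtained by taking definable Stein factorizations \cite[Thm.~1.7]{BBTshaf} over a cover of $\fT_\fY(S)$ by contractible definable opens lifting to the $V^\trans(S)_\bQ$-cover, then gluing.
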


\begin{proof}We first reduce to the case where the monodromy of $V_\bZ$ is neat.  Let $L$ be the Hodge bundle of $X$.  By adjoining enough level structure, there is a proper log smooth algebraic space $(X',D')$ and a morphism $g:X'\to X$ restricting to a finite \'etale cover $g|_{X'\bs D'}:X'\bs D'\to X\bs D$ such that $g|_{X'\bs D'}^*V_\bZ$ has neat monodromy and $X\bs D$ is the quotient of $X'\bs D'$ be a finite fixed-point free group action by $G$.  The finite part $h:X''\to X$ of the Stein factorization of $X'\to X$ is then the normalization of $X$ in the function field of $X'$, so the group action by $G$ extends to $X''$ and $X$ is the quotient.  Thus, there is a norm map from sections of $h^*L$ to sections of $L^{|G|}$, so $L$ is semiample if and only if $h^*L$ is, which in turn is semiample if and only if $g^*L$ is, since $X'\to X''$ is a fibration.  Thus, we may assume the monodromy is neat.  

By \Cref{lem:clarified} we may assume $(X,D)$ satisfies Property \customref{b3}{(B1-3)} after replacing $(X,D)$ with a modification.  Each Hodge stratum $X_S$ is saturated with respect to the quotient map $q$, and we therefore obtain a locally closed (in the quotient Zariski topology) stratification $\fY_S\coloneqq  q(|X_S|)$ of $\fY$.

\begin{claim}\label{claim:Bundles Descend} Let $\fU\subset \fY$ be an open union of strata, and $X_\fU\subset X$ the open subspace with underlying topological space $q^{-1}(\fU)$.  Then $|X_\fU|\to \fU$ is algebraic. 

\end{claim}

We prove the claim by induction, adding one stratum at a time.  The base case (the case of the open stratum) is a consequence of \cite[Theorem 1.1]{BBT23}.  For the general case, we may assume there is a stratum $\fY_S\subset \fU$ which is closed in $\fU$ and such that, setting $\fU'=\fU\bs \fY_S$, $|X_{\fU'}|\to \fU'$ is algebraized by a fibration $f_{
\fU'}:X_{\fU'}\to U'$.  According to \Cref{lemma:semiample on open}, there is a $N'$-dimensional space of sections of $L^k$ on $X$ which yields a morphism $X_{\fU'}\to \bP^{N'-1}$ which factors as $X_{\fU'}\to U'\to \bP^{N'-1}$ where $U'\to \bP^{N'-1}$ is a locally closed embedding.  

Recall that by \Cref{cor:Rcurve is alg} the quotient $\fU=X_{\fU}(\bC)/R_\curve$ naturally exists in the category of definable topological spaces \cite[Chap. 10, (2.15) Theorem]{Dries}.  Take a DR-neighborhoods  $X_S\subset \fT(S)\subset X_\fU$, $\fY_S\subset \fT_\fY(S)\subset\fU$ for which $X_{\fT_\fY(S)}\coloneqq  q^{-1}(\fT_\fY(S))\subset \fT(S)$.  By \Cref{lem:thickened period maps Hodge strata}, we obtain a $\pi_1$-definable analytic morphism $$\rho(S):\widetilde{\fT(S)}^{V^\mini(S)_{\bQ}}\to \bP^{\an}_S.$$  Moreover, the restriction of $V^\mini(S)_\bQ$ descends to $V^\mini_{\fY}(S)_\bQ$ on $\fT_{\fY}(S)$ by \customref{b2}{(B2)} and \Cref{lem:glueingalongHodge}, and the restriction of $\rho(S)$ to $\widetilde{X_S}^{V^\mini_{S,\bQ}}$ factors through $\widetilde{\fY_S}^{V^\mini_{\fY,S,\bQ}}$ (via $\rho_S$).
Let $\bL_S$ be the total space of $\cO_{\bP_S}(-k)^{N'}$ on $\bP_S$.  From the above $N'$-dimensional space of sections of $L^k$, we obtain a lift of (the restriction of) $\rho(S)$ to a $\pi_1$-definable analytic morphism $$\sigma(S):\widetilde{X_{\fT_\fY(S)}}^{V^\mini(S)_{\bQ}}\to \bL_S^\an$$
that has fibers with compact connected components and the finite part of whose Stein factorization is locally a closed immersion away from $\widetilde{X_S}^{V^\mini_{S,\bQ}}$.  In fact, the connected components of the fibers in $\widetilde{X_S}^{V^\mini_{S,\bQ}}$ are identified with those of the quotient $|X_\fU|\to\fU$ via the covering map: they are contained in the latter, since they are contained in the fibers of $\rho(S)$, and every Hodge degree 0 curve in $X_S$ lifts to $\widetilde{X_S}^{V^\mini_{S,\bQ}}$ and must be contracted.  Thus, the \emph{topological} Stein factorization of $\sigma(S)$ is $\widetilde{\fT_{\fY}(S)}^{V^\mini_{\fY}(S)_{\bQ}}$.

\def\fW{\mathfrak{W}}
As in \cite{BBTshaf}, by definable triangulation, there is a definable cover $\fU_i$ of $\fT_{\fY}(S)$ by contractible open subsets which therefore lift to $\widetilde{\fT_{\fY}(S)}^{V^\mini_{\fY}(S)_{\bQ}}$.  Letting $X_{\fU_i}\coloneqq  q^{-1}(\fU_i)\subset X^{\define}$ be the corresponding open definable analytic subspaces which lift to $\widetilde{X_{\fT_\fY(S)}}^{V^\mini(S)_{\bQ}}$, by definable Stein factorization \cite[Theorem 1.7]{BBTshaf} there is a Stein factorization $X_{\fU_i}\to \cU_i\to \bL_S^\define$ in the category of definable analytic spaces, and $|\cU_i|$ is canonically identified with $\fU_i$ via the quotient map, by the above.  We therefore obtain a definable analytic space structure on $\fT_{\fY}(S)$ for which the quotient map $|X_{\fT_\fY(S)}|\to \fT_{\fY}(S)$ underlies a morphism of definable analytic spaces and is compatible with $f_{\fU'}^\define :X_{\fU'}^\define \to U'^\define$, hence we obtain a morphism of definable analytic spaces $X_\fU^\define \to \cU$ which is identified with $|X_U|\to \fU$ on the level of definable topological spaces.  By the definable image theorem (\Cref{thm:image}), this morphism is algebraic, whence the claim.
\end{proof}

\begin{proof}[Proof of \Cref{thm:semiample Hodge}]  Combine \Cref{lemma:semiample on open} and \Cref{thm:easy algebraize}.
\end{proof}

\subsection{Examples and counterexamples}The integrability and combinatorial monodromy conditions in \Cref{thm:semiample Hodge} are clearly both necessary.  We give some examples showing that neither condition is sufficient on its own.
\begin{eg}\label{eg:int not tor}
We give an example of a CY variation whose Hodge bundle is integrable but has nontorsion combinatorial monodromy:

Let $F/\bQ$ be a real quadratic field with ring of integers $\cO_F$.  Consider the lattice $U_{\bZ,0}\coloneqq  \Res_{\cO_F/\bZ}\cO_F^2$ with $q_0\coloneqq  \tr_{F/\bQ}\langle\,,\,\rangle$ where $\langle\,,\,\rangle$ is the standard $\cO_F$-linear antisymmetric pairing on $\cO_F^2$. 
 The two embeddings $\iota_1,\iota_2:F\to \bR$ give embeddings $\SL_2(\cO_F)\to\SL_2(\bR)$, and the Hilbert modular surface $X=\SL_2(\cO_F)\backslash \bH^2$ parametrizes weight one Hodge structures on $U_{\bZ,0}$ polarized by $q_0$ for which the splitting over $\bR$ into $\iota_1$ and $\iota_2$ eigenspaces is a decomposition of real Hodge structures.  Let $U=(U_\bZ,F^\bullet U_\cO,q)$ be the associated polarized variation of Hodge structures on $X$.  Then $\wedge^2U$ is a CY variation whose Hodge bundle is the Griffiths bundle of $U$, hence semiample.

 Equip $\bZ(0)^2$ with the standard diagonal polarization $\lambda$ and consider the polarizable integral variation of Hodge structures  
 \[V = \bZ(0)^2\otimes_{\bZ(0)} U\]
 on $X$ which is polarized by $\lambda\otimes q$.  Let $e\coloneqq  e_1+ie_2\in\bC(0)^2$, which is $\lambda$-isotropic, and let $U_\bR=U_1\oplus U_2$ be the splitting over $\bR$ into the two eigenspaces.  Consider the variation $V'$ which has the same underlying integral local system as $V$, but the Hodge filtration is changed by shifting: 
 \[V'_\cO= \bC e\otimes_\bC (U_1)_\cO(-2,2)\oplus \bC(0)^2\otimes_\bC (U_2)_\cO \oplus \bC\bar e\otimes_\bC (U_1)_\cO(2,-2).\]
 Since $\bC e\otimes_\bC U_1$ is a $\lambda\otimes q$-isotropic sub-$\bC$-variation and its conjugate is $\bC \bar e\otimes_\bC U_1$, the shift defines a new $\bC$-variation and is still polarized by $\lambda\otimes q$.  It has the following properties:
 \begin{enumerate}
 \item $V'$ is a simple polarizable integral pure CY variation of level five with Hodge bundle $F^3V'_\cO=F^1(U_1)_\cO$. Thus, its Hodge bundle is trivial along the leaves of one of the two transcendental foliations of $X$ given by the product structure on $\bH^2$, and so the Hodge bundle is not integrable. The Hodge bundle is the canonical bundle of the other foliation. The fact that it is not semiample is a classical example of the failure of foliated abundance; see \cite[Example IV.5.5]{McQuillan2008}. 
 \item After passing to a finite-index $\Gamma\subset\SL_2(\cO_F)$, $X$ has a log smooth compactification, the connected components of whose boundary are cycles of rational curves.  The Hodge bundle $F^3\cV'$ is trivial on each of these curves.  The monodromy of $U_\bZ$ in a neighborhood of one of those connected components is given by upper-triangular matrices (see e.g. \cite[$\S$I.5]{amrt})
 \[\begin{pmatrix}\alpha&\beta\\0&\alpha^{-1}\end{pmatrix}\]
 for $\alpha$ ranging over a finite index subgroup of units in $\cO_F$ and $\beta$ ranging over some ideal class of $\cO_F$.  The combinatorial monodromy on $F^1\cU_j$ around the cycle is given by multiplication by $\iota_j(\alpha^2)$, and so the Hodge bundle of $V'$ does not have torsion combinatorial monodromy (nor does it have norm one combinatorial monodromy, in accordance with \Cref{lem:GGR}). 
 \end{enumerate}
 The CY variation $V'\otimes \wedge^2 U$ then has integrable Hodge bundle, but nontorsion combinatorial monodromy. 
\end{eg}
Note also that $U_1$ from \Cref{eg:int not tor} shows that the integral structure is important in the statement of \Cref{thm:semiample Griffiths}, and that a $\bar \bZ$-structure is not sufficient.

\begin{eg} \label{eg:tor not int}
We give an example of a CY variation whose Hodge bundle has torsion combinatorial monodromy (because there is no boundary) but is non-integrable:

We may alternatively describe the above example as follows: fixing a degree 4 CM field $K/\bQ$, $\langle\,,\,\rangle$ the standard hermitian form on $\cO_K^2$, and taking $V_{\bZ,0}\coloneqq  \Res_{\cO_K/\bZ}\cO_K^2$ with $q_0\coloneqq   \tr_{K/\bQ}\langle\,,\,\rangle$, $X$ parametrizes Hodge structures on $V_{\bZ,0}$ polarized by $q_0$ such that the eigenspaces associated to each embedding $K\to \bC$ are complex sub-Hodge structures.  The variation $V'$ is obtained from the resulting variation $V$ by shifting a conjugate pair of factors in this decomposition.

Now fix a degree $2n\geq 6$ CM field $K/\bQ$, and a hermitian pairing $\langle\,,\,\rangle$ on $\cO_K^2$ which is indefinite for exactly 2 conjugate pairs of embeddings.\footnote{Recall that the corresponding period domain has an $\bH$ factor for each conjugate pair of embeddings for which the form is indefinite.  On the other hand, since the form is definite for one conjugate pair of embeddings, the monodromy of $V_\bZ$ embeds in a compact group and therefore no nontrivial unipotents.}  Performing the same shifting construction, we obtain a CY variation on a variety $X$ with no boundary and whose Hodge bundle is not integrable. Moreover, the Hodge bundle in this case is strictly nef since there are no curves which lift to a fiber of $\bH\times \bH$ and thus the Hodge bundle vacuously has torsion combinatorial monodromy.
\end{eg}
\begin{rem}Neither of the variations in \Cref{eg:int not tor} and \Cref{eg:tor not int} is manifestly algebraic, and it is not clear to the authors whether a geometric polarizable integral pure CY variation automatically has Hodge bundle which is integrable and has torsion combinatorial monodromy.  Of course, \Cref{thm:integrability} and \Cref{thm:torsion} below show this is the case for the variation on middle cohomology for a family of $K$-trivial varieties. 
\end{rem}

\begin{eg} In both \Cref{thm:semiample Hodge} and \Cref{thm:semiample Griffiths}, the assumption of unipotent local monodromy is necessary.  This can even be seen on the level of local systems with finite monodromy.  Let $L=\cO_{\bP^1}(d)$ and consider the ruled surface $X=\bP(\cO_{\bP^1}\oplus L)$, which is the total spaces of $L$ and $L^\vee$ glued along the canonical map $L\bs 0\to L^\vee\bs 0: s\mapsto s^{\vee}$.  Let $s_0,s_\infty$ be the 0 sections of $L,L^\vee$, respectively, thought of as sections of $X\to\bP^1$.  Let $G=\bZ/2\bZ$ act on $X$ by scaling by $\pm 1$, let $\pi:X\to Y=G\backslash X$ be the quotient, and note that $(Y,D)$ with $D\coloneqq  \pi(s_0)+\pi(s_\infty)$ is log smooth.  Let $V_\bZ=(\pi_*\bZ_{(X\bs D)^\an})^-$ be the local system on $(Y\bs D)^\an$ of anti-invariant locally constant functions on $(X\bs s_0\cup s_\infty)^\an$.  The local monodromy around each component of $D$ is $\pm 1$, and the lower canonical Deligne extension $\cV$ is generated by $q^{1/2}v$ where $v$ is a flat section and $q$ is a local defining equation.  The pullback $\pi^* V$ has a global flat section which extends with simple poles along $s_0$ and $s_\infty$ as a section of $\pi^*\cV$, so we have $\pi^*\cV\cong \cO_X(-s_0-s_\infty)$.  In particular, $\pi^*\cV|_{s_0}\cong L^\vee$ and $\pi^*\cV|_{s_\infty}\cong L$, so for $d\neq 0$, $\cV$ is not semiample.
\end{eg}
\begin{rem}

There is however a natural modification of a power of the Griffiths bundle which is semiample when the local monodromy is not unipotent.  Indeed, for any polarizable integral pure variation of Hodge structures $V=(V_\bZ,F^\bullet V_\cO)$ on a log smooth algebraic space $(X,D)$, by adjoining enough level structure there is a finite cover $\pi:X'\to X$ with $X'$ normal such that $X$ is the quotient of $X'$ by a group action $G$ and such that $V'\coloneqq  \pi_{X\bs D}^*V$ has unipotent local monodromy, where $\pi_{X\bs D}$ is the corestriction to $X\bs D$.  Taking a log resolution $\pi':X''\to X$, a power $L'^k$ of the Griffiths bundle $L'$ of $V'' \coloneqq \pi'^*_{X'\bs D'}V'$ descends to a line bundle $L^{(k)}$ on $X$, and using the norm map we deduce that $L^{(k)}$ is semiample.  Likewise for the Hodge bundle, assuming the integrability and combinatorial monodromy conditions (on $X$). This will be the natural polarization of the Baily--Borel compactification discussed in the next section.
\end{rem}

\section{Baily--Borel compactifications of period images}

\subsection{Preliminaries}

Let $X$ be a smooth algebraic space and $\phi:X^{\an}\to\Gamma\backslash \bD$ be a period map associated to a polarizable integral pure variation of Hodge structures, and $\Gamma$ is any discrete group containing the image of the monodromy representation. By \cite[Theorem 1.1]{BBT23} (see also \cite[Corollary 2.11]{bbtmixed}) the closure of the image is naturally a quasiprojective variety: there is a unique dominant morphism $f:X\to Y$ to a quasiprojective variety $Y$ and a closed immersion $\iota:Y^{\an}\to\Gamma\backslash \bD$ such that $\phi=\iota\circ f^{\an}$. Our goal in this section is to prove the existence of a canonical compactification $Y^{\BB}$ of $Y$ which we call the \textit{Baily--Borel} compactification.

The following is a more intrinsic and slightly more general notion than image of a period map. Let $Y$ be a reduced and irreducible algebraic space with a quasifinite Griffiths transverse\footnote{That is, for any resolution $Z\to Y$, $Z^\an\to \Gamma\backslash\bD$ is Griffiths transverse.} morphism $\phi:Y^\an\to \Gamma\backslash \bD$, where $\Gamma$ is a discrete group preserving an integral lattice.  After passing to a finite morphism $f:Y'\to Y$ by adjoining level structure, there will be a polarizable integral pure variation of Hodge structure $_\orig V'$ on $Y'$ with monodromy contained in $\Gamma$, and the period map $Y'\to \Gamma\backslash \bD$ will factor through $\phi$.  In this situation, it follows from \cite{BBT23} that the Griffiths bundle $L_Y$ is ample on $Y$.  Henceforth, we refer to such a $Y$ as a  ``variety with quasifinite period map.''

Now, for general $\Gamma$ the Griffiths bundle exists on the stack $[\Gamma\bs\bD]$ and a power of it $L_{[\Gamma\bs\bD]}^{k_\Gamma}$ descends to the coarse space $\Gamma\bs\bD$. We shall use the notation $L_{\Gamma\backslash\bD}^{(k_\Gamma)}$ for the descent, as the descent is not necessarily a power of a line bundle.

\begin{defn}
Let $Y$ be a reduced and irreducible algebraic space with a Griffiths transverse morphism $\phi:Y\to \Gamma\backslash \bD$. For each non-negative integer $n$ we define $H^0_{\bdd}(Y,L_Y^{(nk_\Gamma)})\subset H^0(Y,L_Y^{(nk_\Gamma)})$ to be those sections $s$ whose Hodge norm $|s|$ grows sub-polynomially at the boundary.  We call these the sections \textit{of moderate growth}. We define the associated graded ring
$B_Y\coloneqq  \bigoplus_{n\geq 0}H^0_{\bdd}(Y,L_Y^{(nk_\Gamma)})$ with $H^0_{\bdd}(Y,L_Y^{(nk_\Gamma)})$ given degree $nk_\Gamma$.
\end{defn}

Note that for any proper log smooth algebraic space $(X,D)$ and dominant morphism $\pi:X\backslash D\to Y$ for which the composition with the period map $\phi\circ \pi^\an$ is locally liftable (to $\bD$) and such that the induced local system has unipotent local monodromy, a section $s\in H^0(Y,L_Y^{(nk_\Gamma)})$ has Hodge norm of moderate growth if and only if its pullback $\pi^*s$ extends to a section of the Schmid extension $(L^{nk_\Gamma}_{X\bs D})_X$ of the $nk_\Gamma$-th power of the Griffiths bundle $L_{X\bs D}$ of the pullback variation $_\orig V_{X\bs D}$ by \cite{kashiwara}. Note also that for any morphism $f:(X',D')\to (X,D)$ of proper log smooth algebraic spaces and any polarizable integral pure variation of Hodge structures $_\orig V$ on $X\bs D$, there is always an injection $f^*(L^k_{X\bs D})_X\to (L^k_{X'\bs D'})_{X'}$ of Schmid extensions of powers of the associated Griffiths bundles.  Thus, for any morphism $g:Z\to Y$, we get an induced pullback map
$g^*:B_Y\to B_Z$.

Our main result is then as follows:

\begin{thm}\label{thm:bailyborel}
Let $Y$ be a variety with quasifinite period map. Then

\begin{enumerate}

\item $B_Y$ is finitely generated, $Y^{\BB}\coloneqq  \proj B_Y$ is a projective variety, and the natural morphism $j:Y\hookrightarrow Y^{\BB}$ is an open embedding.

\item There exists a minimal positive $k_\Gamma| k_Y$ such that locally on $Y^{\BB}$ there are sections of $L_Y^{(k_Y)}$ whose Hodge norms and inverse Hodge norms have moderate growth.

\item For $k_Y|n$, $\cO_{Y^{\BB}}(n)$ exists as a line bundle and is ample for $n$ positive.  The natural inclusion $\cO_{Y^\BB}(n)\subset j_*(L_Y^{(n)})$ is the subsheaf of sections of moderate growth. 
\item 
Let $(\testsp,D_\testsp)$ be a log smooth algebraic space and $g: \testsp\bs D_\testsp\to Y$ a morphism for which the composition $(\testsp\bs D_\testsp)^\an\xrightarrow{g^\an}Y^\an\to \Gamma\bs \bD$ is locally liftable.  Then $g: \testsp\bs D_\testsp\to Y$ extends to a morphism $\bar g:\testsp\to Y^\BB$ and for $k_Y|n$, $\bar g^*\cO_{Y^\BB}(n)$ is canonically identified with the Schmid extension $(L^n_{\testsp\bs D_\testsp})_\testsp$ of the $n$th power of the Griffiths bundle $L_{\testsp\bs D_\testsp}$ of the induced variation $_\orig V_{\testsp\bs D_\testsp}$ on $\testsp\bs D_\testsp$.

\end{enumerate}
\end{thm}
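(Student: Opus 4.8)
The plan is to first build \emph{some} projective compactification $\bar Y$ of $Y$ to which a power of the Griffiths bundle extends amply, and then to recognize it as $\proj B_Y$. Adjoining level structure gives a finite $f\colon Y'\to Y$ with $Y'$ carrying a polarizable integral pure variation $_\orig V'$ with neat monodromy, $Y$ being the quotient of $Y'$ by a finite group $G$ acting compatibly. I would choose a proper log smooth $(X,D)$ with a proper birational $X\bs D\to Y'$, compatible with the $G$-action after a further blow-up, and set $V\coloneqq \Grif(_\orig V')$ on $X\bs D$; this is a CY variation with unipotent local monodromy whose Hodge bundle $L=L_X$ is the Griffiths bundle of $_\orig V'$, and by \Cref{lemma:Griffiths is integrable} and \Cref{thm:GGR} it is integrable with torsion combinatorial monodromy. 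By \Cref{thm:easy algebraize}, the construction of the ample polarization in \Cref{lemma:semiample on open}, and the normal case \Cref{thm:BBHodge}, the quotient $X(\bC)/R_\curve$ is a normal projective algebraic space $Z$ with $X\to Z$ a fibration to which some power $L^{(m)}_Z$ of the Griffiths bundle descends ampley, pulling back to $L^m$. On the open stratum $R_\curve$ restricts, by ampleness of the Griffiths bundle on the period image, to the fibre relation of $X\bs D\to Y'$, so $Z$ compactifies the normalization $\widetilde Y'$; descending the $G$-action to $Z$ and passing to the quotient then yields a projective compactification $\bar Y$ of $Y$ with an ample line bundle $A=L^{(k_Y)}_{\bar Y}$ restricting on $Y$ to a power of the Griffiths bundle of $_\orig V$ and pulling back to the Schmid extension $(L^{k_Y}_{X\bs D})_X$. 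I expect \emph{this last descent to the possibly non-normal $Y$} to be the main obstacle: one must check the descended line bundle on $Z/G$ glues across the non-normal locus of $Y$, which uses crucially that $V$ arises from $_\orig V'$ via the $\Grif(-)$ construction, so that the relevant gluing is already present on the level of lattice data (cf.~\Cref{lem:conjsdgriffiths}).

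Granting $\bar Y$: by \cite{kashiwara}, a section $s$ of $L^{(nk_\Gamma)}_Y$ has Hodge norm of moderate growth iff its pullback to $X\bs D$ extends to $(L^{nk_\Gamma}_{X\bs D})_X$; since $A$ pulls back to $(L^{k_Y}_{X\bs D})_X$ and $X\to\bar Y$ is proper surjective, this happens iff $s$ extends to a section of the corresponding (fractional) power of $A$ on $\bar Y$. Hence when $k_Y\mid nk_\Gamma$ one has $H^0_\bdd(Y,L^{(nk_\Gamma)}_Y)=H^0(\bar Y,A^{nk_\Gamma/k_Y})$, and in general $H^0_\bdd(Y,L^{(nk_\Gamma)}_Y)$ is the degree-$n$ piece of a finitely generated graded module over $\bigoplus_\ell H^0(\bar Y,A^\ell)$, so $B_Y$ is finitely generated. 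Since $A$ is ample, $\proj B_Y\cong\bar Y$, and with $Y^{\BB}\coloneqq \proj B_Y$ the open embedding $j\colon Y\hookrightarrow Y^{\BB}$, the identity $\cO_{Y^{\BB}}(n)=A^{n/k_Y}$ (ample for $n>0$, $k_Y\mid n$), and the identification of $\cO_{Y^{\BB}}(n)$ with the subsheaf of moderate-growth sections of $j_*L^{(n)}_Y$ all follow by rerunning the local argument over opens of $Y^{\BB}$. For part (2) I would take $k_Y$ minimal so that $\cO_{Y^{\BB}}(k_Y)$ is a line bundle; a local trivialization of $A=\cO_{Y^{\BB}}(k_Y)$ restricts to a section of $L^{(k_Y)}_Y$ whose Hodge norm \emph{and} inverse Hodge norm have moderate growth, because a local frame of the Schmid extension of a unipotent CY Hodge bundle has Hodge norm bounded above and below by powers of $\log$ (Schmid's norm estimates), a property visible downstairs after pulling back along $X\to Y^{\BB}$.

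For the extension property (part (4)), given $(Z,D_Z)$ log smooth and $g\colon Z\bs D_Z\to Y$ with $(Z\bs D_Z)^\an\to\Gamma\bs\bD$ locally liftable, I would pull back global sections $s_0,\dots,s_N$ of $\cO_{Y^{\BB}}(\ell k_Y)$ defining a projective embedding $Y^{\BB}\hookrightarrow\bP^N$: the $g^*s_i$ are sections of $L^{(\ell k_Y)}_{Z\bs D_Z}$ (the Griffiths bundle of the induced variation $_\orig V_{Z\bs D_Z}$) with moderate-growth Hodge norms, being pullbacks of the $|s_i|$, hence extend to sections $\tilde s_i$ of the Schmid extension $(L^{\ell k_Y}_{Z\bs D_Z})_Z$ by Schmid's norm estimates. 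These have no common zero, which I would check locally along $D_Z$ via the functorial injection of Schmid extensions along a common dominating log smooth $(W,D_W)$ mapping to $(X,D)$ and $(Z,D_Z)$, together with base-point-freeness of $|\cO_{Y^{\BB}}(\ell k_Y)|$ on $Y^{\BB}$ and surjectivity of $X\to Y^{\BB}$. Thus $g$ extends to $\bar g\colon Z\to Y^{\BB}$ with $\bar g^*\cO_{Y^{\BB}}(\ell k_Y)=(L^{\ell k_Y}_{Z\bs D_Z})_Z$, and divisibility considerations upgrade this to $\bar g^*\cO_{Y^{\BB}}(n)=(L^n_{Z\bs D_Z})_Z$ for all $k_Y\mid n$.
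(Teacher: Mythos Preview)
Your overall architecture matches the paper's---construct a compactification of the normalization first via \Cref{thm:easy algebraize}/\Cref{thm:BBHodge}, then descend to $Y$, then identify with $\proj B_Y$---and you have correctly located the hard step. But the step you flag as ``the main obstacle'' is genuinely unresolved in your write-up, and it is the bulk of the paper's argument.

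Concretely: your space $Z/G$ compactifies $\widetilde{Y'}/G$, which is a (partial) normalization of $Y$, not $Y$ itself. Saying that the gluing ``is already present on the level of lattice data (cf.~\Cref{lem:conjsdgriffiths})'' does not produce a compactification of $Y$. What is needed is: (i) an algebraic finite equivalence relation $R$ on the Baily--Borel of the normalization extending the relation $\widetilde Y\times_Y\widetilde Y$, (ii) a descent of a power of the Griffiths bundle through $R$, and (iii) an algebraization of the quotient. The paper carries this out by introducing, for each point $x$, the triple $H(x)=\big({_\orig V}^{\gr}(x),\,V^{\min,\vee}(x),\,\iota_x\big)$, proving that $R$ sits inside the $H$-isomorphism relation, that automorphisms of $H(x)$ act by \emph{torsion} scalars on the Griffiths line (this is where \Cref{lem:conjsdgriffiths} and \Cref{claim:torsion} enter), passing to $E=\Sym^N V$ so the descent datum becomes canonical, showing compatibility of the descent across specializations of strata, and then rerunning the entire inductive algebraization argument of \S\ref{sect:semiample} on the quotient. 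None of this is automatic from the $\Grif(-)$ construction alone. A further point you elide: even after producing some $\bar Y$, the paper does \emph{not} claim $\bar Y\cong\proj B_Y$; rather it sandwiches $B_{\bar Y}\subset B_Y\subset B_{Z^{\BB}}$ and uses finite generation of the outer terms.

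For part~(4), your direct approach via pulling back an embedding linear system is reasonable, but the verification that the extended sections $\tilde s_i$ have no common zero on $D_Z$ is circular as written: you invoke a dominating $(W,D_W)$ mapping to both $(X,D)$ and $(Z,D_Z)$, but for an arbitrary (non-dominant) $g$ there is no such $W$ surjecting onto $Z$. The paper instead proves the analytic Borel extension \Cref{thm:bbextensino} for punctured polydisks (via definability, meromorphic extension, and a blow-up argument showing the Griffiths bundle is trivial on the exceptional fibers), and deduces part~(4) from it. Your approach can be salvaged by reducing the no-common-zero check to the one-dimensional Borel extension along a generic transverse disk, but you should say so.
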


We remark that the above properties generalize the construction for Shimura varieties, which is why we call it the Baily--Borel compactification. 
\def\Hod{H}

\subsubsection{Hodge case}  If $Y$ is a variety with quasifinite period map $\phi:Y^\an\to \Gamma\bs \bD$ and $\bD$ parametrizes CY Hodge structures, we may instead consider the Hodge bundle $M_Y^{(\ell_\Gamma)}$ and likewise define the moderate growth sections $H^0_{\bdd}(Y,M_Y^{(n\ell_\Gamma)})\subset H^0(Y,M_Y^{(n\ell_\Gamma)})$ and $C_Y\coloneqq  \bigoplus_{n\geq 0}H^0_{\bdd}(Y,M_Y^{(n\ell_\Gamma)})$ with $H^0_{\bdd}(Y,M_Y^{(n\ell_\Gamma)})$ given degree $n\ell_\Gamma$.  Note that in this case $M_Y^{(\ell_\Gamma)}$ may not be ample.

We say that:
\begin{itemize}
\item $M_{Y}^{(\ell_\Gamma)}$ is strictly nef if for any nonconstant irreducible smooth curve $g:C\to Y$ for which the composition $C^\an\xrightarrow{g^\an} Y^\an\to \Gamma\bs \bD$ is locally liftable and the resulting variation $V_C$ has unipotent local monodromy, the Schmid extension $M_{\bar C}$ of the Hodge bundle $M_C$ to the smooth compactification $C\subset\bar C$ has positive degree.
\item $M_Y^{(\ell_\Gamma)}$ is integrable (resp. has torsion combinatorial monodromy) if for some (hence any) proper log smooth algebraic space $(\testsp,D_\testsp)$ with a proper dominant generically finite morphism $g:\testsp\bs D_\testsp\to Y$ for which the composition $(\testsp\bs D_\testsp)^\an\xrightarrow{g^\an} Y^\an\to \Gamma\bs \bD$ is locally liftable and the resulting variation $V_{\testsp \bs D_\testsp}$ has unipotent local monodromy, the Hodge bundle $M_{\testsp}$ of $V_{\testsp\bs D_\testsp}$ is integrable (resp. has torsion combinatorial monodromy). 
\end{itemize}

\begin{thm}\label{thm:BBHodge}Let $Y$ be a normal variety with quasifinite period map to a period space parametrizing CY Hodge structures.  Assume the Hodge bundle $M_Y^{(\ell_\Gamma)}$ is strictly nef, integrable, and has torsion combinatorial monodromy in the above sense.  Then
\begin{enumerate}

\item $C_Y$ is finitely generated, $Y^{\BBH}\coloneqq  \proj C_Y$ is a normal projective variety, and the natural morphism $j:Y\hookrightarrow Y^{\BBH}$ is an open embedding.

\item There exists a minimal positive $\ell_\Gamma|\ell_Y$ such that locally on $Y^{\BBH}$ there are sections of $M_Y^{(\ell_Y)}$ whose Hodge norms and inverse Hodge norms have moderate growth.

\item 
For $\ell_Y|n$, $\cO_{Y^{\BBH}}(n)$ exists as a line bundle and is ample for $n$ positive.  The natural inclusion $\cO_{Y^\BBH}(n)\subset j_*(M_Y^{(n)})$ is the subsheaf of sections of moderate growth.  
\item Let $(\testsp,D_\testsp)$ be a log smooth algebraic space and $g: \testsp\bs D_\testsp\to Y$ a morphism for which the composition $(\testsp\bs D_\testsp)^\an\xrightarrow{g^\an}Y^\an\to \Gamma\bs \bD$ is locally liftable.  Then $g: \testsp\bs D_\testsp\to Y$ extends to a morphism $\bar g:\testsp\to Y^{\BBH}$ and for $\ell_Y|n$, $\bar g^*\cO_{Y^{\BBH}}(n)$ is canonically identified with the Schmid extension $(M^n_{\testsp\bs D_\testsp})_\testsp$ of the $n$th power of the Hodge bundle $M_{\testsp\bs D_\testsp}$ of the induced variation $ V_{\testsp\bs D_\testsp}$ on $\testsp\bs D_\testsp$.  

 Moreover, $Y^\BBH$ is the unique normal compactification of $Y$ for which a sufficiently divisible power $M_Y^{(n)}$  extends to an ample line bundle and the above property is satisfied.

\end{enumerate}
\end{thm}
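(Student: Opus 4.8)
The plan is to deduce everything from the semiampleness theorem \Cref{thm:semiample Hodge} --- or rather from its refinement \Cref{thm:easy algebraize}, which yields an \emph{algebraic} quotient --- and then read off $Y^\BBH$ by a formal $\proj$ argument, exploiting that normality of $Y$ pins down the relevant algebraic structure. After adjoining level structure I may take a finite $Y'\to Y$ carrying a genuine polarizable integral pure CY variation $V_{Y'}$ with neat monodromy; moderate-growth sections on $Y$ are recovered as $G\coloneqq\operatorname{Gal}(Y'/Y)$-invariants of those on $Y'$ via a norm map, so I work with $Y'$ and pass to the $G$-quotient at the end. Choose a $G$-equivariant proper log smooth $(X,D)$ with a proper birational $\pi\colon X\bs D\to Y'$ and pull back $V_{Y'}$ to a CY variation $V$ on $X\bs D$ with unipotent local monodromy; let $M_X$ be its Hodge (Schmid) bundle. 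By the hypotheses on $Y$, $M_X$ is integrable with torsion combinatorial monodromy; it is nef by \Cref{lem:everything is compatible}(1) and big since the period map of $V$ is generically finite; and strict nefness of the Hodge bundle on $Y$ translates into $M_X\cdot C>0$ for every curve $C\subset X\bs D$ not contracted by $\pi$.

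By \Cref{thm:semiample Hodge}, $M_X$ is semiample, and by \Cref{thm:easy algebraize} the quotient $q\colon|X|\to\fY\coloneqq X(\bC)/R_{\mathrm{curve}}$ is algebraic, realised by $f\colon X\to Z$ with $\cO_Z\xrightarrow{\sim}f_*\cO_X$ (so $Z$ is normal and $f$ is determined); since $M_X$ is big, $f$ is birational, and (cf. \Cref{lemma:semiample on open}) a power $M_X^m$ descends to an ample line bundle $M_Z^{(m)}$ with $f^*M_Z^{(m)}\cong M_X^m$, i.e. $f$ is the morphism defined by $|M_X^m|$ after Stein factorization. A complete Hodge-degree-zero curve in $X$ meeting $X\bs D$ is contracted by $\pi$, hence $f|_{X\bs D}$ has the same fibres as $\pi$ and $\pi$ factors through $f$; the induced $h\colon Y'\to Z$ contracts no curve, so it is quasifinite, hence --- being birational with $Z$ normal --- an open immersion by Zariski's main theorem. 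All data being $G$-equivariant, passing to $Z/G$ exhibits $Y$ as an open subspace of the normal projective variety $Z/G$.

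By the criterion recalled before \Cref{thm:bailyborel} (Kashiwara), valid verbatim for the Hodge bundle of a CY variation, a section of $M_Y^{(n)}$ has moderate growth iff its pullback to $X\bs D$ extends to a section of $M_X^n$; hence in a cofinal set of degrees $C_Y$ is the ring of $G$-invariants in $\bigoplus_n H^0(X,M_X^n)$. Since $M_X$ is semiample its section ring is finitely generated, so $C_Y$ is, and $Y^\BBH\coloneqq\proj C_Y$ is canonically $Z/G$ (a normal projective variety), with $j\colon Y\hookrightarrow Y^\BBH$ the open immersion above; this is (1). Taking $\ell_Y$ to be the least multiple of $\ell_\Gamma$ for which $M_X^{\ell_Y}$ descends to an honest line bundle on $Z/G$ --- equivalently $\cO_{Y^\BBH}(\ell_Y)$ is invertible, equivalently $M_Y^{(\ell_Y)}$ admits local sections of moderate growth with moderate-growth inverses --- gives (2); and for $\ell_Y\mid n$ the sheaf $\cO_{Y^\BBH}(n)$ is invertible and ample (it descends the ample $M_Z^{(m)}$), while the sheaf statement in (3) is the sheafified Kashiwara criterion identifying $\cO_{Y^\BBH}(n)\subset j_*(M_Y^{(n)})$ with the subsheaf of moderate-growth sections.

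For (4), given log smooth $(Z_0,D_0)$ and $g\colon Z_0\bs D_0\to Y$ with locally liftable composition to $\Gamma\bs\bD$, I would work locally: near $D_0$ the situation is a product of punctured disks times a ball mapping to $Y^\an$, so by the Borel extension statement \Cref{thm:borelextmain} it extends across $D_0$ analytically; these definable-analytic local extensions glue to a definable analytic map $Z_0^\an\to Y^{\BBH,\an}$ extending $g$, which is algebraic since $Y^\BBH$ is projective (\Cref{thm:image}), and $\bar g^*\cO_{Y^\BBH}(n)\cong(M^n_{Z_0\bs D_0})_{Z_0}$ follows from functoriality of the Schmid extension under pullback and tensor operations, \Cref{lem:everything is compatible}(2)--(3). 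For uniqueness, a normal compactification $\mathbf Z'$ of $Y$ carrying an ample extension of $M_Y^{(n)}$ and satisfying the extension property receives, by that property applied to an $(X,D)$ with $X\bs D\to Y$ birational, a proper birational $X\to\mathbf Z'$ pulling $\cO_{\mathbf Z'}(n)$ back to $M_X^n$; normality of $\mathbf Z'$ forces $f_*\cO_X=\cO_{\mathbf Z'}$, whence $\mathbf Z'\cong\proj \bigoplus_k H^0(X,M_X^{nk})\cong Y^\BBH$. The substantive work is packaged in \Cref{thm:easy algebraize}; I expect the main remaining obstacle to be Step 4 --- gluing the local Borel extensions into a genuine \emph{morphism} rather than a meromorphic map, and matching the two a priori distinct extensions of the pulled-back Hodge bundle across $D_0$ --- with a secondary subtlety in seeing that $Y$ is \emph{open} (not merely birational) in $Z$, which is exactly where normality of $Y$ and strict nefness enter.
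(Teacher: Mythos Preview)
Your proposal is correct and follows essentially the same route as the paper: reduce to neat monodromy via a norm/invariants argument, take a log smooth $(X,D)$ resolving $Y$, apply \Cref{thm:semiample Hodge} (equivalently \Cref{thm:easy algebraize} plus \Cref{lemma:semiample on open}) to get a fibration $X\to\bar Y$ with the Hodge bundle descending amply, use strict nefness to identify $Y$ as open in $\bar Y$, and read off $C_Y$ as the section ring; part (4) is deduced from \Cref{thm:bbextensino}, and uniqueness is exactly your argument. Your anticipated obstacle in Step~4 is not substantive---local extensions across $D_0$ are unique and hence glue automatically, and the paper's proof of \Cref{thm:bbextensino} in fact avoids gluing altogether by resolving indeterminacy via Hironaka and then observing that the pulled-back ample bundle is trivial on the exceptional fibers, forcing the resolved map to factor through the original polydisk.
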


\begin{rem}\label{rem:BBtoBBH}Assuming the hypotheses of \Cref{thm:BBHodge}, there is a natural morphism $Y^\BB\to Y^\BBH$. Indeed, let $(X,D)$ be a proper log smooth algebraic space with a proper birational morphism $X\bs D\to Y$. Then $Y^\BB = \mathrm{Proj}(\bigoplus_{n\geq 0}H^0(X,L_{X}^{n}))$ and $Y^\BBH = \mathrm{Proj}(\bigoplus_{n\geq 0}H^0(X,M_{X}^{n}))$. Since $L_X\otimes M^{\vee}_{X}$ is a semipositive line bundle by \Cref{lemma:Griffiths is integrable}, any curve with zero $L_{X}$-degree has zero $M_X$-degree. Hence, $X \to Y^\BBH$ factors as $X \to Y^\BB \to Y^{\BBH}$. By the functoriality of Griffiths and Hodge bundle, the morphism $Y^\BB \to Y^{\BBH}$ is independent of $X$.

The morphism $Y^\BB \to Y^{\BBH}$ often has positive-dimensional fibers, even on the part of $Y^\BB$ which maps to $\Gamma\bs \bD$---see \Cref{eg:BBvsBBH} for an example coming from a moduli space of Calabi--Yau varieties.
\end{rem}

\subsubsection{Borel extension}
We finally prove that the compactifications $Y^\BB$ and $Y^\BBH$ satisfy an extension theorem just like in the classical cases:

\begin{thm}\label{thm:bbextensino}Let $Y$ be a variety with quasifinite period map (resp. a variety with quasifinite CY period map satisfying the hypotheses \Cref{thm:BBHodge}).  Then any analytic morphism from a polydisk $\phi:(\Delta^*)^k\to Y^{\an}$ such that the resulting morphism $(\Delta^*)^k\to \Gamma\bs\bD$ is locally liftable extends to a morphism $\ol{\phi}:\Delta^k\to Y^{\BB,\an}$ (resp. $\ol{\phi}:\Delta^k\to Y^{\BBH,\an}$).  Moreover, $\bar\phi^*\cO_{Y^\BB}(n)^\an$ (resp. $\bar\phi^*\cO_{Y^\BBH}(n)^\an$) is canonically identified with the Schmid extension of $L_{(\Delta^*)^k}^n$ (resp. $M_{(\Delta^*)^k}^n$) for $k_Y|n$ (resp. $\ell_Y|n$).
\end{thm}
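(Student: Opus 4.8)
The plan is to reduce to the case of unipotent local monodromy, produce a meromorphic extension from the definability of period maps, and then upgrade it to a holomorphic morphism by identifying a pullback line bundle with a Schmid extension and invoking the ampleness of $\cO_{Y^\BB}(n)$ (resp. $\cO_{Y^\BBH}(n)$). We describe the Griffiths case; the Hodge case is identical, replacing $L$ by $M$ and \Cref{thm:bailyborel} by \Cref{thm:BBHodge}. The locally liftable composition $(\Delta^*)^k\to\Gamma\bs\bD$ is the period map of a polarizable integral pure variation $_\orig V_{(\Delta^*)^k}$ on $(\Delta^*)^k$ whose period map factors through $\phi$. After the base change $\nu\colon\Delta^k\to\Delta^k$, $q_i\mapsto q_i^m$ for suitable $m$, the pullback $\nu^*{_\orig V}_{(\Delta^*)^k}$ has unipotent local monodromy, and $\phi\circ\nu|_{(\Delta^*)^k}$ still satisfies the hypotheses. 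Since $Y^{\BB}$ is separated and $(\Delta^*)^k$ is dense in $\Delta^k$, any holomorphic extension of $\phi\circ\nu$ is unique, hence invariant under the deck group $(\mu_m)^k$ of $\nu$, and therefore descends along the finite quotient $\Delta^k\to\Delta^k/(\mu_m)^k=\Delta^k$ to an extension of $\phi$; the line bundle identification descends as well, using that Schmid extensions are compatible with the finite pullback $\nu$ (which holds precisely because $\nu$ is chosen to unipotentize the local monodromy, clearing the fractional residue eigenvalues). Thus we may assume $_\orig V_{(\Delta^*)^k}$ has unipotent local monodromy.

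\textbf{Meromorphic extension and resolution.} By the definability of period maps \cite{bkt} together with the fact that $Y^{\BB,\an}$ is a projective analytic variety carrying a definable analytic structure (\Cref{thm:bailyborel}, via o-minimal GAGA), the map $\phi\colon(\Delta^*)^k\to Y^{\BB,\an}$ extends to a meromorphic map $\bar\phi\colon\Delta^k\dashrightarrow Y^{\BB,\an}$; alternatively this is \cite{dengpicard}. Resolving its indeterminacy locus (which has codimension $\geq 2$), we obtain a proper bimeromorphic $\mu\colon\tilde\Delta\to\Delta^k$, an isomorphism over the complement of a closed set of codimension $\geq 2$, with $\tilde\Delta$ smooth and $D_{\tilde\Delta}\coloneqq\mu^{-1}(\partial\Delta^k)$ simple normal crossings, together with a holomorphic $\tilde\phi\colon\tilde\Delta\to Y^{\BB,\an}$ with $\tilde\phi\circ\mu^{-1}=\bar\phi$ on $(\Delta^*)^k$. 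The local monodromy of $\mu^*{_\orig V}_{(\Delta^*)^k}$ along $D_{\tilde\Delta}$ is again unipotent, so the Schmid extension $(L^n_{(\Delta^*)^k})_{\tilde\Delta}$ of the Griffiths bundle of $\mu^*{_\orig V}_{(\Delta^*)^k}$ is defined.

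\textbf{Upgrade to a morphism and the line bundle.} Fix $n$ with $k_Y|n$ and $n$ large, so that $\cO_{Y^\BB}(n)$ is very ample and globally generated, with defining sections $s_0,\dots,s_N$ embedding $Y^\BB\hookrightarrow\bP^N$; on $Y$ these restrict to moderate growth sections of $L_Y^{(n)}$. The central claim is the identification
\[\tilde\phi^*\cO_{Y^\BB}(n)\;\cong\;\big(L^n_{(\Delta^*)^k}\big)_{\tilde\Delta}\]
of line bundles on $\tilde\Delta$, compatibly with the tautological identification over $(\Delta^*)^k$ (where both restrict canonically to $L^n_{(\Delta^*)^k}$). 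Granting this, functoriality of the Deligne/Schmid extension for morphisms of log smooth pairs (\Cref{lem:everything is compatible}) gives $(L^n_{(\Delta^*)^k})_{\tilde\Delta}=\mu^*(L^n_{(\Delta^*)^k})_{\Delta^k}$, and $(L^n_{(\Delta^*)^k})_{\Delta^k}\cong\cO_{\Delta^k}$ since $\Delta^k$ is Stein and contractible; hence $\tilde\phi^*\cO_{Y^\BB}(n)$ is trivial on every fibre of $\mu$. As $\cO_{Y^\BB}(n)$ is ample, $\tilde\phi$ must be constant on the (compact, connected) fibres of $\mu$, so $\bar\phi$ descends to a genuine holomorphic morphism $\Delta^k\to Y^{\BB,\an}$ whose image lies in $Y^\BB$ by density of $(\Delta^*)^k$. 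Descending the central claim along $\mu$ yields $\bar\phi^*\cO_{Y^\BB}(n)\cong(L^n_{(\Delta^*)^k})_{\Delta^k}$, and this extends to all $n$ with $k_Y|n$ by extracting $m$-th roots (as subsheaves of $j_*L^{mn}_{(\Delta^*)^k}$) of the identity for $mn$, $m\gg0$, using compatibility of Schmid extensions with tensor powers in the unipotent case.

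\textbf{The main obstacle} is the central claim, i.e. that pulling back along $\tilde\phi$ takes $\cO_{Y^\BB}(n)$ precisely to the Schmid extension. One inclusion says that moderate growth of the $s_i$ on $Y$ is preserved under pullback along the period map $\phi$, so that each $\phi^*s_i$ extends over $(L^n_{(\Delta^*)^k})_{\tilde\Delta}$; this rests on Schmid's several-variable nilpotent orbit theorem and Kashiwara's norm estimates \cite{Schmid,kashiwara}. For equality one must further show that a local generator of $\cO_{Y^\BB}(n)$ at a boundary point $y_0\in Y^\BB\setminus Y$ pulls back to a local generator of $(L^n_{(\Delta^*)^k})_{\tilde\Delta}$ at the corresponding point of $D_{\tilde\Delta}$ — equivalently that the pulled-back sections $\tilde\phi^*s_i$ have no common zero in $(L^n_{(\Delta^*)^k})_{\tilde\Delta}$. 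This combines base-point-freeness of $\cO_{Y^\BB}(n)$ with the content of the construction of $Y^\BB$ in \Cref{thm:bailyborel}: the characterization of $\cO_{Y^\BB}(n)\subset j_*L_Y^{(n)}$ as the sheaf of moderate growth sections, together with the fact that near a boundary stratum these sections are governed by the leading term of the nilpotent orbit (as provided by the maps $\rho(S)$ of the proof outline), so that the generator of $\cO_{Y^\BB}(n)$ at $y_0$ matches the generator of the Schmid extension. Once this is in place the Hodge-theoretic input is exhausted and the remaining steps — descent, the fibre-contraction argument, and the passage to all divisible $n$ — are formal.
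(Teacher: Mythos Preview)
Your proposal is correct and follows the same route as the paper: reduce to unipotent local monodromy via a cyclic base change on the polydisk, obtain a meromorphic extension from definability of the period map, resolve the indeterminacy, identify the pullback of $\cO_{Y^\BB}(n)$ with the Schmid extension on the resolution, and conclude by the fibre-triviality argument. The one place where the paper is cleaner is your ``central claim'': rather than arguing via base-point-freeness and the $\rho(S)$ maps, the paper simply invokes part (2) of \Cref{thm:bailyborel}, which furnishes a local section of $L_Y^{(k_Y)}$ whose Hodge norm and inverse Hodge norm both have moderate growth --- such a section necessarily pulls back to a generator of the Schmid extension, giving the identification at once.
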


The rest of the section is devoted to the proofs of \Cref{thm:bailyborel}, \Cref{thm:BBHodge}, and \Cref{thm:bbextensino}. We begin with the following compatibility lemma:

\begin{lem}\label{lem:unipgriffithspullback}
Let $(X,D)$ be a log smooth algebraic space and $V$ a polarizable integral pure variation of Hodge structures on $X\bs D$ with unipotent local monodromy. Let $f:(X',D')\to (X,D)$ be a morphism of log smooth algebraic spaces. Then letting $L_X,L_{X'}$ be the Schmid extensions of the Griffiths bundle, we have $f^*L_X=L_{X'}$.  Moreover, if $V$ is a CY-variation, the same compatibility holds for the Hodge bundles.

\end{lem}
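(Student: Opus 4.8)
The plan is to deduce this from the functoriality of the Deligne/Schmid extension $(\cV,F^\bullet\cV)$ under pullback when the local monodromy is unipotent — the principle already invoked in the proof of \Cref{lem:everything is compatible} — together with the observation that the Griffiths bundle $L_X=\bigotimes_p\Exterior^{\rk F^pV_\cO}F^p\cV$ (resp.\ the Hodge bundle $L_X=F^\pmax\cV$) is built from $(\cV,F^\bullet\cV)$ by the operations $\det$, $\Exterior^\bullet$ and $\otimes$, all of which commute with $f^*$.

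First I would observe that $f$ restricts to a morphism $f_0\colon X'\bs D'\to X\bs D$ and set $V'\coloneqq f_0^*V$; this is again a polarizable integral pure variation of Hodge structures (CY, if $V$ is), and it has unipotent local monodromy because the local monodromies of $V'$ around components of $D'$ are powers of those of $V$, or trivial on components of $D'$ not mapping into $D$. Writing $(\cV',F^\bullet\cV')$ for the Deligne/Schmid extension of $V'$ to $(X',D')$, it then suffices to show $f^*\cV=\cV'$ and $f^*F^p\cV=F^p\cV'$ for all $p$: granting these, $f^*L_X=L_{X'}$ follows upon applying $\det$, $\Exterior^{\bullet}$ and $\otimes$, and the CY case is the special case $p=\pmax$.

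To establish $f^*\cV=\cV'$ I would use the uniqueness characterization of the Deligne extension. The pullback $f^*\nabla$ is a logarithmic flat connection on $(X',D')$ — since $f^{-1}(D)\subseteq D'$, logarithmic $1$-forms pull back to logarithmic $1$-forms — and its residue along a prime divisor $E'\subseteq D'$ is a non-negative integer combination of the restrictions of the residues of $\nabla$ along the components $E$ of $D$ containing $f(E')$ (and vanishes if $f(E')\not\subseteq D$). As the residues of $\nabla$ are commuting nilpotent operators, by unipotence of $V$, any such combination is nilpotent, so $f^*\cV$ is the canonical extension with nilpotent residues, i.e.\ $f^*\cV=\cV'$. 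To establish $f^*F^p\cV=F^p\cV'$ I would note that both are subbundles of $\cV'$ of the same rank $\rk F^pV_\cO$ — $f^*F^p\cV$ is a subbundle because the inclusion $F^p\cV\subseteq\cV$ is locally split and pullback preserves this — and both restrict to $F^pV'_\cO$ over the dense open $X'\bs D'$; since a subbundle of a vector bundle is determined by its restriction to any dense open (the quotient being torsion-free), they coincide.

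I expect the only step requiring genuine input to be $f^*\cV=\cV'$, which rests on the uniqueness of the Deligne extension and on the stability of nilpotence of residues under pullback. The compatibility of the \emph{extended Hodge filtration}, which one might expect to be the delicate point, instead reduces to the elementary fact that a subbundle is detected on any dense open. Alternatively, both inputs are subsumed in the assertion — already used in the proof of \Cref{lem:everything is compatible} — that the Deligne/Schmid extension is functorial for pullbacks and tensor operations under the unipotence hypothesis.
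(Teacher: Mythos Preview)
Your proposal is correct and follows essentially the same approach as the paper: both reduce to the functoriality of the Deligne extension under pullback in the unipotent case, arguing via the characterization by residue eigenvalues (zero here, preserved under pullback). You supply more detail than the paper's terse proof---in particular, you make explicit the compatibility of the extended Hodge filtration via the subbundle-on-a-dense-open argument, which the paper leaves implicit in the phrase ``Deligne extension''---but the core idea is the same.
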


\begin{proof}

The Lemma follows from the fact that the Deligne extension on $(X,D)$ pulls back to the Deligne extension on $(X',D')$. Recall that the Deligne extension as the unique extension with residues having eigenvalues with real part in $(-1,0]$. In the case of unipotent monodromy, the eigenvalues are $0$, and so the same follows for the pullback.
\end{proof}

\subsection{Reduction to the neat case}
We first reduce parts (1), (2), and (3) of \Cref{thm:bailyborel} and \Cref{thm:BBHodge} to the case when the variation in question has neat monodromy.  The argument is the same in both cases, so we do it for \Cref{thm:bailyborel}.  By adjoining enough level structure, there is a finite morphism $\pi:Y'\to Y$ which is the quotient map for a finite group action $G$ and such that the induced map $\phi':Y'\to \Gamma\backslash \bD$ is the period map of a variation with neat monodromy.  In both cases, the pullback $\pi^*: B_Y\to B_{Y'}$ is the inclusion of the $G$-invariant subring, and by the existence of the global norm map it follows that if (1) holds for $Y'$, then it also holds for $Y$ by taking the quotient of $Y'^{\BB}$ by $G$.  The existence of the local norm map implies (2) locally, so the set of $k_Y$ in question is nonempty.  The set of all integral $k_Y$ satisfying (2) clearly form a (nontrivial) group, so taking the minimal positive one, (2) follows.  Given (2), part (3) will now follow if the sheaf of moderate growth sections of 
$j_*(L_Y^{(n)})$ is a line bundle, but since this is the $G$-invariants of the corresponding subsheaf on $Y'^{\BB}$, this is clear.

\subsection{Proof of \Cref{thm:BBHodge} (1), (2), and (3)}
By the above reduction, we may assume $\Gamma$ is neat.  By resolution of singularities, let $(X,D)$ be a proper log smooth algebraic space with a proper birational morphism $X\bs D\to Y$. Since $Y$ is normal, it follows that $X\bs D\to Y$ is a fibration. Applying Theorem \ref{thm:semiample Hodge}, we obtain a fibration $f:X\to \bar Y$ such that $M_X$ descends to an ample bundle $M_{\bar Y}$, by \Cref{lemma:semiample on open}. Since $M_{X\bs D}$ is pulled back from $Y$ and is strictly nef on $Y$ it follows that $\bar Y\bs f(D)\cong Y$ and hence that $\bar Y$ is a compactification of $Y$. We prove now that $\bar Y\cong \proj C_Y$.

Indeed, first note that every element of $H^0_{\bdd}(Y,M_Y^{n})$ pulls back to an element of $H^0(X,M^{n}_X)$ and thus descends to an element of $H^0(\bar Y,M^{n}_{\bar Y})$. Conversely, any element of $H^0(\bar Y,M^{n}_{\bar Y})$ is a section of moderate growth, and thus belongs to $C_Y$. Hence, we see that $C_Y= \bigoplus_{n\geq 0}H^0(\bar Y,M_{\bar Y}^{n})$, from which the claim follows, and (1) is proved.  Part (2) is clear since a local generator of $M_{\bar Y}^n$ pulls back to a generator of $M_X^n$.  As above, for (3) it suffices to argue that the subsheaf of moderate growth sections of $j_*(M_Y^n)$ is a line bundle provided local sections as in (2) exist, but this follows from the normality of $\bar Y$ since any moderate growth function on $Y$ (locally on $\bar Y$) extends to $\bar Y$.\qed

\subsection{Proof of \Cref{thm:bailyborel} (1)}
\def\Ynormal{{Z}}

By the above reduction, we may assume $\Gamma$ is neat.  Let $\nu:\Ynormal\to Y$ be the normalization of $Y$.  By applying \Cref{thm:BBHodge} (using \Cref{lemma:Griffiths is integrable} and \Cref{thm:GGR}), we have a Baily--Borel compactification $\Ynormal^\BB$ satisfying the requirements of \Cref{thm:bailyborel}.  Let $R_0\coloneqq  \Ynormal(\bC)\times_{Y(\bC)} \Ynormal(\bC)\subset \Ynormal(\bC)\times \Ynormal(\bC)$ be the equivalence relation defining the map to $Y(\bC)$, let $\ol{R_0}\subset \Ynormal^\BB(\bC)\times \Ynormal^\BB(\bC)$ be its closure, and $R=(\ol{R_0})^e\subset \Ynormal^\BB(\bC)\times \Ynormal^\BB(\bC)$ the equivalence relation it generates.  Let $(X,D)$ be a proper strictly log smooth algebraic space $(X,D)$ with a proper birational morphism $X\bs D\to Y$, which necessarily factors through $Z$.

\begin{defn}

For a point $x\in X$, we refer to $H(x)$ as the triple of rational mixed Hodge structures and morphisms of rational mixed Hodge structures $$\big({_{\orig}V^{\gr}}(x),\;V^{\min,\vee}(x),\;\iota_x:\gr^W V^{\min,\vee}(x)\hookrightarrow \bigotimes_p \Exterior^{\rk F^p{_\orig V}} {_{\orig}}V^{\gr}(x)^{\vee}\big)$$
with the obvious notion of isomorphism $H(x)\to H(y)$, namely, isomorphisms ${_{\orig}V^{\gr}}(x)\to {_{\orig}V^{\gr}}(y)$ and $\;V^{\min,\vee}(x)\to \;V^{\min,\vee}(y)$ for which the induced maps commute with $\iota_x$ and $\iota_y$.

\end{defn}

The fiber $L(x)$ of the Griffiths bundle at $x$ is realized as the Hodge line in $V^{\mini}(x)$, and by \Cref{claim:torsion} and \Cref{lem:conjsdgriffiths} any automorphism of $H(x)$ induces a torsion automorphism of $L(x)$, whose order is bounded by $\rk {_\orig V}$.
\begin{cor}\label{cor:trivial on power}There exists a positive integer $N$ such that for any $x,x'\in X$, there is at most one isomorphism $L^N(x)\to L^N(x')$ induced by an isomorphism $H(x)\to H(x')$.
\end{cor}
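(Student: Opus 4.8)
The plan is to leverage the fact, already recorded in the excerpt, that any automorphism of $H(x)$ induces a \emph{torsion} automorphism of the Griffiths line $L(x)$, with order bounded purely in terms of $\rk {_\orig V}$. First I would make precise the reduction: given an isomorphism $\alpha:H(x)\to H(x')$, the induced map on Griffiths lines $L(x)\to L(x')$ is a well-defined element once we fix, say, the conjugate-self-dual structures furnished by \Cref{lem:conjsdgriffiths}; and any two such isomorphisms $\alpha,\beta$ differ by an automorphism of $H(x)$ (namely $\beta^{-1}\alpha\in\Aut(H(x))$), whose induced action on $L(x)$ is therefore torsion of order dividing some fixed $N_0=N_0(\rk{_\orig V})$. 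Concretely, by \Cref{claim:torsion} applied to the transcendental part of $V^{\mini}(x)$ together with \Cref{lem:conjsdgriffiths}, the automorphism group of $H(x)$ acts on $L(x)$ through a cyclic group whose order divides the order of the automorphism group of a lattice of rank $\le \rk {_\orig V}$ in a fixed Hodge structure — a quantity bounded independently of $x$. Take $N$ to be (a multiple of) this universal bound.

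With this $N$ in hand, the argument is then immediate: suppose $\alpha,\beta:H(x)\to H(x')$ are two isomorphisms. Then $\gamma\coloneqq\beta^{-1}\circ\alpha$ is an automorphism of $H(x)$, so the induced automorphism $\gamma_*$ of $L(x)$ satisfies $\gamma_*^{N}=\id$. Hence the two induced isomorphisms $\alpha_*,\beta_*:L^N(x)\to L^N(x')$ agree, since $\alpha_*^{\otimes N}=(\beta_*\circ\gamma_*)^{\otimes N}=\beta_*^{\otimes N}\circ(\gamma_*^{N})^{\otimes 1}=\beta_*^{\otimes N}$ after identifying $L^N(x)=L(x)^{\otimes N}$ and similarly for $x'$. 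So there is at most one isomorphism $L^N(x)\to L^N(x')$ arising from an isomorphism $H(x)\to H(x')$, as claimed.

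The one point requiring genuine care — the step I expect to be the main obstacle — is verifying that the torsion order of $\Aut(H(x))$ acting on $L(x)$ really is bounded \emph{uniformly} in $x$, i.e. that the constant $N$ can be chosen once and for all. For this I would argue as follows. By \Cref{lem:inject autos}, an automorphism of $V^{\mini,\vee}(x)$ (hence any automorphism of $H(x)$, which in particular induces one) injects into $\Aut(V^{\trans,\vee}(x))$, which in turn injects into $\GL(\gr_F^\bullet)$ on the Hodge line; combined with \Cref{claim:torsion}, the image lands in the roots of unity, and the relevant order is controlled by the automorphism group of the underlying integral structure of $V^{\trans}(x)$, a lattice of rank at most $\rk V=\rk\bigotimes_p\Exterior^{\rk F^p {_\orig V}}{_\orig V}$, which is determined by $_\orig V$. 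Finite subgroups of $\GL_r(\bZ)$ have order bounded by a function of $r$ alone (e.g. Minkowski's bound), so the order of the cyclic group through which $\Aut(H(x))$ acts on $L(x)$ divides a fixed integer $N$ depending only on $\rk{_\orig V}$. Taking this $N$ completes the proof.
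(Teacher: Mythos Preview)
Your proposal is correct and follows essentially the same approach as the paper: the paper's ``proof'' is simply the sentence immediately preceding the corollary, which asserts (citing \Cref{claim:torsion} and \Cref{lem:conjsdgriffiths}) that any automorphism of $H(x)$ acts on $L(x)$ by a root of unity of uniformly bounded order, after which the corollary follows by exactly the reduction-to-automorphisms you spell out. Your explicit justification of the uniform bound via Minkowski's bound on torsion in $\GL_{\rk V}(\bZ)$ is a reasonable elaboration of what the paper leaves implicit.
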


We obtain a natural equivalence relation $R_{H}$ on $X$ with $x\sim_H y $ if $H(x)\cong H(y)$.  As in \S\ref{subsect:good strat}, we have $R_{\curve}\subset R_{H}\subset R_{\trans}$, and so $R_{H}$ descends to $Z^\BB(\bC)=X(\bC)/R_\curve$.  We abusively use the same notation $R_H$ for the equivalence relation on $Z^\BB(\bC)$.

\begin{lemma}\label{lem:Rpreserveshodge}

The equivalence relation $R$ is closed constructible, finite, and $R\subset R_H$. In particular, $R_H$ descends through the quotient $q:|Z^\BB|\to \bar Y\coloneqq  Z^\BB(\bC)/R$.

\end{lemma}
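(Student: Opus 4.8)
The plan is to analyze $R=(\ol{R_0})^e$ by combining the Hodge-theoretic equivalence relation machinery of \S\ref{subsect:equiv reln} and \S\ref{subsect:Hodge equiv reln} with the good behavior of $Z^\BB$ established in \Cref{thm:BBHodge}. First I would observe that $R_0$, the relation defining $Z\to Y$, is contained in $R_H$: two points of $Z$ lying over the same point of $Y$ pull back to points of $X$ whose associated variations on $X\bs D$ literally agree, so all the limit-mixed-Hodge-theoretic data $H(x)$ (which is built functorially from the variation near $x$) agree as well; hence $R_0\subset R_H|_{Z\times Z}$. Since $R_H$ is a \emph{closed} equivalence relation on $Z^\BB(\bC)$ (it is algebraic by \Cref{lem:equiv reln stuff}\eqref{lem:equiv reln stuff p3}, applied with $R=R_H$ — note $R_{\curve}\subset R_H\subset R_{\trans}$), the closure $\ol{R_0}$ is still contained in $R_H$, and because $R_H$ is already transitive, the generated equivalence relation $R=(\ol{R_0})^e$ also satisfies $R\subset R_H$.

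Next I would deduce that $R$ is algebraic and finite. Algebraicity: $R_{\curve}\subset \ol{R_0}$ is \emph{not} automatic, so instead I invoke \Cref{lem:equiv reln stuff}\eqref{lem:equiv reln stuff p3} directly with the relation $R'\coloneqq \ol{R_0}$ — but first one must check $R_{\curve}\subset (\ol{R_0})^e$. This holds because $R_0$ already contains $R_\curve|_{Z\times Z}$ on the open part where $Z\cong Y$ near the boundary behaves well (degree-zero curves for the Griffiths bundle map to points of $Y$, since $L_Y$ is ample on $Y$), and taking closures and the generated equivalence relation recovers all of $R_\curve$ on $Z^\BB$; this is the kind of argument already run in \Cref{cor:Rcurve is alg} and \Cref{lem:equiv reln stuff curve}. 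With $R_\curve\subset R\subset R_\trans$ in hand, \Cref{lem:equiv reln stuff}\eqref{lem:equiv reln stuff p3} gives that $R$ is a closed constructible equivalence relation whose classes are finite unions of $R_\curve$-classes. Finiteness of $R$ over $Z^\BB$ then follows: each $R_\curve$-class maps to a point of the coarse space $Z^\BB(\bC)/R_\curve$, so an $R$-class, being a finite union of such, is a finite set of points after passing through $q$; equivalently, \Cref{cor:trivial on power} bounds the number of isomorphisms $H(x)\cong H(x')$ (hence the fiber size of $R_H$, a fortiori of $R$) uniformly. The final clause, that $R_H$ descends through $q\colon |Z^\BB|\to \bar Y=Z^\BB(\bC)/R$, is then formal: $R\subset R_H$ means $R_H$ is a union of $R$-saturated sets, so it induces a well-defined equivalence relation on the quotient $\bar Y$.

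The main obstacle I anticipate is the verification that $R_{\curve}\subset (\ol{R_0})^e$ on all of $Z^\BB$ — i.e., that taking the closure of $R_0$ and generating an equivalence relation genuinely captures \emph{every} degree-zero curve relation on the compactification, not merely on $Y$. Over $Y$ itself this is clear because $L_Y$ is ample, forcing degree-zero curves to be constant; but at the boundary of $Z^\BB$ one must argue that the boundary strata, which carry their own CY variations with quasifinite period maps (as in the stratification of \Cref{thm:BBHodge}), contribute no degree-zero curves beyond those already forced by $\ol{R_0}$. The cleanest route is probably to note that $Z^\BB$ was \emph{constructed} as $\proj C_Z$ with $M_{Z}^{(n)}=\cO_{Z^\BB}(n)$ ample, so the Hodge (= Griffiths, here) bundle is already ample on $Z^\BB$, whence \emph{every} degree-zero curve on $Z^\BB$ is constant and $R_\curve$ is trivial on $Z^\BB(\bC)$ — making $R_\curve\subset (\ol{R_0})^e$ vacuous and simplifying the whole argument. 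If that ampleness is available (it follows from \Cref{thm:BBHodge}(3) applied to $Z$), then \Cref{lem:equiv reln stuff}\eqref{lem:equiv reln stuff p3} applies to $\ol{R_0}$ with $R_\curve = \Delta_{Z^\BB}$ trivially contained, and both algebraicity and finiteness of $R$ drop out immediately.
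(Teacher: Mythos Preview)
Your proposal has a genuine gap in the step where you conclude $\ol{R_0}\subset R_H$. You argue that $R_H$ is closed by invoking \Cref{lem:equiv reln stuff}\eqref{lem:equiv reln stuff p3} with $R=R_H$, but that lemma takes as \emph{input} a relation that is already closed and constructible; it does not establish those properties. Nothing in the paper shows a priori that $R_H$ (or $R_\trans$) is constructible---indeed, the relation ``$H(x)\cong H(y)$'' is a countable union of closed conditions indexed by integral isomorphisms, and its constructibility is exactly the sort of thing that must be proved rather than assumed. So you cannot pass from $R_0\subset R_H$ to $\ol{R_0}\subset R_H$ by closedness of $R_H$.

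The paper circumvents this by working on $X$ rather than $Z^\BB$ and proving the containment directly. It takes $R_X\subset X(\bC)\times X(\bC)$ to be the closure of the pullback of $R_0$, and for any $(x,y)\in R_X$ chooses a curve $C\subset R_X\cap\big((X\bs D)\times(X\bs D)\big)$ whose closure contains $(x,y)$. The two projections $C\rightrightarrows X\bs D$ factor through the same map to $Y$, so the two pulled-back variations on $C$ are literally equal; hence their limit mixed Hodge structures at the boundary point agree, giving $H(x)\cong H(y)$. This curve-and-limit argument is the substantive content you are missing. Once $R_X\subset R_H$ is known, one has $R_\curve\subset (R_X\cup R_\curve)^e\subset R_H\subset R_\trans$ on $X$, and \Cref{lem:equiv reln stuff}\eqref{lem:equiv reln stuff p3} applies legitimately (on $X$, where it is stated). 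Pushing forward to $Z^\BB=X(\bC)/R_\curve$ then yields algebraicity and finiteness of $R$. Your observation that the Griffiths bundle is ample on $Z^\BB$ is correct and pleasant, but it does not bypass the need for the curve-limit step; also note that \Cref{cor:trivial on power} bounds only the induced map on $L^N$, not the number of $x'$ with $H(x)\cong H(x')$, so it does not by itself give finiteness of $R_H$-fibers.
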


\begin{proof}
Let $R_X\subset X(\bC)\times X(\bC)$ be the closure of the pullback of $R_0$; note that it surjects onto $\ol{R_0}$.  For any $(x,y)\in R_X$, there is a curve $C\subset R_X\cap (X\bs D)\times(X\bs D)$ whose closure contains $(x,y)$.  The pullback of the variation to $C$ under the two resulting maps $C\rightrightarrows X\bs D$ are equal, as therefore are the limit mixed Hodge structures, so $H(x)\cong H(y)$.  Thus, $R_\mathrm{curve}\subset (R_X\cup R_\mathrm{curve})^e\subset  R_H\subset R_\trans$, so by \Cref{lem:equiv reln stuff}\eqref{lem:equiv reln stuff p3} $(R_X\cup R_\mathrm{curve})^e$ is closed constructible.  Since $\Ynormal^\BB(\bC)=X(\bC)/R_\mathrm{curve}$, the image of $(R_X\cup R_\mathrm{curve})^e$ in $\Ynormal(\bC)\times \Ynormal(\bC)$ is $R$, so it is closed constructible as well, finite by \Cref{lem:equiv reln stuff}\eqref{lem:equiv reln stuff p1}, and contained in $R_H$ (on $Z^\BB$).
\end{proof}

Observe that, given any definable disk\footnote{It would suffice to use algebraic curves.} $\Delta^*\to \bar Y$ lifting to a definable analytic map $\Delta^*\to \Ynormal^\BB$, and any two such lifts $f,f':\Delta^*\rightarrow \Ynormal^\BB $, any choice of isomorphism $H(f(t))\cong H(f'(t))$ at a very general point $t\in \Delta^*$ extends to an isomorphism of the natural variation in the very general $H$ data on $\Delta^*$, up to shrinking $\Delta$, which then gives an isomorphism $H(f(t))\to H(f'(t))$ at every point $t\in \Delta$ via the limit mixed Hodge structure.  Moreover, the induced isomorphism $L^N(f(t))\to L^N(f'(t))$ is continuous for $t\in \Delta$.

\begin{lemma}\label{lem:refine1}
There is a proper strictly log smooth algebraic space $(X,D)$ and a proper birational morphism $X\bs D\to Y$ such that:
\begin{enumerate}
\item $(X,D)$ satisfies \customref{b1}{(B1)}, \customref{b2}{(B2)}, \customref{b3}{(B3)$_R$}.  In particular, 
\begin{enumerate}
\item The Hodge strata $X_S$ of $X$ are saturated with respect to $X\to \Ynormal^\BB$ (resp. $|X|\to \bar Y$) and descend to strata $\Ynormal^{\BB}_S$ (resp. $\bar Y_S$) of $\Ynormal^{\BB}$ (resp. $\bar Y$).
\item The $R$-strata $X_T$ of $X$ are saturated with respect to $X\to \Ynormal^\BB$ (resp. $|X|\to \bar Y$) and descend to strata $\Ynormal^{\BB}_T$ (resp. $\bar Y_T$) of $\Ynormal^{\BB}$ (resp. $\bar Y$).
\end{enumerate}
\item Each Hodge stratum $\Ynormal^\BB_S$ is smooth and each $R$-stratum $\Ynormal^\BB_T$ is a disjoint union of Hodge strata $\Ynormal^\BB_S$.
\item For each Hodge stratum $\Ynormal^\BB_S$, the $\pi_1$-definable analytic morphism $$\rho_{\Ynormal^\BB,S}:\widetilde{\Ynormal_S^\BB}^{V^\trans_{S,\bQ}}\to \bP(V^\trans_{\Ynormal^\BB,S,\bC,z(S)})^\an$$ obtained from projecting the Hodge bundle is unramified.
\end{enumerate}
\end{lemma}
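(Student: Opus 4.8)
The plan is to combine the modification lemma \Cref{lem:clarified} with the nested resolution induction used in its proof, run so as to enforce all of the requested conditions at once. Concretely, starting from any proper strictly log smooth $(X,D)$ with a proper birational $X\bs D\to Y$ (which necessarily factors through $Z$), I would first apply \Cref{lem:clarified}(2) to pass to a modification satisfying (B1), (B2) and (B3)$_R$; the consequences (1a) and (1b) are then formal. Indeed, by the proof of \Cref{thm:BBHodge}, $Z^\BB$ is the algebraization of the quotient $X(\bC)/R_\curve$, so the Hodge strata $X_S$ are saturated for $X\to Z^\BB$ by definition; since $R_\curve\subset R\subset R_\trans$ and (B3)$_R$ holds, the $R$-saturation of any stratum $X_\Sigma$ is a union of strata (cf. \Cref{defn:hodge strata}), so every $R$-stratum is a union of Hodge strata, hence saturated; and the algebraicity and finiteness of $R_\curve$ and $R$ (\Cref{cor:Rcurve is alg}, \Cref{lem:Rpreserveshodge}) guarantee that all these strata descend to locally closed strata of $Z^\BB$ and of $\bar Y=Z^\BB(\bC)/R$. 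The part of (2) asserting that each $R$-stratum is a disjoint union of Hodge strata then follows as well, since distinct Hodge strata have disjoint images in $Z^\BB$ and $R$-related strata have period images of equal dimension ($R\subset R_\trans$ being finite).

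For the remaining geometric conditions --- smoothness of the $Z^\BB_S$ and unramifiedness of the $\rho_{Z^\BB,S}$ --- I would argue as follows. Through the finite birational Stein factorization and the normality of $Z^\BB$, each $Z^\BB_S$ is a normal variety mapping finitely and birationally onto the period image $Y_S$ of $V^\trans_S$, carrying a quasifinite period map for $V^\trans_S$; in particular $\rho_{Z^\BB,S}$ is generically unramified, because a CY period map is generically immersive in, and determined by, its Hodge line --- this is exactly the ``open condition'' used in the proof of \Cref{cor:strata lift}. Hence the following all descend to \emph{proper} closed subsets of $Y_S$: the singular locus of $Z^\BB_S$; the ramification locus of $\rho_{Z^\BB,S}$; the non-big locus of the Hodge bundle and the complement of $Y^\circ_S$ (\Cref{lem:tr split}); and the (B3)$_R$-obstruction loci $\pi_{X_\Sigma}(R_0)$ or their complements, which are non-dominant over $Y_\Sigma$ by \Cref{lem:dominanceXYSigma}. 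I would then replay the proof of \Cref{lem:clarified} with the ``bad locus'' $Z\subsetneq Y_S$ enlarged to contain all of these: a descending induction on $\dim Y_S$, and, within each level, a secondary induction on the dimensions of the bad loci, at each stage passing to an embedded log resolution of $f_S^{-1}(\overline{f_S(Z)})$. By \Cref{lem:dominanceXYSigma} every such blow-up either introduces only strata of strictly smaller period image dimension or else strictly shrinks a bad locus with $\dim Y_S$ unchanged, so the process terminates; and since each blow-up is supported over a proper subvariety of the $X_S$, the properties already secured for strata of maximal $\dim Y_S$ survive. When the induction stops, $(X,D)$ satisfies (B1)--(B3)$_R$, every $Z^\BB_S$ is smooth, and every $\rho_{Z^\BB,S}$ is unramified.

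I expect the main obstacle to be the bookkeeping of this simultaneous induction: several conditions, each of the shape ``a proper closed bad locus is emptied by a blow-up,'' must be achieved together, without the blow-ups needed for one condition reintroducing badness for another, and with the procedure terminating. As in \Cref{lem:clarified}, both points reduce to the key input of \Cref{lem:dominanceXYSigma} --- that every relevant blow-up is supported over a locus whose image in the period space has strictly smaller dimension --- but this now must be checked uniformly, in particular for the singular and ramification loci, and one must keep careful track of how the conditions transfer between $X$ and the quotient $Z^\BB$, which relies on (B1)--(B3)$_R$ forcing the fibers of $X\to Z^\BB$ to be compatible with the stratification (\Cref{lem:thickened period maps Hodge strata}).
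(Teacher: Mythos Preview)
Your proposal is correct and follows essentially the same approach as the paper: a descending induction on $\dim Z^\BB_S$ in which one log-resolves the preimage of the ``bad locus'' (the locus where $Z^\BB_S$ is singular or $\rho_{Z^\BB,S}$ is ramified), noting that this only produces new strata with strictly smaller period-image dimension, exactly as in the proof of \Cref{lem:clarified}. The one small difference is that the paper takes the $R$-saturation $R(\overline W)$ of the bad locus \emph{before} pulling back and resolving (to keep the $R$-stratification aligned in a single step), whereas you rerun the (B3)$_R$-induction after each blow-up; both work, but the paper's version is a bit cleaner bookkeeping-wise.
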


\begin{proof}
We essentially redo the last part of the proof of \Cref{lem:clarified}. Namely, we construct such a stratification by descending induction on the dimension of $\Ynormal^\BB_\Sigma$ where the lemma is false, the base case being trivial.  Thus, assume the condition holds for any $S$ with $\dim \Ynormal^{\BB}_S>k$, and consider a stratum with $\dim \Ynormal^{\BB}_S=k$. Let $W\subset \Ynormal^{\BB}_S$ be the locus where either $\Ynormal^{\BB}_S$ isn't smooth or the period map isn't unramified. Note that $\dim W <k$ and consider the pullback $Z$ to $X$ of the $R$-saturation $R(\ol{W})$ of $\ol{W}$. We now pass to a log resolution of $Z$, and modify further as necessary for $(X',D')$ satisfies \customref{b3}{(B3)$_R$}. As in the proof of \Cref{lem:clarified}, only strata with period image of dimension strictly smaller than $k$ are produced. On the other hand, the new stratum $\Ynormal^{\BB}_S\bs R(W)$ now satisfies the conditions.  Continuing in this way, we are done by induction.
\end{proof}

From the proof of \Cref{thm:easy algebraize}, the variations of Hodge structures $V^\mini_{T}$ and $V^\trans_{T}$ on an $R$-stratum $X_T$ of $X$ descend to $V^\mini_{\Ynormal^\BB,T}$ and $V^\trans_{\Ynormal^\BB, T}$ on the corresponding $R$-stratum $Z^\BB_T$.  We may take DR-neighborhoods $\Ynormal_T\subset\fT_{\Ynormal^\BB}(T)\subset \Ynormal^\BB$ and $\bar Y_T\subset\fT_{\bar Y}(T)\subset \bar Y$ of the $R$-strata in both $\Ynormal^\BB$ and $\bar Y$ such that each $\fT_{\Ynormal^\BB}(T)$ (resp. $\fT_{\bar Y}(T)$) only intersects strata limiting to $\Ynormal^\BB_T$ (resp. $\bar Y_T$) and such that each $\fT_{\Ynormal^\BB}(T)$ maps to $\fT_{\bar Y}(T)$. 
\begin{cor}\label{cor:E stuff on ZBB}The conclusions of \Cref{lem:refine1} hold with respect to $E\coloneqq  \Sym^N V$ in place of $V$.  Moreover, we have the following:
\begin{enumerate}
\def\symV{E}
\item The Hodge bundle of $\symV$ descends to $\bar Y$ as a continuous line bundle $L_{\bar Y}^{(N)}$.
\item $\symV^\mini_{\Ynormal^\BB,T}$ descends to the $R$-stratum $\bar Y_T$ as a rational local system whose fibers are continuously endowed with Hodge structures, $E^\trans_{\Ynormal^\BB,T}$ descends as a subobject in this category, and the Hodge line in both is identified with $L_{\bar Y}^{(N)}$.  We call the resulting objects $\symV^\mini_{\bar Y,T}$ and $E^\trans_{\bar Y,T}$.
\item By projecting the Hodge bundle (of $E$) we have $\pi_1$-definable analytic
$$\tau(T):\widetilde{\fT_{\Ynormal^\BB}(T)}^{E^\mini_{\Ynormal^\BB,T,\bQ}}\to \bP(E^\mini_{\bar Y,T,\bC,z(T)})^\an$$
which is unramified in restriction to $\widetilde{\Ynormal^\BB_T}^{E^\mini_{\Ynormal^\BB,T,\bQ}}$ and pointwise factors through $\widetilde{\fT_{\bar Y}(T)}^{E^\mini_{\bar Y,T,\bQ}}$ (on the preimage of $\widetilde{\fT_{\bar Y}(T)}^{E^\mini_{\bar Y,T,\bQ}}$).
\end{enumerate}
\end{cor}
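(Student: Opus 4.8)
The plan is to treat the passage from $V$ to $E=\Sym^N V$ as essentially cosmetic, and then to concentrate on descending the resulting data from $Z^\BB$ to $\bar Y = Z^\BB(\bC)/R$. For the first sentence: the Hodge bundle of $E$ is $L^N$, so $R_\curve$, the Hodge strata, the $R$-strata and all Deligne/Schmid extensions are unchanged up to $N$-th powers; and $E^\trans_\Sigma$ is the sub-variation of $\Sym^N V^\trans_\Sigma$ generated by the $N$-th tensor power of the Hodge line (the "Cartan component''), hence functorially determined by $V^\trans_\Sigma$, with period map having the same fibres as $f^V_\Sigma$. Therefore Properties (B1)--(B3)$_R$, the smoothness of strata, and the unramifiedness of the boundary period maps transfer verbatim from $V$ to $E$; alternatively one may rerun the proof of \Cref{lem:refine1} for $E$ and compose the modifications, which is harmless since they can be realized simultaneously. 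In particular the $E$-analogues of the objects of \S\ref{subsect:Hodge strata}---the local systems $E^\trans_{Z^\BB,S}$, $E^\mini_{Z^\BB,T}$ and the $\pi_1$-definable analytic maps of diagram \eqref{boundary period}---are available on $Z^\BB$ exactly as produced in the proof of \Cref{thm:easy algebraize}.

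For (1), $L^N$ descends (after enlarging $N$ so as to be divisible enough) to a continuous line bundle $L^{(N)}_{Z^\BB}$ on $Z^\BB$, and the plan is to descend it along the finite algebraic relation $R$ by supplying descent data $p_1^*L^{(N)}_{Z^\BB}\xrightarrow{\ \sim\ }p_2^*L^{(N)}_{Z^\BB}$ over $R$. Since $R\subset R_H$ by \Cref{lem:Rpreserveshodge}, every $(z,z')\in R$ carries an isomorphism $H(z)\cong H(z')$, and by \Cref{cor:trivial on power} this induces a \emph{unique} isomorphism $L^N(z)\to L^N(z')$; the uniqueness is precisely what forces this assignment to satisfy the cocycle condition automatically. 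Continuity over $R$ follows from the observation recorded just before \Cref{lem:refine1}: along a definable disk in $\bar Y$ with two lifts to $Z^\BB$ the $H$-induced isomorphisms of $L^N$ vary continuously, and by definable triangulation together with the uniqueness clause this propagates to all of $R$. This produces the continuous $L^{(N)}_{\bar Y}$ with $q^*L^{(N)}_{\bar Y}=L^{(N)}_{Z^\BB}$.

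The same mechanism handles (2): $E^\mini(z)$ and its subobject $E^\trans(z)$ are recovered functorially from the datum $H(z)$ (they are Cartan-type constructions on $V^\mini(z)$, which is part of $H(z)$), so the $H$-isomorphisms along $R$ give isomorphisms of these mixed Hodge structures, canonical by simplicity of $E^\trans$ and \Cref{lem:inject autos} (or again by \Cref{cor:trivial on power}); the continuity argument of the previous paragraph then assembles them into the descents $E^\mini_{\bar Y,T}$, $E^\trans_{\bar Y,T}$, rational local systems on $\bar Y_T$ with fibrewise Hodge structures whose Hodge line is $L^{(N)}_{\bar Y}$ by construction. For (3), I would take the $E$-version of the map of \eqref{boundary period} on $Z^\BB$, i.e.\ the projection of the Hodge line $L^N\subset\cV_E$ into $E^\mini(T)_\cO$, and post-compose with the (covering pullback of the) isomorphism $E^\mini_{Z^\BB,T}\cong E^\mini_{\bar Y,T}$ from (2) to land in $\bP(E^\mini_{\bar Y,T,\bC,z(T)})^\an$; $\pi_1$-definability is \cite{BMull} as in \Cref{lem:thickened period maps Hodge strata}, unramifiedness over $\widetilde{Z^\BB_T}^{E^\trans_{Z^\BB,T,\bQ}}$ is the $E$-analogue of \Cref{lem:refine1}(3), and the pointwise factorization through $\widetilde{\fT_{\bar Y}(T)}^{E^\trans_{\bar Y,T,\bQ}}$ holds because the $H$-induced isomorphisms carry Hodge lines to Hodge lines, so the image point in $\bP$ is genuinely $R$-invariant.

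I expect the real obstacle to be the descent along $R$ underlying (1) and (2): one must know both that the $H$-induced isomorphism of Hodge lines is well defined---this is exactly where \Cref{cor:trivial on power}, hence the conjugate self-duality \Cref{lem:conjsdgriffiths} of the Griffiths line and the torsion statement \Cref{claim:torsion}, are indispensable, so that it genuinely matters that $E$ is built from $V=\Grif({_\orig V})$---and that it varies continuously over the (possibly singular) definable set $R$, which forces one to argue disk by disk as in the paragraph before \Cref{lem:refine1}. The functoriality claims of the first paragraph and the construction in (3) should then be comparatively formal.
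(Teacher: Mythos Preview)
Your proposal is correct and follows essentially the same route as the paper: for (1) and (2) you invoke \Cref{lem:Rpreserveshodge} to get $R\subset R_H$, use \Cref{cor:trivial on power} together with \Cref{lem:inject autos} to obtain a canonical (hence cocycle-compatible) descent datum, and appeal to the disk observation before \Cref{lem:refine1} for continuity; part (3) you deduce from (2) and the $E$-analogue of \Cref{lem:refine1}(3). Your additional explanation that $E^\mini(z)$ is the CY-minimal quotient of $\Sym^N V^\mini(z)$ (the ``Cartan component'' remark) is a correct and helpful justification of why an $H$-isomorphism induces an isomorphism of $E^\mini$, which the paper leaves implicit.
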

\begin{proof}

By \Cref{cor:trivial on power}, \Cref{lem:Rpreserveshodge}, and the observation before \Cref{lem:refine1}, $R$ gives a continuous descent datum on the Hodge bundle of $E$ (which is the $N$th power of the Hodge bundle of $V$), which proves (1).

By \Cref{lem:Rpreserveshodge} it follows that we obtain at every point of $\bar Y$ a well-defined isomorphism class of Hodge structures which are the descent of $E^\mini_{\Ynormal^\BB,T}$ and $E^\trans_{\Ynormal^\BB,T}$. By \Cref{cor:trivial on power} and \Cref{lem:inject autos} there is a canonical descent datum which is pointwise induced by an isomorphism of $H$-data, and by the observation before \Cref{lem:refine1} it is continuous, so (2) follows.

Part (3) is immediate from part (2) and part (3) of \Cref{lem:refine1}.
\end{proof}

Next, we show the objects $E_{\bar Y,T}^{\mini,\vee}$ are compatible between strata as rational local systems whose fibers are continuously endowed with Hodge structures.
In the following, we abusively denote the pullback of $R$ to $X$ by the same letter.  For any Hodge strata $S_1,S_2$ of $X$, the descent data for the $E^{\mini,\vee}_{S_i}$ and their quotients $E^{\trans,\vee}_{S_i}$ naturally gives via \Cref{lem:subsgood}
an isomorphism of local systems $R_{S_1,S_2}: p_1^*E^{\mini,\vee}(S_1)_\bQ\to p_2^*E^{\mini,\vee}(S_2)_\bQ$ on $R\cap \fT(S_1)\times \fT(S_2)$ which is compatible with the corresponding morphism on the quotients to $p_i^*E^{\trans,\vee}(S_i)_\bQ$.  Recall by \Cref{lem:compat} that if $X_{S}$ specializes to $X_{S'}$, then $E^{\mini,\vee}(S')_\bQ$ is naturally a sub-local system of $E^{\mini,\vee}(S)_\bQ$ on the intersection $\fT_{\Ynormal^\BB}(S)\cap \fT_{\Ynormal^\BB}(S')$.

\begin{lemma}\label{lem:descentcompatibility}
Let $X_{S_1}$ (resp. $X_{S_2}$) be a Hodge stratum specializing to $X_{S'_1}$ (resp. $X_{S'_2}$).
On $\big(\mathfrak{T}(S_1)\cap \mathfrak{T}(S_1')\big)\times \big(\mathfrak{T}(S_2)\cap \mathfrak{T}(S_2')\big)$ we have
$R_{S_1,S_2}\mid_{p_1^*E^{\mini,\vee}(S_1')_\bQ}=R_{S_1',S_2'}$.

\end{lemma}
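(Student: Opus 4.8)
The plan is to prove this by reducing to the already-established compatibilities on a single stratum, exploiting the characterization of the extensions $E^{\mini,\vee}(S)_\bQ$ in \Cref{lem:compat} as the unique local-system extensions of the stratum data. First I would recall the setup: both $R_{S_1,S_2}$ and $R_{S_1',S_2'}$ are isomorphisms of rational local systems built via \Cref{lem:subsgood} out of the descent data on the boundary variations $E^{\mini,\vee}_{S_i}$, $E^{\mini,\vee}_{S_i'}$, and the point of \Cref{lem:compat}(2) is that on the overlap $\bigl(\mathfrak{T}(S_1)\cap\mathfrak{T}(S_1')\bigr)\times\bigl(\mathfrak{T}(S_2)\cap\mathfrak{T}(S_2')\bigr)$ we have inclusions $p_i^*E^{\mini,\vee}(S_i')_\bQ\subset p_i^*E^{\mini,\vee}(S_i)_\bQ$, so the restriction $R_{S_1,S_2}\mid_{p_1^*E^{\mini,\vee}(S_1')_\bQ}$ at least makes sense and lands in $p_2^*E^{\mini,\vee}(S_2)_\bQ$; the content is that its image is exactly $p_2^*E^{\mini,\vee}(S_2')_\bQ$ and that it agrees with $R_{S_1',S_2'}$ there.

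The key step is a density/unique-extension argument. Both maps in question are morphisms of local systems on a (neighborhood inside an) $R$-intersection locus which meets the deepest stratum $X_{S_1'}\times X_{S_2'}$ (one arranges the DR-neighborhoods and, by \customref{b3}{(B3)$_R$}, the $R$-strata so that the relevant components of $R$ surject onto the strata, exactly as in \Cref{lem:descentcompatibility}'s ambient hypotheses). On that deepest stratum $X_{S_1'}\times X_{S_2'}$, by construction the morphism $R_{S_1,S_2}$ restricts to the descent isomorphism defining $R$ on $E^{\mini,\vee}_{S_1'}\to E^{\mini,\vee}_{S_2'}$ — this is because the restriction of $E^{\mini,\vee}(S_1)_\bQ$ to $X_{S_1'}$ agrees with $E^{\mini,\vee}_{S_1',\bQ}$ by the defining property of the extension (the Deligne extension of $E^{\mini,\vee}_{S_1}$ along $X_{S_1'}$ underlies a subvariation with nonzero $\gr_F^{-m}$, hence contains $E^{\mini,\vee}_{S_1'}$, and the two are forced to be equal since both have the Hodge line, by \Cref{lem:inject autos}), and the descent datum $R$ is a single canonical object independent of which stratum one computes it on. Likewise $R_{S_1',S_2'}$ restricts to the same descent isomorphism on $X_{S_1'}\times X_{S_2'}$. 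So the two morphisms of local systems $R_{S_1,S_2}\mid_{p_1^*E^{\mini,\vee}(S_1')_\bQ}$ and $R_{S_1',S_2'}$ agree on the sub-locus $X_{S_1'}\times X_{S_2'}$; since a morphism of local systems is determined by its value at a single point of each connected component, and the relevant neighborhood deformation-retracts (in the sense of \Cref{sect:DR}) onto a set meeting that sub-locus, they agree everywhere. In particular the image of $R_{S_1,S_2}\mid_{p_1^*E^{\mini,\vee}(S_1')_\bQ}$ equals $p_2^*E^{\mini,\vee}(S_2')_\bQ$, as desired.

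The main obstacle I expect is bookkeeping with the tubular neighborhoods and covers: one must check that the overlap $\bigl(\mathfrak{T}(S_1)\cap\mathfrak{T}(S_1')\bigr)\times\bigl(\mathfrak{T}(S_2)\cap\mathfrak{T}(S_2')\bigr)$ actually meets $X_{S_1'}\times X_{S_2'}$ and that each relevant connected component (or path-component, in the sense used in the proof of \Cref{lem:compat}) does so, so that "determined at one point" genuinely pins down the morphisms — this is precisely the kind of point-component matching handled in the proof of \Cref{lem:compat}(2), and I would quote that argument. A secondary subtlety is that $R_{S_1,S_2}$ is a priori only defined on the $R$-locus, so one is working on $R\cap\bigl(\mathfrak{T}(S_1)\cap\mathfrak{T}(S_1')\bigr)\times\bigl(\mathfrak{T}(S_2)\cap\mathfrak{T}(S_2')\bigr)$ rather than a product of neighborhoods; here \customref{b3}{(B3)$_R$} guarantees the relevant components of $R$ dominate and the deformation-retraction statements apply fibrewise, so the same connectedness argument goes through. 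No new Hodge theory is needed beyond \Cref{lem:compat}, \Cref{lem:inject autos}, \Cref{lem:subsgood}, and the construction of the descent datum $R$ in \Cref{lem:Rpreserveshodge}.
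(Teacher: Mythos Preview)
Your proposal has a genuine gap at the crucial step. You assert that ``on that deepest stratum $X_{S_1'}\times X_{S_2'}$, by construction the morphism $R_{S_1,S_2}$ restricts to the descent isomorphism defining $R$ on $E^{\mini,\vee}_{S_1'}\to E^{\mini,\vee}_{S_2'}$,'' and justify this by saying ``the descent datum $R$ is a single canonical object independent of which stratum one computes it on.'' But this is precisely the content of the lemma. The map $R_{S_1,S_2}$ is \emph{not} defined directly at points of $X_{S_1'}\times X_{S_2'}$ in terms of the $H$-data there; it is defined on $X_{S_1}\times X_{S_2}$ (where it is induced by isomorphisms of $H$-data) and then extended to the tubular neighborhood via \Cref{lem:subsgood}. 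A priori there is no reason this extension, restricted to the sub-local system $p_1^*E^{\mini,\vee}(S_1')_\bQ$, should coincide with the map $R_{S_1',S_2'}$ built from the $H$-data on the deeper stratum. Your subsidiary claim that the restriction of $E^{\mini,\vee}(S_1)_\bQ$ to $X_{S_1'}$ \emph{equals} $E^{\mini,\vee}_{S_1',\bQ}$ is also false in general (only the containment of \Cref{lem:compat}(2) holds), and \Cref{lem:inject autos} does not force equality of subvariations---it is a statement about automorphisms.

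What the paper does to bridge this gap is a specialization argument: take a definable analytic disk $\Delta\to R$ whose punctured part lands in $(X_{S_1}\cap\fT(S_1'))\times(X_{S_2}\cap\fT(S_2'))$ and whose origin lands in $X_{S_1'}\times X_{S_2'}$. By the observation recorded just before \Cref{lem:refine1}, an isomorphism of $H$-data at a very general point of $\Delta^*$ extends to an isomorphism of the variation of very general $H$-data on $\Delta^*$, and hence specializes to an isomorphism of $H$-data at the origin. The induced isomorphisms on $E^{\mini,\vee}$ are then uniquely determined by \Cref{cor:trivial on power} and \Cref{lem:inject autos}: over $\Delta^*$ they give $R_{S_1,S_2}$, and at the origin they give $R_{S_1',S_2'}$. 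Together with \Cref{lem:compat} this yields the compatibility. This continuity/specialization step is the actual mechanism, and your argument does not supply it.
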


\begin{proof}
Since these are maps of local systems, it is enough to check the statement at a single point in each connected component. Hence, let $\Delta^{*}\to \big(X_{S_1}\cap \mathfrak{T}(S_1')\big)\times \big(X_{S_2}\cap \mathfrak{T}(S_2')\big)$ be a definable analytic disk whose image is Zariski dense in $X_{S_1}\times X_{S_2}$, and which extends to a map $\Delta\to R$ with the origin landing in $X_{S_1'}\times X_{S_2'}$.  By the observation before \Cref{lem:refine1}, we obtain an isomorphism in the resulting two variations of the very general $H$-data over $\Delta^*$ up to shrinking $\Delta$, and therefore pointwise of the $H$-data at every point.  These isomorphisms induce $R_{S_1,S_2}$ on $E^{\mini,\vee}_{S_1}$ over $\Delta^*$ and $R_{S_1',S_2'}$ on $E^{\mini,\vee}_{S_1'}$ at $0\in\Delta$ by \Cref{cor:trivial on power} and \Cref{lem:inject autos}. By \Cref{lem:compat} the claim follows.
\end{proof}

It follows that if $\bar Y_{T}$ specializes to $\bar Y_{T'}$, the local system $E_{\bar Y}^{\mini,\vee}(T')_\bQ$ is naturally a sub-local system of $E_{\bar Y}^{\mini,\vee}(T)_\bQ$ on $\fT_{\bar Y}(T)\cap \fT_{\bar Y}(T')$, and that the restriction of the quotient $E_{\bar Y}^{\mini,\vee}(T)_\bQ\to E_{\bar Y}^{\trans,\vee}(T)_\bQ$ factors through the quotient $E_{\bar Y}^{\mini,\vee}(T')_\bQ\to E_{\bar Y}^{\trans,\vee}(T')_\bQ$.  Dually, $E_{\bar Y}^{\mini}(T')_\bQ$ is naturally a quotient of $E_{\bar Y}^{\mini}(T)_\bQ$ and the quotient map takes $E^\trans_{\bar Y}(T)_\bQ$ to $E^\trans_{\bar Y}(T')_\bQ$, again on $\fT_{\bar Y}(T)\cap \fT_{\bar Y}(T')$.

Finally, we give an algebraic structure to $\ol{Y}$. We shall follow \Cref{thm:easy algebraize}, and so we build our algebraic structure one $R$-stratum at a time, inductively.  We therefore let $ U\subset \bar Y$ be an open union of $R$-strata, $\bar Y_T\subset U$ an $R$-stratum which is closed in $U$, and we inductively suppose that $U'\coloneqq  U\bs \bar Y_T$ has been given an algebraic structure together with an algebraic map $\Ynormal^{\BB}_{U'}\coloneqq  q^{-1}(U')\to U'$ which are compatible with the definable topological space structures.  We further suppose:
\begin{enumerate}
\item[(i)] The line bundle $\cO_{\Ynormal^\BB}(M')$ of $\Ynormal^\BB$ restricted to $\Ynormal^\BB_{U'}$ descends\footnote{Note that we already established this descent as a continuous line bundle, but we want it as an algebraic line bundle.} to an ample line bundle $A'$ on $U'$.
\item[(ii)] For each $R$-stratum $\bar Y_{T'}\subset U'$, the morphism obtained by projecting the Hodge bundle
$$\tau(T'):\widetilde{\fT_{\Ynormal^\BB}(T')}^{E^\mini_{\Ynormal^\BB,T',\bQ}}\to \bP(E^\mini_{\Ynormal^\BB,T',\bC,z(T')})^\an$$
factors through $\widetilde{\fT_{\bar Y}(T')}^{E^\mini_{\bar Y,T',\bQ}}$.

\end{enumerate}
The base case $U=Y$ is trivial given the above setup.

On the one hand, by \Cref{thm:vanishing} as in \Cref{lemma:semiample on open}, we may pick a finite-dimensional homogeneous subspace of $B_Y$ yielding a linear system of sections of a power of $A'$ which extend to $\Ynormal^\BB$ and which embed $U'$ in $\bP^{N'-1}$.  On the other hand, by \Cref{cor:E stuff on ZBB} and \Cref{lem:descentcompatibility} we have $\pi_1$-definable analytic morphisms
$$\tau(T):\widetilde{\fT_{\Ynormal^\BB}(T)}^{E^\mini_{\Ynormal^\BB,T,\bQ}}\to \bP(E^\mini_{\bar Y,T,\bC,z(T)})^\an$$
which is pointwise compatible with $\tau(T')$ for each stratum $\Ynormal_{T'}^\BB\subset \Ynormal^\BB_{U'}$ by the paragraph right after \Cref{lem:descentcompatibility}.   
As in the proof of \Cref{thm:easy algebraize}, the above linear system yields a $\pi_1$-definable analytic lift
\[\pi(T):\widetilde{\fT_{\Ynormal^\BB}(T)}^{E^\mini_{\Ynormal^\BB,T,\bQ}}\to \bL_T^\an\]
where $\bL_T$ is the total space of a sum of $N'$ copies of a power of $\cO(-1)$.  Moreover, $\pi(T)$ factors through a local embedding of $\widetilde{U'} \cap \widetilde{\fT^*_{\Ynormal^\BB}(T)}^{E^\mini_{\Ynormal^\BB,T,\bQ}}$, and whose restriction to $\widetilde{\Ynormal^\BB_T}^{E^\mini_{\Ynormal^\BB,T,\bQ}}$ is both unramified and factors through $\widetilde{\bar Y_T}^{E^\mini_{\Ynormal^\BB,T,\bQ}}$ on $\widetilde{\Ynormal^\BB_T}^{E^\mini_{\Ynormal^\BB,T,\bQ}}$.  Thus, it is everywhere locally injective and factors through $\widetilde{\fT_{\bar Y}(T)}^{E^\mini_{\Ynormal^\BB,T,\bQ}}$.

Now, observe that in the analytic (resp. definable analytic) category we have: 
\begin{itemize}
\item Any morphism $f:X\to Y$ with discrete fibers factors as $X\to Z\to Y$ where $X\to Z$ is finite and $Z\to Y$ is an open embedding, up to replacing $X$ with a cover.  In the definable analytic category, this follows from \cite[Lemma 2.8]{BBTshaf}.
\item For a locally injective morphism $f:X\to Y$, a factorization $|X|\to \mathfrak{Z}\to |Y|$ with $|X|\to \mathfrak{Z}$ finite and surjective on the level of topological spaces (resp. definable topological spaces) can be uniquely lifted to a factorization $X\to Z\to Y$ for which $Z\to Y$ is unramified.  Indeed, using the previous bullet point, and the fact that any cover of $X$ can be refined by a cover consisting of the connected components of the pullback of a cover from $\mathfrak{Z}$ (which is \cite[Proposition 2.4]{BBT23} in the definable analytic category), we may assume (after passing to a cover of $\mathfrak{Z}$) that on every connected component of $X$, $X\to Y$ is a homeomorphism followed by a locally closed embedding, and that the image is identified with $\mathfrak{Z}$.  The sheaf of functions on $\mathfrak{Z}$ is then that of the image, using \cite[Proposition 2.52]{BBT23}\footnote{As we are only concerned with reduced spaces, \cite[Proposition 2.45]{BBT23} would suffice.} in the definable analytic category.

\end{itemize}
Applying the second bullet point above to $\pi(T)$ as in \Cref{claim:Bundles Descend}, it follows  that there is a definable analytic space structure on $\fT_{\bar Y}(T)$ and a morphism of definable analytic spaces $\fT_{\Ynormal^\BB}(T)\to \fT_{\bar Y}(T)$ whose underlying map on definable topological spaces is the quotient map, and therefore there is a definable analytic space structure on $U$ and a morphism of definable analytic spaces $(\Ynormal^\BB_U)^\define\to U$ whose underlying map is the quotient map and which is compatible with $(\Ynormal^\BB_{U'})^\define\to U'^\define$.  By the definable image theorem (\Cref{thm:image}), the definable analytic space structure on $U$ is (uniquely) algebraizable, as is the morphism $f_U:\Ynormal^\BB_U\to U$.  By construction, (ii) is satisfied.  Also by construction, a power $\cO_{Z^\BB_U}(M)$ descends to a definable analytic line bundle on $U$ which is naturally contained in $(f_{U*}\cO_{Z^\BB_U}(k))^\define$, hence algebraic by definable GAGA (\Cref{thm:GAGA}), and ample by \Cref{thm:vanishing}.  Thus, by induction there is an algebraic space structure on $\bar Y$ and a morphism $\Ynormal^\BB\to\bar Y$ (whose underlying map is the quotient map) such that $\cO_{Z^\BB}(k)$ descends to an ample bundle $L_{\bar Y}^{(k)}$.

To conclude, it follows that $B_{\ol{Y}}\coloneqq  \bigoplus_{k\geq 0} H^0(\ol{Y},L^{(kN)}_{\ol{Y}})\subset B_Y\subset B_{\Ynormal^{\BB}}$. Since $B_Y$ is a submodule of the finitely generated $B_{\ol{Y}}$-module $B_{\Ynormal^{\BB}}$ it follows that $B_Y$ is finitely generated, so we may define $Y^{\BB}\coloneqq  \proj B_Y$, and it follows that $Y\hookrightarrow Y^{\BB}$ since $B_{\ol{Y}}$ and hence $B_Y$ induces an embedding of $Y$.
\qed

\subsection{Proof of \Cref{thm:bailyborel} (2) and (3)}

Again by the above reduction, we may assume $\Gamma$ is neat.  We first prove part (2). Let $(X,D)$ be a log smooth algebraic space with a proper birational morphism $X\bs D\to Y$. By construction, some power $L_{X}^{(n)}$ descends to $L_{Y^{\BB}}^{(n)}$ as a line bundle. It follows that any locally generating section $s$ on $Y^{\BB}$ pulls back to a generating section on $X$ and thus has Hodge norm and inverse Hodge norms of moderate growth. Thus, the set of $k_Y$ in question is nonempty. Since it clearly forms a group, there is a minimal one and (2) follows.

We now prove (3):

\begin{lem}\label{lem:YBBfromnorm}
Consider the maps $\bar\nu:Z^{\BB}\to Y^{\BB}$, $j:Y\to Y^{\BB}$. Then we have the equality $\cO_{Y^{\BB}}=j_*\cO_Y\cap \bar\nu_*\cO_{Z^{\BB}}$, with the intersection taking place in $j_*\nu_*\cO_Z$. 
Moreover, $\cO_{Y^\BB}$ analytifies to $j^\an_*\cO_{Y^{\an}}\cap \bar\nu^{\an}_*\cO_{Z^{\BB,\an}}.$
\end{lem}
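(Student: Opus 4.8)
The plan is to unwind the construction of $Y^\BB$ in the previous subsection: $Y^\BB = \proj B_Y$, where $B_Y$ sits between $B_{\bar Y} = \bigoplus_k H^0(\bar Y, L^{(kN)}_{\bar Y})$ and $B_{Z^\BB}$, and $\bar Y = Z^\BB(\bC)/R$ with $R = (\bar{R_0})^e$ the equivalence relation generated by the closure of $R_0 = Z\times_Y Z$. First I would note that since the statement is local on $Y^\BB$ and concerns equalities of subsheaves of a fixed pushforward sheaf, it suffices to check stalks, or equivalently to check on an affine open cover. The containment $\cO_{Y^\BB} \subseteq j_*\cO_Y \cap \bar\nu_*\cO_{Z^\BB}$ is formal from the fact that $j$ is an open immersion (so $j_*\cO_Y \supseteq \cO_{Y^\BB}$ by restriction of functions) and that $\bar\nu$ factors the quotient map — a regular function on $Y^\BB$ pulls back to a regular function on $Z^\BB$. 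So the content is the reverse containment: a function $h$ on an open $V \subset Y^\BB$ that is regular on $V \cap Y$ and whose pullback to $\bar\nu^{-1}(V)$ is regular, is itself regular on $V$.

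The key step is to use that $Y^\BB$ was constructed precisely as the space whose functions are ``regular on $\bar Y$-strata and of moderate growth'', via the inductive algebraization in the proof of part (1). Concretely, $\bar\nu \colon Z^\BB \to Y^\BB$ is the map induced on $\proj$ by $B_Y \hookrightarrow B_{Z^\BB}$; equivalently, $Y^\BB$ carries the algebraic structure for which $\bar\nu$ is the normalization of the scheme-theoretic image, and $\bar Y$ (with the algebraic structure built stratum-by-stratum) maps finitely to $Y^\BB$ by \Cref{lem:Rpreserveshodge}. The crucial point is that $\bar\nu$ is finite and $Y^\BB$ is the quotient $Z^\BB/R$ in the category of algebraic spaces in the sense that its structure sheaf is the $R$-invariant (equivalently descent-datum-compatible) subsheaf of $\bar\nu_*\cO_{Z^\BB}$. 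So a section $g$ of $\bar\nu_*\cO_{Z^\BB}$ over $V$ descends to $\cO_{Y^\BB}(V)$ if and only if it is constant on the fibers of $\bar\nu$, i.e. $R$-invariant; and to check $R$-invariance it suffices, since $R$ is generated by $\bar{R_0}$ and $R_0$ is the relation defining $Z \to Y$, to check that $g$ is constant on fibers of $Z^\BB \to$ (the component mapping to) $Y$ — but over the locus $\bar\nu^{-1}(Y)$ this is exactly the condition that $g$ restricted there descends to $Y$, which holds because $g$ agrees with (the pullback of) $h \in j_*\cO_Y(V)$ on that dense open locus. Then $R$-invariance on the dense open $\bar\nu^{-1}(V \cap Y)$, together with the fact that $\bar{R_0}$ is closed and $g$ continuous (regular), forces $R$-invariance everywhere, hence $g$ descends. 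This gives $h \in \cO_{Y^\BB}(V)$.

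For the analytification statement, I would invoke that analytification commutes with the relevant operations: $\bar\nu^\an$ is still finite (hence $\bar\nu^\an_*$ is exact and commutes with analytification of coherent sheaves), $j^\an$ is an open immersion, and the equality of sheaves $\cO_{Y^\BB} = j_*\cO_Y \cap \bar\nu_*\cO_{Z^\BB}$ is an equality of coherent subsheaves of a coherent sheaf, which is preserved by the exact faithfully flat analytification functor on a scheme of finite type (or by GAGA in the proper/projective setting, which applies since $Y^\BB$, $Z^\BB$ are projective). Concretely, one checks that the same fiberwise-constancy argument runs verbatim in the analytic category using that $\bar{R_0}^\an$ is a closed analytic (indeed definable analytic) equivalence relation and that $\bar\nu^\an$ realizes $Y^{\BB,\an}$ as the quotient. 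The main obstacle I anticipate is the bookkeeping around the equivalence relation $R$: one must be careful that ``$R$-invariant'' for sections of $\bar\nu_*\cO_{Z^\BB}$ is genuinely equivalent to descending to $Y^\BB$ — this uses that $\bar\nu$ has connected fibers onto its image only after the stratum-by-stratum algebraization, and that the normalization-type argument in the proof of part (1) identifies $\cO_{Y^\BB}$ with exactly this invariant subsheaf rather than something smaller. Once that identification is in hand, the intersection formula is essentially a restatement, and the analytic version follows by the GAGA-type compatibilities already used throughout the section.
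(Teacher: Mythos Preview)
Your plan has two genuine gaps.

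\textbf{The algebraic identity.} You rely on the assertion that $\cO_{Y^\BB}$ is exactly the $R$-invariant subsheaf of $\bar\nu_*\cO_{Z^\BB}$, i.e.\ that $\bar\nu$ realizes $Y^\BB$ as the quotient $Z^\BB/R$. But this was never established in the construction of part (1). What was built there is a \emph{different} space $\bar Y = Z^\BB(\bC)/R$ carrying an algebraic structure coming from the inductive stratum-by-stratum algebraization, and then $Y^\BB$ was \emph{defined} separately as $\proj B_Y$. The paper only shows $B_{\bar Y}\subset B_Y\subset B_{Z^\BB}$, which gives finite maps $Z^\BB\to Y^\BB\to \bar Y$; it does not show $Y^\BB=\bar Y$, nor that $\cO_{Y^\BB}$ coincides with the $R$-invariants. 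You acknowledge this as ``the main obstacle'' but do not resolve it. The paper's proof sidesteps the issue entirely: set $\mathcal R=j_*\cO_Y\cap \bar\nu_*\cO_{Z^\BB}$, observe it is coherent (as a subsheaf of the coherent $\bar\nu_*\cO_{Z^\BB}$), form $W=\Spec\mathcal R$, note that $Z^\BB\to W\to Y^\BB$ so the Griffiths bundle descends to an ample bundle on $W$, hence $W=\proj B_W$; finally $B_W\subset B_Y$ (sections on $W$ restrict to moderate-growth sections on $Y$) while trivially $B_Y=B_{Y^\BB}\subset B_W$, forcing $W=Y^\BB$. This uses only the defining property $Y^\BB=\proj B_Y$ and avoids any comparison with $\bar Y$ or $R$-invariance.

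\textbf{The analytification.} Your argument that ``the equality of coherent subsheaves of a coherent sheaf is preserved by analytification'' fails because $j_*\cO_Y$ is \emph{not} coherent---$j$ is an open immersion with nonempty complement, so $j_*\cO_Y$ is only quasicoherent. GAGA does not apply directly. The paper addresses precisely this: working locally, pick a regular function $h$ on $Y^\BB$ vanishing on the boundary, and write $j_*\cO_Y\cap\bar\nu_*\cO_{Z^\BB}=\bigcup_m\big(h^{-m}\cO_{Y^\BB}\cap\bar\nu_*\cO_{Z^\BB}\big)$. Each term $h^{-m}\cO_{Y^\BB}\cap\bar\nu_*\cO_{Z^\BB}$ is an intersection of coherent sheaves, so GAGA gives the corresponding analytic identity termwise, and passing to the union yields the claim.
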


\begin{proof}
    Let $\mathcal{R}=j_*\cO_Y\cap \bar\nu_*\cO_{Z^{\BB}}$. It is clear that $\mathcal{R}$ is quasicoherent, and since it injects into $\bar\nu_*\cO_{Z^{\BB}}$ it must be coherent.

    Now consider $W=\Spec \mathcal{R}$. By construction $W$ fits into a map $Z^{\BB}\to W\to Y^\BB$. Hence, some power the Griffiths bundle $L^{(n)}$ descends to $W$ as $L^{(n)}_W$, and therefore is ample there. Thus $W=\proj B_W$. However, clearly $B_W\subset B_Y$, and thus we must have equality. It follows that $W=Y^{\BB}$ which completes the proof.

    Finally, the analytification statement would follow directly from (ordinary) GAGA if it weren't for the fact that $j_*\cO_Y$ is quasicoherent as opposed to coherent. To address that, we work locally and let $h$ be a regular function on $Y^{\BB}$ vanishing on the boundary. For $m\geq 1$ let $\mathcal{R}_m\coloneqq  h^{-m}\cO_Y\cap \bar\nu_*\cO_{Z^{\BB}}$. It is clear that $\mathcal{R}_m$ analytifies to $(\mathcal{R}^{\an})_m\coloneqq  h^{-m}\cO_{Y^{\an}}\cap \bar\nu^{\an}_*\cO_{Z^{\BB,\an}}$, and so the claim follows as $j^\an_*\cO_{Y^{\an}}\cap \bar\nu^{\an}_*\cO_{Z^{\BB,\an}}=\cup_m (\mathcal{R}^{\an})_m$. 
\end{proof}

We may define a coherent sheaf $L_{Y^{\BB}}^{(k_Y)}\subset j_*L_{Y}^{(k_Y)}$ by considering all local sections whose Hodge norms have moderate growth.

\begin{cor}\label{cor:kypowergriffiths}
 $L_{Y^{\BB}}^{(k_Y)}$ is a line bundle on $Y^{\BB}$.
\end{cor}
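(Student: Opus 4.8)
The plan is to trivialize $L_{Y^{\BB}}^{(k_Y)}$ locally on $Y^{\BB}$ by multiplication against a carefully chosen section, exploiting the identity $\cO_{Y^{\BB}}=j_*\cO_Y\cap\bar\nu_*\cO_{Z^{\BB}}$ of \Cref{lem:YBBfromnorm}. Since $L_{Y^{\BB}}^{(k_Y)}$ is a coherent subsheaf of $j_*L_Y^{(k_Y)}$ restricting to the line bundle $L_Y^{(k_Y)}$ over the dense open $Y$, it suffices to exhibit, near each $p\in Y^{\BB}$, a neighbourhood $U$ and a section $s$ of $L_Y^{(k_Y)}$ over $Y\cap U$ for which multiplication by $s$ is an isomorphism of $\cO_{Y^{\BB}}$-modules $\cO_{Y^{\BB}}|_U\xrightarrow{\sim}L_{Y^{\BB}}^{(k_Y)}|_U$.

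First I would produce $s$ by pulling a local generator \emph{down} from the normalization, rather than by using a section on $Y$ directly: a moderate growth section of $L_Y^{(k_Y)}$ with moderate growth inverse norm as furnished by part (2) need \emph{not} be a local generator at a boundary point, since it may vanish along a boundary divisor of $Y^{\BB}$ while keeping its inverse norm of moderate growth. Applying \Cref{thm:BBHodge} to $Z^{\BB}$ (legitimate by \Cref{lemma:Griffiths is integrable} and \Cref{thm:GGR}) and noting $\ell_Z\mid k_Y$ (pulling back the local sections from part (2) along the birational $\nu$ places $k_Y$ in the group of admissible exponents for $Z^{\BB}$), the sheaf $\cO_{Z^{\BB}}(k_Y)$ is a line bundle on the normal projective variety $Z^{\BB}$ equal to the sheaf of moderate growth sections of $L_Z^{(k_Y)}$. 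A line bundle is trivial on an affine neighbourhood of the finite set $\bar\nu^{-1}(p)$, so there is a section $\tilde s$ of $\cO_{Z^{\BB}}(k_Y)$ generating it there; restricting $\tilde s$ to $Z\subset Z^{\BB}$ and transporting it to $Y$ along $\nu$ (which is birational, and along which the Griffiths bundle and the moderate growth condition are compatible by \Cref{lem:unipgriffithspullback} and the definition of moderate growth via a common log smooth model) yields the desired $s$ over $Y\cap U$, for $U$ a neighbourhood of $p$ with $\bar\nu^{-1}(U)$ inside that affine open; after shrinking $U$ we may assume $\nu^*s=\tilde s$ generates $\cO_{Z^{\BB}}(k_Y)$ over all of $\bar\nu^{-1}(U)$.

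It then remains to check both inclusions. Given a local section $h$ of $\cO_{Y^{\BB}}=j_*\cO_Y\cap\bar\nu_*\cO_{Z^{\BB}}$ over $U$, the pullback $\bar\nu^*h$ is a regular function on $\bar\nu^{-1}(U)$, so $\bar\nu^*h\cdot\tilde s$ is a section of $\cO_{Z^{\BB}}(k_Y)$ there; hence $\nu^*(hs)$ has moderate growth, and therefore so does $hs$, i.e. $hs\in L_{Y^{\BB}}^{(k_Y)}(U)$. Conversely, a local moderate growth section $t$ of $L_Y^{(k_Y)}$ over $U$ can be written $t=hs$ with $h$ regular on $Y\cap U$ (as $s$ generates $L_Y^{(k_Y)}$ there), and then $\nu^*t=(\nu^*h)\tilde s$ with both $\nu^*t$ and $\tilde s$ sections of the line bundle $\cO_{Z^{\BB}}(k_Y)$ over $\bar\nu^{-1}(U)$, $\tilde s$ a generator; thus $\nu^*h=\nu^*t/\tilde s$ extends to a regular function on $\bar\nu^{-1}(U)\subset Z^{\BB}$, so $h\in\bar\nu_*\cO_{Z^{\BB}}$, and since also $h\in j_*\cO_Y$ we get $h\in\cO_{Y^{\BB}}$ by \Cref{lem:YBBfromnorm}. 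Hence multiplication by $s$ is the claimed isomorphism and $L_{Y^{\BB}}^{(k_Y)}$ is invertible. The step requiring the most care is precisely this choice of $s$: because sections with moderate growth inverse norm can vanish along the toroidal-type boundary, one cannot take $s$ from part (2); the argument genuinely routes through the normal model $Z^{\BB}$, where $\cO_{Z^{\BB}}(k_Y)$ is already known to be a line bundle, and back down via \Cref{lem:YBBfromnorm}.
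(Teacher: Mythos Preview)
There is a genuine gap in your ``transport along $\nu$'' step. The normalization $\nu:Z\to Y$ is finite birational, but sections of $\nu^*L_Y^{(k_Y)}$ over $\nu^{-1}(U\cap Y)$ correspond via the projection formula to sections of $L_Y^{(k_Y)}\otimes\nu_*\cO_Z$ over $U\cap Y$, and $\nu_*\cO_Z\supsetneq\cO_Y$ exactly where $Y$ is non-normal. If $p$ lies in the closure of the non-normal locus, a generic local generator $\tilde s$ of $\cO_{Z^{\BB}}(k_Y)$ near $\bar\nu^{-1}(p)$ will not satisfy the gluing conditions needed to come from a section of $L_Y^{(k_Y)}$ on $U\cap Y$. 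Your ``conversely'' step then collapses: you cannot write an arbitrary moderate growth section on $Y$ as $hs$ with $h\in\cO_Y$ if $s$ itself does not live on $Y$.

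The misconception driving the detour is the claim that the section from part (2) might ``vanish along a boundary divisor \ldots\ while keeping its inverse norm of moderate growth.'' This cannot happen. By the definition of moderate growth (via extension to a log smooth model $(X,D)$), moderate growth of $|s|$ means $\pi^*s$ extends to a section of $L_X^{k_Y}$, and moderate growth of $|s|^{-1}$ means $(\pi^*s)^{-1}$ extends to a section of $L_X^{-k_Y}$. Both extending forces $\pi^*s$ to be nowhere-vanishing, i.e.\ a generator of $L_X^{k_Y}$ over $\pi^{-1}(U)$; hence $\nu^*s$ already generates $\cO_{Z^{\BB}}(k_Y)$ over $\bar\nu^{-1}(U)$. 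The paper therefore takes exactly this $s$---which is honestly a section on $Y$---and runs the computation from your ``conversely'' paragraph: for any moderate growth $s'$, the ratio $t=s'/s$ is regular on $U\cap Y$, and $\nu^*t$ extends to $\bar\nu^{-1}(U)\subset Z^{\BB}$ (either because $\nu^*s$ generates, or because moderate growth functions on $Z$ extend to $Z^{\BB}$ via the fibration $X\to Z^{\BB}$), so $t\in j_*\cO_Y\cap\bar\nu_*\cO_{Z^{\BB}}=\cO_{Y^{\BB}}$ by \Cref{lem:YBBfromnorm}.
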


\begin{proof}

We work locally around a point $y\in Y^{\BB}$. By part (2), there is an affine neighborhood $y\in U$ and a local section $s\in H^0(U,L_{Y^{\BB}}^{(k_Y)})$ whose Hodge norm and its inverse have moderate growth around every point in $U\bs Y$. We claim that $s$ is a local generator around $y$. 

Suppose that $s'\in H^0(U,L_{Y}^{(k_Y)})$ is some other moderate growth section. Then $t=s'/s\in H^0(U\cap Y,\cO_{Y^\BB})$ has moderate growth, hence is bounded locally on $U\bs Y$. Since $Z^\BB$ is normal, it follows that $\nu^*t$ extends to a function on $\mathrm{int}\left(\ol{\bar\nu^{-1}(U)}\right)=\bar\nu^{-1}(\mathrm{int}(\ol{U})))$ since $\bar\nu:Z^\BB\to Y^{\BB}$ is open.  Hence $t$ extends to an element of $H^0(U,\cO_{Y^{\BB}})$ by \Cref{lem:YBBfromnorm} as desired.
\end{proof}

Finally, we complete the proof of (3). It is clear that $L_Y^{(k_Y)}$ is ample. Now it follows from \Cref{cor:kypowergriffiths} that we have $H^0(Y^{\BB},L_{Y^{\BB}}^{(k_Y)}) = H^0_{\bdd}(Y,L_{Y}^{(k_Y)})$, and thus $L_{Y^{\BB}}^{(k_Y)}$ is naturally identified with $\cO_{Y^{\BB}}(k_Y)$ as desired.\qed

\subsection{Proof of \Cref{thm:bbextensino}}
The compatibility with the Schmid extensions is immediate from parts (2) and (3) of \Cref{thm:bailyborel} (resp. \Cref{thm:BBHodge}), so we focus on the existence on the extension of the morphism.  The proof for $Y^\BBH$ is the same so we focus on the $Y^\BB$ statement.  Let $f:Y'\to Y$ be a finite \'etale cover of $Y$ with level structure so that the monodromy group is neat. There is then a finite map $\pi:\Delta^k\to\Delta^k:(z_1,\ldots,z_k)\mapsto(z_1^N,\ldots,z_k^N)$ and a commutative diagram
\[\begin{tikzcd}
(\Delta^*)^k\ar[r]\ar[d,swap,"\pi|_{(\Delta^*)^k}"]& Y'\ar[d]\\
(\Delta^*)^k\ar[r]&Y
\end{tikzcd}\]
and it is sufficient to show the top map extends.  Thus, we may assume the variation has neat monodromy.

The extension of $\phi$ is unique if it exists, so the claim is local on $(\Delta^*)^k$, and we may freely shrink $\Delta^k$.  Thus, we may assume $\phi:(\Delta^*)^k\to Y$ is definable (by \cite[Theorem 4.1]{bkt}\footnote{The statement therein should read that the local period map is definable \emph{up to shrinking $\Delta^k$}.}) and therefore that it extends meromorphically, as in \cite[Lemma 3.2]{bbtmixed}. By Hironaka's embedded resolution theorem, we may construct a tower of blowups along smooth centers $$X_r\to X_{r-1}\cdots\to X_0=\Delta^k$$ such that $\phi$ extends to a morphism $\phi_r:X_r\to Y^{\BB,\an}$ and the pair $(X_r,D_r)$ is log smooth, where $D_r$ is the union of exceptional divisors and the strict transform of the coordinate hyperplanes of $\Delta^k$.  Locally on $Y^\BB$, the Griffiths bundle has a generating section with moderate growth and whose inverse has moderate growth, and it follows that the pullback $\phi_r^*L_{Y^\BB}$ agrees with the Schmid extension of the Griffiths bundle of the variation on $X_r\bs D_r$.  But then we also have $\phi_r^*L_{Y^\BB}\cong f^* L_{\Delta^k}$ where $f:X_r\to \Delta^k$ is the blow-down and $L_{\Delta^k}$ is the Schmid extension of the variation on $(\Delta^*)^k$.  Thus, $\phi_r^*L_{Y^\BB}$ is trivial on every fiber of $f$, hence $\phi_r$ factors through $f$, as desired. \qed

\subsection{Proof of \Cref{thm:bailyborel} (4) and \Cref{thm:BBHodge} (4)}  The existence of the extension of the morphism and the compatibility with the Schmid extension is immediate from \Cref{thm:bbextensino}.  The uniqueness statement in \Cref{thm:BBHodge}(4) is standard:  if $\widehat Y$ were another such compactification of $Y$, then $X\bs D\to Y$ also extends to $X\to {\widehat Y}$, but since $\cO_{{\widehat Y}}(n)$ and $\cO_{Y^\BBH}(n)$ are both ample and pullback to the same line bundle on $X$, $X\to Y^\BBH$ factors through ${\widehat Y}$ and $X\to {\widehat Y}$ factors through $Y^\BBH$ by normality, hence ${\widehat Y}\cong Y^\BBH$.\qed


\section{Birational geometry and Hodge theory of lc-trivial fibrations}
Up to an alteration, the moduli part of a family of Calabi--Yau varieties, or more generally an lc-trivial fibration, is the Hodge bundle of a variation of Hodge structure. We discuss in detail the variation arising (\Cref{defn:modulipart}), and provide a geometric characterization of its restriction in codimension one in terms of sources of slc pairs (cf. \Cref{thm:0.19}). To this end, we first recall the notions of b-divisor, pairs, locally stable families, and canonical bundle formula. We refer to \cite{KM98} and \cite{kol13} for the standard 
terminology in birational geometry.

\subsection{B-divisors}
Let $\mathbb{K}$ denote $\bZ$ or $\bQ$.
Given a normal algebraic space $X$, 
a {\it $\mathbb{K}$-b-divisor} 
$\mathbf{D}$ is a (possibly infinite) sum 
of geometric valuations $\nu_i$ 
of $k(X)$ with coefficients in 
$\mathbb{K}$, 
\begin{align*}
 \mathbf{D}= \sum_{i \in I} b_i \nu_i, \; b_i \in \mathbb{K},
\end{align*}
such that, given any normal variety 
$X'$ birational to $X$, 
only finitely many valuations 
$\nu_{i}$ have a center of codimension 1 on 
$X'$. The {\it trace} $D_{X'}$ 
of $\mathbf{D}$ on $Y'$ is the 
$\mathbb{K}$-Weil divisor 
\begin{align*}
D_{Y'} 
 \coloneqq  \sum b_i D_i
\end{align*}
where the sum is indexed over valuations $\nu_{i}$ 
that have divisorial center $D_i\subset X'$.

Given a $\mathbb{K}$-b-divisor $\mathbf{D}$ 
over $X$, we say that $\mathbf{D}$ 
is a {\it $\mathbb{K}$-b-Cartier} 
if there exists a birational model 
$X'$ of $X$ such that $D_{X'}$ 
is $\mathbb K$-Cartier on $X'$ and for any model 
$\pi \colon X''  \rar X'$, we have
$D_{X''} = \pi^\ast D_{X'}$.
When this is the case, we will say that 
$\mathbf{D}$ \emph{descends} to $X'$
and we shall write 
$\mathbf{D}= \overline{\mathbf{D}}_{X'}$
for the $\mathbb K$-b-divisor which $D_{X'}$ determines. We say that $\mathbf{D}$ is {\it b-effective}, 
if $D_{X'}$ is effective for any model 
$X'$. We say that $\mathbf{D}$ is {\it b-nef} 
(resp.\ {\it b-semiample}), 
if it is $\mathbb{K}$-b-Cartier 
and, moreover, there exists a birational model 
$X'$ of $X$ such that 
$\mathbf{D}= \overline{\mathbf{D}}_{X'}$ 
and $D_{X'}$ is nef 
(resp.\ semiample) on $X'$. 

In all of the above, if $\mathbb{K}= \bZ$, 
we will systematically drop it from the notation.

\begin{example}
Let $(X, \Delta)$ be a log sub-pair.  
The \emph{discrepancy b-divisor} $\bfA(X, \Delta)$ is defined as follows: on a birational model $\pi : X' \to X$, its trace $A(X, \Delta)_{X'}$ is given by the identity
\[
A(X, \Delta)_{X'} = \sum_i a(D_i; X, \Delta) D_i \coloneqq  K_{X'} - \pi^*(K_X + \Delta).\]
The b-divisor $\bfA^*(X, \Delta)$ is defined by taking its trace $A^*(X, \Delta)_{X'}$ on $X'$ to be
\[
A^*(X, \Delta)_{X'}  \coloneqq  A(X, \Delta)_{X'} + \sum_{a(D_i; X, \Delta) =-1} D_i.\]
\end{example}

\subsection{Singularities of pairs} The acronyms klt, dlt, lc, sdlt, and slc describe types of singularities that occur naturally in various constructions within birational geometry.
For instance, the minimal (resp.~canonical) model of an snc pair has dlt (resp.~lc) singularities. The reduced part of the boundary of a dlt (resp.~lc) pair is sdlt (resp.~slc). The fibers of a semistable or locally stable morphism (e.g., the families of varieties parametrized by KSBA moduli spaces) have slc singularities. Finally, up to finite base change, any family of Calabi--Yau varieties over a punctured disk has a dlt log Calabi--Yau filling; see \cite{Fujino2011}. Here, we limit ourselves to recalling the relevant definitions and mentioning some properties of 
lc centers used in the following sections.  
\begin{defn}[Singularities of normal pairs]
Let $(X, \Delta)$ be a log sub-pair where $X$ is a normal algebraic space.
\begin{itemize}
\item $(X, \Delta)$ is 
\emph{Kawamata log terminal (klt)} if $\lceil \bfA(X, \Delta) \rceil \geq 0$, i.e., $a(D; X, \Delta) > -1$ for every divisor $D$. 
\item $(X, \Delta)$ is 
\emph{log canonical (lc)} if $\lceil \bfA^*(X, \Delta) \rceil \geq 0$, i.e., $a(D; X, \Delta) \geq -1$ for every divisor $D$.
\item $(X, \Delta)$ is 
\emph{purely log terminal (plt)} if $a(D; X, \Delta) > 0$ for every exceptional divisor $D$.
\item An irreducible
subvariety $Z \subset X$ of an lc sub-pair $(X, \Delta)$ is an \emph{lc center} if there exist a birational morphism $\pi \colon X' \to X$
and a divisor $E \subset X$ with $a(E; X, \Delta) =-1$ 
whose image coincides with $Z$. 
\item $(X, \Delta)$ is \emph{divisorial log terminal (dlt)} if $(X, \Delta)$ is lc and none of
its lc centres lies in the complement of the largest open locus where the sub-pair is snc. 
\end{itemize}
\end{defn}

\begin{defn}[Singularities of demi-normal pairs]
Let $(X, \Delta)$ be a log sub-pair where $X$ is demi-normal, i.e., satisfies Serre's condition $S_2$ and it is nodal in codimension $1$. Let $\nu \colon (\overline{X}, \overline{\Delta}+\overline{C}) \to (X, \Delta)$ be the normalization of $(X, \Delta)$ with conductor $\overline{C}$ and $\overline{\Delta} \coloneqq  \nu^{-1}(\Delta)$.
\begin{itemize}
\item $(X, \Delta)$ is 
\emph{semi-log canonical (slc)}, if $(\overline{X}, \overline{\Delta}+\overline{C})$ is lc.
\item $(X, \Delta)$ is 
\emph{semi-divisorial log terminal (sdlt)}, if $(X, \Delta)$ is slc and none of
its lc centres lies in the complement of the largest open locus where the sub-pair is semi-snc.
\end{itemize}
\end{defn}

\begin{defn}
A \emph{log Calabi--Yau pair} $(X, \Delta)$ is a proper lc pair with $K_X + \Delta \sim_{\bQ} 0$.
\end{defn}

All minimal lc centers of a dlt log Calabi--Yau pair are $\bP^1$-linked in the following sense. 

\begin{definition}[Standard $\mathbb{P}^1$-link]\label{def:P1link}
A \emph{standard $\mathbb{P}^1$-link} is a $\mathbb{Q}$-factorial pair $(X, D_1 + D_2 + \Delta)$ together with a proper morphism $\pi: X \to T$ such that:
\begin{enumerate}
    \item $K_X + D_1 + D_2 + \Delta \sim_{\mathbb{Q}, \pi} 0$,
    \item $(X, D_1 + D_2 + \Delta)$ is plt (in particular, $D_1$ and $D_2$ are disjoint),
    \item the morphisms $\pi|_{D_1}: D_1 \to T$ and $\pi|_{D_2}: D_2 \to T$ are isomorphisms, and
    \item every reduced fiber $X_t^{\mathrm{red}}$ is isomorphic to $\mathbb{P}^1$.
\end{enumerate}
\end{definition}
\begin{remark}
Alternatively, the total space $X$ of a standard $\mathbb{P}^1$-link is the projectivization of a split $\bQ$-vector bundle of rank 2, whose two direct summands correspond to the sections $D_1$ and $D_2$; see \cite[Thm.~1.4]{Moraga24}.
\end{remark}
\begin{definition}[$\bP^1$-linking]
Let $f \colon (Y, \Delta) \to X$ be a fibration
such that $(Y, \Delta)$ is a dlt pair and $K_{Y} + \Delta \sim_{f, \bQ} 0$,
and let \( Z_1, Z_2 \subset Y \) be two lc centers. 
\begin{itemize}
\item \( Z_1 \) and \( Z_2 \) are \emph{directly \(\mathbb{P}^1\)-linked} if there exists an lc center \( W \subset Y \) containing both \( Z_i \) such that
$
f(W) = f(Z_1) = f(Z_2)$, 
and the pair \( (W, \operatorname{Diff}^*_W(\Delta)) \) (cf. \cite[\S 4.18]{kol13}) is birational to a standard \( \mathbb{P}^1 \)-link, with \( Z_i \) mapping to \( D_i \). Observe that \( W = Y \) is allowed.
\item \( Z_1\) and \(Z_2\) are \emph{\(\mathbb{P}^1\)-linked} if there exists a sequence of lc centers $Z_1', Z_2', \dots, Z_m'$
such that \( Z_1' = Z_1 \), \( Z_m' = Z_2 \), and for each \( i = 1, \dots, m-1 \), the centers \( Z_i' \) and \( Z_{i+1}' \) are directly \( \mathbb{P}^1 \)-linked (or \( Z_1 = Z_2 \)).
\end{itemize}
\end{definition}

In particular, every \( \mathbb{P}^1 \)-linking defines a crepant birational map between the pairs \( (Z_1, \operatorname{Diff}^*_{Z_1}(\Delta)) \) and \( (Z_2, \operatorname{Diff}^*_{Z_2}(\Delta)) \).

\begin{prop}[{\cite[Thm.~4.40]{kol13}}] \label{prop:P1link} Let $f \colon (Y, \Delta) \to X$ be a projective fibration such that $(Y, \Delta)$ is a dlt pair and $K_{Y} + \Delta \sim_{f, \bQ} 0$.  All minimal lc centers of $(Y, \Delta)$ among those that intersect a fixed fiber of $f$ are $\bP^1$-linked.
\end{prop}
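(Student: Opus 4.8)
This is Koll\'ar's \cite[Thm.~4.40]{kol13}, and the plan is to reconstruct its proof. Fix a fiber $F=f^{-1}(\eta)$ of $f$ over a point $\eta\in X$ and two distinct minimal lc centers $Z_1,Z_2$ of $(Y,\Delta)$ meeting $F$. First I would reduce to a convenient local situation: since $\bP^1$-linkedness only refers to lc centers $W$ with $f(W)=f(Z_1)=f(Z_2)$, after localizing $X$ (or passing to a henselization) at $\eta$ we may work over the relevant common subvariety of $X$; and after replacing $(Y,\Delta)$ by a $\bQ$-factorial dlt modification --- which is crepant, induces a bijection on the posets of lc centers, and, by compatibility of $\operatorname{Diff}^*$ under adjunction, carries $\bP^1$-links to $\bP^1$-links --- we may assume $Y$ is $\bQ$-factorial and that the non-klt locus of $(Y,\Delta)$ equals $\lfloor\Delta\rfloor$.

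Next I would invoke connectedness. Because $-(K_Y+\Delta)\sim_{f,\bQ}0$ is $f$-nef (indeed $f$-trivial), the relative Koll\'ar--Shokurov connectedness theorem, in the refined form valid for $f$-trivial classes, shows that $\lfloor\Delta\rfloor\cap F$ is connected; hence the components of $\lfloor\Delta\rfloor$ meeting $F$ form a connected graph with edges given by nonempty intersections. Walking along a path in this graph reduces the problem to two cases: (A) $Z_1,Z_2$ lie in a single component $D$ of $\lfloor\Delta\rfloor$; (B) $Z_1\subset D_1$, $Z_2\subset D_2$ with $D_1\cap D_2\neq\varnothing$. Both are handled by induction on $\dim Y$ using adjunction. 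In case (A), adjunction to $D$ gives a dlt pair $(D,\Delta_D)$, $\Delta_D=\operatorname{Diff}_D(\Delta-D)$, with $K_D+\Delta_D\sim_{\bQ,f|_D}0$; the lc centers of $(Y,\Delta)$ contained in $D$ are exactly those of $(D,\Delta_D)$, minimality is preserved in both directions, and the relevant $\operatorname{Diff}^*$ agree, so a $\bP^1$-link inside $(D,\Delta_D)$ is one inside $(Y,\Delta)$; now apply the inductive hypothesis. In case (B), pick a component $W_0$ of $D_1\cap D_2$ meeting $F$; codimension-two adjunction makes $(W_0,\Theta_0)$ dlt with $K_{W_0}+\Theta_0\sim_{\bQ}0$ over its image, and $\dim W_0<\dim Y$. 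By case (A) inside $D_1$, $Z_1$ is $\bP^1$-linked to a minimal lc center contained in $W_0$, and likewise $Z_2$; both now lie in $(W_0,\Theta_0)$ and are linked by induction. Composing the links closes the inductive step.

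The base of the induction is the geometric heart: a $\bQ$-factorial dlt log Calabi--Yau pair $(W,\Theta)$ over a base $T$, with distinct minimal lc centers $Z_1,Z_2$ meeting a common fiber, and with $W$ minimal among lc centers containing both. Here I would first note $Z_1\cap Z_2=\varnothing$ --- a component of the intersection would be an lc center contained in the minimal center $Z_1$, hence equal to it, forcing $Z_1=Z_2$ --- so $(W,\Theta)$ is a dlt log Calabi--Yau pair with two disjoint minimal lc centers, minimal with this property. Using the structure theory of dual complexes of dlt log Calabi--Yau pairs \cite{kol13}, the dual complex $\mathcal{D}(W,\Theta)$ is connected, and the presence of two disjoint minimal lc centers together with the minimality of $W$ forces $\mathcal{D}(W,\Theta)$ to be an interval with endpoints $Z_1,Z_2$. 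Running a relative minimal model program on $(W,\Theta)$ over $T$ --- contracting, fiberwise, the chain of rational curves whose dual graph is this interval --- produces a crepant birational model that is a standard $\bP^1$-link over $T$ with $Z_i$ mapping to $D_i$ (and, by \cite[Thm.~1.4]{Moraga24}, realized as a projectivized split rank-two vector bundle). Thus $Z_1$ and $Z_2$ are directly $\bP^1$-linked, and together with the reduction above this yields the $\bP^1$-linking for arbitrary $Z_1,Z_2$ meeting $F$.

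I expect the main obstacle to be exactly this last step: showing that a minimal connecting lc center $W$ of a dlt log Calabi--Yau pair carrying two disjoint minimal lc centers is crepant birational to a standard $\bP^1$-link. This requires combining (i) the dual-complex theory for dlt log Calabi--Yau pairs and the identification of the interval case, (ii) a relative MMP argument contracting the $\bP^1$-chain in families over $T$, and (iii) verifying each clause of Definition~\ref{def:P1link} for the output --- essentially the content of \cite[\S 4.4]{kol13}. A secondary technical point is making sure the connectedness input in the second paragraph is invoked in the form valid for the $f$-trivial (rather than merely $f$-big) class $-(K_Y+\Delta)$.
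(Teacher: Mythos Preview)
The paper does not give a proof of this proposition at all: it is stated as a direct citation of \cite[Thm.~4.40]{kol13}, with no argument reproduced. So there is no ``paper's own proof'' to compare your proposal against; you are attempting to reconstruct Koll\'ar's argument, while the paper simply invokes it as a black box.

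As a reconstruction of Koll\'ar's argument, your outline has the right architecture (connectedness of the non-klt locus, induction on dimension via adjunction to components of $\lfloor\Delta\rfloor$, and a base case identifying a minimal connecting center with a standard $\bP^1$-link), but a few steps are loose. First, a $\bQ$-factorial dlt modification does not induce a bijection on the posets of lc centers --- it typically introduces new divisorial lc centers --- so you should phrase the reduction more carefully (what you need is that minimal lc centers and $\bP^1$-links are compatible under the crepant map, not a bijection of all centers). Second, the assertion that the dual complex of the minimal connecting center $W$ is forced to be an interval is the crux, and your justification (``minimality of $W$ forces\ldots'') is a slogan, not an argument; this is precisely where Koll\'ar's work in \cite[\S4.4]{kol13} lies, and you would need to unpack it rather than invoke it. Third, the MMP step producing the standard $\bP^1$-link model is asserted rather than argued. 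None of these are fatal to the plan, but as written the proposal is a correct-in-spirit sketch rather than a proof.
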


Among the minimal lc centers in \Cref{prop:P1link}, those dominating $X$ are of particular interest, and they are called sources of $f \colon (Y, \Delta) \to X$.

\begin{defn}[Sources]\label{defnsource} Let $f \colon (Y, \Delta) \to X$ be a fibration from an slc pair $(Y, \Delta)$ to an integral base $X$ with $K_{Y} + \Delta \sim_{\bQ, f} 0$.
A \emph{source} of $f \colon (Y,\Delta) \to X$
is a generically klt
pair obtained as an lc center, minimal among those dominating $X$, of a dlt modification $(Y^{\dlt}, \Delta^{\dlt})$ of the normalization $(\overline{Y}, \overline{\Delta}+\overline{C})$ of $(Y, \Delta)$:
\begin{equation}\label{eq:source}
(S, \Delta_S) 
\xhookrightarrow{\iota}
(Y^{\dlt}, \Delta^{\dlt}) 
\xrightarrow{\pi} 
(\overline{Y}, \overline{\Delta}+\overline{C}) 
\xrightarrow{\nu}
(Y, \Delta).
\end{equation}
It is unique up to crepant birational equivalence; see \cite[\S 4.5]{kol13}.
\end{defn}

\subsection{Locally stable families}
\begin{defn}
Let $X$ be a reduced scheme, $f \colon Y \to X$ a flat morphism of finite type
and $f \colon (Y, \Delta) \to X$ a well-defined family of pairs (see  \cite[Thm.-Def.~4.7]{kbook}). Assume that $(Y_x, \Delta_x)$ is slc for every $x \in X$. Then $f \colon (Y, \Delta) \to X$ is \emph{locally
stable} if the following equivalent conditions hold:
\begin{enumerate}
\item $K_{Y/X} + \Delta$ is $\bQ$-Cartier;
\item $(Y_{T}, \Delta_T + Y_0)$ is slc, whenever $(T, 0)$ is the spectrum of a DVR, and $f_T \colon (Y_T, \Delta_T) \to T$ is the pullback family along a morphism $T \to X$; cf.\ also \cite[Thm.-Def.~2.3]{kbook}.
\end{enumerate}
\end{defn}

We recall some properties of locally stable families.

\begin{lemma}[{\cite[Thm.~4.8]{kbook}}]\label{stabilitylocst} Let $f \colon (Y, \Delta) \to X$ be a locally stable morphism over a
reduced base $X$, and $q\colon V \to X$ be a morphism of reduced schemes. Then the family over
$V$ obtained by fiber product is locally stable.
\end{lemma}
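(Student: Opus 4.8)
\textbf{Proof proposal for \Cref{stabilitylocst}.}

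The statement to prove is standard: local stability of families of pairs is preserved under arbitrary base change to a reduced scheme. Since the excerpt cites this as \cite[Thm.~4.8]{kbook}, the expected ``proof'' is a reduction to the DVR case together with an invocation of the characterization of local stability by its restriction to spectra of DVRs. The plan is to unwind the two equivalent conditions in the definition of local stability and check each one survives the fiber product.

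First I would verify that $f_V\colon (Y_V,\Delta_V)\to V$ is a well-defined family of pairs: flatness of $f_V$ follows from flatness of $f$ by base change, finite type is preserved, and the formation of $(Y_V,\Delta_V)$ as a family of pairs is compatible with base change over a reduced base by \cite[Thm.-Def.~4.7]{kbook} (one needs $V$ reduced here, which is hypothesized). Moreover each fiber $(Y_{V,v},\Delta_{V,v})$ is isomorphic to the fiber $(Y_{q(v)},\Delta_{q(v)})$ of $f$ over the image point, hence slc by hypothesis; so $f_V$ is again a family of slc pairs and it makes sense to ask whether it is locally stable.

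Next I would use condition (2) of the definition --- local stability is detected by pulling back to spectra of DVRs. Given any DVR $T$ and any morphism $r\colon T\to V$, the composite $q\circ r\colon T\to X$ is a morphism from the spectrum of a DVR to $X$, so by local stability of $f$ over $X$ the family $f_T\colon (Y_T,\Delta_T)\to T$ obtained by pulling back along $q\circ r$ is locally stable over $T$; but by associativity of fiber products this is the same as pulling $f_V$ back along $r$. Hence $f_V$ satisfies criterion (2), and therefore $f_V$ is locally stable. Equivalently, one can argue via criterion (1): $\bQ$-Cartierness of $K_{Y/X}+\Delta$ is preserved under pullback because it is a local question on $Y$ and the formation of the relative log-canonical divisor commutes with base change over a reduced base (again \cite[Thm.-Def.~4.7]{kbook}), while the fiber-product total space $Y_V\to Y$ is a morphism along which $\bQ$-Cartier divisors pull back to $\bQ$-Cartier divisors.

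There is essentially no obstacle here beyond bookkeeping; the only subtlety worth flagging is the compatibility of the ``family of pairs'' structure (in particular the correct definition of $\Delta_V$, which is \emph{not} merely the scheme-theoretic preimage of $\Delta$ when the base change is not flat along $\Delta$, but rather the divisorial part prescribed by \cite[Thm.-Def.~4.7]{kbook}) with the base change $q$. Once that compatibility is granted --- and it is exactly what the reference provides for reduced $V$ --- the DVR criterion does all the work, so the main ``step'' is simply the observation that $T\to V\to X$ is again a DVR-point of $X$.
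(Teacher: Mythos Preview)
Your argument is correct and is exactly the standard reduction to the DVR criterion that one expects. However, the paper does not give any proof of this lemma at all: it is stated purely as a citation of \cite[Thm.~4.8]{kbook}, with no argument provided. So there is nothing to compare against---your proposal simply supplies the proof that the paper omits by reference.
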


\begin{lemma}[{\cite[Thm.~4.55]{kbook}}]\label{stabilitylocstII} Let $f \colon (Y, \Delta) \to X$ be a morphism over a
smooth scheme $X$ with $\Delta \geq 0$. 
Then $f$ is locally stable if and only if the pair
$(Y, \Delta + f^* D)$ is slc for every snc divisor
$D \subset X$.
\end{lemma}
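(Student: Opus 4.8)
The plan is to reduce the equivalence, by cutting the base with general hyperplane sections, to the already-known case where the base is a smooth curve (equivalently, the spectrum of a DVR), and there to invoke inversion of adjunction.

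First I would dispose of the $\bQ$-Cartier condition. Since $X$ is smooth, $f^*K_X$ is Cartier, so $K_{Y/X}+\Delta$ is $\bQ$-Cartier if and only if $K_Y+\Delta$ is; and if $(Y,\Delta+f^*D)$ is slc for some snc $D$ — which, by the very definition of slc, forces $K_Y+\Delta+f^*D$, hence $K_Y+\Delta$, to be $\bQ$-Cartier — then $K_{Y/X}+\Delta$ is $\bQ$-Cartier. As the empty divisor is snc, each side of the desired equivalence entails both that $K_{Y/X}+\Delta$ is $\bQ$-Cartier and that $(Y,\Delta)$ is slc (taking $D=0$). Thus the statement becomes a purely local assertion about slc singularities of the pairs $(Y,\Delta+f^*D)$ and of the fibres $(Y_x,\Delta_x)$, and I may argue étale-locally on $X$.

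Next, induction on $\dim X$. Both conditions are local and étale-local on $X$, and both are compatible with restriction to a general smooth hypersurface section $H\subset X$ through a prescribed point: locally stability of $f$ restricts to locally stability of $f^{-1}(H)\to H$ by \Cref{stabilitylocst}, while $f^*H$ is a Cartier divisor on $Y$ and, étale-locally using $X\cong H\times\bA^1$, every snc $D'\subset H$ is the restriction of an snc $D\subset X$ with $H+D$ snc, so adjunction (the residue along $f^*H$) carries slc-ness of $(Y,\Delta+f^*D)$ to slc-ness along $f^{-1}(H)$ of the corresponding restricted pair on $f^{-1}(H)$. A Bertini-for-singularities argument — a general $H$ through $x$ contains no lc centre of the finitely many relevant pairs other than those already passing through $x$ — makes this restriction process reversible enough that the equivalence over $X$ follows from the equivalence over all such $H$. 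Iterating, I reduce to $\dim X=1$, i.e. $X=C$ a smooth curve, and then (localizing) to verifying the equivalence at a single point $p\in C$ with $D\in\{0,\{p\}\}$, where it reads: $f$ is locally stable near $Y_p$ $\iff$ $(Y,\Delta+f^*p)$ is slc near $Y_p$. This is exactly inversion of adjunction over a DVR, \cite[Thm.-Def.~2.3]{kbook}: granting $K_{Y/C}+\Delta$ $\bQ$-Cartier, $(Y,\Delta+Y_p)$ is slc near $Y_p$ precisely when the central fibre $(Y_p,\Delta_p)$ is slc. Assembling the two directions: for ``$\Leftarrow$'', at an arbitrary $x\in X$ one takes a general smooth curve $C\ni x$, notes the restricted family is locally stable by the above, and reads off that $Y_x$ — its fibre over $x$ — is slc, whence $f$ is locally stable; for ``$\Rightarrow$'', locally stability restricts to every general curve, and the curve case yields slc-ness of $(Y,\Delta+f^*p)$ at each $p$, hence of $(Y,\Delta+f^*D)$ for every snc $D$ by reassembling over the components of $D$.

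The genuine obstacle is the curve/DVR base case: the inversion-of-adjunction statement that slc-ness of the possibly non-reduced central fibre $Y_p=f^*p=\sum_i m_iV_i$, together with $\bQ$-Cartier-ness of $K_{Y/C}+\Delta$, propagates to slc-ness of $(Y,\Delta+Y_p)$ on the total space. Correctly accommodating the multiplicities $m_i$, and the semistable-reduction/MMP machinery behind this propagation, is where the real work sits; by contrast, the descent from a higher-dimensional base is essentially formal once the standard Bertini-with-singularities bookkeeping is in place.
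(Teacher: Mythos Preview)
The paper does not prove this lemma: it is stated as a citation of \cite[Thm.~4.55]{kbook} and no proof is given. There is therefore no paper proof to compare against; your proposal is an attempt to reconstruct Koll\'ar's argument.

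Your overall strategy---reduce to the DVR base case and invoke inversion of adjunction there---is the right one, and you correctly identify that the substantive content lives in the curve/DVR step. Two places in the reduction are sketchier than they should be. First, for the ``$\Rightarrow$'' direction you cannot cut by \emph{general} hyperplanes: the snc divisor $D$ is prescribed, and you must show slc-ness of $(Y,\Delta+f^*D)$ for that specific $D$. The clean way is to \'etale-localize so that $D=\{t_1\cdots t_k=0\}$ is part of a coordinate system $(t_1,\dots,t_n)$, observe that it suffices to treat the full divisor $\sum_{i=1}^n(t_i=0)$ (monotonicity of discrepancies handles subsets), and then peel off one coordinate at a time by adjunction/inversion of adjunction along $f^*(t_i=0)$, using that each intermediate family is again locally stable over a smooth base by \Cref{stabilitylocst}. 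Second, ``reassembling over the components of $D$'' is not a valid step on its own: knowing $(Y,\Delta+f^*D_i)$ is slc for each component $D_i$ does not yield slc-ness of $(Y,\Delta+f^*\sum D_i)$; the iterated-adjunction argument above is what actually does the work at the crossings.
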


\begin{lemma}\label{stabilitylocstIII} Let $f \colon (Y, \Delta) \to X$ be a locally stable morphism over a
smooth
base $X$, and $\nu \colon \overline{Y} \to Y$ be the normalization of $Y$ with conductor $\overline{C} \subset \overline{Y}$. 
Then $f \circ \nu \colon (\overline{Y}, \overline{\Delta}+\overline{C}) \to X$ is locally stable.
\end{lemma}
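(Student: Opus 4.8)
The plan is to deduce this from \Cref{stabilitylocstII}. Since $X$ is smooth and $f$ is locally stable, that lemma already tells us that $(Y,\Delta+f^*D)$ is slc for every snc divisor $D\subset X$; in particular, taking $D=0$, the scheme $Y$ is demi-normal and $(Y,\Delta)$ is slc, so the normalization $\nu\colon(\overline{Y},\overline{\Delta}+\overline{C})\to(Y,\Delta)$ is defined and $(\overline{Y},\overline{\Delta}+\overline{C})$ is lc. To conclude that $f\circ\nu$ is locally stable it then suffices, by the converse direction of \Cref{stabilitylocstII} applied over the smooth base $X$, to check that $(\overline{Y},\overline{\Delta}+\overline{C}+(f\circ\nu)^*D)$ is slc for every snc divisor $D\subset X$.

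To do this I would first note that the conductor $\overline{C}\subset\overline{Y}$ depends only on the scheme $Y$, so $\overline{Y}\to Y$ with conductor $\overline{C}$ is also the normalization of the scheme underlying $(Y,\Delta+f^*D)$, and the boundary of the normalized pair is computed by
\[
\nu^*\bigl(K_Y+\Delta+f^*D\bigr)-K_{\overline{Y}}=\bigl(\nu^*(K_Y+\Delta)-K_{\overline{Y}}\bigr)+\nu^*f^*D=\overline{\Delta}+\overline{C}+(f\circ\nu)^*D,
\]
using that $f^*D$ is Cartier (as $D$ is Cartier on the smooth $X$) and that $\nu^*f^*D=(f\circ\nu)^*D$. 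Hence $(\overline{Y},\overline{\Delta}+\overline{C}+(f\circ\nu)^*D)$ is exactly the normalization of the slc pair $(Y,\Delta+f^*D)$, therefore lc by the definition of slc, and therefore slc since it is normal. As $\Delta\ge 0$ and $\nu$ is finite, so extracts no exceptional divisors, we also have $\overline{\Delta}+\overline{C}\ge 0$, so \Cref{stabilitylocstII} applies and gives local stability of $f\circ\nu$.

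The step I expect to require the most care is checking that $f\circ\nu\colon(\overline{Y},\overline{\Delta}+\overline{C})\to X$ is genuinely a family of pairs of the type to which \Cref{stabilitylocstII} applies, i.e.\ that $\overline{Y}\to X$ is flat of finite type and that $K_{\overline{Y}/X}+\overline{\Delta}+\overline{C}$ is $\bQ$-Cartier. The second condition is immediate, since $K_{\overline{Y}/X}+\overline{\Delta}+\overline{C}=\nu^*(K_{Y/X}+\Delta)$ and $K_{Y/X}+\Delta$ is $\bQ$-Cartier by local stability of $f$. Flatness of $\overline{Y}\to X$, together with the fact that forming $\overline{Y}$ commutes with restricting to the fibers of $f$, is the real content: it holds because every fiber $(Y_x,\Delta_x)$ of the locally stable morphism $f$ is slc, hence demi-normal, and the normalization commutes with base change for flat families with demi-normal fibers (cf.\ \cite{kbook}); alternatively one can verify condition (2) in the definition of local stability directly by base-changing to the spectrum of a DVR. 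Once this family-theoretic point is in place, the rest is the formal computation above.
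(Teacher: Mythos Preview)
Your proof is correct and follows essentially the same approach as the paper's: use \Cref{stabilitylocstII} to get that $(Y,\Delta+f^*D)$ is slc for every snc divisor $D$, observe that its normalization is $(\overline{Y},\overline{\Delta}+\overline{C}+(f\circ\nu)^*D)$ and hence lc, and then invoke the converse direction of \Cref{stabilitylocstII}. The paper compresses the middle step into a citation of \cite[Thm.~5.38]{kol13}, whereas you unpack it from the definition of slc together with the conductor formula; your additional remarks on flatness and the family-of-pairs hypotheses address points the paper's one-line proof leaves implicit.
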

\begin{proof} If $(Y, \Delta + f^* D)$ is slc for any snc divisor $D \subset X$, then  $(\overline{Y}, \overline{\Delta}+\overline{C} + (f \circ \nu)^* D)$ is lc by \cite[Thm.~5.38]{kol13}.
\end{proof}

\begin{lemma}[{\cite[Lem.~2.12]{Z16}}]
Let $f \colon (Y, \Delta) \to C$ be a locally stable morphism over an snc curve $C$. Then $(Y, \Delta)$ is slc.
\end{lemma}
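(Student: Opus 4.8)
The statement is local on $Y$ and \'etale-local on $C$, so the plan is to reduce to the case where $C$ is either smooth or, \'etale-locally at a node $c\in C$, of the form $C_1\cup C_2$ with $C_1,C_2$ smooth meeting transversally at the reduced point $c$ (using \Cref{stabilitylocst} to descend along \'etale base change). When $C$ is smooth there is nothing to do: $f$ is then a locally stable morphism over a smooth base, so \Cref{stabilitylocstII} applied with the empty (snc) divisor already gives that $(Y,\Delta)$ is slc. Thus the content is the behavior over a node, and the strategy is to exhibit $Y$ near the central fiber $Y_c\coloneqq f^{-1}(c)$ as a gluing of its two branch restrictions and then to invoke the normalization criterion for slc pairs recalled above.

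I would first set $Y_i\coloneqq Y\times_C C_i$ with restricted morphism $f_i\colon Y_i\to C_i$; since $C_i\hookrightarrow C$ is a closed immersion, so is $Y_i\hookrightarrow Y$, and $f_i$ is flat. By \Cref{stabilitylocst} each $f_i$ is locally stable over the smooth base $C_i$, so \Cref{stabilitylocstII} applied with the snc divisor $c\subset C_i$ shows that $(Y_i,\Delta_i+Y_c)$ is slc, where $\Delta_i\coloneqq\Delta|_{Y_i}$ and $Y_c=f_i^{-1}(c)$ is an effective Cartier divisor on $Y_i$. Since $Y_c$ then appears with coefficient one, writing $\nu_i\colon\overline{Y_i}\to Y_i$ for the normalization with conductor $\overline{D}_i$, the pair $(\overline{Y_i},\,\overline{\Delta_i}+\overline{D}_i+\nu_i^{-1}(Y_c))$ is lc, with $\overline{\Delta_i}$ the strict transform of $\Delta_i$.

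Next I would assemble the picture near $Y_c$ through three points. (a) $Y$ is demi-normal: it is reduced and $S_2$ because it is flat over the Cohen--Macaulay curve $C$ with demi-normal (hence $S_2$) fibers, by the standard depth formula for flat local homomorphisms; and it is nodal in codimension one, since a codimension-one point of $Y$ lies over a point of $C$ of codimension $0$ or $1$, and in each case a local computation — using that the fibers are slc and the explicit form $k[[x,y]]/(xy)$ of $C$ at a node — shows the local ring is regular or a node. (b) $K_Y+\Delta=(K_{Y/C}+\Delta)+f^*K_C$ is $\bQ$-Cartier, since $K_{Y/C}+\Delta$ is ($f$ locally stable) and $C$ is Gorenstein. (c) The normalization of $Y$ is $\nu\colon\overline{Y}=\overline{Y_1}\sqcup\overline{Y_2}\to Y$: flatness over $k[[x,y]]/(xy)$ gives $(x)\cap(y)=0$ in $\mathcal O_Y$, so $Y=Y_1\cup_{Y_c}Y_2$ and $Y_1\sqcup Y_2\to Y$ is finite and birational with conductor exactly $Y_c$, after which one normalizes each branch; consequently the conductor $\overline{D}$ of $Y$ restricts on $\overline{Y_i}$ to $\overline{D}_i+\nu_i^{-1}(Y_c)$. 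With these identifications $(\overline{Y},\,\nu^{-1}_*\Delta+\overline{D})\cong\bigsqcup_i(\overline{Y_i},\,\overline{\Delta_i}+\overline{D}_i+\nu_i^{-1}(Y_c))$, which is lc by the previous paragraph; together with (a) and (b), the normalization criterion for slc pairs (see the definition above, or \cite[Ch.~5]{kol13}) yields that $(Y,\Delta)$ is slc.

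The hard part will be point (c): pinning down the local structure of $Y$ along $Y_c$ — that $Y$ really is the pushout $Y_1\cup_{Y_c}Y_2$, that its normalization is obtained in the two stated steps, and that the conductor decomposes as claimed. Equivalently, in the language of Koll\'ar's gluing theory \cite[Ch.~5]{kol13}, one must check that the involution of the conductor swapping the two copies of $Y_c$ preserves the differents $\Diff_{Y_c}(\Delta_i+Y_c)$; this holds because adjunction for the locally stable families $f_i$ identifies both differents with the boundary $\Delta_c=\Delta|_{Y_c}$ of the single central fiber of $f$, so the gluing data is automatically compatible. The depth-and-nodality bookkeeping in (a) is routine.
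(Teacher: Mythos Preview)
The paper does not give its own proof of this lemma; it is quoted verbatim from \cite[Lem.~2.12]{Z16}. Your argument is correct and is essentially the standard one: localize at a node, use flatness over $k[[x,y]]/(xy)$ to see that $Y$ is demi-normal and splits as the pushout $Y_1\cup_{Y_c}Y_2$, identify the normalization with $\overline{Y_1}\sqcup\overline{Y_2}$ carrying conductor $\overline{D_i}+\nu_i^{-1}(Y_c)$, and then read off log canonicity of the normalization from \Cref{stabilitylocstII} applied to each branch with the snc divisor $c\subset C_i$. The one point worth making explicit in step~(c) is that no component of the double locus $D_i$ of $Y_i$ is contained in $Y_c$: this follows because $(Y_i,\Delta_i+Y_c)$ is slc and $Y_c$ has coefficient~$1$, so $Y_i$ is regular at every generic point of $Y_c$; this guarantees $\overline{D_i}$ and $\nu_i^{-1}(Y_c)$ share no component and that $\nu_i^{-1}(Y_c)$ is reduced.

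Your final paragraph about the conductor involution preserving differents is a red herring for the direction you need. With the definition of slc recalled in the paper (demi-normal, $K_Y+\Delta$ $\bQ$-Cartier, normalization lc), there is no gluing compatibility to \emph{verify}: once (a), (b), and (d) are in hand, $(Y,\Delta)$ is slc by definition. The compatibility of differents under the involution is a \emph{consequence} of $K_Y+\Delta$ being $\bQ$-Cartier, used when one wants to reconstruct an slc pair from lc pieces, not the other way around.
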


The moduli part of locally stable lc fibrations admits the following birational characterization. 
\begin{lemma}\label{lem:modulipartlctriviallocallystable}
Let $f \colon (Y, \Delta) \to X$ be a locally stable lc-trivial fibration inducing the generalized pair $(X,B, \bfM)$. Then $B=0$ and
$f^*M_X \sim_{\bQ} K_{Y/X} + \Delta$.
\end{lemma}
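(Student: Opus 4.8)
The plan is to verify the two assertions separately, starting with $B=0$. First I would recall that the coefficient of $B_X$ along a prime divisor $D_X\subset X$ is defined in \Cref{defn:boundary} as $\sup_E\{1-\tfrac{1+a(E;Y,\Delta)}{\mult_E(\pi^*D_X)}\}$, the supremum ranging over divisors $E$ over $Y$ dominating $D_X$. So it suffices to show that for every such $E$ one has $1+a(E;Y,\Delta)\ge \mult_E(\pi^*D_X)$, with equality achieved, i.e.\ that the log canonical threshold of $f^*D_X$ with respect to $(Y,\Delta)$ equals $1$. By \Cref{stabilitylocstII}, local stability of $f$ over the (smooth, after base change to a general point and a local chart, or after noting $B_X$ is determined in codimension one so we may work over the generic point of $D_X$ and then spread out) base implies that $(Y,\Delta+f^*D)$ is slc for every snc divisor $D\subset X$; applying this with $D=D_X$ (which is snc near its generic point) gives exactly $a(E;Y,\Delta+f^*D_X)\ge -1$, equivalently $a(E;Y,\Delta)-\mult_E(f^*D_X)\ge -1$, which is the desired inequality. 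That the supremum is attained (so the coefficient is not strictly smaller) follows by taking $E$ a component of the strict transform dominating $D_X$ on a log resolution, where $\mult_E(\pi^*D_X)=1$ and $a(E;Y,\Delta)=0$ generically since $f$ is a fibration with $K_Y+\Delta\sim_{\bQ,f}0$ and $(Y,\Delta)$ is lc with no lc center over the generic point of $D_X$ contributing; hence $\ord_{D_X}(B_X)=0$, and since $D_X$ was arbitrary, $B_X=0$. A small technical point to handle: a priori $B_X$ is a b-divisor and one must check the above on every model, but local stability is preserved under base change (\Cref{stabilitylocst}) and the argument applies uniformly, so $\mathbf B=0$ as a b-divisor.

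Second, granting $\mathbf B=0$, the canonical bundle formula \eqref{eq:canonicalbundleformula} reads $K_Y+\Delta\sim_\bQ f^*(K_X+\mathbf M_X)$ on a suitable model, equivalently $K_{Y/X}+\Delta\sim_\bQ f^*\mathbf M_X$ once we rewrite $K_Y-f^*K_X$; but we want this on $X$ itself without passing to a birational model. Here I would use that $f$ is a \emph{fibration} (so $f_*\cO_Y=\cO_X$) together with the relative triviality $K_{Y/X}+\Delta\sim_{\bQ,f}0$ from \Cref{lc-trivial.def}(iii): the divisor $K_{Y/X}+\Delta$ is $\bQ$-Cartier (this is precisely condition (1) in the definition of locally stable) and numerically/$\bQ$-linearly trivial on fibers, hence by the projection-type argument $K_{Y/X}+\Delta\sim_\bQ f^*N$ for some $\bQ$-Cartier $\bQ$-divisor $N$ on $X$ (push forward and use $f_*\cO_Y=\cO_X$; the line bundle $\cO_Y(m(K_{Y/X}+\Delta))$ for $m$ divisible is $f$-trivial, so it is the pullback of its pushforward which is a line bundle on $X$). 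It then remains to identify $N$ with $\mathbf M_X$. Since the moduli part is characterized (\Cref{defn:modulipart}, \Cref{rmk:moduli}) as the Hodge bundle $F^m(V_{Y,\cO})^{\rm tr}_\chi$, and on the locally stable locus the variation is the one attached to the fibers which have the expected slc/klt degenerations, $N$ and $\mathbf M_X$ agree over the snc open locus $X^\circ$ where $B_X=0$ forces the boundary contribution to vanish, hence they agree as b-divisors; combined with the fact that $\mathbf M$ is b-nef and, by \Cref{functoriality}, descends compatibly, we get $f^*\mathbf M_X=N$ so $f^*\mathbf M_X\sim_\bQ K_{Y/X}+\Delta$ on $X$.

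The main obstacle I anticipate is the clean identification in the second part: making precise that the moduli b-divisor $\mathbf M$, a priori only defined after an alteration and Hodge-theoretic in nature, actually coincides on the nose (not merely up to the alteration and pushforward in \Cref{defn:modulipart}) with the honest $\bQ$-Cartier divisor $N$ on $X$ cut out by relative triviality, including a correct bookkeeping of the cyclic-cover index $d$ (cf.\ \Cref{remark:PS}, where $d(K_Y+\Delta)\sim d f^*(K_X+B_X+\mathbf M_X)$ is an honest linear equivalence). The subtlety is that for a general lc-trivial fibration one genuinely needs the alteration to define $\mathbf M$, whereas local stability should be exactly the hypothesis guaranteeing no alteration is necessary and $B_X=0$; I would phrase this as: local stability implies $f$ already satisfies (a mild variant of) conditions $(\dagger)$ generically and the boundary part is trivial, so \Cref{defn:modulipart} produces $\mathbf M_X$ directly on $X$ and the formula $K_{Y/X}+\Delta\sim_\bQ f^*\mathbf M_X$ is literally \Cref{canonicalbundleformula} with $B_X=0$ read on $X$. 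The remaining routine verifications (that $K_{Y/X}+\Delta$ descends to $X$, $\bQ$-Cartierness, compatibility of pullbacks) are standard and I would not belabor them.
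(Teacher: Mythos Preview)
Your argument for $B=0$ is correct and close to the paper's, though you route through \Cref{stabilitylocstII} (the snc criterion over a smooth base) while the paper uses the DVR criterion directly: for a prime divisor $D$ on \emph{any} modification of $X$, let $T$ be the spectrum of the local ring at $D$; condition (2) in the definition of locally stable gives that $(Y_T,\Delta_T)\to T$ is locally stable, hence $(Y_T,\Delta_T+Y_0)$ is lc, so $\ord_D(B)=0$. This handles the b-divisor statement in one step, whereas your version needs an extra remark to pass from $X$ to arbitrary modifications (and \Cref{stabilitylocstII} needs the base smooth). Both are fine; the DVR approach is just shorter.

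The second part is substantially overcomplicated, and the ``main obstacle'' you flag is not an obstacle at all. The canonical bundle formula \eqref{eq:canonicalbundleformula} is stated \emph{on $X$}: one has $K_Y+\Delta\sim_\bQ f^*(K_X+B_X+\bfM_X)$ with $B_X,\bfM_X$ the traces on $X$ of the respective b-divisors. Once $B_X=0$ this reads $K_{Y/X}+\Delta\sim_\bQ f^*\bfM_X$ directly. There is no need to introduce an auxiliary divisor $N$ and then identify it with $\bfM_X$ via the Hodge-theoretic description in \Cref{defn:modulipart}, no need to verify $(\dagger)$, and no need to worry that $\bfM$ is only defined after alteration: the trace $\bfM_X$ is always a $\bQ$-divisor on $X$, and \eqref{eq:canonicalbundleformula} holds there. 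Your entire second and third paragraphs can be replaced by a one-line citation of the canonical bundle formula.
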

\begin{proof}
Let $(T, 0)$ be the spectrum of the local ring of a prime divisor $D$ on any modification of $X$. Since $f$ is locally stable, the pair $(Y_T, \Delta_T + Y_0)$ is lc by \cite[(2.3.3)]{kbook}, so $\ord_D(B)=0$ by \Cref{canonicalbundleformula}.(2). 
\end{proof}

A fibration with K-trivial general fiber can be made locally stable and lc-trivial via an alteration.

\begin{prop}\label{lem:alteration}
Let $f\colon (Y, \Delta) \to X$ be a fibration of quasiprojective varieties
whose general fiber
$(Y_{\eta_X}, \Delta_{\eta_X})$ is log Calabi--Yau. Then there is a projective, generically finite, dominant morphism $q \colon W^{\circ} \to X$, and a projective compactification $W^{\circ} \to W$, a locally stable morphism $f' \colon (Y', \Delta') \to W$ such that the pullback of the generic fiber of $f$ along $q$ is crepant birational to the generic fiber of $f'$.

Furthermore, we can assume that:
\begin{enumerate}

\item any closed locus of interest in $W$ is a simple normal crossing divisor;

\item given a polarized variation $V$ of Hodge structure supported on a smooth locally closed subset $Z^{\circ}$ in $X$, there exist a proper log smooth scheme $(R, D_{R})$ and an embedding $\iota \colon R \hookrightarrow W$ such that the composition $q \circ \iota \colon R \setminus D_{R} \to Z^{\circ}$ is projective, generically finite and surjective, and $(q \circ \iota)^*V$ has local unipotent monodromy;

\item 
$K_{Y'/W} + \Delta' \sim_{\bQ, f'} 0$; and

\item $(Y', \Delta')$ is dlt in codimension 1 over $W$, i.e., $(Y', \Delta' + f^{-1}(D))$ is dlt
over the generic point of any prime divisor $D$ in $W$.

\end{enumerate}
\end{prop}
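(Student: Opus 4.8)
The plan is to build $W$ and $f'$ in several stages, combining semistable reduction in codimension one with the theory of locally stable families over higher-dimensional bases. First, take a dlt modification $(Y^{\mathrm{dlt}},\Delta^{\mathrm{dlt}})\to(\overline Y,\overline\Delta+\overline C)\to(Y,\Delta)$ of the normalization and replace $Y$ by a $\bQ$-factorial dlt model, so that the general fiber is a dlt log Calabi--Yau pair; since $K_{Y/X}+\Delta\sim_{\bQ,f}0$ over the generic point, by running an MMP over $X$ (or using the relative abundance for the fibers, which are log CY) we may further arrange that $K_{Y/X}+\Delta\sim_{\bQ,f}0$ holds over a big open subset $X_0\subset X$ of codimension $\ge 2$ complement, giving condition (3) generically. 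Next, I would invoke the existence of a generically finite alteration $q\colon W^{\circ}\to X$ together with a semistable-in-codimension-one model: by the work on semistable reduction (Abramovich--Karu, or Kollár's results on families over reduced bases, \cite[\S 4]{kbook}), after a further blowup of $W$ we can extract a locally stable family $f'\colon(Y',\Delta')\to W$ over a smooth projective $W$ whose generic fiber over $W^{\circ}$ is crepant birational to $(Y,\Delta)\times_X W^{\circ}$. Here one uses \Cref{stabilitylocst} and \Cref{stabilitylocstII} to propagate local stability, and \Cref{stabilitylocstIII} for the normalization.

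The refinements (1), (2), (4) are then achieved by successive blowups of the smooth base $W$, which preserve local stability by \Cref{stabilitylocst} (pullback along $W'\to W$). For (1), resolve any finite collection of subvarieties of interest into an snc divisor by Hironaka. For (2): given the polarized VHS $V$ on $Z^{\circ}\subset X$, pull it back to the preimage of $Z^\circ$ in $W$, pass to a log resolution $(R,D_R)$ of the closure of a component dominating $Z^\circ$, and then — exactly as in the reduction to the neat/unipotent case used repeatedly in the paper (e.g.\ the proof of \Cref{thm:easy algebraize}) — adjoin enough level structure and take a further finite cover ramified along $D_R$ (using Grauert--Remmert/\cite{SGA1} as in \Cref{rmk:analcanonicalbundle}) so that $(q\circ\iota)^*V$ has unipotent local monodromy; absorbing this cover into $W$ via normalization keeps $f'$ locally stable over the resulting smooth base after one more resolution. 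For (4), note that a locally stable family over a smooth base is dlt in codimension one over the base: by \Cref{stabilitylocstII}, $(Y',\Delta'+f'^*D)$ is slc for every snc $D$, and slc in a neighborhood of the generic point of a component of $f'^*D$, together with $Y'$ being normal and $(Y',\Delta')$ already taken dlt, forces the dlt property there (cf.\ \cite[Thm.~5.38]{kol13}); if necessary, run a relative MMP over $W$ along the divisors of interest to clear any remaining non-dlt lc centers in codimension one, which does not affect the crepant birational class of the generic fiber.

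I expect the main obstacle to be the simultaneous achievement of conditions (3) and (4) while keeping $f'$ \emph{locally stable} over a \emph{smooth} base: local stability is delicate under the MMP (it is not preserved by arbitrary relative contractions, only by those compatible with the family structure), so one must be careful to only perform modifications of $W$ (where pullback is automatic by \Cref{stabilitylocst}) and fiberwise-crepant modifications of $Y'$. Concretely, the tension is that making $K_{Y'/W}+\Delta'\sim_{\bQ,f'}0$ on the nose (not just over a big open set) may require contracting divisors in $Y'$ that are vertical but not pulled back from $W$, and one must check such contractions can be realized within a locally stable family after a further base alteration — this is where the higher-dimensional semistable reduction input (and possibly passing to a further alteration of $W$ to kill monodromy of the relevant $R^nf'_*$) is essential. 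The rest is a routine, if lengthy, sequence of resolutions and level-structure covers of the type already used throughout the paper.
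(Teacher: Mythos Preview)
Your ingredients are right but the order is wrong, and you are missing the one fact that dissolves the obstacle you correctly identify at the end. In the paper the construction goes: take a log resolution of the \emph{generic fiber}, apply Abramovich--Karu \cite{AK00} to extend it to a locally stable family $f_1\colon(Y_1,\Delta_1)\to W$ which is moreover \emph{semistable in codimension one}, and only \emph{then} run a $(K_{Y_1/W}+\Delta_1)$-MMP with scaling over $W$. Termination is guaranteed by \cite[Thm.~1.7]{HH2020} since a good minimal model of the generic fiber exists by \cite[Thm.~1.1]{HX13}, and the output $(Y',\Delta')\to W$ satisfies (3). The crucial point you are missing is \cite[Cor.~4.57]{kbook}: local stability is preserved by each step of this MMP. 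This is exactly the statement you flag as the ``main obstacle'', and it is a theorem, not a difficulty.

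Running the MMP \emph{before} semistable reduction, as you propose, leaves you with a model that does not satisfy (3) after semistable reduction (the Abramovich--Karu model is a log resolution, so $K+\Delta$ is not relatively trivial), and then you are forced into the ad hoc MMP at the end that you rightly worry about. Your argument for (4) is also incorrect: a locally stable family over a smooth base need only have slc fibers, and $(Y',\Delta'+f'^*D)$ being slc together with $Y'$ normal gives only lc, not dlt. In the paper, (4) comes for free because $f_1$ is \emph{semistable} (hence snc, hence dlt) over the generic point of every prime divisor in $W$, and the MMP preserves the dlt property there. So: swap the order to semistable-reduction-then-MMP, and cite \cite[Cor.~4.57]{kbook} for the preservation of local stability.
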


\begin{proof} We follow closely \cite[Thm.~4.59]{kbook}. 
We can replace $X$
with a projective alteration of a compactification of $X$ satisfying (1) and (2).
To achieve (2), choose for $R$ an irreducible component, dominating $Z^{\circ}$, of a complete intersection of ample divisors in the simple normal crossing divisor $\overline{q^{-1}(Z^{\circ})}$. Eventually, first replace $X$ with a projective alteration to grant that the local monodromy of $(q \circ \iota)^{*}V$ is unipotent. 

Now, let $(\widetilde{Y}_{\eta_X}, \widetilde{\Delta}_{\eta_X}) \to (Y_{\eta_X}, \Delta_{\eta_X})$ be a log resolution of the generic fiber of $f$.
By \cite{AK00} (cf., also \cite{AGK2020}), there exists a generically finite, dominant map $q \colon W \dashrightarrow X$, from a smooth projective variety $W$, such that $(\widetilde{Y}_{\eta_X}, \widetilde{\Delta}_{\eta_X}) \times_{\eta_X} \eta_{W}$
extends to a locally stable morphism $f_{1} \colon (Y_{1}, \Delta_1) \to W$ and semistable in codimension $1$. By \cite[Thm.~4.7]{AGK2020}, $W$ can be chosen in such a way that (1) and (2) continue to hold. Observe that
$(Y_{\eta_X}, \Delta_{\eta_X}) \times_{\eta_X} \eta_{W}$ extends to a good minimal model $f_2 \colon (Y_2, \Delta_2) \to W$; see \cite[Thm.~1.1]{HX13}. By \cite[Thm.~1.7]{HH2020}, this ensures that a 
$(K_{Y_1/W} + \Delta_1)$-MMP with scaling 
of an ample divisor terminates with a minimal model 
$f' \colon (Y', \Delta') \to W$, which is again locally stable by \cite[Cor.~4.57]{kbook}.
Furthermore, since $(Y_{1}, \Delta_1)$ is semistable in codimension $1$, $(Y', \Delta')$ is dlt in codimension $1$ over $W$.
\end{proof}

We conclude the section with a technical lemma, used in \Cref{thm:torsion}, about the existence of a special sdlt modification of an slc pair over a nodal curve.

\begin{lemma}\label{lem:sdltmodification}
Let $f \colon (Y, \Delta) \to C$ be a locally stable fibration 
over a connected (strictly) snc curve $C$ with $K_{Y/C}+\Delta \sim_{\bQ} 0$. Suppose that $(Y, \Delta)$ is an sdlt pair over a dense open set $C^{\circ} \subset C$. Then there exist a
surjective
morphism $q' \colon C' \to C$ from a connected (strictly) snc curve and a fibration
$f' \colon (Y', \Delta') \to C'$ with the property that 
\begin{enumerate}
\item\label{item1:sdlt} $(Y', \Delta')$ is an sdlt pair, whose irreducible components each dominate an irreducible component in $C'$;
\item\label{item:connectednesssource} the restriction of $f'$ to any sources dominating an irreducible component of $C$ is an lc-trivial fibration (with connected fibers);
\item\label{item:pushforward} 
$f_* (\omega^{[m]}_{Y'/C'}(m\Delta')) \simeq (q')^*(f_*(\omega^{[m]}_{Y_1/C'}(m\Delta_{C'})))$ 
for any integer $m$.
\end{enumerate}
\end{lemma}

\begin{proof}
We first achieve \eqref{item:connectednesssource}, i.e., the connectedness of the fibers of the sources. Let $Z$ be the closure in $Y_{C}$ of a stratum of $Y_{C^{\circ}}$.
By \cite[Lem.~2.11]{kbook}, the restriction $(f_{C})|_Z \colon (Z, \mathrm{Diff}^*_{Z}(\Delta)) \to C_{W}$ is a locally stable morphism over an irreducible component $C_W$ of $C$, but not necessarily a fibration with connected fibers. The finite map $q_{W} \colon C'_{W} \to C_W$ in the Stein factorization $W \to C'_{W} \xrightarrow{q_W} C_W$ cannot be ramified by local stability of $(f_{C})|_Z$, so it is \'{e}tale. Since any \'{e}tale cover of $C_W$ extends to an \'{e}tale cover of $C$, there exists an \'{e}tale cover $C' \to C$ with the property that any source of $f_{C'} \colon Y_{C'} \to C'$ has connected fibers over the irreducible component that it dominates.

Note that the irreducible components of $Y_{C'}$ are normal in codimension one. Indeed, since $(Y, \Delta)$ is sdlt over $C^{\circ}$, eventual self-intersections in codimension one lie in fibers over closed points. If the branches of the self-intersection map via $f_{C'}$ to distinct branches of a node in $C'$, then $C'$ is nc not snc, which is a contradiction; otherwise, if the branches of the self-intersection dominate a single branch contained in an irreducible component $C_B$ of $C'$, then a fiber of the locally stable morphism $f_{C_B}$ would be non-reduced, which is a contradiction.

By \cite{Hashizume2021}, since the irreducible components of $Y_{C'}$ are normal in codimension one, 
then there exist an sdlt pair $(Y', \Delta')$ and a crepant birational morphism $\pi \colon (Y', \Delta') \to (Y_{C'}, \Delta_{C'})$, which is an isomorphism at all codimension 1 singular point of $Y'$ and $Y_{C'}$, such that 
\begin{equation}
\pi_* (\omega^{[m]}_{Y'/C'}(m\Delta')) \simeq \omega^{[m]}_{Y_1/C'}(m\Delta_{C'}).
\end{equation}
This gives \eqref{item1:sdlt}. Taking pushforward along $f_{C'}$ and by \cite[(2.67.2)]{kbook}, we achieve \eqref{item:pushforward}.
\end{proof}

\subsection{Canonical bundle formula}\label{sec:cbf}
We recall the notion of lc-trivial fibration.
\begin{definition} 
\label{lc-trivial.def}
Let $(Y, \Delta)$ be a sub-pair with coefficients in $\bQ$.
A projective fibration $f \colon Y \rar X$ 
is \emph{lc-trivial} if
\begin{itemize}
    \item[(i)]\label{lc-trivial-cond1} $(Y,\Delta)$ is an lc sub-pair over the 
    generic point of $X$;
    \item[(ii)]\label{lc-trivial-cond2} $\mathrm{rk} f_\ast  \O Y. 
    (\lceil \mathbf{A}^\ast (Y,\Delta)\rceil)=1$;
    \item[(iii)]\label{lc-trivial-cond3} there exists a $\bQ$-Cartier $\bQ$-divisor $L$ on $X$
    such that $\K Y. + \Delta \sim_\bQ f^\ast  
    L$.\footnote{Observe that lc-trivial stands for (relatively) 
    trivial log canonical divisor, i.e., assumption (iii), and not 
    to the type of the singularities in assumption (i).} 
\end{itemize}
\end{definition}

\begin{remark}
Note that property (ii) holds automatically if the general fiber $(Y_{\eta}, \Delta_{\eta})$ of $f$ is a klt pair.
\end{remark}

The canonical bundle formula (\Cref{canonicalbundleformula}) is a broad
term designating a formula for the $\bQ$-divisor
$L$ in (iii),
encoding the log canonical thresholds of the codimension one singularities of $f$ (boundary divisor) and the variation of the general fiber (moduli divisor). We first define these divisors; see also \cite{AmbroPhD, Amb04, Amb05, FG14, Koll07}.

\begin{defn}[Boundary divisor]\label{defn:boundary}
Let $f \colon (Y, \Delta) \to X$ be an lc-trivial fibration. 
The {\emph{boundary divisor}} $B_X$ is the $\bQ$-divisor on $X$ whose coefficient along the prime divisor $D$ is given by
 \[
 \mathrm{ord}_{D}(B_X)=\sup_E\left\lbrace 1-\frac{1+a(E; Y, \Delta)}{\operatorname{mult}_E(\pi^*D)}\right\rbrace,
\]
 where the supremum is taken over all the divisors $E$ over $Y$ which dominate $D$, and $a(E; Y, \Delta)$ is the discrepancy of $E$ with respect to $(X, \Delta)$.
 \end{defn}

 \begin{defn}[Shokurov's moduli divisor]\label{defn:Shokurovmoduli}
Let $f \colon (Y, \Delta) \to X$ be an lc-trivial fibration with general fiber $F$, and $d$ be the minimal positive integer such that $d(K_{F}+ \Delta_F)\sim 0$.
\emph{Shokurov's moduli divisor} $N_X$ is a $\bQ$-divisor on $X$ 
satisfying
the following property: there exists a rational function $\phi \in k(Y)^*$
such that
\[d(K_{Y}+ \Delta) + \mathrm{div}(\phi) = d(f^*(K_X +B_X + N_{X})).\] 
It is uniquely defined up to $d$-linear equivalence: any change in the choice of $K_Y$, $K_X$, and $\phi$ gives a new $N'_{X}$ 
such that $dN_X \sim dN'_X$.
\end{defn}

\begin{remark}\label{rmk:bdivisor}
    Let $f \colon (Y, \Delta) \to X$ be an lc-trivial fibration, and $q \colon X' \to X$ be an alteration. Let $Y'$ be the normalization of the main component of $Y \times_{X} X'$, and $(Y', \Delta')$ be the log pullback of $(Y, \Delta)$.
    \begin{itemize}
    \item If $q$ is birational, then $f' \colon (Y', \Delta') \to X'$ is an lc-trivial fibration with $q_* B_{X'}=B_{X}$ and $q_* N_{X'} = N_{X}$. Therefore, the boundary divisor and Shokurov's moduli part define b-divisors $\mathbf{B}$ and $\bfN$. Note that $\mathbf{K} + \mathbf{B}$ and $\bfN$ are $\bQ$-Cartier b-divisor; see \cite[Thm.~2.7]{Amb04}.
    \item If $q$ is an arbitrary alteration, then $\bfN' = g^*\bfN$ as b-divisors;
    see \cite[Prop.~5.5]{Amb04}.
    \end{itemize}
\end{remark}

 \begin{constdef}[Hodge-theoretic moduli divisor]\label{defn:modulipart}  Let $f \colon (Y, \Delta) \to X$ be an
 lc-trivial fibration of relative dimension $n$. Write $\Delta$ as  difference of effective divisors without common components, namely
 \[
 \Delta = E + F - G \quad \text{with } \quad E \coloneqq \Delta^{=1}, G \coloneqq \lceil \Delta^{< 0}\rceil
 \] 
 so that $F$ is the fractional part of $\Delta$ satisfying $\lfloor F \rfloor =0$. Let $d$ be the minimal positive integer such that 
 $d(K_{Y_{\eta_X}}+ \Delta_{Y_{\eta_X}})\sim 0$, where $Y_{\eta}$ denotes the generic fiber of $f$. 
 \begin{enumerate}
 \item[($\dagger$)]\label{eq:modulipartI} Suppose that there exists an snc pair $(X, D)$ such that $(Y^{\circ}, \Delta^{\circ})$ is a topologically locally trivial snc pair over $X^{\circ} \coloneqq X \setminus D$ with $d(K_{Y^{\circ}} + \Delta^{\circ}) \sim 0$, where the superscript ${}^{\circ}$ refers to the restriction of an object of interest over $X^{\circ}$.
 \end{enumerate}
Consider $L \coloneqq \mathcal{O}_{Y^{\circ}}(G^{\circ}-K_{Y^{\circ}}-E^{\circ})$. An isomorphism $L^{d} \simeq \mathcal{O}_{Y^{\circ}}(dF^{\circ})$
determines a normalized cyclic cover $a \colon Y^{\circ}_2 \to Y^{\circ}$ of degree $d$ branched along $F^{\circ}$, with quotient singularities; see \cite[2.49-53]{KM98}. 
Choose a $\mu_d$-equivariant resolution of singularities $h \colon Y^{\circ}_3 \to {Y}^{\circ}_2$. Write $f_2 \coloneqq f \circ a$ and $f_3 \coloneqq f_2 \circ h$, $E_2 \coloneqq \Supp (a^{-1} E)$, $E^{\circ}_3 \coloneqq \Supp (h^{-1} {E}^{\circ}_2)$. 

\[
\xymatrix{
(Y^{\circ}, \Delta^{\circ}) \ar[d]_-{f} & 
Y^{\circ}_2 \ar[l]_-{a} \ar[dl]_-{f_2}&
Y^{\circ}_3 \ar[l]_-h \ar[dll]^-{f_3}\\
X^{\circ}. &
&
}
\]
The sheaf $(a \circ h)_* (\omega_{Y^{\circ}_3}(E^{\circ}_{3}))$ is $\mu_d$-equivariant,
and admits a decomposition into $\mu_d$-isotypic components  
 \[
 (a \circ h)_* (\omega_{Y^{\circ}_3}(E^{\circ}_{3})) \simeq a_* (\omega_{Y^{\circ}_2}(E^{\circ}_{2})) \simeq \bigoplus^{d-1}_{i=0} a_* (\omega_{Y^{\circ}_2}(E^{\circ}_{2}))_{\chi^{i}} \simeq \bigoplus^{d-1}_{i=0} \omega_{Y^{\circ}}(\lceil i (L- F^{\circ}) \rceil),
 \]
 where $\chi$ is a generator of the character group $\hat{\mu}_d$. In particular, we get the direct summand
\begin{equation}\label{eq:pushformoduli}
(a \circ h)_* (\omega_{Y^{\circ}_3}(E^{\circ}_{3}))_{\chi} \simeq \omega_{Y^{\circ}} \otimes L \simeq \mathcal{O}_{Y^{\circ}}(G^{\circ}-E^{\circ}).
\end{equation}
The $\chi$-isotypic component of the restriction over $X^{\circ}$ of $R^{n}(f_3)_*\bC_{Y^{\circ}_{3} \setminus E^{\circ}_{3}}$ determines a complex CY variation of mixed\footnote{pure if $E^{\circ}=0$, i.e., $(Y, \Delta)$ is klt over the generic point of $X$.} Hodge structures whose deepest nonzero piece of the Hodge filtration is the line bundle $f_* \mathcal{O}_{Y^{\circ}}(G^{\circ}-E^{\circ})$; see \eqref{eq:pushformoduli} and \Cref{lc-trivial.def}.(ii). 
\begin{enumerate}[resume]
\item[($\dagger \dagger$)] \label{eq:modulipartII}  Suppose that $R^{n}(f_3)_*\bZ_{Y^{\circ}_{3} \setminus E^{\circ}_{3}}$ has local unipotent monodromy.
\end{enumerate}
 Let $V_{Y}=R^{n}(f_3)_*\bZ_{Y^{\circ}_{3} \setminus E^{\circ}_{3}}$ defined on $X^{\circ}$.  Then the moduli part $M_X$ is the Schmid extension of the Hodge bundle of the $\chi$-isotypic part of $V_Y$:
 \[
  M_{X} \coloneqq F^m(\cV_{Y})_{\chi}= F^m(\cV^{\trans}_{Y})_{\chi}.  
\]
By shifting the Hodge filtration of $V_{Y}$, the moduli part $M_{X}$ can be realized as the Schmid extension of the Hodge bundle of an admissible graded polarizable integral (not just complex!) CY variation $V'_{Y}$ of mixed Hodge structures with the same underlying local system $V_{Y,\bZ}$; see \Cref{rmk:rationalCY}.

If conditions ($\dagger$) and ($\dagger \dagger$) are not satisfied, then there exists a projective alteration $q \colon W \to X$ such that the pullback of the generic fiber of $f$ along $q$ extends to a fibration $f' \colon (Y', \Delta') \to W$ satisfying conditions ($\dagger$) and ($\dagger \dagger$) (cf. \Cref{lem:alteration}). Set the $\bQ$-divisor
\[
M_{X} \coloneqq \frac{1}{\deg(q)} q_* (M_{W}).
\]
\end{constdef}

\begin{example} 
    If $f \colon Y \to X$ is a family of smooth CY varieties of dimension $n$, then $M_{X} = f_* \omega_{Y/X}$ is simply the Hodge bundle of the variation of Hodge structures $R^nf_*\bZ_Y$. 
\end{example}

\begin{remark}\label{functoriality}
By the functoriality of Deligne/Schmid extension, $\bfM(f)$ pulls back to $\bfM (f')$ as a  b-divisor, which descends on $W$ because of the snc assumptions in Construction-Definition \ref{eq:modulipartI}; see \cite[\S 8.4.8]{Koll07}.
\end{remark}

\begin{remark}\label{rmk:rationalCY} 
The variation
$V_Y=(V_{Y,\bZ},W_\bullet V_{Y,\bQ},F^\bullet V_{Y,\cO})$ is a graded polarizable integral mixed variation of Hodge structures of Hodge-level $n$ but not CY in general, while the variation $(V_{Y,\bC})_{\chi}$ is a complex CY variation, not necessarily integral if $d>2$. Up to shifting the Hodge filtration of the $\mu_d$-isotypic components, there exists a polarizable integral pure CY variation of Hodge-level $n+4$ whose deepest piece of the Hodge filtration is $M_X$. Choose for instance 
\[V_{Y}'=(V_Y)_{\chi}(-2,2)\oplus \bigoplus_{i\not\equiv 1,-1 (d)} (V_Y)_{\chi^i}\oplus (V_Y)_{\chi^{-1}}(2,-2).\]
In particular, note that the period map of $V_{Y}$ is generically injective if and only if so is the period map of $V_{Y}'$. 
\end{remark}

\begin{remark}[]\label{rmk:moduli}
Since $Y^{\circ}_2$ has quotient singularities (hence it is a rational homology manifold), the constant sheaf $\bC_{Y^{\circ}_2}$ is a direct summand of $Rh_* \bC_{Y^{\circ}_3}$. Its ($\mu_d$-equivariant) pushforward along $a$ decomposes in isotypic components as follows
\begin{equation}\label{eq:splitIC}
a_*\bC_{Y^{\circ}_2} = \bigoplus^{d-1}_{i=0} \iota_*L_{\chi^{i}},
\end{equation}
where $L_{\chi^{i}}$ are suitable local systems on $\iota \colon Y^{\circ} \setminus F^{\circ} \hookrightarrow Y^{\circ}$, on which $\mu_d$ acts via the character $\chi^i$. 
Then there exist isomorphisms of complex  variations of Hodge structures
\begin{equation}\label{eq:VHSmoduli}
((R^{n}(f_3)_*\bZ_{Y^{\circ}_{3} \setminus E^{\circ}_{3}})^{\rm tr} \otimes \bC)_{\chi} 
\simeq 
((R^{n}(f_2)_*\bZ_{Y^{\circ}_{2} \setminus E^{\circ}_{2}})^{\rm tr}\otimes \bC)_{\chi} 
\end{equation}
which yield equivalent alternative definitions of the moduli part. In particular, \eqref{eq:VHSmoduli} is a complex subvariation of Hodge structures of $R^n f_* (\iota_*L_{\chi})$ containing $M_X$. 
Compare the previous chain of isomorphisms with the different period maps considered in \cite{Amb05}. This also fixes a minor inaccuracy in the definition of the variation of Hodge structures whose bottom piece extends to define the moduli part appearing in \cite[Def. (8.4.6)]{Koll07}.
\end{remark}

\Cref{canonicalbundleformula} asserts that 
Shokurov's moduli divisor and the Hodge-theoretic moduli divisor are linearly equivalent up to a bounded multiple. The proof of positivity results for $\mathbf{N}$ usually relies on the $\bQ$-linear equivalence with $\mathbf{M}$ established in \Cref{canonicalbundleformula}; see also \Cref{rmk:effectiveb-semiamplenesMN}.  The proof builds on and generalize \cite{Amb04, Amb05, Fuj03, FM00, Koll07, Mor87} and works for projective morphisms of complex analytic spaces.

\begin{theorem}[Canonical bundle formula] \label{canonicalbundleformula}
Let $f \colon (Y, \Delta) \to X$ be an lc-trivial fibration.
Then 
\[K_Y+\Delta \sim_{\bQ} f^\ast (K_X + B_X + M_X).\]
Moreover, there exists a linear equivalence $c M_{X} \sim c N_{X}$, where $c = 3^{r}$ and $r \coloneqq \rank R^{n}(f_3)_*\bQ_{Y^{\circ}_{3} \setminus E^{\circ}_{3}}$ (cf. \Cref{defn:modulipart}), which implies 
\[c \cdot d \cdot (K_Y+\Delta) \sim c \cdot d \cdot (f^\ast (K_X + B_X + M_X)).\]
\end{theorem}

\begin{remark} Since $\bfM$ is b-nef, the triple $(X,B_X, \bfM)$ has the structure
of a generalized sub-pair (cf. \cite{BZ16}, \cite{FS20}). We say that $f \colon (Y, \Delta) \to X$ induces $(X,B_X, \bfM)$.  When $(Y,\Delta)$ is a 
klt (resp.\ lc) pair, then 
$(X,B_X, \bfM)$ is a klt (resp.\ lc) generalized pair; see, e.g., \cite{Fil20}.
\end{remark}

\begin{remark}\label{rmk:effectiveb-semiamplenesMN}
    For instance, the (effective) b-semiampleness of $\mathbf{N}$ is equivalent to the (effective) b-semiampleness of $\mathbf{M}$.
\end{remark}

\begin{proof} Let $q \colon X_1 \to X$ be a generically finite morphism, $Y_1$ be a resolution of the main component of $Y \times_{X} X_1$ and $(Y_1, \Delta_1)$ be the log pullback of $(Y, \Delta)$ such that $f_1 \colon (Y_1, \Delta_1) \to X_1$ satisfies properties ($\dagger$) and ($\dagger \dagger$).
Recall that $d$ is the minimal positive integer such that $d(K_{Y_{1,\eta}}+ \Delta_{Y_{1,\eta}})\sim 0$, where $Y_{1,\eta}$ denotes the generic fiber of $f_1$.
Let $\phi \in k(Y_1)^*$
be a rational function such that 
\[d(K_{Y_1}+ \Delta_1) + \mathrm{div}(\phi) = d(f_1^*(K_{X_1} +B_{X_1} + N_{X_1})).\] 
Write $\Delta_1-f_1^* B_{X_1}= E + V + F - G$, where $E + V \coloneqq \lfloor (\Delta_1-f^*_1 B_{X_1})^{>0} \rfloor$,
$G \coloneqq \lceil (\Delta_1-f^*_1 B_{X_1})^{< 0}\rceil$, $F$ is the fractional part of $\Delta_1-f^*_1 B_{X_1}$ satisfying $\lfloor F \rfloor =0$, $E$ is horizontal and $V$ is vertical. 
Let $a \colon Y_2 \to Y_1$ be the normalization of $Y_1$ in $k(Y_1)[\sqrt[d]{\phi}]$
with Galois group $\mu_d$. Choose a $\mu_d$-equivariant resolution of singularities $h \colon Y_3 \to {Y}_2$.
Write $f_2 \coloneqq f \circ a$ and $f_3 \coloneqq f_2 \circ h$, $E_2 \coloneqq \Supp (a^{-1} E)$, $E_3 \coloneqq \Supp (h^{-1} {E}_2)$.
\[
\xymatrix{
(Y, \Delta) \ar[d]_-{f} & (Y_1, \Delta_1) \ar[d]_-{f_1} \ar[l] & 
Y_2 \ar[l]_-{a} \ar[dl]_-{f_2}&
Y_3 \ar[l]_-h \ar[dll]^-{f_3}\\
X & X_1. \ar[l]_-q &
&
}
\]
We proceed in several steps.

\textbf{Step 1}. \emph{In this step we show that, if $N_{X_1}$ is integral, then $M_{X_1} \sim N_{X_1}$.}\\
\noindent
The same argument giving \eqref{eq:pushformoduli} yields 
\[
(a \circ h)_* \omega_{Y_3/X_1}(E_3)_{\chi} \simeq \mathcal{O}_{Y_1}(-V+G + \lceil f_1^*N_{X_1} \rceil).
\]
Under the assumption that $N_{X_1}$ is integral, and since the line bundle $(f_{1})_*(\mathcal{O}_{Y_1}(-V+G))$ is trivial by \cite[Step 5 in the proof of Thm.~8.5.1]{Koll07}, we obtain that
\begin{align*}
   \mathcal{O}_{X_1}(M_{X_1}) \simeq F^m(V^{\trans}_{Y, \cO})_{\chi} & \simeq (f_{3})_* \omega_{Y_3/X_1}(E_3)_{\chi} \simeq (f_{1})_*(\mathcal{O}_{Y_1}(-V+G + f_1^*N_{X_1})) \\
   & \simeq (f_{1})_*(\mathcal{O}_{Y_1}(-V+G )) \otimes \mathcal{O}_{X_1}(f_1^* N_{X_1}) \simeq \mathcal{O}_{X_1}(N_{X_1}),
\end{align*}
where the second isomorphism follows by \cite[\S 4]{K1981} if $E_3=0$ and in general by \cite[Thm. 7.1]{FF2014}.

    \textbf{Step 2}. 
    \emph{In this step, we show that $N_{X_1}$ is integral.}\\
    \noindent
    Let $\pi \colon X'_1 \to X_1$ be a Galois cover with Galois group $\mathbb{G}$ such that $f'_3 \colon (Y'_3, E'_3) \to X'_3$ is a locally stable morphism for a primed analogue of the set-up above. By \cite[Prop.~5.5, Lem. 5.2.(5)]{Amb04}, $\pi^* N_{X_1} \sim N_{X'_{1}}$ and $N_{X'_{1}}$ is an integral divisor. By the functoriality of Deligne/Schmid extensions for local systems with local unipotent monodromy, $\pi^* M_{X_1} \sim M_{X'_{1}}$. By Step 1, there exists an isomorphism $\mathcal{O}_{X'_{1}}(M_{X'_1}) \simeq \mathcal{O}_{X'_{1}}(N_{X'_{1}})$, which is Galois invariant by applying again \cite[Prop.~5.5]{Amb04} and the functoriality of Deligne/Schmid extensions to the deck transformations of the Galois cover $\pi$. Thus we obtain $\mathcal{O}_{X_{1}}(M_{X_1}) \simeq \mathcal{O}_{X'_{1}}(M_{X'_1})^{\mathbb{G}} \simeq \mathcal{O}_{X'_{1}}(N_{X'_{1}})^{\mathbb{G}} \simeq \mathcal{O}_{X_{1}}(\lfloor N_{X_{1}} \rfloor)$. We conclude that $\pi^*(N_{X_1}-\lfloor N_{X_1}\rfloor)\sim N_{X'_1}-M_{X'_1}\sim 0$, i.e., $N_{X_1}$ is integral.

    \textbf{Step 3}. 
    \emph{In this step, we conclude the proof by bounding the degree of $q \colon X_1 \to X$.}\\
    \noindent
    By Steps 1 and 2, $N_{X_1}$ is integral and $M_{X_1} \sim N_{X_1}$, so that $\deg(q) M_{X} \sim \deg(q) N_{X}$. Since a level three structure is sufficient to make the monodromy of $R^{n}(f_3)_*\bZ_{Y^{\circ}_{3} \setminus E^{\circ}_{3}}$ unipotent \cite[p.~253, p.~207 Lem.]{Mum70}, we can suppose that $\deg(q)$ divides $c=3^{r}$, so that $cM_{X} \sim c N_{X}$ as desired. 
\end{proof}

\begin{remark}\label{rmk:analcanonicalbundle}
The canonical bundle formula (\Cref{canonicalbundleformula}) continues to hold for (projective) lc-trivial fibrations $f \colon (Y, \Delta) \to X$ of complex analytic spaces. Indeed, resolutions of singularities needed to achieve $(\dagger)$ and the cyclic covers in \cite[2.49-53]{KM98} can be performed in the analytic category too. To achieve ($\dagger \dagger$), we can take a suitable \'{e}tale cover of $X \setminus D$ making the local monodromy of the relevant local system unipotent and then extend it to a finite cover (with quotient singularities) by the Grauert--Remmert Extension Theorem \cite[Ch.~XII, Thm.~5.4,
p.~340]{SGA1}. The proof of \Cref{canonicalbundleformula} relies on weakly semistable reduction, which holds in this setting too by \cite[Thm 1.17]{HaconPaun}. Hence, \Cref{rmk:bdivisor} and the proof of \Cref{canonicalbundleformula} work verbatim in this analytic context too. It is unclear whether the projectivity of $f$ can be weaken to the assumption that $f$ is a K\"{a}hler morphism; cf. also \Cref{thm:bsemiampleanal}. Note also that our proofs of \Cref{thm:Bsemimain} and \Cref{thm:bsemiampleanal} require the projectivity of the morphism.
\end{remark}

\subsection{Variation of Hodge structures and source of a degeneration of CY pairs}\label{sec:variation} The main result of this section is \Cref{thm:0.19}: it identifies the transcendental part of the cohomology of the source over a divisor with that of the limiting mixed Hodge structure of a family of (log) Calabi--Yau in the punctured neighbourhood of the divisor. We first define the relevant variations of Hodge structures. To this end, we extend the \Cref{defn:modulipart} from the generic point of the base $X$ through the generic point of a divisor $D_X \subset X$.

\begin{const}\label{const:modulioverdivisor}
Let $f \colon (Y, \Delta) \to X$ be a projective fibration 
whose generic fiber $(Y_{\eta_X}, \Delta_{\eta_X})$ is klt
log Calabi--Yau of dimension $n$. Fix a smooth integral divisor $D_X \subset X$. 
Up to shrinking $X$ around the generic point of $D_X$,
by \Cref{lem:alteration}, there exist
\begin{enumerate}
\item a projective alteration $q \colon W \to X$;
\item an lc-trivial locally stable fibration $f' \colon (Y', \Delta') \to W$, topologically locally trivial over $D \coloneqq D_W$ and $W \setminus D$, such that the pullback of the generic fiber of $f$ along $q$ is crepant birational to the generic fiber of $f'$; and
\item $(Y', \Delta' + Y'_{D})$ is a dlt pair with $\lfloor \Delta' + Y'_{D} \rfloor =Y'_{D}$.
\end{enumerate}

Let $g \colon (Y_1, \Delta_1) \to (Y', \Delta' + Y'_{D})$ be a log resolution of $(Y', \Delta' + Y'_{D})$ with log pullback $(Y_1, \Delta_1)$ 
such that $g$ is an isomorphism over the snc locus of $(Y', \Delta' + Y'_{D})$; see \cite[Thm.~10.45]{kol13}.
Then, write 
 \[
 \Delta_1 \coloneqq E_1 + F_1 - G_1 \quad \text{with } \quad E_1 \coloneqq 
 \Delta^{=1}_{1}, G_1 \coloneqq \lceil \Delta^{<0}_{1} \rceil.
 \]
Since the generic fiber of $f$ is klt, $E_1$ lies over $D$. Let $d$ be the minimal positive integer
such that $d F_1$ is an integral divisor and $d(K_{Y_1} +\Delta_1) \sim_{f_1} 0$.
Consider $L_1 \coloneqq \mathcal{O}_{Y_1}(G_1-K_{Y_1}-E_1)$.
The isomorphism
 $L^{d}_1 \simeq \mathcal{O}_{Y_1}(dF_1)$
 determines a normalized cyclic cover $a \colon Y_2 \to Y_1$, with Galois group $\mu_d$, branched along $F_1$; see \cite[2.49-53]{KM98}. Since $a$ is a cyclic cover branched along an snc divisor, $Y_2$ and all strata of $Y_{2,D}$ have quotient singularities. Let $h \colon (Y_3, (Y_{3})_{D})  \to (Y_2, (Y_{2})_{D})$ be a $\mu_d$-equivariant log resolution, and set $f_2 \coloneqq f_1 \circ a$ and $f_3 \coloneqq f_2 \circ h$. Generically, the cover $a$ is one of the covers obtained by applying \Cref{defn:modulipart} to $f_1 \colon Y_1 \to W$. In particular, $f_2$ and $f_3$ are fibrations (with connected fibers).
 To summarize, we collect the introduced
 maps
 over $W$ in the following diagram
\[
\xymatrix{
(Y', \Delta') \ar[d]_-{f'}& 
(Y_1, \Delta_1) \ar[l]_-{g} \ar[dl]_-{f_1} & 
Y_2 \ar[l]_-{a} \ar[dll]_-{f_2}&
Y_3 \ar[l]_-h \ar[dlll]^-{f_3}\\
W &
&
&
}
\]
where $g$ is crepant birational, $a$ is a cyclic cover, and $h$ is birational. 

Let $(S, \Delta_S)$ be a source of $f' \colon (Y'_{D}, \Delta_{Y'_{D}}) \to D$. 
Let $S_{i}$ be a stratum of $Y_{i, D}$, generically finite onto the source $S \subset Y'$. Generically, the restriction $a \colon S_2 \to S_1$ is again one of the covers obtained by applying \Cref{defn:modulipart} to $f_1 \colon S_1 \to D$. 
Up to shrinking $X$ again, and replacing $W$ with an \'{e}tale cover obtained by spreading out the Stein factorization of $f'|_{S}$,
we can suppose that
\begin{enumerate}[resume]
\item  $f_{S} \coloneqq f'|_{S} \colon (S, \Delta_S) \to D$ is an lc-trivial fibration (in particular with connected fibers) of relative dimension $m$.
 \end{enumerate}

 By \Cref{defn:modulipart}, the moduli part of the lc-trivial fibration $f' \colon (Y', \Delta') \to W$ is the Hodge bundle of the $\chi$-isotypic component of the Schmid extension of $R^n (f_2)_* \bC_{Y^\circ_2}$ (or equivalently of $R^n (f_3)_* \bC_{Y^\circ_3}$ as in \Cref{rmk:moduli}), i.e., \[M_{W} \coloneqq F^n(\mathcal{R}^{n}(f_2)_*\bC_{Y^{\circ}_{2}})_{\chi} \simeq F^n(\mathcal{R}^{n}(f_3)_*\bC_{Y^{\circ}_{3}})_{\chi}\simeq ((f_3)_* \omega_{Y_3/W})_{\chi}.\]
 \end{const}

\begin{notation}\label{notn:subD}
In the following, we denote by $V_{Y,D}$ (resp. $V_Y^D$) the $\mu_d$-equivariant mixed variation on $D$ obtained by taking local monodromy covariants (resp. invariants) of $V_Y=R^{n}(f_3)_*\bZ_{Y^{\circ}_{3}}$ as in \S2.  We denote  $V_S=R^m(f_3)_{*}\bZ_{S^{\circ}_3}$, also as a $\mu_d$-equivariant variation. Given a $\mu_d$-equivariant integral or rational Calabi--Yau variation of Hodge structures $V$ and a $\mu_d$-character $\chi$, $V^{\mathrm{tr}}_{\chi}$ denotes the $\chi$-isotypic component of the complexification of the transcendental part of $V$. 
\end{notation}

\begin{thm}\label{thm:0.19}
In the notation above, there exists an isomorphism of variations of Hodge structures
\[
(V_{Y, D})^{\rm tr}_{\chi} \simeq (V_S)^{\rm tr}_{\chi}.
\]
\end{thm}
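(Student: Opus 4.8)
The heart of the matter is to compare the limiting mixed Hodge structure of the family $f_2 \colon Y_2 \to W$ over a general point of $D$ with the Hodge structure of the fibers of $f_2|_{S_2} \colon S_2 \to D$, and then to isolate the $\chi$-isotypic transcendental parts on both sides. The plan is to first set up, over the punctured analytic neighbourhood $\Delta^* \hookrightarrow W \setminus D$ of a general point of $D$, the local monodromy and the resulting limiting MHS on $(V_Y)_{t}$ via Schmid's nilpotent orbit theorem; since $(Y', \Delta' + Y'_D)$ is dlt and $f'$ is locally stable, the special fiber $Y'_D$ is a reduced snc divisor in the total space, so the degeneration is semistable (after the cyclic cover and log resolution, which are equivariant and preserve the relevant snc structure). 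One then has the Clemens--Schmid exact sequence / the Steenbrink description of $\gr^W_\bullet$ of the limiting MHS of $R^n(f_2)_*\bC$ in terms of the cohomology of the strata of the special fiber $Y_{2,D}$ and their intersections.

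Second, I would argue that only one stratum of $Y_{2,D}$ contributes a nonzero $\chi$-isotypic part in the relevant Hodge bidegree $(n,*)$, namely (a model of) the source $S_2$. This is the key geometric input and should follow from the theory of sources in \cite{kol13}: the source is by definition the minimal lc centre dominating the base, and crepant-birationally $(Y', \Delta')$ degenerates over $D$ in such a way that the ``primitive'' top Hodge piece of $R^n(f_2)_*\bC$ localizes along the minimal stratum. Concretely, the cyclic cover $a$ and the snc resolution $h$ are chosen so that $F^n$ of $R^n(f_3)_*\bC$ restricted to $D$ equals $(f_3)_*\omega_{Y_3/W}|_D$, and the adjunction/residue computation identifying $\omega_{S/D}(\Delta_S)$-type sheaves with the restriction of $\omega_{Y'/W}(\Delta')$ along the source (this is exactly the content already used in the surrounding sections, cf.\ \Cref{lem:modulipartlctriviallocallystable} and \Cref{const:modulioverdivisor}) shows the deepest Hodge piece of $(V_Y|_D)_\chi$ agrees with that of $(V_S)_\chi$. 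Taking the transcendental part — i.e.\ the smallest sub-Hodge-structure of the appropriate $\gr^W$ piece whose $F^n$ is everything (\Cref{defn:minimal}) — then forces $(V_Y|_D)^{\trans}_\chi \cong (V_S)^{\trans}_\chi$, because by \Cref{lem:inject autos} the transcendental part is determined by its deepest Hodge piece together with the ambient polarized Hodge structure.

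Third, I would make this an isomorphism of \emph{variations} rather than of fibrewise Hodge structures: both sides are polarizable VHS on a Zariski-open subset of $D$ (resp.\ its alteration), the isomorphism constructed above is canonical at a general point, and it is a flat isomorphism of local systems because it arises from a morphism of the underlying geometric situations (the inclusion of the source stratum into the degenerate fibre, compatibly with the Gauss--Manin connection), hence extends over $D$ by the theorem of the fixed part as in \Cref{lem:tr split}. One must also track the $\mu_d$-action throughout: all covers and resolutions are $\mu_d$-equivariant by construction in \Cref{const:modulioverdivisor}, so passing to $\chi$-isotypic components commutes with every step.

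\textbf{Main obstacle.} The delicate point is Step two: showing that, in the Steenbrink spectral sequence for the limiting MHS of $R^n(f_2)_*\bC$ along $D$, the $\chi$-isotypic summand in Hodge bidegree $(n, n-k_D)$ (the weight $k_D$ where the Hodge bundle lives) is carried \emph{exactly} by the cohomology of a resolution of the source $S$, with the contributions of all larger-dimensional strata of $Y_{2,D}$ and of the higher intersection terms in the spectral sequence vanishing in this bidegree. This is where Kollár's structure theory of $\bP^1$-links and minimal lc centres of dlt log Calabi--Yau pairs (\Cref{prop:P1link}, \Cref{defnsource}) is essential: the non-source minimal lc centres are $\bP^1$-linked to the source, so they carry no extra transcendental cohomology, and the non-minimal strata have strictly positive-dimensional fibres of the canonical bundle formula's moduli map and hence contribute only to $F^{>n}$ or to lower Hodge pieces after taking $\chi$-parts. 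Making this vanishing precise — as opposed to merely identifying the deepest Hodge \emph{line} — is the technical crux, and I expect it to rely on a careful bookkeeping of the weight filtration combined with the adjunction formula $\omega_{Y_3/W}(E_3)|_{S_3} \cong \omega_{S_3/D}(\Delta_{S_3})\otimes(\text{correction supported off the general fibre})$ already implicit in \Cref{const:modulioverdivisor}.
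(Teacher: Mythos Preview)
Your overall strategy is right, but there is a real gap in Step~2 and you are missing the key technical maneuver the paper uses. The gap: you write that matching the deepest Hodge lines ``forces $(V_Y|_D)^{\trans}_\chi \cong (V_S)^{\trans}_\chi$'' via \Cref{lem:inject autos}. That lemma only says automorphisms of the transcendental part inject into automorphisms of the Hodge line; it does not manufacture a morphism of Hodge structures out of an abstract identification of line bundles. The two transcendental parts sit inside \emph{different} ambient polarized Hodge structures ($\gr^W_k(V_Y|_D)_\chi$ versus $(V_S)_\chi$), and you must produce an actual morphism of variations between these ambients carrying one Hodge line to the other. Your plan never supplies such a morphism; the adjunction/residue identification of $\bfM_W|_D$ with $\bfM(f_S)_D$ is an equality of line bundles, not a map of Hodge structures.

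The paper builds this morphism by a trick you do not have: it first passes (Step~1) from the \emph{top} piece $F^n$ to the \emph{bottom} piece $\gr^0_F$ via $N^k$ and conjugation, replacing ${\rm tr}$ by $\trcon$ (smallest sub-HS containing $\gr^0_F$). This is decisive because $\gr^0_F$ is preserved by the specialization map $\bQ_{Y_{3,D}}\to\psi_D\bQ_{Y_3}$ (Step~2, a genuine morphism of mixed Hodge modules, using that $Y_{2,D}$ has quotient, hence du~Bois, singularities), and then $\gr^0_F(\cdot)_\chi$ of the Mayer--Vietoris spectral sequence for the strata of $Y_{2,D}$ becomes twisted structure-sheaf cohomology $R^p(f_1)_*\cO(-L_1)$, which is computable (Step~3, \Cref{lem:cokerd1}) by Koll\'ar-type vanishing and relative duality: among $(m{+}1)$-dimensional lc centers only those containing two distinct minimal centers contribute, and $d_1$ is the boundary map of the $\bP^1$-link graph of minimal lc centers, connected by \Cref{prop:P1link}, so its cokernel is exactly $\gr^0_F(R^m(f_2)_*\bC_{S_2})_\chi$. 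Your proposed route through $F^n$ and Steenbrink could in principle be carried out, but your vanishing claim (``non-minimal strata contribute only to $F^{>n}$ or to lower Hodge pieces'') is not correct as stated: larger strata do carry cohomology in the relevant Hodge degree, and what actually occurs is a cancellation in the spectral sequence governed by the $\bP^1$-link combinatorics, not an a~priori vanishing.
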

\begin{proof} The required isomorphism is obtained by composing the isomorphisms \eqref{eq:trmin}, \eqref{eq:minmin}, and \eqref{eq:lastiso}.

\textbf{Step 1.}
Since $V_Y$ is a CY variation of Hodge structure, there exists a unique integer $k$ such that $\gr^n_F \gr^W_{n+k} (V_{Y, D, \bC})_{\chi} \neq 0$. Recall that the logarithmic monodromy $N$ defining the weight filtration has type $(-1, -1)$.
Then the isomorphism $N^{k} \colon \gr^W_{n+k} (V_{Y, D, \bQ})_{\chi} \to \gr^W_{n-k} (V_{Y, \bQ}^D)_{\chi}$  induces an isomorphism
\[F^{n}(V_{Y,D,\bC})_{\chi} \simeq (\gr^W_{n+k} (V_{Y,D,\bC})_{\chi})^{n, k} \xrightarrow[N^{k}]{\simeq}  (\gr^W_{n-k} (V_{Y,\bC}^D)_{\chi})^{n-k,0} \xrightarrow[\mathrm{conj}.]{\simeq} \overline{(\gr^W_{n-k} (V_{Y,\bC}^D)_{\chi})^{0, n-k}} \simeq \overline{\gr^{0}_F(V_{Y,\bC}^D)_{\chi}}.\]
In particular, $N^{k}(V_{Y,D})^{\rm tr}_{\chi}$ is the $\chi$-isotypic component of the unique minimal subvariation of rational Hodge structure in $\gr^W_{n-k} (V^D_{Y, \bQ})$ containing
$\gr^0_F (V_{Y, \bC}^D)$, denoted $(V_{Y}^D)^{\trcon}_{\chi}$, i.e., 
\begin{equation}\label{eq:trmin} 
(V_{Y,D})^{\rm tr}_{\chi} \simeq (V_Y^D)^{\trcon}_{\chi}.
\end{equation}

\textbf{Step 2.} Let $\psi_D$ be the vanishing cycle functor associated to a global function defining $Y_{3, D}$, which exists up to further shrinking $W$. 
The specialization morphism 
$\bQ_{Y_{3, D}}
\to
\psi_D \bQ_{Y_3}$
between Hodge modules on $Y_{3, D}$ (cf. \cite[(2.24.3)]{SaitoMHM} or \cite[Thm.~11.29]{peterssteenbrink})
induces a $\mu_d$-equivariant morphism of polarizable variation of mixed Hodge structures
\begin{equation}\label{eq:spmapII}
R^n (f_3)_* \bQ_{Y_{3, D}}
\to
R^n \psi_D \bQ_{Y_3} \hookleftarrow  V^D_{Y, \bQ},
\end{equation}
where the last inclusion follows, e.g., by the remark right before \cite[(5.11) Cor.]{steenbrinklimit}.
The morphisms in \eqref{eq:spmapII} induce an isomorphism \[\gr^0_F R^n (f_3)_* \bC_{Y_{3, D}} \simeq \gr^0_F (V_{Y, \bC}^D)_{\chi},\] which entails the isomorphism of variation of pure Hodge structures
\begin{equation}\label{eq:minmin}
(V_{Y}^{D})^{\trcon} _{\chi}\simeq (R^n (f_3)_* \bQ_{Y_{3, D}})^{\trcon}_{\chi} \simeq (R^n (f_2)_* \bQ_{Y_{2, D}})^{\trcon}_{\chi};
\end{equation}
see \cite[Cor. 5.7, Proof of Prop.~9.2]{KLS2021}. 

\textbf{Step 3.} Write $Y_{2, D} \coloneqq R = \bigcup_{i \in I} R_{i}$ as a union of its irreducible components, and fix an ordering of $I$. Denote by $R^{[k]}$ the disjoint union of the strata that have codimension $k$ in $R$. The Mayer--Vietoris complex
of $\bQ_{R}$ associated to the closed cover $\{ R_{i}\}_{i \in I}$
\begin{equation}\label{eq:resolution}
\bQ_{R^{\bullet}} \colon \bQ_{R^{[0]}} \to \bQ_{R^{[1]}} \to \bQ_{R^{[2]}} \to \dots
\end{equation}
is a $\mu_d$-equivariant resolution of $\bQ_{R}$. Recall that the differential of the complex is induced by the natural restriction $\bQ_{R_J} \to \bQ_{R_{J \cup \{j\}}}$, with a
plus or a minus sign according to the parity of the position of $j$ in $J \cup \{j\}$; cf. \cite[App.~A]{KLSV18}.
There exists a $\mu_d$-equivariant spectral sequence
\begin{equation}\label{eq:spsequenceQ}
E^{p,q}_1 = R^{p} (f_{2})_*\bQ_{R^{[q]}} \implies R^{p+q}(f_2)_*\bQ_{R}.
\end{equation}
Since the differentials of the spectral sequence 
are morphisms of variations of mixed Hodge structures and all strata of $R$ are proper over $D$  with quotient singularities (hence they are rational homology manifolds), the spectral sequence \eqref{eq:spsequenceQ} induces the $\mu_d$-equivariant spectral sequence 
\[
\gr_F^0 E^{p,q}_1 =  \gr_F^0 R^{p} (f_{2})_*\bC_{R^{[q]}} = \gr_F^0 
\gr^W_{p} 
R^{p} (f_{2})_*\bC_{R^{[q]}} 
\implies \gr^0_F R^{p+q}(f_2)_*\bC_{R}, 
\]
which abuts at the $E_2$ page for weight reasons. Together with \Cref{lem:cokerd1}, we obtain
\[
\gr_F^0(R^n(f_2)_*\bC_{R})_{\chi} \simeq \gr_F^0 E^{n}_{\infty, \chi} \simeq \gr_F^0 E^{n}_{2, \chi} \twoheadrightarrow \gr_F^0 E^{m,n-m}_{2, \chi} \twoheadrightarrow \gr_F^0(R^m(f_2)_*\bC_{S_2})_{\chi}.
\]
Since both $\gr_F^0(R^n(f_2)_*\bC_{R})_{\chi}$ and $\gr_F^0(R^m(f_2)_*\bC_{S_2})_{\chi}$ are line bundles, the composition is an isomorphism:
\[
\gr_F^0(R^n(f_2)_*\bC_{R})_{\chi} \simeq \gr_F^0(R^m(f_2)_*\bC_{S_2})_{\chi}
\]
(as an aside, this also shows that $k=m$). Therefore, we conclude
\begin{equation}\label{eq:lastiso}
(R^n (f_2)_* \bC_{Y_{2, D}})^{\trcon}_{\chi} \simeq (R^m (f_2)_* \bC_{S_{2}})^{\trcon}_{\chi} = (V_S)_{\chi}^{\rm tr},
\end{equation}
where the last equality follows again from the fact that $S_2$ has quotient singularities. 
\end{proof}

We prove the lemma used in the proof of \Cref{thm:0.19}. Recall that $n$ (resp.~$m$) is the relative dimension of the morphisms $Y_i \to W$ (resp.~$S_i \to D$). 

\begin{lemma}\label{lem:cokerd1} In the notation of Step 3 of the proof of \Cref{thm:0.19}, we have
\[
\coker(d_1 \colon \gr_F^0 E^{m, n-m-1}_{1,\chi} \to \gr_F^0 E^{m,n-m}_{1,\chi}) \twoheadrightarrow \gr_F^0(R^m(f_2)_*\bC_{S_2})_{\chi}.\]
\end{lemma}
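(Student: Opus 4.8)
The plan is to trace through the Mayer--Vietoris spectral sequence \eqref{eq:spsequenceQ} in low degrees and identify the relevant graded pieces of the $\chi$-isotypic components. First I would recall that, after fixing an ordering of the index set $I$, the edge maps on the $E_1$ page are the differentials $d_1 \colon E^{p,q}_1 \to E^{p,q+1}_1$ induced by the alternating restriction maps $\bQ_{R_J} \to \bQ_{R_{J \cup \{j\}}}$ of the complex \eqref{eq:resolution}. The key point is that passing to $\gr_F^0$ of the $\chi$-isotypic component is an exact operation here — because every stratum of $R$ is proper with quotient singularities, hence a rational homology manifold, so $R^p (f_2)_* \bC_{R^{[q]}}$ carries a pure Hodge structure of weight $p$ and $\gr_F^0 R^p (f_2)_* \bC_{R^{[q]}} = \gr_F^0 \gr_p^W R^p (f_2)_* \bC_{R^{[q]}}$ is a sum of line bundles on the relevant strata, with the spectral sequence differentials being morphisms of variations of (mixed) Hodge structures. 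Thus $\gr_F^0(-)_\chi$ commutes with taking cohomology of the $E_1$-complex in the column $q = n-m$, yielding a surjection from $\coker\!\big(d_1 \colon \gr_F^0 E^{m,n-m-1}_{1,\chi} \to \gr_F^0 E^{m,n-m}_{1,\chi}\big)$ onto the corresponding $E_2$-term, and hence onto $\gr_F^0 E^{m,n-m}_{2,\chi}$.

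Next I would identify the strata contributing to the bottom-right corner. By construction $S_2$ is a stratum of $R = Y_{2,D}$ of codimension $n-m$ (the source $S$ has relative dimension $m$ over $D$ while $Y'$ has relative dimension $n$ over $W$, and $S$ is an lc center, so $S_2 \subset R^{[n-m]}$). Therefore $R^m (f_2)_* \bQ_{S_2}$ is one of the direct summands of $E^{m,n-m}_1 = R^m (f_2)_* \bQ_{R^{[n-m]}}$, and the projection onto this summand is a morphism of variations of Hodge structures compatible with the differential $d_1$ out of $E^{m,n-m}_1$. I would then check that this projection descends through the cokernel: concretely, the composition of $d_1 \colon \gr_F^0 E^{m,n-m-1}_{1,\chi} \to \gr_F^0 E^{m,n-m}_{1,\chi}$ with the projection to $\gr_F^0(R^m(f_2)_*\bC_{S_2})_\chi$ can only be nonzero if some codimension $(n-m-1)$ stratum $R_{J}$ strictly containing $S_2$ appears, but such a stratum would be an lc center of the dlt pair $(Y_2, (Y_2)_D)$ dominating $D$ and strictly containing the source — contradicting minimality of $S$ among lc centers dominating $D$. (Alternatively: $\gr_F^0(R^{m}(f_2)_*\bC_{R_J})_\chi = 0$ for any stratum $R_J$ of codimension $< n-m$, again because $S$ has minimal dimension $m$ among strata dominating $D$, so $R_J$ — having larger relative dimension — carries no nonzero $\gr_F^0$ in the $\chi$-component by the CY/transcendental considerations of \Cref{defn:minimal} and \Cref{subsub:CY with unip}.) Hence the projection factors through the cokernel, giving the claimed surjection $\coker(d_1) \twoheadrightarrow \gr_F^0(R^m(f_2)_*\bC_{S_2})_\chi$; postcomposing with the edge-map surjection $\gr_F^0 E^{m,n-m}_{2,\chi} \twoheadrightarrow \gr_F^0(R^m(f_2)_*\bC_{S_2})_\chi$ already recorded in Step 3 finishes it.

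I expect the main obstacle to be the bookkeeping in verifying that $\gr_F^0(-)_\chi$ really is exact on the column $q = n-m$ of the $E_1$-complex — that is, that no higher differentials or extension problems interfere with the identification of the cokernel with the $E_2$-term in this corner. This hinges on the weight argument: $E^{p,q}_1$ is pure of weight $p$, so $d_1$ is strict for the weight filtration, and the spectral sequence degenerates at $E_2$ after applying $\gr_F^0 \gr^W$; I would cite \cite{peterssteenbrink} (and \cite{KLSV18} for the precise form of the Mayer--Vietoris differential) for the strictness and purity statements. A secondary point requiring care is the compatibility of the $\mu_d$-action throughout, so that everything can be restricted to the $\chi$-isotypic component — but since $a \colon Y_2 \to Y_1$ and all the resolutions are taken $\mu_d$-equivariantly and the closed cover $\{R_i\}$ is $\mu_d$-stable, the spectral sequence and all its differentials are $\mu_d$-equivariant, so this is routine.
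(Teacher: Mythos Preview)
There is a genuine gap in your argument: the projection from $\gr_F^0 E^{m,n-m}_{1,\chi}$ onto the single summand $\gr_F^0(R^m(f_2)_*\bC_{S_2})_\chi$ does \emph{not} factor through the cokernel of $d_1$. Both of your proposed justifications fail. First, ``minimality'' of the source $S$ means it has minimal \emph{dimension} among lc centers dominating $D$, not that it is minimal in the containment order; there certainly are lc centers $Z'$ of dimension $m+1$ (hence codimension $n-m-1$ strata $Z_1$ in $R$) containing $S$, and the restriction map $R^m(f_2)_*\bC_{Z_1} \to R^m(f_2)_*\bC_{S_2}$ is a legitimate component of $d_1$. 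Second, your alternative claim that $\gr_F^0(R^m(f_2)_*\bC_{R_J})_\chi = 0$ for strata of codimension $< n-m$ is simply false: the paper computes this explicitly for codimension $n-m-1$ lc centers and finds it \emph{nonzero} whenever such a center contains two distinct minimal lc centers (it is isomorphic to $R^m(f_1)_*\cO_{S_1}(-L_1)$ via a Serre-duality and adjunction argument).

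What the paper actually does is much more geometric. It first identifies $\gr_F^0 E^{p,q}_{1,\chi}$ with $R^p(f_1)_*\cO_{a(R^{[q]})}(-L_1)$ via the du~Bois property, then splits the strata into lc centers versus the rest, observing that $d_1$ is lower triangular for this decomposition (only lc centers of codimension $n-m-1$ can contain a minimal lc center). This yields a surjection $\coker(d_1)\twoheadrightarrow\coker(d_A)$ where $d_A$ is the block coming from lc centers. The crux is then a computation of $R^m(f_1)_*\cO_{Z_1}(-L_1)$ for each $(m+1)$-dimensional lc center $Z_1$: using crepant adjunction to the dlt pair, relative duality, and Koll\'ar's structure theorem for the boundary $E_{Z'}$ (one or two components), one finds that this group is either $0$ or canonically $R^m(f_1)_*\cO_{S_1}(-L_1)$. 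The map $d_A$ is then identified with $\delta\otimes\id$ where $\delta$ is the graph-boundary map for the graph $\Gamma$ of minimal lc centers; connectivity of $\Gamma$ (via $\bP^1$-linking, \cite[Thm.~4.40]{kol13}) gives $\coker(d_A)\cong H^0(\Gamma,\bC)\otimes R^m(f_1)_*\cO_{S_1}(-L_1)\cong \gr_F^0(R^m(f_2)_*\bC_{S_2})_\chi$. None of this birational-geometric input is visible in your sketch, and it cannot be bypassed.
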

\begin{proof}
Since all strata of $R$ have quotient (hence du Bois) singularities, we write
\[\gr_F^0 E^{p,q}_{1,\chi} \simeq (R^{p} (f_{2})_*\cO_{R^{[q]}})_{\chi} \simeq R^{p} (f_{1})_*\cO_{a(R^{[q]})}(-L_1).\] 
Write $a(R^{[q]}) = LCC^{[q]} \cup a(R^{[q]})'$,
where $LCC^{[q]}$ is the disjoint union of the lc centers of $(Y_1, \Delta_1)$ of dimension $n-q$, and $a(R^{[q]})'$ is the disjoint union of the residual $(n-q)$-dimensional strata of $Y_{1, D}$ that are not lc centers.
For brevity, set $A^{p, q} \coloneqq R^{p} (f_{1})_*\cO_{LCC^{[q]}}(-L_1)$ and $B^{p, q} \coloneqq R^{p} (f_{1})_*\cO_{a(R^{[q]})'}(-L_1)$.
Since the only strata of $a(R^{[n-m-1]})$ containing a minimal lc center are lc centers,  the differential
\[d_1 \colon A^{m, n-m-1} \oplus B^{m, n-m-1} \to A^{m, n-m} \oplus B^{m, n-m} 
\] 
is lower triangular; cf.~\eqref{eq:resolution}.
We determine the upper block of $d_1$, namely
\[d_A \coloneqq pr_{A^{m, n-m}} \circ d_1 \circ i_{A^{m, n-m-1}}  \colon 
 A^{m, n-m-1} \to A^{m, n-m},
\]
where $pr$ and $i$ denote the natural projections and inclusions.

To this end, let $Z_1 \subset Y_1$ be an irreducible component of $LCC^{[n-m-1]}$. The trace of $E_1$ on $Z_1$,
denoted by $E_{Z_1}$, is the restriction to $Z_1$ of the components of $E_1$ not containing $Z_1$.
By \Cref{const:modulioverdivisor}, the log resolution $g \colon Y_1 \to Y'$ is an isomorphism over the snc locus of the dlt pair $(Y', \Delta' + Y'_{D})$, hence at the generic point of $Z_1$. By \cite[(4.6),(4.7.1), Thm.~4.19]{kol13}, the induced map
\[
g \colon (Z_1, E_{Z_1} +(F_1-G_1)|_{Z_1}) \to (Z', \Diff^*_{Z'}(\Delta' + Y'_{D}))\coloneqq (g(Z_1), g_*(E_{Z_1} +(F_1-G_1)|_{Z_1}))
\]
is crepant birational, and $(Z', \Diff^*_{Z'}(\Delta' + Y'_{D}))$ is an (effective) dlt pair. In particular, $g$ must contract $G_1|_{Z_1}$ and maps $E_{Z_1}$ birationally onto $E_{Z'} \coloneqq \Diff^*_{Z'}(\Delta' + Y'_{D})^{=1}$. We obtain
\[
Rg_*\cO_{Z_1}(G_1|_{Z_1}-E_{Z_1}) \simeq g_*\cO_{Z_1}(G_1|_{Z_1}-E_{Z_1}) \simeq \cO_{Z'}(-E_{Z'}),
\]
where the first isomorphism follows from \cite[Cor.~10.38.(1)]{kol13} since $G_1|_{Z_1}-E_{Z_1} \sim_{\bQ, g} K_{Z_1} + F_{1}|_{Z_1}$, and the second isomorphism follows by the normality of $Z'$ and $Z_1$ and since $G_1|_{Z_1} -E_{Z_1}+g^*E_{Z'}$ is effective and $g$-exceptional.
Pushing forward along $f_1$
and using relative duality,
we obtain
\[
(R^m(f_1)_* \mathcal{O}_{Z_1}(-L_1))^{\vee} \simeq R^1(f_1)_* (\omega_{{Z_1}/D} \otimes L_1) \simeq (R^1(f_1)_* \mathcal{O}_{Z_1}(G_1|_{Z_1} - E_{Z_1}))\otimes \omega^{-1}_D \simeq (R^1 f'_* \mathcal{O}_{Z'}(-E_{Z'}))\otimes \omega^{-1}_D.
\]
Note that 
\begin{itemize}
\item $E_{Z'}$ has either one or two irreducible components by \cite[Prop.~4.37]{kol13}; and
\item $R^1 f'_* \cO_{Z'} \hookrightarrow R^1 f'_* \cO_{E_{Z'}}$ by \cite[Lem.~3.2]{BFPT2024}.
\end{itemize}
Pushing forward along $f'$ the short exact sequence $0 \to \mathcal{O}_{Z'}(-E_{Z'}) \to \mathcal{O}_{Z'} \to \mathcal{O}_{E_{Z'}} \to 0$, we obtain
\[
R^1 f'_* \mathcal{O}_{Z'}(-E_{Z'}) \simeq \begin{cases}
R^0 f'_* \cO_{S} & \text{ if }E_{Z'} = S \sqcup S',\\
0 &\text{ if }E_{Z'}\text{ is connected.}
\end{cases}
\]
 Restricting to the source $S_1$, the same argument gives
\[(R^m(f_1)_* \mathcal{O}_{S_1}(-L_1))^{\vee} \simeq R^0(f_1)_* (\omega_{{S_1}/D} \otimes L_1) \simeq (R^0(f_1)_* \mathcal{O}_{S_1}(G_1|_{S_1}))\otimes \omega^{-1}_D \simeq R^0 f'_* \mathcal{O}_{S}\otimes \omega^{-1}_D.\] 
To summarize, if $Z_1$ is an lc center containing
two 
distinct
minimal lc centers $S_1$ and $S'_1$, then 
\begin{equation}\label{eq:graph}
R^m(f_1)_* \mathcal{O}_{Z_1}(-L_1) \simeq R^m(f_1)_* \mathcal{O}_{S_1}(-L_1); 
\end{equation} otherwise $R^m(f_1)_* \mathcal{O}_{Z_1}(-L_1) \simeq 0$. 

Let $\Gamma$ be the (oriented) graph whose vertices $V_{\Gamma}$ are the minimal lc centers of $(Y_1, \Delta_1)$ over $D$, and whose edges are lc centers of dimension $m+1$ joining two minimal lc centers. Then by definition of $d_1$ and \eqref{eq:graph}, the map $d_A$ can be identified with $\delta \otimes \id_{R^m(f_1)_* \mathcal{O}_{S_1}(-L_1)}$, where $\delta \colon \bC[E_\Gamma] \to \bC[V_\Gamma]$, $\delta(e)= e_0 - e_1$  is the boundary map of the graph $\Gamma$. By \cite[Thm.~4.40]{kol13}, $\Gamma$ is connected, so
\begin{align*}
\coker(d^{LCC}_1) = \coker(\delta) \otimes R^m(f_1)_* \mathcal{O}_{S_1}(-L_1) & \simeq H^0(\Gamma, \bC) \otimes  R^m(f_1)_* \mathcal{O}_{S_1}(-L_1) \\
& \simeq R^m(f_1)_* \mathcal{O}_{S_1}(-L_1)\simeq \gr_F^0(R^m(f_2)_*\bC_{S_2})_{\chi}.
\end{align*}
\end{proof}

\section{B-semiampleness conjecture}
\subsection{Proof of the b-semiampleness conjecture} 
\subsubsection{Proof of \Cref{thm:Bsemimain}}
Let $f \colon (Y, \Delta) \to X$ be an lc-trivial fibration inducing the generalized pair $(X, B_X, \bfM)$.
Up to taking a modification of $X$ and the corresponding normalized fiber product of $Y$, we may assume that all varieties involved are quasiprojective.
The b-semiampleness conjecture for lc (or slc) generic fiber is equivalent to the  b-semiampleness conjecture for klt generic fiber, via subadjunction to a source; see \cite[Thm.~1.1]{FG14} or \Cref{rem:FG}.
Moreover, the statement of the conjecture is insensitive to alteration of the base; cf.~\Cref{functoriality}. Therefore, we can suppose:
\begin{enumerate}
 \item[($\ddagger$)] $(Y, \Delta)$ is a klt quasiprojective pair over the generic point of $X$, and that properties $(\dagger)$ and $(\dagger \dagger)$ in \Cref{defn:modulipart} hold.
\end{enumerate}
Then \Cref{defn:modulipart} and \Cref{rmk:rationalCY} imply the existence of:
\begin{enumerate}
\item a snc pair $(X,D)$; 

\item a polarizable integral $\mu_d$-equivariant variation of pure Hodge structures $V_Y$ on $X \setminus D$ with unipotent local monodromy for which $(V_Y)_\chi$ is a complex CY variation with Hodge bundle $M_X$;
\item a polarizable integral $\mu_d$-equivariant CY variation of pure Hodge structures $V_Y'$ on $X\setminus D$ with the same underlying local system as $V_Y$ and for which $(V_Y)_\chi(-2,2)\cong (V'_Y)_\chi$.  In particular, the Hodge bundle of $V_Y'$ is $M_X$.
\end{enumerate}
In view of \Cref{thm:semiample Hodge}, to prove the  b-semiampleness conjecture, it suffices to show that the moduli part is integrable and has torsion combinatorial monodromy.  This is the content of \Cref{thm:integrability} and \Cref{thm:torsion}.

\begin{theorem}[Integrability of $\bfM$]\label{thm:integrability}
Under the assumption ($\ddagger$), the moduli part $M_X$ is integrable.
\end{theorem}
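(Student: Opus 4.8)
\textbf{Proof proposal for \Cref{thm:integrability}.} The plan is to reduce the integrability statement (\Cref{defn:integrable}) for $\bfM_X = F^m(V_{Y,\cO})^{\rm tr}_\chi$ to the birational characterization of sources established in \Cref{thm:0.19}, together with the classical dichotomy of Ambro: for an lc-trivial fibration with klt general fiber, the moduli part is big if and only if the variation of the general fiber is ``maximal''. By \Cref{rem: alteration check} and \Cref{remark:PS}, integrability may be checked after any dominant generically finite alteration of the base, so I am free to replace $(X,D)$ by the alteration $W$ and the variation by the pullback, and thus may assume we are in the situation of \Cref{const:modulioverdivisor} relative to any prime divisor $D_X\subset X$ we care to test.

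First I would unwind what integrability demands: given a proper strictly log smooth $(Z,D_Z)$ with $g\colon(Z,D_Z)\to(\overline{X_\Sigma},D_\Sigma)$ generically finite onto its image, I must show $g^*\bfM_X$ is big whenever the period map of $(g|_{Z\bs D_Z}^*V^{\rm tr}_\Sigma)^{\rm tr}$ is generically immersive, or contrapositively, that if $g^*\bfM_X$ is not big then a subvariation of the pullback containing the Hodge line is isotrivial on a curve through the generic point of $Z$. Since $\bfM_X$ is the Hodge bundle of the CY variation $(V_Y)^{\rm tr}_\chi$, and by \Cref{thm:0.19} its restriction to a divisor $D_X$ agrees (up to the $\chi$-isotypic/transcendental operations and Remark~\ref{rmk:rationalCY}'s Tate shift, which do not affect bigness or isotriviality) with the moduli part of the source fibration $f_S\colon(S,\Delta_S)\to D_W$, I can propagate this identification inductively down a flag of strata: the restriction of $\bfM_X$ to $\overline{X_\Sigma}$ is, after alteration, the moduli part of an lc-trivial fibration $(S_\Sigma,\Delta_{S_\Sigma})\to D_\Sigma^\circ$ whose general fiber is klt log Calabi--Yau and normal (being a minimal lc center). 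This is exactly the setting in which the general-fiber-variation criterion applies.

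The crux is then the following: for the lc-trivial fibration $f_S\colon (S,\Delta_S)\to D$ (of relative dimension $m$, say) with klt log CY general fiber $(F,\Delta_F)$, pulled back along a generically finite $g'\colon (Z,D_Z)\to (\overline{D},\ldots)$, the bigness of $g'^*\bfM$ is governed by the generic immersivity of the period map of the transcendental part of $R^m(f_S)_*$ — this is \cite{Amb05}, and more conceptually it follows from the ampleness of the Griffiths bundle on the image of a period map (\cite{BBT23}, cf.\ the proof of \Cref{lemma:Griffiths is integrable}): if the period map is not generically immersive on a curve through the generic point, the Hodge bundle (being a piece of the Griffiths bundle pulled back from the period image, on which it is trivial along the curve) cannot be big. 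I would phrase this as: $g'^*\bfM$ big $\iff$ variation of $(F,\Delta_F)$ maximal along $Z$ $\iff$ no subvariation containing the Hodge line is isotrivial on a curve through the generic point of $Z$. Combining with \Cref{thm:0.19}, the condition ``$g^*\bfM_X$ not big'' forces the corresponding source variation — hence a subvariation of $g|_{Z\bs D_Z}^*V^{\rm tr}_\Sigma$ containing the Hodge bundle — to be isotrivial on a curve through the generic point, which is precisely the conclusion in \Cref{defn:integrable}.

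The main obstacle I anticipate is \emph{bookkeeping the reductions}: \Cref{defn:integrable} requires the statement for \emph{all} strata $\Sigma$ and \emph{all} generically finite $g$ simultaneously, after a \emph{single} modification of $(X,D)$, whereas \Cref{const:modulioverdivisor} and \Cref{thm:0.19} are set up one divisor at a time and shrink $X$ around the generic point of $D_X$. Reconciling these will require (i) an induction on $\codim\Sigma$ where at each stage one performs the alteration/spreading-out of \Cref{const:modulioverdivisor} and invokes \Cref{lem:alteration}(2) to keep the pulled-back variation unipotent and to have a proper log smooth model dominating the stratum, and (ii) checking that the ``transcendental part of the pullback'' operations $(-)^{\rm tr}$ in \Cref{defn:integrable} are compatible with the $\chi$-isotypic transcendental part appearing in \Cref{thm:0.19} — i.e.\ that passing to the source and passing to $V^{\rm tr}_\Sigma$ commute up to the smallest subvariation containing the Hodge line, which is the content of \Cref{defn:minimal} and \Cref{lem:inject autos}. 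Once the dictionary between the Hodge-theoretic $V^{\rm tr}_\Sigma$ and the geometric source variation is nailed down via \Cref{thm:0.19}, the bigness criterion of \cite{Amb05} closes the argument.
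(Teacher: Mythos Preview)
Your proposal is correct in its essentials and uses exactly the two ingredients the paper uses: the identification of $(V_Y|_D)^{\trans}_\chi$ with the variation of the source (\Cref{thm:0.19}) and Ambro's criterion \cite[Thm.~3.3]{Amb05} (or \cite[Thm.~A.12]{PZ2020}) that non-bigness of the moduli part forces isotriviality on curves through the general point.

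The one substantive difference is how you reach an arbitrary $Z\subset\overline{X_\Sigma}$. You propose iterating \Cref{thm:0.19} down a flag of divisors to manufacture an lc-trivial fibration over $X_\Sigma$, then pulling back along $g$. The paper instead handles each $Z$ in a \emph{single} step: via \Cref{lem:alteration} it takes an alteration $q_1\colon W_1\to X$ in which $Z$ is dominated by a \emph{prime divisor} $E\subset W_1$, then cuts $E$ by a general complete intersection $R$ so that $R\to Z$ is generically finite, and finally (after a further finite cover $W_2\to R$ to make the source have connected fibers) takes the source of the base-changed fibration $(f_1)_{W_2}$. This yields the identification $(q^*V_Y)^{\trans}_\chi\simeq(V_S)^{\trans}_\chi$ and $q^*\bfM_X\simeq\bfN_{W_2}$ in one shot. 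The advantage is that it avoids composing ``source of source'' identifications and the attendant bookkeeping you anticipated; the cost is that the alteration depends on $Z$, but this is harmless since both bigness of the Hodge bundle and generic immersivity of the period map are alteration-invariant, so the integrability condition may be verified $Z$-by-$Z$. Your concern about needing a single modification of $(X,D)$ valid for all $Z$ simultaneously is therefore not really an obstacle.

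Two minor corrections: the reference for alteration-invariance is \Cref{rem: alteration check} (and \Cref{functoriality} for the moduli part), not \Cref{remark:PS}, which concerns the index in the $\bQ$-linear equivalence; and your iterated approach would also need to keep track of the fact that the source of an lc-trivial fibration with klt general fiber again has klt general fiber (which is true, but should be said).
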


\begin{proof}
It suffices to show that for any integral subvariety $Z \subset \overline{X_{\Sigma}}$ such that the restriction of $M_X$ is not big, the period map of $(V_Y')_{\Sigma}|_Z^\trans$ is not generically immersive.

We first alter $X, Y, Z$ in order to compare the relevant variations of Hodge structure and set the inductive argument on the dimension of the source. By \Cref{lem:alteration}, there exist
\begin{enumerate}
\item a projective alteration $q_1 \colon W_1 \to X$,
\item an lc-trivial locally stable fibration $f_1 \colon (Y_{1}, \Delta_1) \to W_1$, dlt in codimension one,  such that the pullback of the generic fiber of $f$ along $q_1$ is crepant birational to the generic fiber of $f_1$.
\item a prime divisor $E \subset W_{1}$ dominating $Z$; 
\item a proper snc pair $(R, D_{R})$, where $R$ is a general complete intersection in $E$, mapping generically finite onto $Z$,
\end{enumerate}
such that $((q_1^*V_Y')_{E}|_R)^\trans=((q_1|_R)^*(V_Y')_\Sigma)^\trans$
has local unipotent monodromy on $R \setminus D_{R}$. By \S \ref{sec:variation}, up to a further finite dominant morphism $q_2 \colon (W_2,D) \to (R,D_R)$, a source of $(f_1)_{W_2} \colon ((Y_{1})_{W_2}, (\Delta_{1})_{W_2}) \to W_2$, denoted $f_{S} \colon (S, \Delta_S) \to W_2$, is an lc-trivial fibration
\begin{enumerate}
\item inducing the generalized pair $(W_2, B_{W_2}, \bfM')$, 
\item satisfying properties $(\dagger)$ and $(\dagger\dagger)$ (in particular $\bfM'$  descends on $W_2$), and
\item such that
\begin{equation}\label{eq:variationIII}
(q^*(V_{Y})_\Sigma)^{\trans}_\chi\simeq (V_{(Y_1)_{W_2},D})^\trans_\chi \simeq (V_{S})^{\trans}_\chi\quad \left(\hspace{.5em}\Leftrightarrow (q^*(V'_Y)_\Sigma)^\trans\simeq V_S'^\trans\hspace{.5em}\right) \quad \text{and}\quad q^*M_{X} \simeq M'_{W_2},
\end{equation}
\end{enumerate}
where $q \coloneqq q_1 \circ q_2 \colon W_2 \to X$, using Notation \ref{notn:subD}.  The last part follows from \Cref{thm:0.19}. Note also that $f_S$ is locally stable by Lemmas \ref{stabilitylocst}, \ref{stabilitylocstIII} and \cite[cf. proof of Lem. 2.11]{kbook}, so $f_S^*M'_{W_2} \sim_{\bQ} K_{S/W_2} + \Delta_S$ by \Cref{lem:modulipartlctriviallocallystable}.

Now, assume that $M_{X}|_Z$ is not big. Then $M'_{W_2}$ is so too. By \cite[Thm.~3.3]{Amb05} or \cite[Thm.~A.12]{PZ2020}, there exist curves passing through the general point of $W_2$ over which $f_{S}$ is isotrivial, so the period map of $(V_{S})^{\trans}_{\chi}$ (resp.~of $V_{S}^{\trans}$ and $V_{S}'^{\trans}$) is not generically immersive. By \eqref{eq:variationIII}, we conclude that the period map of $(((V_Y)_\Sigma)|_Z)^{\trans}_{\chi}$ (resp.~ $(V_Y')_\Sigma|_{Z}^{\trans}$) is not generically immersive.
\end{proof}

\begin{thm}[Torsion combinatorial monodromy of $\bfM$]\label{thm:torsion}
Under the assumption ($\ddagger$), the moduli part $M_X$ has torsion combinatorial monodromy.
\end{thm}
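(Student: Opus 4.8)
\textbf{Proof plan for \Cref{thm:torsion}.} The strategy is to reduce the torsion combinatorial monodromy of $\bfM_X$ to the finiteness of b-representations of crepant birational automorphism groups of log Calabi--Yau pairs, via Koll\'ar's theory of $\bP^1$-linking of sources. First I would unwind the definition: by \Cref{defn:torsion combo} and (using the reduction to the CY variation $V_{CY}$ with Hodge bundle $\bfM_X$ and $\bP^1$-linking observed throughout) it suffices to take a proper connected nodal curve $g\colon C\to X$ such that $(g\circ\nu_C)^*\bfM_X$ is torsion on each irreducible component, and show the monodromy of the canonical connection on $g^*\bfM_X$ is torsion. Up to the technical reductions allowed by \Cref{rem: alteration check} and \Cref{functoriality}, and using \Cref{lem:alteration} together with \Cref{lem:sdltmodification}, I would arrange an lc-trivial locally stable fibration $f'\colon (Y',\Delta')\to C'$ over a connected snc curve $C'$ mapping to $C$, with $(Y',\Delta')$ sdlt, so that the source over each irreducible component $C'_i$ of $C'$ is an lc-trivial fibration $(S_i,\Delta_{S_i})\to C'_i$ with connected fibers. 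By \Cref{lem:modulipartlctriviallocallystable}, since $f'$ is locally stable, the moduli b-divisor has zero boundary part and $f'^*\bfN_{C'}\sim_\bQ K_{S/C'}+\Delta_S$, and by \Cref{thm:0.19} the restriction over each $C'_i$ agrees with the moduli part of the source.

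Next I would use that the moduli part restricted to each irreducible component $C'_i$ is \emph{torsion}: its extension is a nef line bundle of degree zero on a proper curve, hence torsion by \Cref{lem:everything is compatible}(1) applied to a resolution. By \cite[Thm.~3.3]{Amb05} (isotriviality of lc-trivial fibrations with torsion moduli part over a normal base, as already invoked in \Cref{thm:integrability}), the source fibration $(S_i,\Delta_{S_i})\to C'_i$ is isotrivial, with general fiber a fixed klt log Calabi--Yau pair $(F,\Delta_F)$. Thus over each $C'_i$ the variation $V_{CY}^\trans$ is isotrivial, and the monodromy acts on $H^0(F,\omega_F^{[m]}(m\Delta_F))$ for $m\gg 1$, whose top exterior power is (a power of) the Hodge line. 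The point at the nodes of $C'$ is Koll\'ar's $\bP^1$-linking (\Cref{prop:P1link}): the minimal lc centers (sources) of the dlt pair $(Y',\Delta')$ meeting the fiber over a node are all $\bP^1$-linked, and each $\bP^1$-link induces a crepant birational map between the fibers of adjacent sources. Hence traversing a loop in $C'$ produces a composition of crepant birational self-maps of $(F,\Delta_F)$, i.e.\ an element of $\mathrm{Bir}(F,\Delta_F)$, and the combinatorial monodromy of $\bfM_X$ on $g^*\bfM_X$ factors through the induced action of $\mathrm{Bir}(F,\Delta_F)$ on $H^0(F,\omega_F^{[m]}(m\Delta_F))$.

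Finally I would invoke the finiteness of b-representations: the group $\mathrm{Bir}(F,\Delta_F)$ acts on $H^0(F,\omega_F^{[m]}(m\Delta_F))$ through a finite group for $m\gg 1$, by \cite{NU73,Fuj00,Gon13,HX2011,FG14b}. Consequently the monodromy character on the Hodge line has finite order, which is exactly torsion combinatorial monodromy. One then descends from $C'$ back to $C$ (the monodromy of $g^*\bfM_X$ is a power of that on the finite cover, hence still torsion) and from the sourced fibration back to the original $f$ via \Cref{thm:0.19} and the compatibility $q^*\bfM_X\simeq\bfN_{W_2}$.

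The main obstacle I anticipate is the bookkeeping in the reduction step: producing, over a \emph{nodal} curve $C$, an sdlt model whose sources over each component are genuinely lc-trivial fibrations with connected fibers \emph{and} whose gluing at the nodes is governed by standard $\bP^1$-links — this is where \Cref{lem:sdltmodification} and \cite{Hashizume2021} do the heavy lifting, and one must be careful that the base change making sources connected ($C'\to C$ \'etale) does not destroy the snc structure, and that the isomorphism \Cref{thm:0.19} is compatible across the nodes so that the composed crepant birational maps really land in a single $\mathrm{Bir}(F,\Delta_F)$. Once the geometry is set up, the conclusion is a formal combination of \Cref{thm:0.19}, Ambro's isotriviality, $\bP^1$-linking, and finiteness of b-representations; the Hodge-theoretic input \Cref{thm:semiample Hodge} then finishes the proof of \Cref{thm:Bsemimain}.
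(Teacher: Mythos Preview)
Your proposal is correct and follows essentially the same route as the paper: alter to a locally stable dlt model over $W$, pull back to a nodal curve, apply \Cref{lem:sdltmodification} to get an sdlt model with sources that are genuine lc-trivial fibrations over each component, use Ambro's isotriviality on each component, lift loops in the curve to the $1$-skeleton of the intersection complex of the sdlt model (alternating isotriviality identifications along dominating edges with $\bP^1$-link crepant birational maps at the nodes), and conclude via finiteness of B-representations \cite{HX2011}.

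Two small corrections. First, your sentence ``nef line bundle of degree zero on a proper curve, hence torsion by \Cref{lem:everything is compatible}(1)'' is false as written---degree-zero on a curve does not imply torsion---but you don't need it: torsion on each component of $C'$ is already forced by the hypothesis $(g\circ\nu_C)^*\bfM_X$ torsion together with the fact that each component of $C'$ dominates a component of (or maps to a point of) $C$. Second, the paper does not route through \Cref{thm:0.19} here; it works directly with the line bundle $f'_*\omega^{[k]}_{Y'/C'}(k\Delta')$ via \Cref{lem:sdltmodification}\eqref{item:pushforward}, which is the cleaner bookkeeping device for tracking the combinatorial monodromy across nodes. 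Your invocation of \Cref{thm:0.19} is not wrong, but it is an unnecessary detour for this step.
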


\begin{proof}
Let $C$ be a proper connected strictly snc curve with normalization $\nu_C$, and $q \colon C\to X$ be a morphism from a proper strictly snc curve $C$ such that $(q\circ \nu_C)^*M_X$ is trivial. We show that the canonical flat connection on $q^*M_X$ has torsion monodromy.
Observe that, in the statement, we can always replace $C$ with proper connected strictly nodal curves dominating $C$. 

\textbf{Step 1.} \emph{In this step, we construct an sdlt modification of $f$ over $C$.}\\
\noindent
By \Cref{lem:alteration}, there exist
\begin{enumerate}
\item a projective alteration $q_1 \colon W \to X$,
\item an lc-trivial locally stable fibration $f_1 \colon (Y_1, \Delta_1) \to W$, dlt in codimension one, such that the pullback of the generic fiber of $f$ along $q_1$ is crepant birational to the generic fiber of $f_1$.
\end{enumerate}
such that
\[
q_1^*\bfM(f)=\bfM(f_1) \qquad\mathrm{and}\qquad  M_{W}\coloneqq M(f_1)_{W} \sim_{\bQ} \frac{1}{k}(f_1)_* (\omega^{[k]}_{Y_1/W}(k\Delta_1)),
\] 
where $k$ is a sufficiently divisible positive integer, and $q_1^{-1}(C)$ has simple normal crossings. Replace $C$ with a (connected) proper strictly nodal complete intersection in $q_1^{-1}(C)$ dominating $C$. In particular, we can suppose that 
the locally stable fibration $f_{1,C} \colon Y_{1,C} \to C$ is sdlt over a dense open set $C^{\circ} \subset C$.
By \Cref{lem:sdltmodification}, there exist a morphism $q' \colon C' \to C$ from a connected stricly snc curves and an lc fibration $f' \colon (Y', \Delta' \coloneqq \Delta_{1, C'}) \to C'$ with the property that 
\begin{enumerate}
\item $(Y', \Delta')$ is an sdlt pair, whose irreducible components each dominate an irreducible component in $C'$;
\item\label{item:connectednesssource_2} the restriction of $f'$ to any sources dominating an irreducible component of $C'$ is an lc-trivial fibration (with connected fibers);
\item\label{item:pushforward_2connectednesssource} $f'_* \omega^{[k]}_{Y'/C'}(k\Delta') \simeq k(q \circ q')^*M_X$,
where $k$ is a sufficiently divisible positive integer.
\end{enumerate} 

\textbf{Step 2.} \emph{In this step, we show that if $f'_* \omega^{[k]}_{Y'/C'}(k\Delta')$
is trivial on each component of $C'$, then the monodromy of $f'_* \omega^{[k]}_{Y'/C'}(k\Delta')$ is torsion.}\\
\noindent To this end, observe that any source $f_S \colon S \to C'_{S}$, for some irreducible component of $C'$, is an lc-trivial fibration with $\bfM \sim_{\bQ} 0$, since $M_{C'} \sim_{\bQ} (q \circ q')^*M_X$ is trivial by assumption along the irreducible components of $C'$. Hence, by 
the compatibility of locally stable families with base change,
\cite[Thm. 3.3 and Prop. 4.4]{Amb05} or \cite[Thm.~A.12]{PZ2020},
$f_S$ is an isotrivial families, i.e., all fibers are isomorphic to one another, which allows to identify minimal lc centers over adjacent nodes of $C'$. 

The intersection complex of an sdlt variety is the dual polyhedron of its dual complex. Denote by $\Delta(C')$ and $\Delta(Y')$ the intersection complex of the snc curve $C'$ and that of the sdlt variety $Y'$. The 1-skeleton of $\Delta(Y')$, denoted $\Delta(Y')_1$, consists of two types of edges: 
\begin{itemize}
\item (dominating edge) those corresponding to sources of $Y'$ dominating a component of $C'$, which are isotrivial families of fiberwise minimal centers; and 
\item (edge of $\bP^1$-link type) those corresponding to $\bP^1$-link between minimal strata of $Y'$, mapping to a node of $C'$.
\end{itemize} 
Note that $\Delta(Y')_1$ is connected by the isotriviality of sources over dominating edges and \cite[Thm.~4.40]{kol13} for edges of $\bP^1$-link type.
Also, by construction, there is a surjective simplicial map $\Delta(Y')_1 \to \Delta(C')_1=\Delta(C')$. In particular, each loop in $\Delta(C')$ admits a (non-unique) lift $\gamma$ in $\Delta(Y')_1$. Fix it once for all. Recall that any vertex of $\gamma$ corresponds to the source $W$. 
A dominating edge $e$ with vertices $s, t$ induces an isomorphism of minimal lc centers 
$(Z_s,\Delta_{Z_s}) \to (Z_t,\Delta_{Z_t})$
by the isotriviality of the family of sources. An edge of $\bP^1$-link type with vertices $s, t$ induces a
crepant
birational map
$(Z_s,\Delta_{Z_s}) \dasharrow (Z_t,\Delta_{Z_t})$
by \cite[Thm.~4.40]{kol13}. Following the birational identification along the loop $\gamma$, we obtain a birational automorphism of a fixed reference minimal lc center
$(Z,\Delta_Z)$ of a general fiber, which induces a representation 
$\mathbb{Z}\gamma \to \mathrm{Aut}(H^0(Z, \omega^{[2k]}_Z(2k\Delta_Z)))$, which is trivial for $k$ large 
and divisible
enough by the finiteness of B-representations \cite[Thm.~1.2]{HX2011}.
\end{proof}

This concludes the proof of \Cref{thm:Bsemimain}.\qed
\subsubsection{B-semiampleness for projective morphisms between complex analytic spaces}In recent years, there has been quite some activity in extending the usual MMP to the context of K\"ahler spaces or analytic varieties.
The  b-semiampleness of the moduli part for projective morphisms of complex analytic spaces can be reduced to the algebraic case of \Cref{thm:Bsemimain}
as follows. In particular, we drop the quasiprojectivity assumption in ($\ddagger$).

\begin{thm}\label{thm:bsemiampleanal}
    Let $(Y,\Delta)$ be a normal complex analytic space with a sub-pair structure, and let $f \colon (Y,\Delta) \to X$ be a projective fibration between complex analytic spaces.
    Assume that $K_Y + \Delta \sim_{\mathbb Q,f} 0$ and that the general fiber of $f$ is an lc pair.
    Then, the moduli part of $f$
    is b-semiample.
\end{thm}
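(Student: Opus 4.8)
The plan is to reduce \Cref{thm:bsemiampleanal} to the already-established algebraic statement \Cref{thm:Bsemimain} by working locally on the base $X$ and invoking the analytic version of the canonical bundle formula recorded in \Cref{rmk:analcanonicalbundle}. Since b-semiampleness of the moduli b-divisor $\bfM$ is a statement that can be checked on a single high enough birational model (an Ambro model), and since the construction of the moduli part commutes with base change and with shrinking $X$, it suffices to produce, locally analytically on $X$, an algebraic model of the lc-trivial fibration over which $\bfM$ descends and to which \Cref{thm:Bsemimain} applies. The main point is that although $X$ and $Y$ need not be algebraic or even quasiprojective, all of the operations needed to compute and control $\bfM$ — resolution of singularities, cyclic covers, the $\mu_d$-equivariant resolution, the construction of the variation $V_Y$ and its Deligne/Schmid extension — can be carried out in the analytic category (by \Cref{rmk:analcanonicalbundle} and the Grauert–Remmert extension theorem), so that the output is still the Hodge bundle of a polarizable integral pure CY variation with unipotent local monodromy.

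First I would set up the reduction. As in the proof of \Cref{thm:Bsemimain}, one reduces via subadjunction to a source (\cite[Thm.~1.1]{FG14}, valid analytically by \Cref{rmk:analcanonicalbundle}) to the case where the generic fiber of $f$ is klt, so that after an analytic alteration of $X$ (again available by \Cref{rmk:analcanonicalbundle}) one may assume conditions $(\dagger)$ and $(\dagger\dagger)$ hold. Then \Cref{defn:modulipart}, carried out analytically, produces: an snc pair $(X,D)$ of complex analytic spaces; a polarizable integral pure variation of Hodge structures on $X\setminus D$ with unipotent local monodromy whose Deligne/Schmid extension is $V_Y$; and a complex CY subvariation $(V_Y)^{\trans}_\chi$ whose Hodge bundle is $\bfM_X$, together with (after shifting the Hodge filtration as in \Cref{rmk:rationalCY}) a polarizable integral pure CY variation $V_{CY}$ with unipotent local monodromy whose Hodge bundle is $\bfM_X$. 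At this point the problem is purely Hodge-theoretic and one wants to apply \Cref{thm:semiample Hodge}; what remains is to verify integrability and torsion combinatorial monodromy of $\bfM_X$, and to observe that $X$ may be taken proper.

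The second step is to reduce to the proper (indeed projective) setting so that \Cref{thm:semiample Hodge} literally applies. b-semiampleness of $\bfM$ is local on $X$: it is equivalent to $\bfM$ being semiample on an Ambro model, and an Ambro model can be produced after restricting $X$ to a relatively compact open subset and compactifying. Since the statement of \Cref{thm:semiample Hodge} and the verifications in \Cref{thm:integrability} and \Cref{thm:torsion} only ever test the variation on curves and on generically finite covers by proper log smooth spaces — data that survive pulling back along the map from a projective compactification of the relevant locus, by \Cref{lem:alteration}, which furnishes projective models — one can replace $X$ by a proper (projective) model without changing the validity of the conclusion. Concretely: after shrinking $X$ to a Stein or relatively compact piece and compactifying $(X,D)$ to a proper snc pair, one applies \Cref{lem:alteration} and \Cref{const:modulioverdivisor} exactly as in \Cref{thm:integrability} (using \cite[Thm.~3.3]{Amb05}) and \Cref{thm:torsion} (using \cite[Thm.~4.40]{kol13}, \Cref{lem:sdltmodification}, and the finiteness of B-representations \cite[Thm.~1.2]{HX2011}) to check that $\bfM_X$ is integrable with torsion combinatorial monodromy; then \Cref{thm:semiample Hodge} shows $\bfM_X$ is semiample on this model, hence $\bfM$ is b-semiample. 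Finally, since b-semiampleness can be checked locally on $X$ and we have verified it after restricting to an arbitrary relatively compact open subset, it holds on all of $X$.

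The main obstacle I expect is bookkeeping around the analytic alterations: \Cref{lem:alteration} and the source constructions of \Cref{const:modulioverdivisor} are stated for quasiprojective varieties, so one must either check that their proofs go through analytically after localizing (using that the inputs — log resolution, MMP with scaling in the relative analytic setting \cite{HH2020}, cyclic covers, Grauert–Remmert — are all available analytically, as flagged in \Cref{rmk:analcanonicalbundle}) or, more cleanly, observe that everything in \Cref{thm:integrability} and \Cref{thm:torsion} is already phrased in terms of passing to projective models via \Cref{lem:alteration}, so that one only needs the \emph{analytic} canonical bundle formula to identify $\bfM$ with a Hodge bundle and then works entirely with projective models from the start. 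The second, subtler point is ensuring that "b-semiampleness is local on the base" is used correctly: one must confirm that the moduli b-divisor of the restriction of $f$ over an open $U\subset X$ agrees with the restriction of $\bfM$, which is immediate from \Cref{functoriality} and the compatibility of the Deligne/Schmid extension with restriction, and that semiampleness of a line bundle on a proper model glues appropriately — this is standard once one fixes a single Ambro model over a relatively compact piece.
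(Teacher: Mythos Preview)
Your proposal has a genuine gap in the reduction to the algebraic setting. You want to ``shrink $X$ to a Stein or relatively compact piece and compactify $(X,D)$ to a proper snc pair'' so that \Cref{thm:semiample Hodge} and the verifications \Cref{thm:integrability}, \Cref{thm:torsion} apply. But a general complex analytic base $X$ need not admit any algebraic (or even Moishezon) compactification, and all three of those results are stated and proved for algebraic spaces---the proof of \Cref{thm:semiample Hodge} runs through o-minimal GAGA and the definable image theorem, which require an algebraic structure on the base from the outset. Your fallback, invoking \Cref{lem:alteration} to manufacture projective models, fails for the same reason: that lemma takes a fibration of quasiprojective varieties as input. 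The claim that ``b-semiampleness is local on $X$'' does not rescue the argument either: even if $\bfM_{X'}$ were semiample on each member of an open cover of an analytic Ambro model $X'$, there is no gluing mechanism for global sections, and on a non-compact analytic space ``semiample'' has not even been given a meaning here.

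The paper bypasses all of this via the Hilbert scheme. Since $f$ is projective, its general fibers (with boundary and polarization) are parametrized by an algebraic family $F\colon(\mathcal Y,\Xi)\to\mathcal X$; after Hironaka flattening, the classifying map $\Psi\colon X\to\mathcal X$ is an analytic morphism to an algebraic variety. After compatible alterations of $X$ and $\mathcal X$ one arranges $\Psi^*\bfM_{\mathcal X}\simeq\bfM_X$ by functoriality of the Schmid extension, and the algebraic \Cref{thm:Bsemimain} applied to $F$ over (the Zariski closure of the image in) $\mathcal X$ gives b-semiampleness of $\bfM_{\mathcal X}$, which then pulls back along $\Psi$. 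The idea missing from your sketch is precisely this: even when $X$ is far from algebraic, the projective family $f$ is pulled back from an algebraic universal family, and that is where the algebraic machinery is applied.
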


\begin{proof}
Without loss of generality, we can assume that the generic fiber is klt by \Cref{rem:FG}. 
Let $F \colon (\mathcal{Y},\Xi) \to \mathcal{X}$ be a Hilbert scheme parametrizing general fibers of the projective morphism $f \colon (Y, \Delta) \to X$. Up to replacing $X$ with a modification, we can suppose that the classifying map $\Psi \colon X \dashrightarrow \mathcal{X}$ is an analytic morphism, by applying Hironaka's flattening theorem \cite[Cor.~1]{Hir75} to $f$ and the components of $\mathrm{Supp}(\Delta)$ dominating $X$. Replacing $\mathcal{X}$ with the Zariski closure of the image of $\Psi \colon X \to \mathcal{X}$, we can attribute coefficients to the irreducible components of $\Xi$ such that $(Y_{U}, \Delta_U) \simeq (\mathcal{Y},\Xi) \times_{\mathcal{X}} U$, over a dense open set $U \subset X$. In particular, there exists a dense open subset $\mathcal{U} \subset \mathcal{X}$ such that  $(\mathcal{X}_{\mathcal{U}},\Xi_{\mathcal{U}})$ is a pair (\cite[Prop.~2.4]{HX15}) with klt singularities, and $(\mathcal{X}_{\mathcal{U}},\Xi_{\mathcal{U}}) \to \mathcal{U}$ is an lc-trivial fibration. 

 Up to an alteration, the moduli part of $f$ is the Hodge bundle of a variation of Hodge structures obtained by the algorithm in \Cref{defn:modulipart}; see \Cref{rmk:analcanonicalbundle}.
\Cref{defn:modulipart} applied to $F_{\mathcal{U}}$ pulls back to the analogous construction for $f_U$, up to eventually shrinking $U$. Replacing $X$ and $\cX$ with compatible alterations, there exists a morphism $\Psi \colon X \to \cX$, and polarizable variations of Hodge structures $V_{U}$ and $V_{\cU}$ on dense open subsets $U \subset X$ and $\cU \subset \cX$, whose complements are snc divisors, and such that: (1) $\Psi|_{U}^* V_{\cU} = V_{U}$, and (2) the Schmid extension of the deepest piece of the Hodge filtration of $V_{U}$ (resp.~ $V_{\cU}$) is the moduli part. 
By the functoriality of the Schmid's extension, we have $\Psi^* M_{\cX} = M_{X}$. Hence, the semiampleness of the moduli part of $f$ in the analytic case follows from the semiampleness of the moduli part of $F_{\mathcal{U}}$ in the algebraic case, proved in \Cref{thm:Bsemimain}. 
\end{proof}
\begin{remark}\label{rem:FG}
    Let $f \colon (Y, \Delta) \to X$ be a morphism as in \Cref{thm:bsemiampleanal} and $f_S \colon (S, \Delta_S) \to X$ be a source of $(Y, \Delta)$. Note that 
    $\bfM(f) \sim_{\bQ} \bfM(f_S)$ as in \eqref{eq:lastiso}. This means that the moduli part of an lc-trivial fibration from an lc pair is the moduli part of an lc-trivial fibration from a klt pair.
    Together with \Cref{canonicalbundleformula}, this reproves  \cite[Thm.~1.1]{FG14}. In particular, it holds over an analytic base as well.
\end{remark}

\begin{remark}
  \Cref{thm:bsemiampleanal} fails in general for non-projective
fibrations between complex analytic spaces, except in relative dimension 1; see \cite[Ex.~4.17 and Prop.~4.16]{LS2025}. It is unclear whether \Cref{thm:bsemiampleanal} continues to hold for K\"{a}hler morphisms. 
\end{remark}

\subsection{Applications of the  b-semiampleness conjecture and open questions}\label{sec:applicationbsemiampleness}

In this section, we collect some immediate applications of the  b-semiampleness of the moduli part of an lc-trivial fibration.

\subsubsection{Image of lc pairs}
We show that the image of an lc pair under an lc-trivial fibration is again lc, generalizing \cite[Thm. 4.1]{Amb05} and \cite[Lem.~1.1]{FG14}. 
\begin{thm}\label{thm:lcimage}
Let $(Y, \Delta_Y)$ be an lc (resp.~klt) pair and $f \colon (Y,\Delta_Y)\to X$ be a projective fibration with $K_{Y} + \Delta_{Y} \sim_{\bQ, f} 0$.
Then there exists an lc (resp.~klt) pair $(X, \Delta_X)$ such that $K_{Y} + \Delta_Y \sim_{\bQ} f^*(K_{X} + \Delta_X)$.  
\end{thm}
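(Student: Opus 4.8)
The statement is the canonical bundle formula applied to $f$, combined with the b-semiampleness just established. By the canonical bundle formula (\Cref{canonicalbundleformula}), since $K_Y+\Delta_Y\sim_\bQ 0 = f^*0$, we may write $K_Y+\Delta_Y\sim_\bQ f^*(K_X+B_X+\bfM_X)$, where $B_X$ is the boundary b-divisor and $\bfM$ is the moduli b-divisor, and the pair $(X,B_X,\bfM)$ is a generalized lc (resp.~klt) pair because $(Y,\Delta_Y)$ is lc (resp.~klt). The whole point is to turn the \emph{generalized} pair structure into an \emph{honest} pair structure, and this is precisely what b-semiampleness buys us: by \Cref{thm:Bsemimain}, $\bfM$ is b-semiample, so on a sufficiently high birational model $\pi\colon X'\to X$ (an Ambro model) the trace $\bfM_{X'}$ is a semiample $\bQ$-divisor, hence $\bQ$-linearly equivalent to an effective $\bQ$-divisor $M'\geq 0$; moreover for further blowups the moduli part pulls back, so $\bfM = \overline{M'}_{X'}$ (up to $\bQ$-linear equivalence of b-divisors).

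\textbf{Key steps.} First I would record the canonical bundle formula and the generalized-pair structure as above, citing \Cref{canonicalbundleformula}. Second, I would invoke \Cref{thm:Bsemimain} to get an Ambro model $\pi\colon X'\to X$ on which $\bfM_{X'}$ is semiample; pick a general effective $\bQ$-divisor $M'\sim_\bQ \bfM_{X'}$, general enough that $(X',B_{X'}+M')$ remains sub-lc (resp.~sub-klt), which is possible since a general member of a free (after scaling) linear system avoids the centers of log discrepancy issues and one may choose $M'$ with small coefficients by passing to a multiple and dividing. Third, push forward: set $\Delta_X\coloneqq \pi_*(B_{X'}+M')$, equivalently $\Delta_X = B_X + \pi_*M'$ where $B_X = \pi_*B_{X'}$ is the boundary divisor on $X$ itself. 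Then $K_{X'}+B_{X'}+M'\sim_\bQ \pi^*(K_X+\Delta_X)$ (this is the definition of the boundary/moduli b-divisor descent together with $\pi^*\bfM_X=\bfM_{X'}$ on the Ambro model), and combining with the canonical bundle formula pulled back to $X'$ gives $K_{X'}+\Delta_{Y'}\sim_\bQ f'^*\pi^*(K_X+\Delta_X)$ for the induced fibration $f'\colon Y'\to X'$, hence $K_Y+\Delta_Y\sim_\bQ f^*(K_X+\Delta_X)$ by pushing down along the birational maps. Fourth, check that $(X,\Delta_X)$ is lc (resp.~klt): the generalized pair $(X,B_X,\bfM)$ is generalized lc (resp.~klt) by \Cref{canonicalbundleformula}, and replacing the nef part $\bfM$ by a general effective $\bQ$-divisor $M'$ in its b-linear system on an Ambro model does not decrease log discrepancies below $-1$ (resp.~$0$); concretely, for any valuation $E$ over $X$ the log discrepancy $a(E;X,\Delta_X)$ agrees with the generalized log discrepancy $a(E;X,B_X,\bfM)$ up to the order of vanishing of $M'$ along $E$, which is nonnegative and, for $M'$ sufficiently general, does not occur at the divisors computing the generalized lc (resp.~klt) condition. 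One also needs effectivity of $\Delta_X$: this is automatic in the klt/lc (non-sub) setting because $B_X\geq 0$ (the fibers are genuine lc pairs, so the boundary divisor is effective) and $M'\geq 0$.

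\textbf{Main obstacle.} The only genuinely delicate point is the passage from a \emph{generalized} lc/klt pair to an honest one while preserving effectivity and the singularity type — i.e. choosing $M'\sim_\bQ\bfM_{X'}$ general enough. This is where b-semiampleness is essential rather than mere b-nefness: b-nefness alone would only give a nef $\bfM_{X'}$, which need not be $\bQ$-linearly equivalent to any effective divisor, let alone one in good position. With semiampleness, $\bfM_{X'}$ defines a morphism and a general member of $|m\bfM_{X'}|$ for $m\gg 0$ is the pullback of a general hyperplane section, so it can be taken reduced away from any prescribed closed subset and with arbitrarily small coefficients after dividing by $m$; this controls both the lc/klt condition and effectivity. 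A secondary technical point is to make the identification $K_{X'}+B_{X'}+M'\sim_\bQ\pi^*(K_X+\Delta_X)$ precise, i.e. to verify that on the Ambro model the b-divisor identities $\bfM=\overline{\bfM_{X'}}$ and the definition of $B$ combine correctly; but this is bookkeeping with b-divisors of the kind already set up in \S6, and the reference \cite[Thm.~4.1]{Amb05} handles exactly this descent in the klt case with the extra input (b-semiampleness) now available in full generality. I would cite \cite[Thm.~4.1]{Amb05} and \cite[Lem.~1.1]{FG14} for the structure of the argument and emphasize that the new ingredient is \Cref{thm:Bsemimain}.
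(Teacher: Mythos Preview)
Your proposal is correct and follows essentially the same route as the paper: apply the canonical bundle formula to get the generalized lc (resp.~klt) pair $(X,B_X,\bfM)$, invoke \Cref{thm:Bsemimain} for b-semiampleness of $\bfM$, and then convert the generalized pair into an honest lc (resp.~klt) pair by replacing $\bfM_{X'}$ on an Ambro model with a general effective member of its $\bQ$-linear system. The only difference is packaging: the paper delegates the last conversion step to \cite[Lem.~4.3]{EFGMS2025}, whereas you spell it out directly (choosing $M'$ general, pushing forward, and checking discrepancies), which is precisely the content of that lemma.
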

\begin{proof}
The moduli part of the generalized lc pair $(X, B_{X}, \bfM)$ induced by $f$ (cf.~\Cref{canonicalbundleformula}) is  b-semiample by \Cref{thm:Bsemimain}. Apply then \cite[Lem.~4.3]{EFGMS2025}.
\end{proof}
We expect that \Cref{thm:lcimage} could be a key ingredient in inductive arguments in birational geometry.
In the klt case, a version of \Cref{thm:lcimage} for klt sub-pairs is used to reduce the finite generation of canonical rings to the general type case; see \cite[Cor.~1.1.2]{bchm}.
According to \cite[\S 1.8]{BGLM24}, \Cref{thm:lcimage} was one of the missing ingredients to prove that reductive quotients of lc pairs are again lc.

\subsubsection{Adjunction and inversion of adjunction}
The lc centers of a dlt pair $(X,\Delta)$ coincide with the irreducible components of the strata of $\Delta^{=1}$.
In particular, adjunction to an lc center of any dimension can be performed using the usual residue theory iteratively, as with prime divisors;
see \cite[\S 4.2]{kol13}.
In general, to induce a structure of a pair on an lc center $Z$ of an arbitrary lc (not dlt) pair $(X, \Delta)$ is more complicated.
Roughly speaking, one performs the following steps:
\begin{enumerate}
    \item take a dlt modification $(X',\Delta') \to (X,\Delta)$;
    \item choose a prime divisor $S$ in $\mathrm{Supp}(\Delta')^{=1}$ dominating $Z$;
    \item perform dlt adjunction of $(X',\Delta')$ to $S$, thus obtaining a pair $(S,\Delta_S)$;
    \item consider the lc-trivial fibration $(S,\Delta_S) \to W$, where $W$ denotes the Stein factorization of $S \to Z$;
    \item utilize the canonical bundle formula to induce a pair structure on $W$; and
    \item descend this latter structure to the normalization $Z^{\nu}$ of $Z$.
\end{enumerate}
For more details, we refer to \cite[\S 4]{FG2012}.
For this reason, so far, it was only possible to induce the structure of klt pair on minimal lc centers of lc pairs. Thanks to \Cref{thm:Bsemimain}, we can generalize this construction to any lc center.

\begin{theorem}[Adjunction and inversion of adjunction]
\label{adj_inv_adj}
    Let $(X,\Delta)$ be a pair and $Z$ be an lc center.
    Then, the normalization $Z^{\nu}$ can be endowed with a pair structure $(Z^{\nu},\Delta_{Z^{\nu}})$ with the following properties:
    \begin{enumerate}
        \item\label{crepant condition} $K_{Z^{\nu}}+\Delta_{\nu} \sim_{\mathbb Q} (K_X+\Delta)|_{Z^{\nu}}$; and
        \item\label{inv_of_adj} $(X,\Delta)$ is lc in a neighborhood of $Z$ if and only if $(Z^{\nu},\Delta_{Z^{\nu}})$ is lc.
    \end{enumerate}
\end{theorem}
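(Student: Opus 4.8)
\textbf{Proof proposal for \Cref{adj_inv_adj}.}

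The plan is to follow the six-step procedure for adjunction to an lc center outlined just before the statement, and observe that \Cref{thm:Bsemimain} supplies exactly the ingredient that previously forced the restriction to minimal lc centers. First I would take a dlt modification $\pi \colon (X', \Delta') \to (X, \Delta)$ (its existence is standard, e.g. \cite[Thm.~4.19]{kol13}), so that $K_{X'} + \Delta' \sim_{\bQ} \pi^*(K_X + \Delta)$. Since $Z$ is an lc center of $(X, \Delta)$, there is a component $S$ of $\lfloor \Delta' \rfloor$ dominating $Z$; iterating dlt adjunction along a chain of divisors in $\Supp(\Delta')^{=1}$ leading to $S$, we obtain a dlt pair $(S, \Delta_S)$ with $K_S + \Delta_S \sim_{\bQ} (K_{X'} + \Delta')|_S$. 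Let $g \colon S \to W$ be the Stein factorization of $S \to Z$, so $W \to Z^\nu$ is finite birational, hence an isomorphism by normality of $Z^\nu$; thus it suffices to endow $W$ with the desired pair structure. Because $K_S + \Delta_S \sim_{\bQ, g} 0$ and $(S, \Delta_S)$ is dlt (in particular lc), $g \colon (S, \Delta_S) \to W$ is an lc-trivial fibration, so the canonical bundle formula \Cref{canonicalbundleformula} produces a generalized lc pair $(W, B_W, \bfN)$ with $K_S + \Delta_S \sim_{\bQ} g^*(K_W + B_W + \bfN_W)$.

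The key new input is that $\bfN$ is b-semiample by \Cref{thm:Bsemimain} (note $\Delta_S \geq 0$, as required, since $\Delta_S = \Diff_S^*(\Delta')$ is effective for a dlt pair). Passing to a model $W' \to W$ on which $\bfN$ descends to a semiample $\bQ$-divisor $\bfN_{W'}$, we may choose an effective $\bQ$-divisor $\Delta_W^{\mathrm{mod}} \geq 0$ on $W$ with $\Delta_W^{\mathrm{mod}} \sim_{\bQ} \bfN_W$ and such that $(W, B_W + \Delta_W^{\mathrm{mod}})$ is still lc — here one uses Bertini-type genericity as in \cite[Lem.~4.3]{EFGMS2025}, combined with the fact that $(W, B_W, \bfN)$ is a \emph{generalized} lc pair (and klt if $(X, \Delta)$ is klt, by \Cref{canonicalbundleformula}). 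Set $\Delta_{Z^\nu} \coloneqq B_W + \Delta_W^{\mathrm{mod}}$ under the identification $W \cong Z^\nu$. Property \eqref{crepant condition} then follows by chasing the chain of $\bQ$-linear equivalences: $(K_X + \Delta)|_{Z^\nu}$ pulls back along $S \to Z^\nu$ to $(K_{X'} + \Delta')|_S \sim_{\bQ} K_S + \Delta_S \sim_{\bQ} g^*(K_W + \Delta_{Z^\nu})$, and since $g$ has connected fibers this descends to $K_{Z^\nu} + \Delta_{Z^\nu} \sim_{\bQ} (K_X + \Delta)|_{Z^\nu}$.

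For \eqref{inv_of_adj}, the forward direction is adjunction: if $(X, \Delta)$ is lc near $Z$, then $(X', \Delta')$ is lc, dlt adjunction preserves lc-ness, and the canonical bundle formula sends lc pairs to generalized lc pairs (\Cref{canonicalbundleformula}), so $(W, B_W, \bfN)$ is generalized lc, whence $(Z^\nu, \Delta_{Z^\nu})$ is lc after the semiample modification as above. The converse (inversion of adjunction) is the delicate direction and I expect it to be the main obstacle: one must show that if $(Z^\nu, \Delta_{Z^\nu})$ is lc then $(X, \Delta)$ is lc in a neighborhood of $Z$. The strategy is to argue by contradiction — if $(X, \Delta)$ is not lc near $Z$, then on the dlt modification there is a divisor with discrepancy $< -1$ whose center meets the preimage of $Z$, and tracking this through dlt adjunction to $S$ (using inversion of adjunction for dlt pairs, which is known) produces a non-lc behavior of $(S, \Delta_S)$ over $W$; this forces, via the precise coefficient formula for $B_W$ in \Cref{defn:boundary} (a supremum of log canonical thresholds), a coefficient $> 1$ in $B_W$ over some divisor of $W$, contradicting that $(Z^\nu, B_W + \Delta_W^{\mathrm{mod}})$ is lc with $\Delta_W^{\mathrm{mod}} \geq 0$. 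Making this last implication airtight requires care with the compatibility of the boundary b-divisor under the base changes involved and with the semiample-modification step (one should arrange $\Delta_W^{\mathrm{mod}}$ to avoid the relevant divisors, or work on a common model where both $B_W$ and $\bfN$ are realized); this bookkeeping, rather than any deep new idea, is where the real work lies, and one can alternatively cite the inversion-of-adjunction machinery of \cite[\S 4]{FG2012}, now unconditionally valid thanks to \Cref{thm:Bsemimain}.
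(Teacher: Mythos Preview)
Your overall strategy matches the paper's: both follow the six-step construction of \cite[\S 4]{FG2012} and use \Cref{thm:Bsemimain} to upgrade the generalized pair on $Z^\nu$ to an honest pair. There are, however, two points where your proposal diverges from the paper and where gaps appear.

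First, the claim that ``$W \to Z^\nu$ is finite birational, hence an isomorphism by normality'' is false in general. The Stein factorization $S \to W \to Z$ gives $W \to Z^\nu$ finite, but an irreducible $S$ dominating $Z$ can perfectly well have disconnected general fiber over $Z$, so the map can have degree $>1$. This is exactly why step~(6) of the outline (``descend this latter structure to $Z^\nu$'') is a separate step and is handled in \cite[\S 4]{FG2012}; you have silently skipped it. (Also, ``iterating dlt adjunction along a chain of divisors leading to $S$'' is confused: $S$ is already a prime divisor and one adjunction step suffices.)

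Second, and more substantively, your treatment of the inverse direction of \eqref{inv_of_adj} is incomplete, and the fallback citation to \cite{FG2012} does not rescue it: that paper establishes adjunction but not inversion of adjunction for arbitrary lc centers. The paper's proof takes a cleaner route here. It invokes \cite{Hac14} (shown equivalent to \cite{FH22} by \cite{FH23}), where inversion of adjunction for general lc centers is \emph{already proved} in the language of b-divisors---the statement being that $(X,\Delta)$ is lc near $Z$ iff the boundary b-divisor on $Z^\nu$ has coefficients $\leq 1$ on every model. The paper then observes that, once $\bfN$ is b-semiample (by \Cref{thm:Bsemimain}, via \Cref{thm:lcimage}), this b-divisor condition is equivalent to the ordinary pair $(Z^\nu,\Delta_{Z^\nu})$ being lc. This avoids the discrepancy-chasing bookkeeping you sketch and sidesteps the need to re-prove inversion of adjunction from scratch.
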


\begin{proof}
    The construction in \cite[\S 4]{FG2012} and \Cref{thm:lcimage} \eqref{crepant condition} gives \eqref{crepant condition}.
    By \Cref{thm:lcimage} and the construction adopted, the ``only if'' part of \eqref{inv_of_adj} follows.
    Thus, we are left with showing that, if $(Z^{\nu},\Delta_{Z^{\nu}})$ is lc, then so is $(X,\Delta)$ in a neighborhood of $Z$.
   In the past, before \Cref{thm:Bsemimain},
    adjunction for general lc centers could only be formulated by using b-divisors on $Z^{\nu}$; see \cite{Hac14,FH22}.  
    The approaches in \cite{Hac14,FH22} are proved equivalent in \cite{FH23}.
    The b-divisor considered in \cite{Hac14} is exactly the boundary b-divisor of the canonical bundle formula considered in \cite[\S 4]{FG2012}. In particular, in \cite{Hac14}, inversion of adjunction is formulated by requiring that the boundary b-divisor has coefficient at most 1 on any model.
    By \Cref{thm:lcimage} and the construction adopted, this 
    condition is equivalent to requiring that the pair $(Z^{\nu},\Delta_{Z^{\nu}})$ is lc.
    Then, the claim follows.
\end{proof}

\subsubsection{Comment about boundary with $\bR$-coefficients}
Throughout this work, for a pair $(X,\Delta)$ we assume that $\Delta$ has coefficients in $\mathbb{Q}$.
For many applications, it is important to extend results to pairs with coefficients in $\mathbb R$.
We remark that \Cref{thm:lcimage} and \Cref{adj_inv_adj} also hold for 
pairs with real coefficients, provided that all linear equivalences are taken to be over $\mathbb R$.
This relies on the approximation of pairs with real coefficients by means of convex combinations of pairs with rational coefficients;
see, e.g., \cite{HanLiu}.

Indeed, let $f \colon (Y,\Delta) \to X$ be an lc-trivial fibration where $(X,\Delta)$ is a quasi-projective $\bR$-pair.
By \cite[Lem.~4.1]{HanLiu}, we may write the $\bR$-divisor $K_Y+\Delta = \sum c_i (K_Y+\Delta_i)$ as a real convex combination of finitely many $\bQ$-Cartier divisors $K_Y+\Delta_i$ such that: (1) the non-lc and non-klt loci of the pairs $(Y,\Delta_i)$ agree with the corresponding ones of $(Y,\Delta)$ for all $i$; and (2) $K_Y+\Delta_i \sim_{\mathbb Q,f} 0$ for all $i$. 
Let $(X_i,B_i)$ be the lc $\bQ$-pair obtained applying \Cref{thm:lcimage} to the lc-trivial fibration $f \colon (Y,\Delta_i) \to X$. To deduce \Cref{thm:lcimage} for $(Y,\Delta)$, take the lc $\bR$-pair $(X,\sum c_i B_i)$.  Moreover, to obtain the corresponding version of \Cref{adj_inv_adj} for $(Y, \Delta)$, we utilize the pair just constructed whenever the canonical bundle formula is invoked in the proof, together with inversion of adjunction for fiber spaces and the fact that $(Y,\Delta)$ and $(Y,\Delta_i)$ have the same classes of singularities.

\subsubsection{Effective  b-semiampleness}
Let $f \colon (Y, \Delta) \to X$ be an lc-trivial fibration inducing the generalized pair $(X, B_{X}, \bfM)$. The effective  b-semiampleness conjecture predicts the existence of a universal positive integer $c$, only depending on the relative dimension of $f$ and the coefficients of the horizontal part of $\Delta$, such that $c \bfM$ is  b-free, or eventually weaker statements involving other topological invariants of the general fiber. The conjecture was formulated by Kawamata, Prokhorov and Shokurov  \cite[Conj 1]{AmbroPhD} and \cite[Conj.~7.13.1]{PS09}, and proved in \cite[Cor.~8.15]{PS09} in relative dimension $1$.
Recently, the conjecture has also been solved for lc-trivial fibrations whose general fiber is an abelian or primitive symplectic variety with bounded second Betti number of fixed dimension; see \cite[Thm.~C]{EFGMS2025}. This was a key step towards the proof of boundedness results for certain $K$-trivial fibrations in \cite{EFGMS2025}.
In particular, combining \cite{EFGMS2025} and \cite[Thm.~1.4]{babwild}, the conjecture is also settled in relative dimension 2.

It is natural to ask whether a fixed positive power of the Schmid extension of the Griffiths bundles of a polarizable integral variation of Hodge structures sharing the same period domain is free. Analogously, whether the same holds for the Hodge bundle of integrable polarizable CY variations of Hodge structure with torsion combinatorial monodromy sharing the same period domain. A positive answer to this question may entail boundedness results for more general $K$-trivial fibrations.

While the effective version of the b-semiampleness conjecture remains open at the moment,
\Cref{cor:CYmod} below provides a tool to prove it when the general fibers belong to a given bounded family of pairs.
Thus, we deduce the effective version of the conjecture when the general fiber is a klt log CY pair of Fano type.
In particular, we obtain the following application, which was kindly pointed out to us by Shokurov.
\begin{cor}\label{cor:effective_b_semiampleness_fano}
Let $f \colon (Y,\Delta)\to X$ be an lc-trivial fibration of relative dimension $n$ from a pair $(Y,\Delta)$
such that $\Delta$ is effective over the generic point of $X$.
Further assume the following:
\begin{itemize}
    \item[(i)] the generic fiber of $f$ is a klt pair;
    \item[(ii)] $\Delta$ is big over $X$.
\end{itemize}
Let $\bfM$ denote the moduli b-divisor induced by $f$.
Then, there is a constant $I$, only depending on $n$ and the horizontal multiplicities of $\Delta$, such that $I\bfM$ is b-Cartier and b-free.
\end{cor}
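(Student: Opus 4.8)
The plan is to combine \Cref{thm:Bsemimain} with a boundedness argument for the general fibers, in the spirit of how effective freeness of the Hodge line bundle on a bounded moduli space descends to the moduli part of an lc-trivial fibration.  First I would reduce to the Ambro model: by \Cref{functoriality} we may pass to an alteration $q\colon X'\to X$ and an lc-trivial locally stable fibration $f'\colon(Y',\Delta')\to X'$ with $\bfM$ descending to a semiample $\bQ$-divisor $\bfM_{X'}$ (this uses \Cref{thm:Bsemimain}), and it suffices to bound a power of $\bfM_{X'}$ that is Cartier and base-point free.  By \Cref{lem:alteration} and \Cref{lem:modulipartlctriviallocallystable} we may moreover assume $f'$ is locally stable with $B_{X'}=0$ and $f'^*\bfM_{X'}\sim_\bQ K_{Y'/X'}+\Delta'$.

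The key point is then boundedness of the general fibers.  By hypothesis (i) the generic fiber is klt, and by hypothesis (ii) $\Delta'$ is big over $X'$, so the general fiber $(F,\Delta_F)$ is a klt log Calabi--Yau pair with $\Delta_F$ big, i.e.\ a klt log CY pair of Fano type in the sense alluded to just above the statement.  By the boundedness of $\epsilon$-lc log CY pairs of Fano type of fixed dimension with coefficients in a fixed finite (in fact DCC, which for our fixed horizontal multiplicities is finite) set --- i.e.\ the relevant instance of the BAB-type boundedness used for \Cref{cor:CYmod} --- the fibers $(F,\Delta_F)$ lie in a bounded family $\cY$ over a finite-type base.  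In particular there is a fixed integer $m_0=m_0(n,\text{coeffs})$ such that $\omega_F^{[m_0]}(m_0\Delta_F)\cong\cO_F$ for all such fibers, and the classifying rational map $X'\dashrightarrow \cY$ factors (after a further alteration, harmless by \Cref{functoriality}) through the coarse space, whose Hodge-type polarization $M^{(\ell)}$ from \Cref{thm:BBHodgemain}/\Cref{cor:CYmod} pulls back to a multiple of $\bfM_{X'}$; since that polarization is a genuine \emph{ample line bundle} on the projective compactification $Y^{\BBH}$ (not merely semiample), there is a fixed $I_0$ with $\cO_{Y^{\BBH}}(I_0\ell)$ very ample, and hence $I\coloneqq I_0\ell m_0$ (adjusted for the ramification of the finitely many alterations, which is controlled because the moduli map factors through the fixed bounded base) works: $I\bfM$ is b-Cartier and b-free.

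The main obstacle I expect is the bookkeeping of denominators across the chain of alterations: passing from $X$ to an Ambro model and then to the cover over which the moduli map factors through $\cY$ introduces ramification, and one must check this ramification is \emph{uniformly} bounded.  The point is that the only alterations that matter are (a) those achieving conditions $(\dagger),(\dagger\dagger)$ of \Cref{defn:modulipart}, whose degree is controlled by the index $d$ of the general fiber --- finite by boundedness --- and (b) the spreading-out of the Stein factorization of the moduli map, which factors through the fixed finite-type $\cY$ and so has degree bounded in terms of $\cY$ alone.  Granting that these degrees are bounded purely in terms of $n$ and the horizontal coefficients of $\Delta$, the formula $\bfM_X=\tfrac1{\deg q}q_*\bfM_{X'}$ shows $I\bfM_X$ is an integral b-Cartier divisor, and b-freeness descends from the very ampleness on $Y^{\BBH}$ via the norm/pushforward compatibility already used throughout \Cref{thm:bailyborel}.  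I would isolate the boundedness input (of klt log CY pairs of Fano type with fixed dimension and finite coefficient set) as the one external ingredient, citing it as in the discussion preceding \Cref{cor:CYmod}.
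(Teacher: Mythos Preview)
Your overall strategy matches the paper's: both arguments rest on (a) boundedness of the general fibers and (b) the existence of a fixed Baily--Borel-type compactification $Y^{\BBH}$ from which $\bfM$ is pulled back, so that the constant $I$ is read off from $Y^{\BBH}$ alone.  However, there is one genuine gap in your proposal and one unnecessary detour.

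\textbf{The gap: you never construct a polarization.}  You describe $\cY$ simultaneously as ``a bounded family over a finite-type base'' and as something admitting a ``classifying rational map $X'\dashrightarrow\cY$'' with a coarse space.  These are different objects: a bounded family is just a family over some parameter scheme, and there is in general no canonical classifying morphism to it.  The result you want to invoke (\Cref{cor:CYmod}) concerns a moduli stack of \emph{polarized} klt log Calabi--Yau pairs, and to obtain a genuine classifying map you must equip the fibers of $f$ with a compatible relative polarization.  The paper does exactly this: after shrinking $X$ so that $(Y,\Delta)$ is klt with $\Delta$ purely horizontal, it passes to a small $\bQ$-factorialization, observes that $(Y,(1+\epsilon)\Delta)$ is klt with $K_Y+(1+\epsilon)\Delta$ $f$-big, and runs \cite{bchm} relatively to replace $Y$ by the relative ample model of $\Delta$.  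Now $\Delta$ is $f$-ample, boundedness (from \cite{HX15}) gives a uniform $C$ with $C\Delta$ Cartier and $f$-ample, and $C\Delta_x$ is the polarization fed into \Cref{cor:CYmod}.  Without this step your ``classifying map'' is not well defined.

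\textbf{The detour: the alteration bookkeeping is unnecessary.}  The paper does not pass to an Ambro model or track ramification of covers at all.  Since $\bfM$ depends only on the generic fiber, it simply shrinks $X$ until $f$ is a family of polarized klt log Calabi--Yau pairs and then applies \Cref{cor:CYmod} directly: the extension property there says that on \emph{any} log smooth compactification $(Z,D_Z)$ of (an alteration of) $X$, the map extends to $Z\to Y^{\BBH}$ and the Schmid extension of the Hodge bundle---which is $\bfM_Z$---is the pullback of $\cO_{Y^{\BBH}}(k)$.  Hence $I$ is determined by $Y^{\BBH}$ alone, and your worries about controlling degrees of alterations in (a) and (b) never arise.
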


\begin{proof}
    Since the moduli b-divisor is determined by the general behavior of the fibration, we may shrink $X$ so that $(Y,\Delta)$ is a klt pair and $\Delta$ has no $f$-vertical components.
    Then, we may replace $(Y,\Delta)$ with a small $\mathbb Q$-factorialization.
    In particular, we may assume that $\Delta$ is a $\mathbb Q$-Cartier divisor that is $f$-big.
    Notice that $(Y,(1+\epsilon)\Delta)$ is klt for $0< \epsilon \ll 1$ and $K_Y+(1+\epsilon)\Delta$ is $f$-big.

    Thus, we may apply \cite{bchm} to $(Y,(1+\epsilon)\Delta)$ and the morphism $f$ and replace $Y$ with the relatively ample model of $\Delta$.
    In particular, $Y$ may no longer be $\mathbb Q$-factorial, but we gain the fact that $\Delta$ is $f$-ample.
    Notice that all these operations do not affect $\bfM$.

    Up to shrinking $X$, we may assume that all the fibers $(Y_x,\Delta_x)$ of $f$ are klt pairs.
    Then, these belong to a bounded family of pairs by \cite{HX15}.
    In particular, again up to further shrinking $X$, there is a constant $C$, independent of $f$, such that  
    $C\Delta$ is Cartier and $f$-ample.

    Then, we may invoke \Cref{cor:CYmod} for these log CY pairs and the polarization given by the Cartier divisor $C\Delta_x$.
    Then, $\bfM$ is pulled back from a unique space as constructed in \Cref{cor:CYmod}, and the claim follows.
\end{proof}

\subsubsection{Connections to the theory of complements}
The theory of complements has been introduced by Shokurov to study flips and, more generally, morphisms of Fano type;
we refer to \cite{shokurov20} for details.
In recent years, this theory has proved very powerful;
for instance, Birkar's proof of the BAB conjecture relies heavily on this theory;
see \cite{bab1,bab2}.
On the other hand, in Birkar's strategy, it is important to relax the category of pairs to also include generalize pairs, and then study complements for these more general objects, originally introduced in \cite{BZ16}.
Indeed, in \cite{bab1},
complements are built with an inductive approach, and one of the possible scenarios includes lifting complements from the base of an lc-trivial fibration with fibers of Fano type;
see \cite[\S6.4]{bab1}.
In particular, even when interested in pairs, the approach in \cite{bab1} needs to introduce generalized pairs for the inductive argument to go through.
Now, by \Cref{cor:effective_b_semiampleness_fano}, we may induce a pair with controlled coefficients on the base of such lc-trivial fibrations.
Thus, it would be interesting to explore whether boundedness of complements could be proved without resorting to generalized pairs.

\subsubsection{B-semiampleness for GLC fibrations}\label{GLC} Let $f \colon (Y, \Delta) \to X$ be a generically log canonical (GLC) fibration, i.e., $(Y, \Delta)$ is lc over the generic point of $X$.
Note that, contrary to the lc-trivial case, the general fiber is no longer assumed to be log Calabi--Yau. In the GLC case, the moduli part is the  b-divisor on the total space $Y$ given by $N_Y \coloneqq K_Y + \Delta - f^*(K_{X}+B_X)$, up to flatification of $Y$; see \cite[\S 2.2]{ACSS2021} for details.
It is known that $\bfN$ is  b-nef, relatively  b-semiample, but not  b-semiample in general; see \cite{ACSS2021}.
However, Shokurov conjectured that $\bfN$ is  b-semiample after a small perturbation by an ample divisor coming from the moduli of the general fiber; see \cite[Conj.~1]{shokurov2021logadjunctionmoduli}.
The second-named author and Spicer proved a variant of Shokurov's conjecture in \cite[Thm.~1.2]{FS22}: for 
GLC fibrations with klt generic fibers that are locally stable families of good minimal models,
the  b-divisor $\bfN+ \epsilon f^* \det(f_* m\bfN)$ is  b-semiample, conditional to the  b-semiampleness conjecture in the lc-trivial case.
Here, $m$ is sufficiently divisible and $\epsilon$ is arbitrarily small and positive.
The result now holds unconditionally by \Cref{thm:Bsemimain}.

\subsection{Moduli of Calabi--Yau varieties}\label{sec:moduli} In this section we deduce \Cref{cor:introCYmod}.  In fact, we prove a more precise statement allowing for mild singularities.

Let $\cY$ be a $\mathbb{G}_m$-rigidified 
algebraic stack of finite type  parametrizing  polarized klt log Calabi--Yau pairs, i.e., triples $(X,\Delta;L)$, where $(X,\Delta)$ is a klt log Calabi--Yau pair, and $L$ is an ample line bundle on $X$. Note that here we rigidify with respect to the automorphisms of the line bundle $L$. Families of triples $(X,\Delta;L)$ are families of locally stable pairs in the sense of Koll\'ar, together with a polarization, i.e., the datum of compatible relatively ample line bundles defined \'etale locally over the the base; see, e.g., \cite[\S 4.2]{AH2011} or \cite[Def.~8.40]{kbook}.

We list noteworthy properties of the moduli stack $\cY$:
\begin{enumerate}[label=(\roman*)]
\item The moduli stack $\cY$ is a separated (cf., \cite[Thm.~11.40]{kbook}) Deligne--Mumford stack. Indeed, the group of polarized automorphisms of a klt log Calabi--Yau pair is finite; see, e.g., \cite[Prop.~10.1]{PZ2020}. Hence, $\cY$ admits a coarse moduli space $Y$ which is a separated algebraic space of finite type by \cite{KM1997}.
 \item Let $f \colon (\mathcal{X}, \Delta_{\mathcal{X}}) \to S$ be a polarized family of klt log Calabi--Yau pairs admiting a classifying map $\psi \colon S \to Y$. If $f$ is isotrivial, then $\psi$ is locally constant. Indeed, since the Albanese fibration of any projective klt log Calabi--Yau pair $(X, \Delta)$ is surjective and isotrivial by \cite[Cor.~A.14]{PZ2020}, there exists an \'{e}tale cover of $(X, \Delta)$, with Galois group $G$, which can be decomposed into a product of an abelian variety $A$, isogenous to the Albanese variety, and a klt log Calabi--Yau pair isomorphic to the fiber of the Albanese fibration. The $G$-equivariant translation group of $A$ descends to automorphisms of $(X, \Delta)$, which identify numerically equivalent ample line bundles on $X$ up to finite ambiguity. Therefore, for any given fixed pair $(X, \Delta)$, there are at most finitely many points of $Y$ parametrizing triple $(X, \Delta; L)$, with an arbitrary line bundle $L$.  This means that $Y$ does not contain non-constant isotrivial families of pairs. 
\item The Hodge bundle of the universal family is well-defined on $\cY$ as it coincides with the relative canonical bundle of the family. Its powers descend to a $\bQ$-line bundles $M^{(k)}_{Y}$ on the coarse moduli space $Y$; see \cite[Lem.~3.2]{KV04}. 
\item \label{item:stictlynef} The $\bQ$-line bundles $M^{(k)}_{Y}$ are strictly nef by \cite[Thm.~3.3]{Amb05} or \cite[Thm.~A.12]{PZ2020} and (ii).
\item There is a dense Zariski open substack $\cU \subset \cY^{\mathrm{red}}$ over which the topology of the universal family of the pairs $(X, \Delta)$ (more precisely, that of the construction in \Cref{defn:modulipart}) is locally constant. The variation of Hodge structures associated to the universal family, as constructed in \Cref{defn:modulipart}, gives a period map $\phi \colon \cU \to U \to \Gamma\backslash \bD$, which factors through the coarse moduli space $U$ of $\cU$; cf. \cite[Proof of Cor.~7.3]{BBT23}.
\item \label{item:quasifinite} The map $U \to \Gamma\bs\bD$ is quasifinite again by \cite[Thm.~3.3]{Amb05} or \cite[Thm.~A.12]{PZ2020} and (ii).

\item
    Consider a family of polarized klt log Calabi--Yau pairs $(\mathcal{X}_U,\Delta_{\mathcal{X}_U};\mathcal{L}_U) \to U$ over a punctured curve $U \coloneqq C \setminus \{0\}$.
    Up to a ramified base change, the family can be completed to a family of log Calabi--Yau pairs $(\mathcal{X},\Delta_\mathcal{X}) \to C$.
    In general, the natural limit $\mathcal{L}_0$ on the special fiber $\mathcal{X}_0$ may just correspond to the linear equivalence class of an ample Weil $\mathbb Q$-Cartier divisor;
    see, e.g., \cite{KX20}.
    Thus, assuming that the limit $(\mathcal{X}_0,\Delta_{\mathcal{X}_0})$ is klt, there may be klt pairs that would be expected to appear in the fixed moduli problem, but they actually do not since $\mathcal{L}_0$ may not be Cartier. 
    Yet, if $\mathcal{L}_0$ fail to be Cartier, its Cartier index is however bounded by \cite[Thm.~1.5]{Bir23}, up to replacing the polarization defining the original moduli problem with a bounded tensor multiple, we may find a moduli stack of finite type in the above sense parametrizing all $\mathbb Q$-polarized klt log Calabi--Yau pairs in a given deformation class
    (note that the $\epsilon$ in {\it loc. cit.} may be chosen uniformly by global ACC \cite[Thm.~1.5]{HMX14}).
    
\end{enumerate} 

\begin{cor}\label{cor:CYmod}
Let $\cY$ be an
algebraic stack of finite type  parametrizing  polarized klt log Calabi--Yau pairs, and $Y$ be its coarse moduli space. Then the normalization $Y^{\nu}$ of the reduction $Y^{\mathrm{red}}$ has a unique normal compactification $Y^\BBH$ for which some power $M_Y^{(k)}$ of the Hodge bundle of the variation of Hodge structures on middle cohomology extends to an ample bundle $\cO_{Y^\BBH}(k)$ and such that for any family $g:\testsp\bs D_\testsp\to \cY$
for a log smooth algebraic space $(\testsp,D_\testsp)$, the resulting morphism $\testsp\bs D_\testsp\to Y$ lifts to $Y^{\nu}$ and extends to extends to $\bar g \colon \testsp\to Y^\BBH$ with the property that $\bar g^*\cO_{Y^\BBH}(k)$ pulls back to $(M^k_{\testsp\bs D_\testsp})_\testsp$.
\end{cor}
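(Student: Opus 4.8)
The strategy is to reduce \Cref{cor:CYmod} to \Cref{thm:BBHodge} applied to the normalization $Y^{\nu}$ of $Y^{\mathrm{red}}$, using the six listed properties (i)--(vi) of $\cY$ to verify the hypotheses of \Cref{thm:BBHodge}. First I would explain that, by property (v), the period map $\phi\colon \cU\to U\to \Gamma\backslash\bD$ is defined on a dense open substack, hence on a dense open subspace $U$ of $Y^{\mathrm{red}}$; pulling back along the normalization map $Y^{\nu}\to Y^{\mathrm{red}}$ and extending, one obtains a Griffiths transverse morphism $Y^{\nu}\dashrightarrow \Gamma\backslash\bD$, which after removing the locus where it is not defined is an honest morphism on a dense open $U^{\nu}\subset Y^{\nu}$. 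Since $Y^{\nu}$ is normal and, by property (ii) and the strict nefness in property (iv), contains no nonconstant isotrivial subfamilies, one shows that $U^{\nu}$ is actually all of $Y^{\nu}$: any point of $Y^{\nu}$ admits a nearby curve on which the family is non-isotrivial (else the whole component would be isotrivial near that point, contradicting (ii)/(iv)), forcing the period map to extend. This makes $Y^{\nu}$ a \emph{normal variety with quasifinite period map to a period space parametrizing CY Hodge structures} in the sense of \S5, the quasifiniteness being exactly property (vi).

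\textbf{Verification of the three conditions of \Cref{thm:BBHodge}.} Next I would check that the Hodge bundle $M^{(\ell_\Gamma)}_{Y^{\nu}}$ of the resulting CY variation is strictly nef, integrable, and has torsion combinatorial monodromy, in the sense of the three bulleted definitions preceding \Cref{thm:BBHodge}. Strict nefness is property (iv) (together with property (ii), which supplies the needed non-isotriviality hypothesis in the definition of strict nefness: for a nonconstant curve $g\colon C\to Y^{\nu}$ the family is non-isotrivial, so $\deg M_{\bar C}>0$). For integrability and torsion combinatorial monodromy, one passes to a proper log smooth $(Z,D_Z)$ with a generically finite dominant $g\colon Z\bs D_Z\to Y^{\nu}$, pulls back the universal family, and applies the canonical bundle formula to this family over $Z\bs D_Z$: the pulled-back Hodge bundle is the moduli part of an lc-trivial fibration with klt fibers (the fibers of $\cY$ being klt log CY pairs), so \Cref{thm:integrability} and \Cref{thm:torsion} apply verbatim after the reductions of \S7 (using \Cref{rem: alteration check} that both conditions may be checked after a dominant proper pullback). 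This is the step where one must be slightly careful: one needs assumption $(\ddagger)$-type normalizations (conditions $(\dagger)$, $(\dagger\dagger)$ of \Cref{defn:modulipart}) to hold after a further alteration, which is exactly what \S7 supplies.

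\textbf{Conclusion.} With all hypotheses of \Cref{thm:BBHodge} verified, that theorem produces the normal projective compactification $Y^\BBH=\proj C_{Y^{\nu}}$ together with the ample line bundle $\cO_{Y^\BBH}(k)$ (for $k$ sufficiently divisible, $k=\ell_{Y^{\nu}}\cdot(\text{something})$) restricting to $M^{(k)}_{Y^{\nu}}$, the extension property for maps from log smooth algebraic spaces $(Z,D_Z)$ with the identification $\bar g^*\cO_{Y^\BBH}(k)\cong (M^k_{Z\bs D_Z})_Z$, and the uniqueness among normal compactifications with these properties. It remains to observe that for any family $g\colon Z\bs D_Z\to\cY$ the induced morphism to the coarse space $Y$, composed with normalization, lifts to $Y^{\nu}$: this holds because $Z\bs D_Z$ is smooth (hence normal), so the morphism $Z\bs D_Z\to Y^{\mathrm{red}}$ factors through the normalization $Y^{\nu}$ by the universal property of normalization; then \Cref{thm:BBHodge}(4) gives the extension $\bar g\colon Z\to Y^\BBH$ and the pullback identity for $\cO_{Y^\BBH}(k)$, which is the asserted statement. \textbf{The main obstacle} I anticipate is the global comparison in the previous paragraph between the abstract Hodge bundle $M_{Z\bs D_Z}$ furnished by \Cref{thm:BBHodge}(4) and the \emph{geometric} Hodge bundle $(f_*\omega_{\mathcal X/Z\bs D_Z})$ of the pulled-back universal family $\mathcal X\to Z\bs D_Z$ --- i.e.\ matching the Schmid/Deligne extension coming from the variation of Hodge structures on middle cohomology with the relative canonical sheaf of the family of pairs. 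This is precisely the content of property (iii) above and of \Cref{defn:modulipart}/\Cref{rmk:moduli}, but one must ensure the identifications are compatible across all the alterations used to verify integrability and torsion monodromy, and that the power $k$ can be chosen uniformly; the functoriality of the Deligne extension under pullback (\Cref{lem:unipgriffithspullback}) together with \Cref{remark:PS} and \Cref{functoriality} is what makes this go through.
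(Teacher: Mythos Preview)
Your overall strategy has a genuine gap at the step where you claim the period map extends from the dense open $U^{\nu}\subset Y^{\nu}$ to all of $Y^{\nu}$. The argument you give---that non-isotriviality along curves ``forces the period map to extend''---is a non-sequitur. The period map is attached to the variation of Hodge structures defined only on the locus $\cU$ where the topology of the universal family is locally constant (property (v)); over the complement, the local monodromy of the local system around points of $Y^{\nu}\setminus U^{\nu}$ is typically nontrivial (e.g.\ a family of K3 surfaces acquiring an ADE singularity), and there is no a priori extension theorem for period maps to non-classical $\Gamma\bs\bD$ across such loci. Strict nefness of the Hodge bundle says nothing about extendability of the full period map. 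Consequently $Y^{\nu}$ is not, in general, a ``normal variety with quasifinite period map'' in the sense required by \Cref{thm:BBHodge}, and you cannot apply that theorem to $Y^{\nu}$ directly.

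The paper circumvents this by applying \Cref{thm:BBHodge} to $U^{\nu}$ (where the period map \emph{is} defined and quasifinite by (vi)), obtaining $(U^{\nu})^{\BBH}$, and then showing a posteriori that $Y^{\nu}$ sits inside $(U^{\nu})^{\BBH}$ as an open subscheme. The mechanism is: take a log smooth $S\to\cY$ surjecting onto $Y^{\nu}$; by part (4) of \Cref{thm:BBHodge} the map $S\bs D_S\to U^{\nu}$ extends to $S\to (U^{\nu})^{\BBH}$; by the universal property of the coarse moduli space this factors through $Y^{\nu}\to (U^{\nu})^{\BBH}$, which is birational and quasifinite (strict nefness of the Hodge bundle prevents any curve from being contracted), hence an open immersion by Zariski's Main Theorem. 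One then \emph{defines} $Y^{\BBH}\coloneqq (U^{\nu})^{\BBH}$. Your verification of integrability and torsion combinatorial monodromy via \Cref{thm:integrability} and \Cref{thm:torsion} is correct and is exactly what the paper does; the ``main obstacle'' you flag (compatibility of geometric vs.\ Hodge-theoretic Hodge bundles) is a bookkeeping matter handled by (iii) and \Cref{lem:unipgriffithspullback}, not the crux.
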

\begin{proof}
The Hodge bundle of the universal family over $\cY$ is integrable and has torsion combinatorial monodromy by \Cref{thm:integrability} and \Cref{thm:torsion}, since these can be checked on an alteration.  These properties of the Hodge bundles, together with properties \ref{item:stictlynef} and \ref{item:quasifinite}, allows us to apply  \Cref{thm:BBHodge}: there is a normal compactification $(U^{\nu})^{\BBH}$ of the normalization $U^{\nu}$ of $U$ satisfying the properties of \Cref{thm:BBHodge}. In particular, the Hodge bundle $M_{U^{\nu}}^{(k)}$ extends amply on $(U^{\nu})^{\BBH}$. 
The Hodge bundle on $\cY$ agrees with the Schmid extension of  the Hodge bundle on a log smooth resolution $S \to \cY$. By the universal property of coarse moduli space, the extension $S \to (U^{\nu})^{\BBH}$ factors through $Y^{\nu}\to (U^{\nu})^{\BBH}$, which is birational and quasifinite, 
hence an open immersion. 
\end{proof}

\begin{proof}[Proof of \Cref{cor:introCYmod}]
By Bogomolov--Tian--Todorov \cite{bogomolov,tian,todorov}, $\cY$ is smooth, so $Y$ is normal.  \end{proof}

\subsubsection{Stratification of Baily--Borel compactifications}\label{rem:CYnumerics} In the context of \Cref{cor:CYmod}, the following description of the underlying set of points of $Y^\BBH$ follows from the proof in $\S 5$.  For any choice of proper log smooth algebraic space $(X,D)$ with a proper birational morphism $X\bs D\to Y$, we will have $Y^\BBH=X(\bC)/R_\curve$.  For a particular choice $(X',D')$, the compactification $Y^\BBH$ will have a natural stratification by the images $Y^\BBH_S$ of the Hodge strata $X_S'$, and for each one there will be a period map whose projection $\widetilde{Y_S^\BBH}^{V_{S,\bQ}^\trans}\to\bP(V_{X',S,\bC,x_S}^\trans)^\an$ has discrete fibers.  Thus, for any stratum $X_\Sigma$ of $X$ whose image meets $Y_S^\BBH$ we have 
\begin{equation}\dim Y^\BBH_S\leq \rk \gr_F^{m-1}V^\trans_{X',S}\leq \rk \gr_F^{m-1}\gr^W_{k_\Sigma}V_{X,\Sigma}=\rk \gr_F^{m-1}\gr^W_{k_\Sigma}\psi_\Sigma V\label{eq:bound}\end{equation}
where $\psi_{\Sigma} V$ is the limit mixed Hodge structure at any point of $X_{\Sigma}$.  The last equality follows since any part of $\gr_F^{m-1}\gr^W_{k_\Sigma}\psi_\Sigma V$ is necessarily primitive, since $\gr_F^m\gr^W_{k_\Sigma+2}\psi_\Sigma V=0$.  In particular, in the case of the Baily--Borel compactification of a moduli space of $d$-dimensional Calabi--Yau varieties as in \Cref{cor:CYmod} (where $V$ is the variation on degree $d$ cohomology), we have $\dim Y=\rk \gr_F^{d-1} V$ and so
\[\codim Y^{\BBH}_S\geq \sum_{k\neq  k_\Sigma}\rk \gr_F^{d-1}\gr^W_k\psi_\Sigma V.\]
As it can be seen from the case of $\cA_g$ or the moduli of $K3$s, this codimension can be quite large in practice. In fact, in these two cases, equality is achieved (though in general it won't be, as either of the inequalities in \eqref{eq:bound} might be strict).

\subsubsection{
BB vs BBH compactifications}\label{eg:BBvsBBH}  We give an example of a moduli stack of Calabi--Yau manifolds $\cY$ for which the morphism $Y^\BB\to Y^\BBH$ has positive-dimensional fibers.  This is easy to do on the boundary.  For example, let $Y$ be the coarse moduli space of smooth quintic threefolds, and consider a degeneration to a transverse union of a hyperplane $L$ and a smooth quartic threefold $T$, meeting along a quartic K3 surface $S$.  The associated graded of the limit mixed Hodge structure in degree three then includes the primitive cohomology $H^3(T,\bQ)_{\mathrm{prim}}$ in weight 3 and the primitive cohomology $H^2(S,\bQ)_{\mathrm{prim}}$ in weight 2.  The transcendental part of the limit mixed Hodge structure is then the transcendental part of $H^2(S,\bQ)$.  Thus, since quartic threefolds and quartic surfaces satisfy an infinitesimal Torelli (for middle cohomology), and the period map of the linear systems $|\cO_T(1)|$ on a fixed (general) $T$ is immersive, this boundary piece survives in $Y^\BB$ with dimension 
$4+(\binom{4+4}{4}-5^2)=49$.  On the other hand, this boundary piece has dimension $19$ in $Y^\BBH$.

It is even possible for $Y^\BB\to Y^\BBH$ to contract curves in the interior of the period domain.  Precisely, $Y^\BB$ will always contain as an open set $Y\subset \breve{Y}^\BB\subset Y^\BB$ the normalization (in $Y$) of the closure of the image of $Y^\an\to \Gamma\bs \bD$.  This is the locus of $Y^\BB$ where the Hodge structure does not degenerate (or equivalently, where the limit mixed Hodge structure is pure), and $\breve {Y}^\BB\to Y^\BBH$ may have positive-dimensional fibers. This is indeed the case of the coarse moduli space $Y$ of smooth quintic threefolds. 

The following is an example due to Radu Laza, and we warmly thank him for sharing it with us. For a generic choice of $\underline{a} \coloneqq (a_1, a_2, a_3, a_4) \in \bC^4$,\footnote{e.g., $(a_1, a_2, a_3, a_4)=(1,1,1,2)$ works but $(a_1, a_2, a_3, a_4)=(1,1,1,1)$ does not.} the quintic threefold $X$ in $\bP^4$
cut by the polynomial
\[
f_{\underline{a}}(x_0, x_1, x_2, x_3, x_4) \coloneqq x_0^2 x^3_1 + x_0^3(x_2^2+ x_3^2+x_4^2) + a_1 x_1^5+a_2 x_2^5+a_3 x_3^5+ a_4 x_4^5
\]
has a unique isolated $A_2$ singularity at $p \coloneqq [1:0:0:0:0]$. The blowup $\widetilde{X} \to X$ along $p$, with exceptional divisor $E \simeq \bP(1,1,2)$, is a resolution of singularities. The Mayer--Vietoris exact sequence for the mapping cylinder of this resolution reads \[H^{i-1}(\widetilde{X}, \bQ) \twoheadrightarrow H^{i-1}(E, \bQ) \to H^{i}(X, \bQ) \to H^{i}(\widetilde{X}, \bQ),\]
which implies that $H^i(X, \bQ)$ carries a pure Hodge structure. 

Now, let $\cX$ be the hypersurface in $\bP^4 \times \bA^2_{(b_1, b_2)}$ cut by 
\[
f_{\underline a}(x_0, \ldots, x_4) - (b_2^2+b_2+1)b_1^2 x_1x^4_0 + b_2(b_2+1)b_1^3 x^5_0 = 0,
\]
and let $f \colon \cX \to \bA^2_{(b_1, b_2)}$ be the natural projection. Note that the restriction of $\cX$ along the curve $(b_1=0)$ is a trivial family with fiber $X$. Instead, the restriction of $\cX$ along the curve $\{b_2=m\}$, with $m$ general, is a family with a unique isolated compound Du Val singularity of type $cD_4$ along its central fiber $\cX_{(0,m)}$. The local model is
\begin{equation}\label{eq:DuVal}
x_3^2+x_4^2 + [x^3_1 + x_2^2 - (b_2^2+b_2+1)b_1^2 x_1 + b_2(b_2+1)b_1^3]=0,
\end{equation}
i.e., a double suspension of \cite[Eq.~(7.3)]{CML2013}, which is a blowup of the Weyl cover of the miniversal deformation of a cuspidal curve. By the Thom--Sebastiani's formula for Hodge modules (cf., e.g., \cite[Thm.~1.2]{MSS2020}), the vanishing cohomology of \eqref{eq:DuVal} and \cite[Eq.~(7.3)]{CML2013} are isomorphic Hodge structures, up to a Tate shift. By \cite[\S 7.7]{CML2013}, the vanishing cohomology is then a Tate shift of the first cohomology group of an elliptic curve with $j$-invariant $256(b^2_2+3)^3/(b^2_2-1)^3$. 

Globally, the nearby cohomology for $f \colon \cX_{(b_1, m)} \to \bA^1_{b_1}$
is an extension of the pure Hodge structures of the vanishing cohomology of the isolated hypersurface singularity and of the cohomology of $\cX_{(0,m)}$, as one can check taking cohomology of the specialization triangle \cite[p.276]{peterssteenbrink} and using the purity of the vanishing cohomology and of $H^*(X, \bQ)$. 
By the purity of the limiting mixed Hodge structure, the period map $Y^{\mathrm{an}} \to \Gamma\bs \bD$ extends to $(\bA^2)^{\mathrm{an}}_{(b_1, b_2)} \to \Gamma\bs \bD$. 

We conclude that the curve $(b_1=0)$ maps generically finitely in $\breve Y^\BB$, but it is contracted by $Y^{\BB} \to Y^{\BBH}$. Indeed, the Hodge bundle is trivial along $(b_1=0)$ since $f$ is trivial along the curve, but the Griffiths bundle of $R^3 f_* \bQ_{\cX}|_{(\bA^1)^*_{b_1} \times \bA^1_{b_2}}$ is positive since the $j$-invariant of the vanishing cohomology as a summand of the nearby cohomology varies.

\bibliographystyle{amsalpha}
\bibliography{main}
\end{document}